\newcommand{\N}{\mathbb{N}} 
\newcommand{\Z}{\mathbb{Z}}
\newcommand{\R}{\mathbb{R}}
\DeclareMathOperator{\B}{B}
\DeclareMathOperator{\e}{e}
\DeclareMathOperator{\supp}{supp}
\DeclareMathOperator{\diff}{d}
\DeclareMathOperator{\BUC}{BUC}
\theoremstyle{plain}
\newtheorem{theorem}{Theorem}
\newtheorem{lemma}{Lemma}[section]
\newtheorem{corollary}[lemma]{Corollary}
\newtheorem{proposition}[lemma]{Proposition}
\newtheorem{remark}[lemma]{Remark}
\newtheorem{definition}[lemma]{Definition}
\theoremstyle{definition}
\newtheorem{example}[lemma]{Example}
\title{A spatial host-parasite model with host immunity:\\
Survival and linear spread of parasites on $\Z$}
\author{Sascha Franck and Cornelia Pokalyuk}
\date{\today}
\begin{document}

\maketitle

\begin{abstract}
     We introduce a generalized version of the frog model to describe the invasion of a parasite population in a spatially structured immobile host population with host immunity on the integer line. Parasites move according to simple symmetric random walks and try to infect any host they meet. Hosts, however, own an immunity against the parasites that protects them from infection for a random number of attacks. Once a host gets infected, it and the infecting parasite die, and a random number of offspring parasites is generated. We show that the positivity of the survival probability of parasites only depends on the mean offspring and mean height of immunity. Furthermore, we prove through the construction of a renewal structure that given survival of the parasite population  parasites invade the host population at linear speed under relatively mild assumptions on the host immunity distribution.
\end{abstract}

\pagestyle{empty}

\pagestyle{headings}
\section{Introduction}
We are interested in the spread of a parasite population in a spatially distributed host population with host immunity. We model this scenario by placing susceptible (immobile) hosts on the vertices of a graph $G$. (In the following we will analyze the model mainly on $\Z$, that is, for $G=(V,E)$ with vertex set $V=\Z$ and edge set $E=\{\{x,y\}:\lVert x-y\rVert_1 = 1\}$). Parasites infect hosts and move on the graph according to symmetric nearest neighbor random walks in continuous time. As hosts often have an immune response against infections, we assume that whenever a parasite tries to infect a host, parasite reproduction might be prevented (with some probability that might depend on the number of previous infection attemps), and the parasite gets killed. To be precise, we assume that there is some sequence $(p_m)_{m\in\N}\subset[0,1]$ such that for each host, if the host is attacked for the $m$-th time by a parasite then this host prevents the infection, independent of everything else, with probability $1-p_m$ and kills the parasite. In the case of a successful infection, which happens with probability $p_m$, the parasite kills the host and sets free a (possibly random) number of offspring. For simplicity we assume that hosts do not reproduce. \\
We can view this model as an extension of the frog model that has been introduced by Telcs and Wormland (1999, \cite{T1999}). The classical frog model is an interacting particle system on some graph $G=(V,E)$, which evolves under the following dynamics. There are two types of particles, which are usually called \textit{active frogs} and \textit{sleeping frogs}, and each site $v\in V$ can host a finite number of frogs of the same type. Active frogs, independently of everything else, perform nearest neighbor random walks on $G$, and sleeping frogs do not move at all. Whenever an active frog jumps onto a vertex $v\in V$, all sleeping frogs on $v$ immediately transform into active frogs and start performing their own simple nearest neighbor random walk, starting at $v$. The model can be formulated both in continuous and discrete time; however, we focus on the continuous case here. Interpreting active frogs as parasites and sleeping frogs as the offspring produced at an infection, in the classical frog model, an infection is always successful, and infecting parasites survive the infection. Therefore, in particular, the population of active frogs can only grow larger.\\
In \cite{A2002}, Alves, Machado, and Popov showed a shape theorem for the set $S_d(n)$ of vertices visited by some active frog up to time $n$ when $G=(\Z^d,\{\{x,y\}:\lVert x-y\rVert_1 = 1\})$ and at $n=0$ there is one active frog in $0$ and one sleeping frog on each other site $x\in\Z^d\setminus\{0\}$. Precisely, they showed that there is a convex (deterministic) set $\mathcal{A}_d$ such that for any $\varepsilon\in(0,1)$
\[
(1-\varepsilon)\mathcal{A}_d\subset \frac{S_d(n)}{n}\subset(1+\varepsilon)\mathcal{A}_d
\]
for all $n$ large enough almost surely. In \cite{A2001} they improved this result to hold for random initial configurations, placing an i.i.d.~number of sleeping frogs on each vertex.
For the frog model in continuous time, Ramírez and Sidoravicius showed in \cite{RS2004} the shape theorem on $\Z^d$ with one sleeping frog on each site. \\
In the one-dimensional, time-continuous case, Quastel, Comets, and Ramírez improved upon this result in \cite{C2009} and obtained a central limit theorem for the fluctuations of $S_1(t)$ around $\mathcal{A}_1t$, and a large deviation principle for this quantity was shown by Bérard and Ramírez in \cite{B2010}.

A similar adaptation of the frog model to the model that we investigate in this paper was introduced by Junge, McDonald, Pulla and Reeves in \cite{J2023}. In their work (in the language of our model), hosts might also prevent parasite reproduction for a random number of times but at a successful infection they do not kill the attacking parasite and, in addition, parasites can sit on top of hosts without infecting and killing them. For this reason, the parasites remain in the system and one can construct a coupling of their model with a classical frog model, with a different offspring distribution, that shows the shape theorem on $\Z^d$. They investigate their model on $d$-ary trees, where results for arbitrary offspring distributions are not available, using quite different methods to ours.

Another version of the frog model that incorporates the potential death of frogs was introduced by Alves, Machado and Popov in \cite{A2002b}. In this model, there is some fixed probability $p\in(0,1]$ such that each time a frog would take a step, it dies with probability $p$ before performing this step. They investigate phase transitions in the parameter $p$ for different statistics of the process. 
In \cite{F2025} we investigate a version of our model (with a specific choice for the sequence $(p_m)_{m\in\N}$ of success probabilities) that is quite similar to the model in \cite{A2002b}. In \cite{F2025}, we set $p_m = p \delta_1(m)$ for some $p\in(0,1]$, i.e., a host gets infected the first time it is attacked with probability $p$, or else it is completely immune and never gets infected while killing any parasite that tries to do so. Similar as in \cite{A2002b} we investigate phase transitions in the parameter $p$. 

 In the classical frog model the population of frogs a.s.~survives. Obviously, this distinguishes our general host parasite model from the frog model. Another main difference is that in the frog model, using a collection of independent simple symmetric random walks to assign each frog its trajectory after waking up yields an easy way to couple initial configurations in a monotone way. However,  in our model this procedure will not provide a monotone coupling because when and where a specific parasite dies depends on the location of hosts that are still in the system; see Example \ref{unmonontonecoupl}. Hence, the trajectories of parasites will no longer be independent in our model. In particular, this means that the techniques relying on this monotone coupling, such as the sub-additivity arguments used in Ramirez and Sidoravicius (\cite{RS2004}) to show a shape theorem, cannot be applied to our model.\\
Importantly, techniques used by Quastel, Comets, and Ramírez in \cite{C2007},\cite{C2009} do not rely on this monotone coupling of the frog model but on the construction of a renewal structure. Inspired by these methods, we investigate our model by constructing a sequence of sites, from which the environment of future immunities and offspring numbers guarantees linear growth. \\ \vspace{0.1cm}\\
The paper is structured as follows. In Section \ref{sec:mainres} we will introduce the model, present the main results of this work and sketch their proofs. Section \ref{sec:goodsites} will contain further constructions and auxiliary results that are needed in the proof of the main theorems of this work. Then, in Section \ref{Sec:ProofsOfTheorems2to4}, we will give the proofs of the main results using the techniques developed in Section \ref{sec:goodsites}. Finally, Section \ref{Construction} gives a formal construction of the process as a strong Markov process on some state space.
\section{(Informal) model definition and main results}\label{sec:mainres}
\subsection{The spatial infection model with host immunity}
Consider the graph $G=(\Z,\{\{x,y\}:\lVert x-y\rVert_1 = 1\})$ and consider two sites $\ell\leq r$ with $\ell \in \Z \cup \{-\infty\}$ and $r\in \Z$. Place one host at each vertex $x$ with $x \notin [\ell, r]$. Equip each host with an immunity and denote the immunity corresponding to the host occupying the vertex $x$ (if there is one) by $I_x$. We assume that $(I_x)_{x \notin[\ell, r]}$ is an i.i.d.~sequence of $\N$-valued random variables, distributed as some $I$, and that $I$ has a finite expectation, which we denote by $\mu_I$. 
Furthermore, let $\eta \in \mathbb{N}_0^{\Z}$ with $\supp \eta \subset [\ell, r]$. Place on site $x \in [\ell, r]$ initially $\eta(x)$ parasites.
Each parasite performs independently of all other parasites a time-continuous symmetric random walk with rate 2.
Whenever a parasite jumps onto a vertex that is occupied by a host, the parasite attacks the host.
If the initial immunity $I_x$ of the host at site $x$ is equal to $i$ and the host is attacked for the $i$th time, the host and the parasite die, and $A_x$ parasites are released at $x$, where $(A_x)_{x \in \Z}$ is an i.i.d.~sequence of $N_0$-valued random variables, distributed as some $A$, independent of $I,(I_x)_{x\notin[\ell, r]}$ and $A$ has a finite expectation, which we denote by $\mu_A$. Otherwise, if the host has been attacked less than $i$ times, only the parasite dies and the host survives. If a parasite jumps to an empty site, the parasite survives. 
In Section \ref{Construction} we give a rigorous definition of the process on a state space that contains the position of the sites occupied by parasites or hosts as well as the remaining immunities of the hosts at sites next to sites free of hosts and show that it is a well-defined strong Markov process if the initial distribution of parasites is sufficiently well behaved. 
We call the corresponding process the spatial infection model with host immunity (SIMI, for short) with initial condition $\zeta=(\ell, r , \eta)$. If $\ell= -\infty$, only to the right of $r$ hosts are placed. We call this case a one-sided host population and correspondingly the case $\ell \in \Z$ a two-sided host population.\\
 Given the process is well defined, we can determine at each time point $t\geq 0$ the largest subinterval of $\Z$ which is not occupied by any host. We denote the boundaries of this interval by
\[
r_t(\zeta) := \sup\left\{x\ge r:\begin{matrix}\text{In the SIMI with initial configuration }\zeta\\\text{at time }t,\text{ there is no living host at site } x\end{matrix}\right\}
\]
and call it the \textit{right front} of the process and call (in the case $\ell \in \Z$)
\[
\ell_t(\zeta) := \inf\left\{x\le \ell:\begin{matrix}\text{In the SIMI with initial configuration }\zeta\\\text{at time }t,\text{ there is no living host at site } x\end{matrix}\right\}
\]
the \textit{left front} of the process. Similarly, we denote by $\eta_t(\zeta)\in\N_0^\Z$ the amounts of parasites on each site at time $t$, by $\iota^\ell_t(\zeta)$ the remaining immunity of the host at site $\ell_t(\zeta)-1$ at time $t$, and by $\iota_t^r(\zeta)$ the remaining immunity of the host at site $r_t(\zeta)+1$ at time $t$. Then $(\ell_t,r_t,\eta_t,\iota_t^\ell,\iota_t^r)$ is enough information to describe the SIMI as a Markov process, and we obtain the following theorem.
 \theoremstyle{plain}\begin{theorem}\label{Theorem:well-def}
 There is a probability space $\mathbf{\Omega}^\prime = (\Omega^\prime,\mathcal{F}^\prime,\mathbf{P}^\prime)$ such that for any (possibly random) initial placement of parasites and hosts given by $\zeta = (\ell,r,\eta)$ defined on some probability space $\mathbf{\Omega}^{\prime\prime}= (\Omega^{\prime\prime},\mathcal{F}^{\prime\prime},\mathbf{P}^{\prime\prime})$ and fulfilling the condition
 \[
 \mathbf{E}^{\prime\prime}\left[f_\theta(\zeta)\right] := \mathbf{E}^{\prime\prime}\left[\sum_{x\in\Z}\eta(x)\e^{\theta (x-r)}\right]<\infty
 \]
 for some $\theta > 0$, the SIMI with initial configuration $\zeta$ can be constructed as a strong Markov process on the state space $(\mathbb{S}_\theta, d_\theta)$ with càdlàg paths on the product probability space $\mathbf{\Omega} = (\Omega,\mathcal{F},\mathbf{P})$ of $\mathbf{\Omega}^\prime$ and $\mathbf{\Omega}^{\prime\prime}$, where the state space $ \mathbb{S}_\theta $ is given by
 \[
 \left\{\zeta=(\ell,r,\eta,\iota^\ell,\iota^r)\in(\Z\cup\{-\infty\})\times\Z\times\N_0^\Z\times\N\times\N:\begin{matrix}\ell\le r, \supp \eta\subset[\ell,r]\\f_\theta(\zeta) <\infty\end{matrix}\right\}
 \]
and
 \[
 \begin{split}
 d_\theta(\zeta,\zeta^\prime) &:= \vert r-r^\prime\vert + \vert \ell-\ell^\prime\vert + \vert \iota^\ell-(\iota^\ell)^\prime\vert + \vert \iota^r-(\iota^r)^\prime\vert \\
 &+\sum_{x\in\Z}\vert\eta(x)-\eta^\prime(x)\vert\e^{-\theta(x-(r\wedge r^\prime))}.
 \end{split}
 \]
 If in addition $\mathbb{E}[A^2] < \infty$, then this Markov process possesses the Feller property.
 \end{theorem}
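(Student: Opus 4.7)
The plan is to carry out a graphical construction of the SIMI, verify non-explosion and $\mathbb{S}_\theta$-invariance via a moment generating function computation, and from there derive the strong Markov and Feller properties.

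For the construction, I would attach to each site $x\in\Z$ a countable family of independent rate-$2$ Poisson processes with i.i.d.\ Bernoulli$(1/2)$ direction marks (one family per parasite slot at $x$), together with i.i.d.\ copies $I_x$ of $I$ (activated only if a host is actually present at $x$) and $A_x$ of $A$. Parasites in the initial configuration are assigned to slots at their starting sites; at each infection at site $x$, the $A_x$ newly released parasites are each given a fresh unused slot at $x$. Infections are resolved instantaneously whenever a parasite lands on a host site, by decrementing the remaining-immunity counter and, if it reaches zero, killing the host and the attacking parasite and spawning $A_x$ offspring.

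The key technical step is non-explosion. Writing $M_t := f_\theta(\zeta_t)$ and using Dynkin's formula, the infinitesimal expected change of $M_t$ contributed by each parasite at site $y$ is bounded above by the worst case in which every jump produces $A_y$ offspring (the actual dynamics may, instead of reproducing, simply delete the parasite or leave it in place, both of which are stochastically smaller). A direct computation gives
\[
\frac{d}{dt}\mathbf{E}[M_t]\le C_\theta\,\mathbf{E}[M_t], \qquad C_\theta := 2(\mu_A\cosh\theta-1),
\]
so $\mathbf{E}[f_\theta(\zeta_t)]\le \e^{C_\theta t}\mathbf{E}[f_\theta(\zeta_0)]$ and in particular $f_\theta(\zeta_t)<\infty$ a.s.\ for all $t$. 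Combined with the observation that the expected number of parasites ever touching a bounded spatial window in $[0,t]$ is finite, this yields a càdlàg $\mathbb{S}_\theta$-valued version of the process. The Markov property is immediate from the graphical construction since the post-$t$ dynamics depend on $\mathcal{F}_t$ only through $\zeta_t$, and the strong Markov property follows from the càdlàg paths together with right-continuity of the (augmented) filtration.

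The main obstacle is the Feller property under the additional assumption $\mathbf{E}[A^2]<\infty$. I would show $P_t:C_b(\mathbb{S}_\theta,d_\theta)\to C_b(\mathbb{S}_\theta,d_\theta)$ by coupling two initial conditions $\zeta,\zeta^\prime$ at $d_\theta$-distance $\varepsilon$ via the same Poisson clocks, the same $I_x$ and $A_x$, and identifying parasite slots where the two configurations agree. As remarked in the introduction, this coupling is not monotone: a parasite present in one configuration but not in the other can trigger an infection that eliminates a host, so the trajectories of shared parasites may diverge. To control the resulting discrepancy I would set up a Gronwall-type inequality for $\mathbf{E}[d_\theta(\zeta_t,\zeta_t^\prime)]$; the finite second moment of $A$ is what makes the relevant bound close, since it is needed to control the variance of the total weighted parasite mass produced in any infection cascade triggered by a discrepancy parasite, and thereby the $L^2$-contribution of the boundary coordinates $(\ell_t,r_t,\iota_t^\ell,\iota_t^r)$ to $d_\theta$. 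The resulting estimate
\[
\mathbf{E}\bigl[d_\theta(\zeta_t,\zeta_t^\prime)\bigr]\le \e^{C^\prime_\theta t}\,d_\theta(\zeta,\zeta^\prime)
\]
yields continuity of $P_t f$ in $\zeta$ for bounded continuous $f$, while $P_tf(\zeta)\to f(\zeta)$ as $t\downarrow 0$ is immediate from the càdlàg paths and bounded convergence.
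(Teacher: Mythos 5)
Your overall plan has the right skeleton — a Poisson graphical construction, a Lyapunov estimate on $f_\theta$ via the generator to rule out explosion, and then Markov/Feller — and the non-explosion step ($Lf_\vartheta\le\lambda_{2,\vartheta}f_\vartheta$, hence $\mathbf{E}[f_\vartheta(\zeta_t)]\le\e^{\lambda_{2,\vartheta}t}f_\vartheta(\zeta_0)$) coincides with what the paper does. But the Feller argument is where the proposal diverges from the paper and where a real gap appears.

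First, you assert that the Poisson-clock coupling with shared $I_x,A_x$ is ``not monotone,'' citing the paper's remark about a non-monotone coupling. That remark (and Example~\ref{unmonontonecoupl}) concerns the \emph{tagged} coupling, where each parasite is assigned its own random walk path $Y^{x,i}$. The paper's whole point in Section~\ref{Construction:unt} is that the \emph{untagged} Poisson-clock coupling (exactly the one you describe) \emph{is} monotone in the sense of Proposition~\ref{prop:untaged_mon}: if $\sum_{k\ge x}\eta^1(k)\le\sum_{k\ge x}\eta^2(k)$ for all $x$, then $r_t^1\le r_t^2$ for all $t$. You are therefore setting out to fight a non-monotonicity problem that your own construction does not actually have, and miss the tool that makes the Feller proof tractable.

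Second, and more substantively, the proposed Gronwall inequality $\mathbf{E}[d_\theta(\zeta_t,\zeta_t')]\le\e^{C_\theta' t}d_\theta(\zeta,\zeta')$ is not established and is considerably more delicate than the sketch admits. The metric $d_\theta$ contains the integer-valued terms $|r_t-r_t'|$, $|\ell_t-\ell_t'|$, $|\iota_t^r-(\iota_t^r)'|$, $|\iota_t^\ell-(\iota_t^\ell)'|$, which are zero at $t=0$ when $d_\theta(\zeta,\zeta')<1$ but become order-one the instant a discrepancy parasite first hits the front and triggers a different infection outcome, and the two processes thereafter run with different front positions and different remaining immunities, so they do not tend to recouple. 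Controlling this requires a genuine argument, not just ``the second moment of $A$ controls the variance of the cascade''; in particular it is unclear how your differential inequality would close given that after first divergence the dynamics of the two copies are effectively decoupled. The paper avoids this difficulty entirely: it uses the monotone coupling to define the process for infinite configurations as a monotone increasing limit of finitely-supported approximants, computes the generator, establishes the two Lyapunov bounds $Lf_\vartheta(\zeta)\le\lambda_{2,\vartheta}f_\vartheta(\zeta)$ and (the place where $\mathbf{E}[A^2]<\infty$ enters) $(L(f_\vartheta^2)-2f_\vartheta Lf_\vartheta)(\zeta)\le\mathbb{E}[A^2]\lambda_{3,\vartheta}f_{2\vartheta}(\zeta)$, and then runs the compactness/tightness argument of \cite[Section~6]{C2009}. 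Relatedly, your claim that the strong Markov property ``follows from the càdlàg paths together with right-continuity of the (augmented) filtration'' is incomplete: passing from countable-range stopping times to general ones requires continuity of $\zeta\mapsto S_tf(\zeta)$ for $f$ in a separating class, which is essentially the Feller-type statement you have deferred; the paper proves $S_t\,\BUC(\mathbb{L}_\theta)\subset\BUC(\mathbb{L}_\theta)$ before invoking the approximation-by-discrete-stopping-times argument.
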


 \theoremstyle{remark}\begin{remark}
 In Section \ref{Construction} we actually not only construct the process on $\mathbb{S}_\theta$ but also on a larger state space, which tracks additional information of the individual parasite trajectories. This will be helpful in the proof of our main results. In Section \ref{Construction:unt} we will construct the process carrying only the information above.
 \end{remark}
\begin{remark}
Via the distribution of $I$ it is possible to model different responses of the hosts to a parasite. Taking any sequence $(p_m)_{m\in\N}\subset[0,1]$ and an independent sequence $(B_m)_{m\in\N}$ with $B_m \sim \text{Ber}(p_m)$, we can set
\[
I := \inf\{m\in\N: B_m = 1\}.
\]
This means that the $m$-th infection attempt is successful with probability $p_m$. In the case that $p_m \equiv p$, this corresponds to no response of the host to previous infection attempts and a geometrically distributed $I$. However, taking $p_m \downarrow 0$ corresponds to hosts building up an immunity against infections each time that they successfully defend against an infection attempt. Taking on the other hand, $p_m \uparrow 1$ corresponds to parasites continuously damaging the host's defenses until they eventually succeed in infecting it. Controlling the speed at which $p_m$ goes to $0$ (resp. to $1$) corresponds to how fast this immunity builds up (resp. how quickly the host gets damaged). For example, the choice $p_m = \frac{\alpha}{m+\alpha}$ with some $\alpha > 1$ yields the behavior $\mathbf{P}(I > m) \sim Cm^{-\alpha}$ for some constant $C = C(\alpha)$.
\end{remark}
\subsection{Main results on survival and speed of spread}
Next we come to our main results concerning the probability of survival of the parasite population as well as the speed at which the parasite population spreads in the host population when we condition on survival.

For the rest of the paper we fix some initial configuration $\zeta$ defined on $\mathbf{\Omega}^{\prime\prime}$ as in Theorem \ref{Theorem:well-def}. We note that by construction the initial configuration $\zeta$ is independent of the immunities, the offspring numbers, as well as the trajectories of parasites that are used in the construction of the process, which are defined on $\mathbf{\Omega}^\prime$. The SIMI with this random initial configuration is constructed by first sampling $\zeta$ and then constructing the process conditionally on the realization. The first results concern the probability of survival of the parasite population for infinite time, i.e., the following event:
\[
\mathcal{S}(\zeta) := \bigcap_{t\ge 0}\left\{\begin{matrix}\text{In the SIMI with initial configuration }\zeta,\\\text{ there is at least one living parasite at time } t\end{matrix}\right\}.
\]
\subsubsection{One-sided host population}
In this subsection we consider the case $\ell=-\infty$ and assume w.l.o.g. $r=0$, that is, initially all sites to the left of and including the origin are either empty or occupied by parasites, and hosts are placed on all sites to the right of the origin. Since the initial condition is in this case always of the form $\zeta=(\ell, r, \eta)= (-\infty, 0, \eta)$, we shortly write $\eta$ for the initial condition instead of $\zeta.$

 Since random walks are recurrent on $\Z$, as long as the parasite population is alive, at least one parasite will hit the front in a finite amount of time. Thus, the event of survival only depends on $(A_x)_{x> 0}$ and $(I_x)_{x>0}$ and is given by
\[
\mathcal{S}(\zeta) = \mathcal{S}(\eta)=\bigcap_{n=1}^\infty\left\{\sum_{x\le 0}\eta(x) + \sum_{k=1}^{n-1} A_{k} \ge \sum_{j=1}^{n}I_{j}\right\}.
\]
This is just the event that a random walk with step distribution $A-I$ and starting at $\sum_{x\le 0}\eta(x)-I_{1}$ stays non-negative. Hence, we can infer the classical result from \cite[Lemma 6.1.3]{S1974} to obtain the following
\theoremstyle{plain}\begin{lemma}\label{lem:survivalpos}
Assume $\mu_A := \mathbf{E}[A] > \mu_I := \mathbf{E}[I]$, and the initial configuration $\eta$ satisfies
\[
\mathbf{P}\left(\sum_{x\le 0}\eta(x) \ge I\right) =:\delta_0 > 0
\]then
\[ \mathbf{P}(\mathcal{S}(\eta)) \ge \delta_0\exp\left(-\sum_{n=1}^\infty\frac{1}{n}\mathbf{P}\left(\sum_{k=1}^{n} A_{k} < \sum_{j=2}^{n+1}I_{j}\right)\right) >0.\]
If $\mu_A \le \mu_I$, then
\[
\mathcal{S}(\eta) = \left\{\sum_{x\le r}\eta(x) = \infty\right\}
\]
\end{lemma}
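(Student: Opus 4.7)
The plan is to treat both halves of the lemma as consequences of the random-walk reformulation of $\mathcal{S}(\eta)$ that is already given in the text preceding the lemma. Set
\[
W_0 := \sum_{x\le 0}\eta(x)-I_1,\qquad T_n := \sum_{k=1}^{n}\bigl(A_k - I_{k+1}\bigr)\quad(n\ge 1),\qquad T_0:=0.
\]
The reformulation says $\mathcal{S}(\eta)=\bigcap_{n\ge 0}\{W_0+T_n\ge 0\}$. Crucially, the initial value $W_0$ depends only on $\eta$ and $I_1$, while the increments $A_k-I_{k+1}$ involve only $A_k$ and $I_{k+1}$ for $k\ge 1$; by the independence assumptions on the initial configuration and the environment, $W_0$ is independent of $(T_n)_{n\ge 1}$. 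The increments are i.i.d.\ with mean $\mu_A-\mu_I$.

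For the positive-drift part, I would condition on $W_0$ and use $\{T_n\ge 0\ \forall n\ge 1\}\subset\{W_0+T_n\ge 0\ \forall n\ge 0\}$ on the event $\{W_0\ge 0\}$. Independence of $W_0$ and $(T_n)$ then gives
\[
\mathbf{P}(\mathcal{S}(\eta)) \;\ge\; \mathbf{P}(W_0\ge 0)\,\mathbf{P}\!\left(T_n\ge 0\ \forall n\ge 1\right) \;=\; \delta_0\,\mathbf{P}\!\left(T_n\ge 0\ \forall n\ge 1\right).
\]
The second factor is exactly what the Baxter--Spitzer identity (\cite[Lemma 6.1.3]{S1974}) evaluates: $\mathbf{P}(T_n\ge 0\ \forall n\ge 1)=\exp\bigl(-\sum_{n\ge 1}\tfrac1n\mathbf{P}(T_n<0)\bigr)$, which in terms of the original variables is $\exp\bigl(-\sum_{n\ge 1}\tfrac1n\mathbf{P}(\sum_{k=1}^n A_k<\sum_{j=2}^{n+1}I_j)\bigr)$. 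Under $\mu_A>\mu_I$ this exponential is strictly positive (the Spitzer condition / convergence of $\sum n^{-1}\mathbf{P}(T_n<0)$ follows from the positive mean), so the bound stated in the lemma drops out.

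For the non-positive drift part, I would split according to whether $\sum_{x\le 0}\eta(x)$ is finite or infinite. On $\{\sum_{x\le 0}\eta(x)=\infty\}$ we have $W_0=\infty$, so every $W_0+T_n$ is non-negative trivially and $\mathcal{S}(\eta)$ occurs; this gives one inclusion. On $\{\sum_{x\le 0}\eta(x)<\infty\}$, $W_0$ is finite a.s.; if $\mu_A<\mu_I$, the strong law gives $T_n\to-\infty$ a.s., so $W_0+T_n\to-\infty$ and the walk falls below $0$, while if $\mu_A=\mu_I$ (and the step distribution is non-degenerate), the Chung--Fuchs theorem yields $\liminf_n T_n=-\infty$ a.s., with the same conclusion. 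Thus $\mathcal{S}(\eta)^c$ occurs a.s.\ on $\{\sum_{x\le 0}\eta(x)<\infty\}$, giving the reverse inclusion. The only genuine subtlety is the degenerate case $A\equiv I$ constant, which would have to be excluded or treated as a trivial corner; otherwise the argument is just careful bookkeeping around a classical Spitzer-type identity, so I expect no serious obstacle beyond quoting \cite[Lemma 6.1.3]{S1974} in the form stated.
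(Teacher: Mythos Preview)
Your proposal is correct and follows exactly the approach the paper indicates: the paper does not give a detailed proof but simply reformulates $\mathcal{S}(\eta)$ as the event that the random walk $W_0+T_n$ stays non-negative and then cites \cite[Lemma 6.1.3]{S1974}, which is precisely the Spitzer identity you invoke. Your write-up supplies the details the paper omits (the independence of $W_0$ from $(T_n)$, the inclusion on $\{W_0\ge 0\}$, and the SLLN/Chung--Fuchs dichotomy for $\mu_A\le\mu_I$), and your flagging of the degenerate corner case $A\equiv I$ is appropriate.
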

\begin{remark}
If $\eta$ is such that
 \[
 \sum_{x\le 0}\eta(x) \overset{d}{=} A,
 \]
then
 \[
 \mathbf{P}(\mathcal{S}(\eta)) = \exp\left(-\sum_{n=1}^\infty\frac{1}{n}\mathbf{P}\left(\sum_{k=1}^{n} A_{k-1} < \sum_{j=1}^{n}I_{j}\right)\right).
 \]
 Also, trivially, the inclusion
 \[
 \left\{ \sum_{x\le 0}\eta(x) = \infty\right\}\subset \mathcal{S}(\eta)
 \]
holds.
\end{remark}
In the following, we sometimes drop the reference to the initial configuration and just write $\mathcal{S}$ instead of $\mathcal{S}(\eta)$ and assume we fixed some initial configuration $\eta$.

If the random variables $(I_k)_{k\in\Z}$ are geometrically distributed for some parameter $p\in(0,1)$, the survival probability is given by the survival probability of a Galton-Watson process.
\theoremstyle{plain}\begin{lemma}
Suppose that $I\sim$ Geom$(p)$ for some $p\in\left(\frac{1}{\mu_A},1\right]$ and $\eta(x) = A_0 \delta_{x,0}$, where $A_0$ is distributed as $A$ conditioned to be $> 0$. Then
\[
\mathbf{P}(\Omega\setminus\mathcal{S}) = \sum_{m=1}^\infty\sum_{n=1}^\infty\mathbf{P}(A_{0}=m\vert A_{0}>0)\frac{m}{n}\mathbf{P}\left(m+\sum_{k=1}^n \tilde{A}_k B_k = n\right) < 1
\]
where $(B_k)_{k\ge1}$ is an i.i.d.~sequence of Bernoulli variables with parameter $p$ that is independent of $(\tilde{A}_k)_{k\ge1}$, and $(\tilde{A}_k)_{k\ge1}$ are independent and distributed as $A_1$. If, for some $a\in\N$, $A=a$ almost surely, we get the explicit formula
\[
\mathbf{P}(\Omega\setminus\mathcal{S}) = \sum_{n=1}^\infty\frac{1}{n}\binom{an}{n-1}p^{n-1}(1-p)^{n(a-1)+1}
\]
which is the unique solution $y\in(0,1)$ to the equation
\begin{align}\label{extinctionprobab}
y=(py+1-p)^a.
\end{align}
\end{lemma}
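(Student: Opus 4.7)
The plan is to reduce the parasite dynamics under geometric host immunity to a Galton--Watson branching process, apply the Dwass cycle-lemma identity for the total progeny, and then identify the resulting series with the smallest fixed point of an explicit polynomial.

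First I would exploit the memoryless property of $I\sim\operatorname{Geom}(p)$: writing each $I_j=\inf\{m\ge 1:B_{j,m}=1\}$ with i.i.d.\ $\operatorname{Ber}(p)$ indicators $B_{j,m}$ and concatenating into a single Bernoulli sequence $(B_i)_{i\ge 1}$ of attack outcomes, the survival criterion of Lemma~\ref{lem:survivalpos} translates into: extinction happens iff the non-negative walk $W_N:=A_0+\sum_{i=1}^N(\xi_i-1)$ reaches $0$, where $\xi_i:=B_i\tilde{A}_i$ with $\tilde{A}_i$ i.i.d.\ $\sim A$ independent of the $B_i$. Conditionally on $A_0=m$ the parasite population is thus a Galton--Watson process with $m$ ancestors and offspring law $\xi=BA$, reflecting the fact that each host attack independently succeeds with probability $p$ and, when successful, releases a fresh $A$-distributed brood.

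I would then apply the Dwass (ballot) formula $\mathbf{P}(T_m=n)=(m/n)\,\mathbf{P}\bigl(\sum_{i=1}^n\xi_i=n-m\bigr)$ to the hitting time $T_m$ of $0$ by $W_\cdot$; summing over $n\ge 1$ (terms with $n<m$ vanish because $\xi_i\ge 0$) gives the conditional extinction probability, and integrating against the law of $A_0\sim A\mid A>0$ yields the first claimed identity. Since $p\mu_A>1$ places us in the supercritical regime, classical branching-process theory gives $\mathbf{P}(\Omega\setminus\mathcal{S})<1$.

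For the deterministic case $A\equiv a$, substituting $m=a$ and using $\sum_{k=1}^n B_k\sim\operatorname{Bin}(n,p)$ turns the constraint $a+a\sum_{k=1}^n B_k=n$ into $n=aj$ with $\sum_{k=1}^{aj}B_k=j-1$, and plugging in $\mathbf{P}(\operatorname{Bin}(aj,p)=j-1)$ collapses the Dwass sum into the stated binomial expression. For the fixed-point equation~\eqref{extinctionprobab}, note that the offspring generating function is $f(s)=(1-p)+ps^a$, so the one-particle extinction probability $q$ solves $q=f(q)$, whence the $a$-particle extinction probability $y:=q^a$ solves $y=((1-p)+py)^a$. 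Uniqueness of this root in $(0,1)$ follows from convexity of $h(y):=(py+1-p)^a-y$ together with $h(0)=(1-p)^a>0$, $h(1)=0$, and $h'(1)=ap-1>0$, which forces exactly two zeros of $h$ in $[0,1]$, namely $y=1$ and a unique point in $(0,1)$. The only conceptual step is the Galton--Watson reduction, which hinges crucially on memorylessness of the geometric immunity; the remainder is routine manipulation of well-known branching-process identities.
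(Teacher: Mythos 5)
Your proof is correct and follows essentially the same route as the paper: you reduce to a Galton--Watson process with offspring law $\xi=BA$ (exploiting the memorylessness of the geometric immunity), apply the Dwass/cycle-lemma (hitting-time theorem) for the time to extinction, and identify the extinction probability of the $a$-particle branching process with generating function $f(s)=(1-p)+ps^a$ as a fixed point. The only addition relative to the paper is that you spell out the uniqueness of the root in $(0,1)$ via convexity of $h(y)=(py+1-p)^a-y$ together with $h(0)>0$, $h(1)=0$, $h'(1)=ap-1>0$; the paper leaves this implicit as standard branching-process folklore, so this is a minor but welcome bit of extra detail rather than a different argument.
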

\begin{proof}
Since the random variables $(I_k)_k$ are geometrically distributed, the number of parasites that are generated in $n$ infection events (or attempts) is distributed as $\sum_{i=1}^{n} B_i \tilde{A}_i$.
At the onset of the infection process, $A_0$ parasites are generated. Since at each infection event, the infecting parasite dies, the parasite population gets extinct at the infection event, at which for the first time the number of parasites equals the total number of happened infection events, i.e., at the $\tau$-th infection event, with
\[\tau= \inf\{n | A_0 + \sum_{i=1}^{n} B_i \tilde{A}_i= n \}.\]
This number is the first generation at which the position $S_n = X_0 + \dots+ X_n$ of a random walk $(X_i)_{i\geq 0}$ with step sizes $X_{i}- X_{i-1} = B_i \tilde{A}_i -1$ and started in $X_0=A_0$ particles is $0$. By the hitting time theorem \cite{vdHandKeane}, \[\mathbf{P}(\tau=n| A_0=m) = \frac{m}{n} \mathbf{P}(S_n=0 |S_0=m).\]
Summing over all possible values for $A_0$ yields the first statement. \\
For the second statement, i.e., if $A=a$ almost surely, we calculate
\begin{align*}
\mathbf{P}(\Omega\setminus\mathcal{S}) & = \sum_{m=1}^\infty\sum_{n=1}^\infty\mathbf{P}(A_{0}=m\vert A_{0}>0)\frac{m}{n}\mathbf{P}\left(m+\sum_{k=1}^n \tilde{A}_k B_k = n\right) \\
& =\sum_{\ell=1}^\infty \frac{a }{ a \ell} \mathbf{P}\left(\sum_{k=1}^{a\ell} B_k= \frac{a \ell -a}{a} \right) \\
& =\sum_{\ell=1}^\infty \frac{1 }{\ell} 
\binom{a \ell}{\ell-1 } p^{\ell-1} (1-p)^{\ell(a-1) +1}.
\end{align*}
Renaming $\ell$ into $n$ yields the statement.\\
For the last statement, i.e., that the extinction probability solves equation \eqref{extinctionprobab}, note that the extinction probability of the parasite population is equal to the extinction probability of a branching process with an offspring distribution with probability weights $p_0= 1-p$, $p_a= p$, and $p_i=0$ for all $i \neq a$
 started with $a$ individuals. The generating function of this offspring distribution is $f(z) = (1-p + p z^a)$. Hence, 
 a branching process with this offspring distribution started with a single individual dies with probability $z$, where $z \in (0,1)$ solves 
 \[z=(1-p + pz^a). \]
If the branching process is started with $a$ 
individuals the extinction probability $y$ solves 
 \[y = z^a= (1-p + p z^a)^a = (1-p + p y)^a,\]
 which yields the claim.
\end{proof}

\begin{remark}
 For geometrically distributed immunities, each infection attempt is successful with probability $p$, independent of how many attempts were already made at this host. Thus, the extinction probability is bounded by the expression above for any underlying infinite graph $(V,E)$ on which the SIMI evolves and any initial host configuration. For recurrent graphs and infinitely large host populations, the extinction probability coincides.

 For immunity distributions that are not memoryless, each infection attempt's success depends on the number of previous infection attempts; thus the extinction probability cannot be bounded independent of the trajectories of parasites and hence depends on the underlying graph structure.
\end{remark}
\theoremstyle{definition}\begin{definition}
 For some fixed random initial configuration $\eta$ such that $\mathbf{P}(\mathcal{S}(\eta)) > 0$ we denote by
 \[
 \mathbb{P}^\eta := \mathbf{P}(\cdot\vert\mathcal{S}(\eta))
 \]
 the measure, conditioned on the survival of the parasites for infinite time. Similar to for $\mathcal{S}$, we often drop the reference to the initial configuration and just write $\mathbb{P}$ instead of $\mathbb{P}^\eta$. Also, we denote by $\mathbb{E}^\eta$ (resp. just $\mathbb{E}$) the corresponding expectation operator.
\end{definition}
Given survival, we are interested in the speed of growth. Under mild conditions on the distribution of $I$, we obtain that $(r_t)$ exhibits ballistic growth.

\theoremstyle{plain}\begin{theorem}\label{Theorem: ballistic growth} 
Assume that the distributions $A,I$ fulfill the following conditions:
\begin{equation}\label{alphacondition}
\mathbf{E}[A] = \mu_A > \mu_I =\mathbf{E}[I]
\end{equation}
and there is an 
\begin{equation}\label{momentcondition}
\alpha>1~~~\text{ such that }~~\mathbf{E}[I^{2\alpha}] <\infty.
\end{equation}
Let $\eta$ be any initial configuration such that the SIMI is well-defined in the sense of Theorem \ref{Theorem:well-def} and satisfies $\mathbf{P}(\mathcal{S}(\eta)) > 0$. Then there are constants $C_1,C_2$ such that conditioned on the survival of the parasite population, i.e., under $\mathbb{P}^\eta$, almost surely the following holds:
 \[
0 <C_1 \le \liminf_{t\to\infty}\frac{r_t}{t}\le\limsup_{t\to\infty}\frac{r_t}{t} \le C_2 < \infty.
\]
\end{theorem}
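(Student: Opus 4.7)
The theorem has two essentially independent halves: a linear upper bound on $r_t$, which I expect to follow from a stochastic domination, and a strictly positive linear lower bound, which is the substantive new contribution and, as announced in the introduction, rests on the renewal construction inspired by Comets--Quastel--Ramírez \cite{C2007,C2009}.

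For the upper bound, my plan is to dominate the SIMI by a simpler process in which parasites proliferate more aggressively so that its right front $\tilde r_t$ satisfies $\tilde r_t \ge r_t$. A natural candidate is a classical frog model with i.i.d.\ offspring obtained from the SIMI by setting $I_x\equiv 1$ and letting each attacking parasite survive its attack; this model has a shape theorem with linear speed (Ramírez--Sidoravicius \cite{RS2004}, extended to random offspring as in Alves--Machado--Popov \cite{A2001}). Since Example \ref{unmonontonecoupl} rules out a literal monotone coupling, I would argue the bound at the level of hitting times of new sites, using that each rightward jump of any parasite advances the front by at most one and that $\mu_A<\infty$ keeps the parasite count from exploding faster than linearly in $r_t$.

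For the lower bound, I would carry out the "good site" construction foreshadowed in Section \ref{sec:goodsites}. Call a site $y\in\N$ \emph{good} if the environment $(I_x,A_x)_{x>y}$ together with an independent randomness for the parasite dynamics guarantees, with a positive probability $p_0$ uniform in $y$, that whenever the right front reaches $y$ with at least one parasite present, the front advances by a fixed amount $L$ within time $T$. Since $(I_x,A_x)$ is i.i.d., goodness is a stationary event measurable with respect to a finite look-ahead window, so good sites form a stationary sequence with positive density $\rho>0$. This is where the moment condition $\mathbf{E}[I^{2\alpha}]<\infty$ for some $\alpha>1$ enters, controlling the tails of the time needed to process stretches of highly immune hosts. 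Writing $\tau_k$ for the first time the front reaches the $k$-th good site after $r_0$, I would show that the differences $\tau_{k+1}-\tau_k$ are, on $\mathcal{S}(\eta)$, stochastically dominated by i.i.d.\ copies of a random variable with finite expectation, so the strong law for renewals gives
\[
\liminf_{t\to\infty}\frac{r_t}{t}\ge \frac{L\rho}{\mathbf{E}[\tau]}>0.
\]

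The hardest step, and presumably the content of Section \ref{sec:goodsites}, is to ensure that the "enough parasites at the front" condition required for the good-site event is regenerated sufficiently often under $\mathbb{P}^\eta$. Under conditioning on survival the local parasite density at the front can temporarily shrink, and Markovian regeneration is unavailable because in the SIMI the surviving parasite population depends on the future environment through the non-monotonicity highlighted in Example \ref{unmonontonecoupl}. I expect the proof to combine the positive drift $\mu_A>\mu_I$ supplied by Lemma \ref{lem:survivalpos} with the moment bound \eqref{momentcondition} to show that after a tail-controlled waiting time the configuration near the front returns to a "fresh start" with a quantitatively controlled parasite count, so that the renewal argument can be iterated and both bounds close.
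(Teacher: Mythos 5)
Your architecture (martingale/BRW bound above, good-site construction below) matches the paper's, but two of the claims you make about the lower bound do not survive inspection and obscure the genuinely clever idea. First, you assert that goodness is "measurable with respect to a finite look-ahead window." In the paper, the good site $M$ of Definition \ref{Msite} is defined by the condition $K_{M+n}\le n$ for \emph{all} $n\ge k_0$ — an infinite-horizon tail event of the environment, not a finite-window one. A finite-window favorable stretch cannot prevent a large pile of immunity from showing up immediately afterward, and with only $\mathbf{E}[I^{2\alpha}]<\infty$ for some $\alpha>1$ these pile-ups are far from exponentially rare. The proof that $M<\infty$ a.s.\ (Lemma \ref{lem:welldef}) runs through the Sparre--Andersen machinery (Corollary \ref{cor:N_ntail}, Lemma \ref{lem:Kmtail}) precisely because the event that the entire future environment is good must be shown to have positive probability. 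Second, your claim that the inter-good-site times $\tau_{k+1}-\tau_k$ are dominated by i.i.d.\ copies with finite mean is exactly what the paper \emph{cannot} establish under the weak hypothesis $\alpha>1$; the i.i.d.\ renewal structure (Proposition \ref{prop:renewal_sites}) is reserved for Theorem \ref{Theorem: lln} under the much stronger assumption $\alpha>\frac{3+\sqrt{11}}{2}$. For Theorem \ref{Theorem: ballistic growth}, the paper instead shows that the auxiliary jump times $(\nu_{M+n})_{n\ge k_0}$ form only a $\phi$-mixing, asymptotically stationary sequence (Lemmas \ref{Tdistconv}, \ref{conditinallyphimixing}), and then closes the law of large numbers via an $L^q$-mixingale argument (Lemmas \ref{Tkmixingale} and \ref{Tnconvergence}), not a renewal theorem.

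The deeper gap is the one you yourself flag as "the hardest step" — controlling the parasite count near the front under $\mathbb{P}^\eta$ — and your proposed resolution (a tail-controlled return to a "fresh start" configuration) is not how the paper handles it. The key device you are missing is Definition \ref{auxiliaryfront} and Proposition \ref{Couplinglowbound}: rather than attempting to regenerate a controlled configuration (which is blocked by the non-Markovian dependence on trajectories), the paper \emph{lower-bounds the number of living parasites near the front by a deterministic function of $\mathbf{A}$ and $\mathbf{I}$ alone}. Concretely, at time $\rho_{n-1}$ at least $\sum_{j=1}^{K_n}A_{n-j}-\sum_{j=1}^{K_n-1}I_{n-j}$ parasites born in $[n-K_n,n-1]$ must still be alive, because parasites can only die by attacking a host and each host kills at most $I_x$ of them. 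The jump time $\nu_n$ is then the first-passage time of a barrier by the corresponding collection of \emph{virtual} random walks, which is decoupled from the actual history of the process and depends only on $(\mathbf{A},\mathbf{I},\mathbf{Y})$ with indices in $[n-K_n,n]$. This is what makes the moment bound of Lemma \ref{NuMoments} and the mixing estimate of Lemma \ref{conditinallyphimixing} accessible and ultimately lets the argument close with only $\alpha>1$. Your sketch, as written, would at best prove the stronger Theorem \ref{Theorem: lln} under its stronger hypotheses, not the present statement.
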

In Figure \ref{fig:r_t} below we depict a simulation of the process $(r_t)_{t\ge0}$ for $\alpha$ close to the boundary of the parameter range in Theorem \ref{Theorem: ballistic growth}, namely $\alpha > 1.05$.
\begin{figure}[ht] \label{fig:r_t}
\centering
 \includegraphics[width =\linewidth]{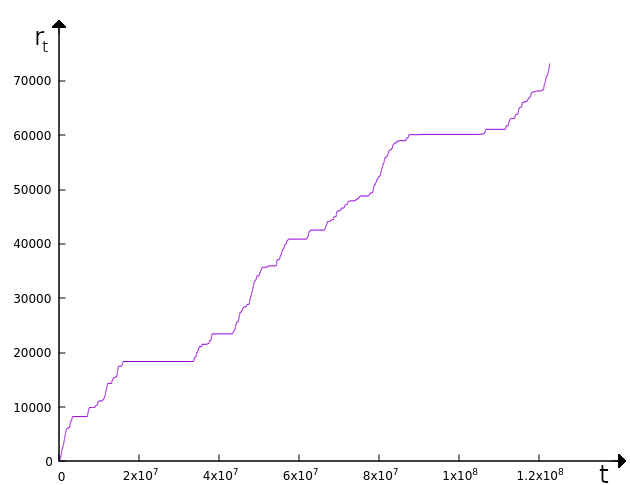}
 \caption{$(r_t)_{t\ge0}$ for $\mathbf{P}(I\ge n) = \left(\frac{n-1}{3}+1\right)^{-2.1}$ and $A=4$ a.s.}
\end{figure}
\pagebreak
We give a formal proof of Theorem \ref{Theorem: ballistic growth} in Section \ref{Sec:ProofsOfTheorems2to4}. The main steps of the proof are as follows.

\begin{proof}[Sketch of the proof of Theorem \ref{Theorem: ballistic growth}] 
The upper bound follows by simply coupling with a branching random walk.

For the lower bound, a key observation is that only parasites quite close to the front push the front further. The reasons for this are that a) parasites can generate offspring only at the front, b) parasites move diffusively, and c) the tail of the immunity distribution falls so fast that sufficiently high barriers of immunities appear so rarely.

This structure motivates the construction of auxiliary jump times that upper bound the actual jump times from above but are based only on parasites born relatively close to the front. 
More precisely, to estimate the jump time from site $n-1$ to site $n$, we consider a small neighborhood left to $n$ of random size $K_n$. The neighborhood is extended to the left until the number of parasites accumulating in the neighborhood is lower bounded by a number that grows linearly with the distance to the front; see Definition \ref{auxiliaryfront}.
Since by assumption the accumulated number of parasites grows on average faster than the accumulated number of immunities, one can show that the tail of the size $K_n$ of the neighborhood falls quickly as long as the neighborhood does not cross the origin; see Lemma \ref{lem:Kmtail}. \phantom\qedhere
\end{proof}

To control the movement of the front at sites whose corresponding neighborhoods would cross the origin, we proceed as follows. We show that there exists a site $M$ from which on, the neighborhood of each site $M+n$ does not cross $M$ for $n\geq k_0$ (with $k_0\in\N$ depending on the tail of the immunity distribution as well as (inversely proportional) on the difference between the average number of parasite offspring numbers and average immunities, see \eqref{qkcond}). This site $M$ has a.s.~a finite distance to the origin, and hence $M+ k_0$ is reached a.s.~in finite time; see Definition \ref{Msite} and Lemma \ref{lem:welldef}.

For sites $n \geq k_0$ we show by using results on collections of random walks hitting a one-sided barrier that the jump times from $M+n$ to $M+n+1$ have $q$-th moments for any $q \in [1, \frac{(4\wedge\alpha)+1}{2})$, see Lemma \ref{NuMoments}. Since neighborhoods of sites sufficiently far apart do not overlap with high probability, the corresponding auxiliary jump times only weakly depend on each other. More precisely, we show that the sequence of jump time $ (\nu_{M+n})_{n\geq k_0}$ forms a $\phi$-mixing sequence, see Lemma \ref{conditinallyphimixing}, which allows us to conclude a law of large numbers for $ (\nu_{M+n})_{n\geq k_0}$ and show the claimed lower bound. 

Under some stronger assumption, which we do not believe to be optimal, we can show the stronger result that the front satisfies an almost sure law of large numbers. 
\theoremstyle{plain}\begin{theorem}\label{Theorem: lln}
Suppose that for some $\alpha > \frac{3+\sqrt{11}}{2}$ and some $\varepsilon_A > 0$ we have
\begin{itemize}
 \item $\mathbf{E}[\vert I\vert^{2\alpha}] < \infty$, $\mathbb{E}[A^{\frac{4}{(4\wedge\alpha)-3}+\varepsilon_A}]$ and $\mathbf{P}(A-I \ge 4) > 0$.
\end{itemize}
Then, there is a (deterministic) $\gamma \in (0,\infty)$ such that, starting in the initial configuration given by $\eta = A\delta_{0}$ and conditioned on the survival of parasites, almost surely
 \[
\lim_{t\to\infty} \frac{r_t}{t} = \gamma.
 \]
\end{theorem}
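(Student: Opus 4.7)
The plan is to upgrade the $\liminf$-$\limsup$ bounds of Theorem \ref{Theorem: ballistic growth} to an almost-sure limit by combining the moment bound on the auxiliary jump times $\nu_{M+n}$ (Lemma \ref{NuMoments}), their $\phi$-mixing property (Lemma \ref{conditinallyphimixing}), and a coupling/regeneration argument that forces the limit to be deterministic. Throughout I would work under $\mathbb{P}^\eta$ and condition on the realization of the site $M$ from Definition \ref{Msite}.

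The threshold $\alpha>(3+\sqrt{11})/2$ is tuned so that $\tfrac{(4\wedge\alpha)+1}{2}>2$, which by Lemma \ref{NuMoments} provides some $q^\ast>2$ with finite $q^\ast$-moments for $\nu_{M+n}$. The extra hypothesis $\mathbb{E}[A^{4/((4\wedge\alpha)-3)+\varepsilon_A}]<\infty$ should be used to turn this moment bound into one that is \emph{uniform} in $n$, by absorbing the contribution of the parasites produced near the origin that are eventually carried out to the front. Once $\sup_{n\ge k_0}\mathbb{E}[\nu_{M+n}^{q^\ast}]<\infty$ is in place, together with the $\phi$-mixing bound of Lemma \ref{conditinallyphimixing} (whose decay rate is inherited from the tail estimate in Lemma \ref{lem:Kmtail}), a classical $\phi$-mixing strong law (e.g.\ Shao's inequality, or a Serfling-type maximal inequality combined with Borel--Cantelli along dyadic subsequences) yields
\[
\frac{1}{N}\sum_{n=k_0}^{k_0+N-1}\bigl(\nu_{M+n}-\mathbb{E}[\nu_{M+n}]\bigr)\;\xrightarrow[N\to\infty]{\text{a.s.}}\;0.
\]
Because the true jump times from $M+n-1$ to $M+n$ are dominated by the auxiliary ones, and since Theorem \ref{Theorem: ballistic growth} already rules out degenerate behaviour, inverting this Cesàro mean gives $\liminf_{t\to\infty} r_t/t\ge \bigl(\limsup_N \tfrac{1}{N}\sum_n \mathbb{E}[\nu_{M+n}]\bigr)^{-1}$, and an analogous construction of \emph{lower} auxiliary jump times produces the matching upper bound.

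It remains to argue that $\tfrac{1}{N}\sum_n\mathbb{E}[\nu_{M+n}]$ actually converges to a single deterministic constant $\gamma^{-1}\in(0,\infty)$. This is where the hypothesis $\mathbf{P}(A-I\ge 4)>0$ enters: it produces, at positive density in $n$, sites where the offspring--immunity surplus is large enough that the auxiliary neighbourhood $K_n$ of Definition \ref{auxiliaryfront} collapses to a singleton. Past such a site the configuration seen from the front decouples from everything to its left, so by a standard coupling argument the distribution of the post-regeneration process (and hence the law of $\nu_{M+n}$ past a regeneration epoch) converges to a unique stationary law. This gives $\mathbb{E}[\nu_{M+n}]\to\gamma^{-1}$ and hence $r_t/t\to\gamma$ almost surely on $\mathcal{S}(\eta)$, with $\gamma$ characterised as the reciprocal of the stationary mean of the auxiliary jump times.

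The main obstacle is precisely this last regeneration/coupling step. Absent the monotone coupling available in the classical frog model (cf.\ the discussion preceding Example \ref{unmonontonecoupl}), genuine decoupling at candidate regeneration sites is delicate: one has to show quantitatively that essentially no information from the initial segment $[0,M+n]$ survives past a fresh regeneration site, and that return times between such sites have finite expectation. The specific constant $4$ in the hypothesis presumably provides enough parasite surplus to absorb diffusive losses while the front waits for a favourable site, and the precise exponents in the moment assumption on $A$ are what make the coupling compatible with the $\phi$-mixing rate from Lemma \ref{conditinallyphimixing} and the uniform $L^{q^\ast}$-bound from Step~1.
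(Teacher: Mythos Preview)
Your $\phi$-mixing strong law for the auxiliary jump times $\nu_{M+n}$ is essentially what the paper already does in Section \ref{subsec:ball} to prove the \emph{lower} bound in Theorem \ref{Theorem: ballistic growth} (via Lemma \ref{Tnconvergence}); it cannot by itself yield Theorem \ref{Theorem: lln}. The $\nu_{M+n}$ are only \emph{upper} bounds on the true increments $\rho_{M+n}-\rho_{M+n-1}$, so the Cesàro limit you obtain gives only $\liminf r_t/t\ge 1/\mathbf{E}[\nu]$. Your sentence ``an analogous construction of lower auxiliary jump times produces the matching upper bound'' is the crux, and it is unsupported: no such construction appears anywhere in the paper, and it is unclear how one would build lower bounds on the true jump times that have the same limiting mean. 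The easy upper bound on $r_t/t$ from the branching random walk coupling does not match $1/\mathbf{E}[\nu]$.

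The paper avoids this obstacle entirely. It introduces a second, \emph{monotone} coupling (the untagged construction of Section \ref{Construction:unt}, Proposition \ref{prop:untaged_mon}) and uses it to prove subadditivity of the \emph{actual} crossing times $X_{m,n}$ between the good sites $M^m$ and $M^n$; Liggett's subadditive ergodic theorem then gives $X_{0,n}/n\to\rho$ on $\{M^0=0\}$ (Lemma \ref{lem:X_mn_conv}). The threshold $\alpha>(3+\sqrt{11})/2$ enters not to get second moments for a mixing SLLN but to ensure $\mathbb{E}[T^1\mid M^0=0]<\infty$ (Lemma \ref{lem:T_Tfinexp}), which is condition (a) of Liggett's theorem. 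Finally, the limit is upgraded from $\{M^0=0\}$ to all of $\mathcal{S}$ by a 0--1 law coming from renewal sites $(R^i)$ (Proposition \ref{prop:renewal_sites}); this is where the moment assumption on $A$ is used. You also misread the role of $\mathbf{P}(A-I\ge 4)>0$: it is not a regeneration mechanism for the real process, and it does not make $K_n$ ``collapse to a singleton'' in a way that decouples the front from its past. Its purpose is to guarantee that good configurations of length $<k_0$ in Definition \ref{auxiliaryfront} occur with positive probability, so that the sites $M^i$ are almost surely finite (Lemma \ref{lem:welldef}); the constant $4=\lceil(3+\sqrt{11})/2\rceil$ is chosen precisely to make the moment computation in Lemma \ref{lem:T_Tfinexp} close.
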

As for Theorem \ref{Theorem: ballistic growth}, a proof of the theorem can be found in Section \ref{Sec:ProofsOfTheorems2to4}.
\begin{proof}[Sketch of the proof of Theorem \ref{Theorem: lln}] 
The approach is similar to the proof of Theorem \ref{Theorem: ballistic growth}. We also consider for each site $n$ a neighborhood of size $K_n$ to the left of this site. However, to arrive not only at a lower bound but also on the convergence of $r_t/t$, it is not sufficient to construct a single site M. Instead, in Definition \ref{Msites}, we identify a sequence of sites $(M^i)_{i\ge0}$ such that the upcoming births and immunities after each site $M^i$ have the same properties as the site $M$, that is, the number of living parasites grows linearly in the distance of the front from each $M^i$, and in addition, even for small $n < k_0$, there are enough parasites born between $M^i$ and $M^i+n-1$ to ensure a fast infection of site $M^i +n$ with sufficiently high probability. \\
Then, in Lemma \ref{lem:X_mn_conv}, we show a subadditivity property of the front in between reaching these good sites, conditionally on the event that the initial configuration of upcoming offspring and immunities is already good, i.e., $\{M^0 = 0\}$. In particular, we will use approximating jump times $\overline{\nu}_n^i = \nu_{M^i+n}$ defined similarly as $\nu_{n+M}$ (here defined to the right of $M^i$ instead of $M$), which will be a $\phi$-mixing sequence with $\sup_{n\ge 1}\mathbb{E}[\vert\overline{\nu}^i_n\vert^q] < \infty$ for all $q\in\left[1,\frac{(4\wedge\alpha)+1}{2}\right)$. With these approximating jump times, we show, in Lemma \ref{lem:T_Tfinexp}, that moving from one site $M^i$ to the next $M^{i+1}$ happens with finite expectation, which yields the claimed convergence, conditionally on $\{M^0 = 0\}$, after noting that $(M^{i+1}-M^i)_{i\ge0}$ will be i.i.d.~Finally, in Proposition \ref{prop:renewal_sites}, we construct a sequence of renewal sites from which on the front becomes independent of the past. This allows us to conclude that the limit of $\frac{r_t}{t}$ cannot be random, conditioned on survival, and thus, the limit of $\frac{r_t}{t}$ we obtained with positive probability must already hold almost surely. \phantom\qedhere
\end{proof}
 \subsubsection{Two-sided host population}
In this subsection we investigate the survival probability in the two-sided model. We use the results on the one-sided model to obtain the following result on the positive probability of survival and linear spread.
\theoremstyle{plain}\begin{theorem}\label{Theorem: polynomial decay}
Consider a two-sided host population with initial configuration $\zeta = (0,0,A_0\delta_0)$. We make the same assumptions on $A,I$ as in Theorem \ref{Theorem: ballistic growth}, but now further restricting
 \[
 \alpha > 3.
 \]
 Then there is a positive probability for the parasites to survive for infinite time, and there exists a $v>0$ such that for any $0 < \lambda < v$, we have
 \[
 \mathbf{P}\left(\bigcap_{t\ge 0}\{\text{all hosts inside }\{-\lfloor\lambda t\rfloor,\dots,\lfloor \lambda t\rfloor\}\text{ are infected at time }t\}\right) > 0.
 \]
\end{theorem}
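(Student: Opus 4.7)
The plan is to reduce the two-sided claim to Theorem~\ref{Theorem: ballistic growth} via a reflection-symmetry and comparison argument. By the symmetry $x \mapsto -x$ of the model, the joint distribution of $(r_t, \ell_t)$ coincides with that of $(-\ell_t, -r_t)$, so the right and left fronts can be analyzed by the same argument and combined at the end. Moreover, the event in the statement is equivalent to the discrete-time condition $r_{n/\lambda} \geq n$ and $\ell_{n/\lambda} \leq -n$ for all $n \geq 1$, since for $t \in [n/\lambda, (n+1)/\lambda)$ we have $\lfloor \lambda t \rfloor = n$ while $r_t$ is nondecreasing and $\ell_t$ nonincreasing, and for $t \in [0, 1/\lambda)$ the condition is vacuous as there is no host at $0$.

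To obtain a positive-probability linear lower bound on the right front, I would compare the two-sided process with a dominated process $\hat X$, obtained by giving all hosts at positions $\leq -1$ infinite immunity, so that any parasite attacking such a host dies without producing offspring. Since this modification only removes parasites, $\hat X$ has no more living parasites than the two-sided process at every time, and in particular $\hat r_t \leq r_t$. The process $\hat X$ is essentially a one-sided process on $\{0,1,2,\ldots\}$ with a killing boundary at $-1$: parasites moving to $-1$ die immediately. For front positions $k$ large, this boundary is rarely reached by parasites near the front (by gambler's ruin, a parasite starting at the front has probability only $1/(k+2)$ of reaching $-1$ before attacking the host at $k+1$), so the good-site construction of Definition~\ref{Msite} and the auxiliary-jump-time analysis sketched after Theorem~\ref{Theorem: ballistic growth} transfer to $\hat X$ as a small perturbation, with the higher moment assumption $\alpha > 3$ used to absorb the error from the killing boundary. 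This yields, on a positive-probability event, a finite $L > 0$ with $\sup_n \hat T_n^R/n \leq L$, where $\hat T_n^R := \inf\{t \geq 0 : \hat r_t \geq n\}$; by the domination $\hat r_t \leq r_t$, the same bound holds for $T_n^R := \inf\{t \geq 0 : r_t \geq n\}$.

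By reflection symmetry the analogous bound $\sup_n T_n^L/n \leq L$ (for $T_n^L := \inf\{t \geq 0 : \ell_t \leq -n\}$) holds on a positive-probability event. For the joint event I would apply the construction simultaneously on both sides, considering the process in which hosts outside some large box $[-N, N]$ all have infinite immunity. After the (positive-probability) event that all hosts inside the box are infected within finite time, the dynamics decompose into two essentially independent half-line dynamics to which the previous argument applies on each side. Setting $v := 1/L$ yields positive probability for the event in the statement for every $\lambda \in (0, v)$, and the survival claim follows a fortiori. The main obstacle is the perturbation analysis for $\hat X$: the killing boundary introduces a small but non-negligible loss of parasites near the origin, which must be shown not to destroy the renewal structure of Section~\ref{sec:goodsites}. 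The stronger moment assumption $\alpha > 3$ (compared with $\alpha > 1$ in Theorem~\ref{Theorem: ballistic growth}) is used precisely to ensure that this loss is summable in the appropriate sense, so that the good-site construction still yields an a.s.\ finite $M$ and ballistic growth on a positive-probability event.
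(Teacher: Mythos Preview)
Your proposal has two genuine gaps that prevent it from going through.

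\textbf{The decoupling of the two sides is not handled.} After the box $[-N,N]$ is cleared, the two fronts are \emph{not} ``essentially independent half-line dynamics'': every parasite born to the right of $N$ is free to walk left and hit the left front, and vice versa. The entire difficulty of the two-sided problem is precisely this coupling, and your sketch does not explain how to control it. The paper's proof attacks this head-on with a different idea: it constructs an explicit positive-probability event on which parasites born at sites $x\ge 1$ never reach the linearly receding left front, and symmetrically for the other side. The mechanism is that fronts move ballistically (via the good-site/auxiliary-jump-time machinery, Lemma~\ref{Uinf}) while individual parasites move only diffusively (Lemma~\ref{RWsublin}), so a parasite born on the right cannot catch a left front that is already past $-\lfloor \gamma_1 t\rfloor$. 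On that event the two sides really are two independent one-sided processes, and the one-sided analysis applies verbatim. Your infinite-immunity comparison goes in the wrong direction for this purpose: making hosts outside $[-N,N]$ immune stops both fronts rather than decoupling them.

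\textbf{``Always above a linear line'' does not follow from Theorem~\ref{Theorem: ballistic growth}.} That theorem gives $\liminf r_t/t\ge C_1$ a.s.\ on survival, i.e.\ $r_t\ge\lambda t$ \emph{eventually}. What you need is $r_t\ge\lfloor\lambda t\rfloor$ for \emph{all} $t\ge 0$ with positive probability. This is a separate statement (Lemma~\ref{Uinf} in the paper) and is exactly where the assumption $\alpha>3$ enters: it gives $q$-th moments of the auxiliary jump times for some $q>2$, which via the $\phi$-mixing moment bound of Lemma~\ref{Qconvspeed} controls $\sup_{k\ge n}\frac{1}{k}\sum_{j=1}^k\nu_j$ summably in $n$, so that the event $\bigcap_n\{\sum_{j=1}^n\nu_j\le n/\gamma\}$ has positive probability. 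Your sketch invokes $\alpha>3$ only to ``absorb the error from the killing boundary,'' which is not the role it plays. Finally, the monotone comparison $\hat r_t\le r_t$ you assert is plausible but is not covered by Proposition~\ref{prop:untaged_mon}, which compares the \emph{same} dynamics with different initial data, not two processes with different boundary rules; and the perturbation of the $\nu_n$-construction by a killing boundary at $-1$ is left entirely unspecified.
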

\begin{proof}[Sketch of the proof of Theorem \ref{Theorem: polynomial decay}] 
 We construct an event of positive probability in which the right front is only driven by parasites born on a site $x$ with $x\ge 1$, while the left front is driven only by parasites born on a site $x$ with $x\le -1$. We do this by showing that in the one-sided model started with a single parasite, there is a positive probability that the front always stays above $\lfloor\lambda t\rfloor$ and that for any time $t\ge 0$ there is no parasite on any site $x$ with $x\le -1-\lfloor \lambda t\rfloor$. Imposing this event on both the right and left fronts, after they reach $1$ and $-1$, respectively, then yields that the two sides evolve as independent copies of the one-sided model, because parasites born on the right side never reach the linear line $-1-\lfloor\lambda t\rfloor$, while the left front is always below this line, and vice versa for parasites born on the left side. This then yields the claim.\\
To show the result on the one-sided model, we first note that because in the one-sided model the new parasites get only added on the right front, the speed in the negative direction is bounded by the speed of random walk, which is on the scale $\sqrt{t}$ with high probability. Hence, with positive probability for all $t\ge 0$, there are no parasites to the left of $-1-\lfloor\lambda t\rfloor$ at time $t$. To see that with positive probability the front moves with linear speed, we use that $\alpha>3$, which implies that for any $q\in\left(2,\frac{\alpha+1}{2}\right)$ we have $\mathbf{E}[\vert\nu_n\vert^q]<\infty$ for $n\ge 1$. This will allow us to obtain better estimates on the fluctuations of 
\[
\frac{1}{n}\sum_{k=1}^n\nu_k
\] 
around its mean $v$. We then show that the event
\[ G_{k_0} := \left\{M = 0, \forall 1\le n < k_0: \rho_n \le \frac{n}{\lambda}\right\}\] has positive probability, and that on the event $G_{k_0}$, by using tail estimates for sums of $\phi$-mixing series, we also have 
\[
\rho_n \le \rho_{k_0-1} + \sum_{k=k_0}^n \nu_{k} \le \frac{n}{\lambda} 
\]
for all $n\ge k_0$ with positive probability. \phantom\qedhere
\end{proof}
 \section{Good sites and auxiliary jump times}\label{sec:goodsites}
We consider here mainly the case of a one-sided host population with initially $r=0$. Also, w.l.o.g., we assume a random initial configuration $\zeta = (-\infty,0,\eta)$ such that $\eta(0) \overset{d}{=}A$. If not, we can simply shift the process to $0$ after the first jump, where, by definition of the process, $\eta_{\rho_1}(1) \overset{d}{=} A$, and use the strong Markov property. \\
\vspace{0.1cm}\\
 In this section we construct the auxiliary jump times $\{\nu_n:n\in\Z\}$, see Definition \ref{auxiliaryfront}, that will be used in the construction of a lower bound for the actual jump times of the front. We will also analyze the tails as well as the mutual dependencies of these jump times in Lemma \ref{NuMoments} and Lemma \ref{conditinallyphimixing}. Furthermore, we will define the site $M$ and the sequence of sites $(M^i)_{i\ge0}$, see Definition \ref{Msites}, and analyze their properties. As explained in the sketches of the proofs for Theorem \ref{Theorem: ballistic growth} and Theorem \ref{Theorem: lln}, we will use slightly different lower bounds for the jump times to show Theorem \ref{Theorem: ballistic growth} than the one we are using to show Theorem \ref{Theorem: lln}. This difference appears only at the first $k_0$ sites after $M$ and $M^i$, respectively. After that, $\nu_{M+n}$ and $\nu_{M^i+n}$ are defined analogously as upper bounds for the jump times for $n\ge k_0$. \\
 The formal construction of the process will be given in Section \ref{Construction}. We shortly summarize some of the notation that will be introduced there, because we need it for the construction of the lower bound. We keep not only the current positions of parasites but also the places at which each parasite entered the system. Therefore we give each parasite a label $(x,i)\in\Z\times\N$, where $x$ is its birthplace and $i$ numerates all parasites that were born on that site. Also, as will become clear in the construction of the times $\{\nu_n:n\in\Z\}$, we need to follow the virtual paths of parasites after their death. The state space will hence be given by a $5$-tuple $w=(r,\mathcal{L},\mathcal{G},F,\iota)$ with
 \[
 r\in\Z,\mathcal{L}\subset\Z\times\N,\mathcal{G}\subset\Z\times\N,F:\mathcal{L}\cup\mathcal{G}\to\Z,\iota\in\N.
 \]
The entry $r$ is the current position of the front, and $\iota$ is the current remaining immunity of the host at site $r+1$. The set $\mathcal{L}$ consists of the labels of parasites that are currently alive, called living parasites, and $\mathcal{G}$ consists of the labels of parasites that already died (but were alive at some point in the past), called ghost parasites. The map $F$ then assigns a current position to each parasite. There are, of course, some technical restrictions we need to make on these tuples to obtain a strong Markov process on this state space (see in particular the assumptions in Theorem \ref{Theorem:well-def}), which we defer to Section \ref{Construction}.\\
We now explain how we can think of our initial configuration $\zeta= (-\infty,0,\eta)$ as an element of this state space that keeps track of more information. First, to obtain a Markov process, we need to keep track of the current remaining immunity and thus will extend our random initial configuration to $(-\infty,0,\eta,I_1)$. Also we keep track of the position where a parasite entered the system instead of only following all actual parasite positions with $\eta$. We extend the configuration $(-\infty,0,\eta,I_1)$ to an element of this new state space by assuming that every parasite is occupying the site on which it was born and there are no initial ghost parasites, i.e., to the configuration
\[
\left(0,\{(x,i)\in\Z\times\N: x\le 0,1\le i\le \eta(x)\},\emptyset,(x,i)\mapsto x,I_1\right).
\]
With this identification we can assume to have a random initial configuration $w = (0,\mathcal{L},\emptyset,F,I_1)$ defined on $\mathbf{\Omega}$. When the initial configuration is given by $w$, we denote by
\[w_t(w) = (r_t(w),\mathcal{L}_t(w),\mathcal{G}_t(w),F_t(w),\iota_t(w))\]
the configuration of the process at time $t$, by $\mathcal{S}(w)$ the event of survival of parasites for infinite time, i.e. $\mathcal{L}_t(w) \neq\emptyset$ for all $t\ge0$, and by $\mathbb{P}^w$ the measure conditioned on this event. For the rest of this work we drop the reference to the initial configuration, by only writing $\mathbb{P},\mathcal{S},w_t$ instead of $\mathbb{P}^w,\mathcal{S}(w),w_t(w)$. \\
Also, we assume that there are independent collections
\[
\mathbf{Y} = \{Y^{x,i}:x\in\Z,i\in\N\} = \{(Y^{x,i}_t)_{t\ge0}:x\in\Z,i\in\Z\}
\]
of i.i.d.~continuous time simple symmetric random walks with rate $2$ and starting at $0$ as well as 
\[
\mathbf{I} = \{I_x:x\in\Z\}
\]
of i.i.d.~random variables distributed as $I$ and 
\[
\mathbf{A} = \{A_x:x\in\Z\}
\]
of i.i.d.~random variables distributed as $A$. For some label $(x,i)\in\Z\times\N$, the path that the parasite with this label takes after it enters the system at some site $y\in\Z$ will be given by $y+Y^{x,i}$. We note that for $x>0$ we will have $y=x$, but if the parasite was already alive in $w$, then it may be at a different site than its birthplace. Also for $x>0$, the initial immunity of the host at $x$ will be given by $I_x$, and the number of parasites that will be produced after the host at site $x$ is infected will be given by $A_x$. Finally we denote by
\[
\rho_n := \inf\{t\ge0:r_t \ge n\}
\]
the time, when the front reaches site $n\ge0$, i.e., when the host at site $n$ gets infected. 

With all the notation introduced, we now lay out the general approach. We identify a site $M^0$ such that from that site on, the upcoming births and deaths are in a good configuration, in the sense that the amount of living parasites grows linearly in the distance of the front to $M^0$ and the linear growth has some high enough slope. Why these properties are useful will become clear after we introduce the jump times $\{\nu_n:n\in\Z\}$ in Definition \ref{auxiliaryfront} that will be used to lower bound the front, which is also why we start this section by constructing these times before constructing $(M^i)_{i\ge0}$. The constructed lower bound in particular will only use parasites born to the right of the site $M^0$, and hence not depend on the initial configuration anymore.
After reaching the good site $M^0$, we simply repeat the procedure to find the next site $M^1 > M^0$, which has a good configuration of upcoming births and deaths. Iterating this yields a sequence $(M^i)_{i\ge0}$ of good sites, such that even ignoring any parasites that were born before reaching $M^i$ and only starting with $A_{M^i}$ parasites at site $M^i$ has a linear speed with high probability. \\
Then we establish a subadditivity property of the front that holds if we start with only $A_0$ parasites on $0$ and no other parasites in the system and the first good site $M^0 =0$. This allows us to prove the law of large numbers in the event that $M^0 = 0$, by controlling the tails of the hitting times of $M^i$. Then, showing that the limit of $\frac{r_t}{t}$ is not random, conditionally on survival, will show the strong law of large numbers in Theorem \ref{Theorem: lln}. For arbitrary initial configurations, this approach, however, does not work, and we will only show the weaker result of ballistic growth in Theorem \ref{Theorem: ballistic growth}.\\
In Definition \ref{auxiliaryfront} we will construct random times $\{\nu_n: n\in\Z\}$ on $\mathbf{\Omega}$ that will be used as the jump times for a lower bound of the front. Precisely, we will show that in the event that $\{M^i = k\}$, the time $\nu_{k+n}$ only depends on $\mathbf{A},\mathbf{I},\mathbf{Y}$ with an index inside $k+n-K_{k+n}$ for some random distance $K_{k+n}\le n$ and is given by the first hitting time of a one-sided barrier by a collection of random walks. In Lemma \ref{lem:Kmtail} we will show that this distance $K_{k+n}$ does not become too large with high probability.
In particular, this will show that on one hand the tails of $\nu_{M^i+k}$ fall fast enough in Lemma \ref{NuMoments} and on the other that the sequence $(\nu_{M^i+n})_{n\ge1}$ is weakly dependent, specifically that it is $\phi$-mixing in Lemma \ref{conditinallyphimixing}. Also, in Proposition \ref{Couplinglowbound} we show that, still on the event $\{M^i = k\}$, we have $\nu_{k+n} \ge \rho_{k+n}-\rho_{k+n-1}$ for any $k\ge0,n\ge 1$. The definition of $M$ that is used in the proof of Theorem \ref{Theorem: ballistic growth} will be slightly different and only yield that on the event $\{M=k\}$ we have $\nu_{k+n}\ge \rho_{k+n}-\rho_{k+n-1}$ for all $n\ge k_0$ with some explicit constant $k_0\in\N$. The different treatment is due to the reason that we want every jump time above $M^i$ to have a small tail, while for $M$ we only need this eventually, i.e., for $n\ge k_0$. \\
We set $\overline{\nu}_n^i := \nu_{M^i+n}$, and using the tails of $\nu_n$ as well as their dependence, we want to obtain good bounds on
\begin{equation}\label{partsums}
\sum_{j=1}^{n}\overline{\nu}_{j}^i
\end{equation}
which is an upper bound for $\rho_{M^i+n}-\rho_{M^i}$. The tail estimates and the weak dependence, using standard results for $\phi$-mixing sequences, allow us to control the moments of their partial sums \eqref{partsums} and in Lemma \ref{Qconvspeed} show that the $q$-th moment only grows like $\mathcal{O}(n^{\frac{q}{2}})$, if $\alpha > 3$ and $q < \frac{(4\wedge\alpha)+1}{2}$, which will be useful in proving that moving from $M^i$ to $M^{i+1}$ happens with finite expectation. 
\subsection{Construction and basic properties of auxiliary jump times and good sites}
Before constructing the good sites $(M^i)_{i\ge0}$, we want to analyze a candidate for an upper bound of the jump times and under which conditions this actually is an upper bound and what tails these candidates have. These conditions then naturally lead to what properties the good sites $(M^i)_{i\ge0}$ should satisfy in order to be able to define the jump times $(\overline{\nu}^i_n)_{n\ge1}$ as an upper bound for the jump times $\rho_{M^i+n}-\rho_{M^i+n-1}$ that have the needed tail behavior of their partial sums \eqref{partsums}. \\
 The key idea in defining $\{\nu_n:n\in\Z\}$ is to obtain estimates on the number of living parasites when the front is at a certain position $x$, independent of the paths $\mathbf{Y}$, only relying on the random variables $\mathbf{A},\mathbf{I}$. Then we estimate the time that these parasites would need to push the front further by establishing results on first hitting times of a one-sided barrier by a collection of simple symmetric random walks. Combining these two estimates, using the independence of $\mathbf{A},\mathbf{I},\mathbf{Y}$, we then obtain an upper bound, denoted by $\nu_n$, for the times the process $(r_t)_{t\ge0}$ takes to jump from $n-1$ to $n$, which was denoted by $\rho_{n}-\rho_{n-1}$.\\
First we observe that for any $n\ge1$ the number of parasites that die at site $n$ is given by $I_n$, and the process $(r_t)_{t\ge0}$ advances exactly when this many parasites have died. Hence, we have
\[
\rho_{n+1} = \inf\{t\ge0:r_t\ge n+1\} = \inf\{t\ge \rho_{n}: \vert\mathcal{G}_t\vert-\vert\mathcal{G}_{\rho_{n}}\vert = I_{n+1}\}
\]
and thus
\[
\vert\mathcal{G}_{\rho_{n+m}}\vert - \vert\mathcal{G}_{\rho_n}\vert = \sum_{k=n+1}^{n+m} I_k.
\]
for all $n\ge0,m\ge1$. In words, the amount of parasites that die during the time that the front needs to advance from site $n$ to site $n+m$ is given by the sum of the immunities between $n+1$ and $n+m$. Because for $x>0$, exactly $A_x$ new parasites are added at $x$ whenever the front jumps onto $x$, the number of living parasites at time $\rho_{n+m}$ that have been generated at one of the sites $\{n, ..., n+m-1\}$ satisfies the inequality
\[
\vert\mathcal{L}_{\rho_{n+m}}\cap(\{n,\dots,n+m-1\}\times\N)\vert \ge \sum_{k=n}^{n+m-1}A_k - \sum_{k=n+1}^{n+m} I_k.
\]
This gives us a lower bound on the number of living parasites at time $\rho_{n+m}$, independent of the paths $\mathbf{Y}$, and leads to the following definition.

For the rest of this work we fix some
\[\beta_A\in(\mu_I,\mu_A), \quad \quad \beta_I\in(\mu_I,\beta_A).\] 
Furthermore, we set
\begin{equation}\label{qkcond}
k_0 := \left\lceil\frac{\alpha+1}{\beta_A-\beta_I}\right\rceil.
\end{equation}

\theoremstyle{definition}\begin{definition}\label{auxiliaryfront}
For any $k\ge 0$ we define
\[
m(A,k) := \inf\{j\ge k: \mathbf{P}(A=j) > 0\}, m(I,k) := \inf\{j\ge k:\mathbf{P}(I=j) > 0\},
\]
and set $m_I := m(I,1),m_A := m(A,\beta_A\vee(4+m(I,1)))$.
For $1\le k < k_0$ we define
\[
\underline{A}^{\text{good}}(k) := \left\{(a_1,\dots,a_k)\in\N_0^k\vert\forall 1\le j \le k:a_j \ge m_A\right\},
\]
and
\[
\underline{I}^{\text{good}}(k) := \left\{(i_1,\dots,i_k)\in\N^k\vert\forall 1\le j \le k:i_j = m_I\right\}.
\]
For $k\ge k_0$ we define
\[
\underline{A}^{\text{good}}(k) := \left\{(a_1,\dots,a_k)\in\N_0^k:\sum_{j=1}^ka_j \ge \beta_Ak\right\},
\]
and
\[
\underline{I}^{\text{good}}(k) := \left\{(i_1,\dots,i_k)\in\N^k:\sum_{j=1}^ki_j \le \beta_Ik\right\}.
\]
For $1\le k < k_0$ the set $\underline{A}^\text{good}(k)$ are the offspring constellations of length $k$ such that on each vertex there are born at least $4$ parasites more than the minimal immunity, and $\underline{I}^\text{good}(k)$ are the immunity constellations of length $k$ such that each immunity has the minimal possible value. For $k\ge k_0$ the set $\underline{A}^\text{good}(k)$ are the offspring constellations of length $k$ with on average at least $\beta_A$ many parasites per site, while $\underline{I}^\text{good}(k)$ are the immunity constellations of length $k$ with on average a strength of at most $\beta_I$ per site.
For $n\in\Z$ we now go backwards from site $n$, starting at $k_0$ steps, until we find two good configurations in both offspring and immunities. Precisely, we define
\[
K_n := \inf\left\{k\ge 1:(A_{n-1},\dots,A_{n-k})\in\underline{A}^\text{good}(k),(I_n,\dots,I_{n-k+1})\in\underline{I}^\text{good}(k)\right\}
\]
as the smallest distance greater than or equal to $k_0$ where both the offsprings and immunities between $n-K_n^m$ and $n$ are good, in the sense of the just defined sets. \\
For $k\in\N$ and a good offspring constellation $\mathbf{a}\in\underline{A}^\text{good}(k)$ we define the corresponding labels of parasites below site $n$ as
\[ 
\underline{\mathcal{V}}^\text{good}_n(k,\mathbf{a}) := \{(x,i)\in\Z\times\N:n-k\le x\le n-1,1\le i\le a_{n-x}\},
\]
and define the random label set
\[
\mathcal{W}_n := \underline{\mathcal{V}}_n^\text{good}(K_n,(A_{n-1},\dots,A_{n-K_n})).
\]
For $(x,i)\in\Z\times\N$ and $k\in\Z$ we define
\[
\tau^{x,i}_k := \inf\{t\ge0:Y^{x,i}_t = k\},
\]
the first time the parasite with label $(x,i)$ has reached the site at distance $k$ to the right of its birthplace, and
\[
\nu_n := \inf\left\{t\ge0:\sum_{(x,i)\in \mathcal{W}_n}\mathds{1}_{\tau^{x,i}_{n-x}\le t} \ge \begin{cases} m_I K_n,&\text{if }1\le K_n < k_0\\\beta_I K_n,&\text{if }K_n\ge k_0\end{cases}\right\}
\]
as the first time at which at least a number equal to at least the accumulated sums of immunities between $n-K_n+1$ and $n$ of parasites born between $n-K_n$ and $n-1$ reached site $n$. 
\end{definition}
\begin{remark}
The role of the specific choice $4$ will become clear in the proof of Lemma \ref{NuMoments} and is just that $4$ is the smallest integer that guarantees high enough moments. \\ Because clearly $(\nu_n)_{n\in\Z}$ and $(K_n)_{n\in\Z}$ are stationary under the unconditioned law $\mathbf{P}$, to simplify notation, we set $\nu := \nu_1,K := K_1$ when only stating properties depending on the distribution of a single variable.
\end{remark}
The idea behind the definition of $\nu_n$ is the following. First, we ignore the deaths of parasites and use the collection $\mathbf{Y}$ to follow the (potentially virtual) paths of each parasite. We then want to distinguish which random walks correspond to living parasites and which correspond to ghost parasites at time $t=\rho_{n-1}$. The real process jumps to site $n$, when $I_n$ many parasites, that were alive at time $\rho_{n-1}$, reached site $n$. Firstly, we note that placing these parasites back on the site where they were born and then following their trajectory only increases the first hitting time of site $n$. Hence, given $\mathcal{F}_{\rho_{n-1}}$, the time $\rho_{n}-\rho_{n-1}$ is smaller than the first time $I_n$ many random walks, corresponding to parasites alive at time $\rho_{n-1}$ and starting from their birthplace, reached site $n$. However, deciding which specific labels correspond to living parasites depends on the trajectories of all parasites before time $\rho_{n-1}$ and thus is too complicated. Hence, for any $k\ge 1$, we only bound the amount of ghost parasites at time $\rho_{n-1}$ that were born inside $[n-k,n-1]$. Once this amount of random walks and at least one additional random walk, all with labels in $[n-k,n-1]$, have reached site $n$, at least one of those (but we do not know which one) has to correspond to a parasite that was alive at time $\rho_{n-1}$. Thus, this gives an upper bound on the jump time from $n-1$ onto $n$. We will make this reasoning precise in the upcoming Proposition \ref{Couplinglowbound}. \\
We note that if $n-K_n < 0$, then the labels $\mathcal{W}_n$ are not necessarily corresponding to real parasites, depending on the initial configuration. But if $n-K_n \ge 0$, then $\mathcal{W}_{n}$ are exactly the labels of parasites that were born during the time $[\rho_{n-K_n},\rho_{n-1}]$. Because parasites can only die at sites to the right of their birthplace, an upper bound for the number of ghost parasites at time $\rho_{n-1}$, born inside $[n-K_n,n-1]$, is given by $I_{n-K_n+1}+\dots+I_{n-1}$. The first and second conditions in the definition of $K_n$ exactly guarantee that the sum of immunities in $[n-K_n+1,n]$ is much smaller than the sum of parasites born in that interval. This will ensure that there are enough random walks corresponding to a parasite born in $[n-K_n,n-1]$, such that the time it takes for $I_{n-K_n+1}+\dots+I_{n-1}$ many of those random walks to reach $n$ has finite moments up to some high enough power, depending on the parameter $\alpha$, which was chosen such that $\mathbf{E}[I^{2\alpha}]<\infty$. The different treatment for $k< k_0$ and $k\ge k_0$ comes from the fact that for $k\ge k_0$, the condition on the sums already guarantees a surplus of $(\beta_A-\beta_I)k_0 \ge \alpha + 1$ parasites, which will correspond to certain moments of $\nu_n$ conditionally on $K_n \ge k_0$; see the proof of Lemma \ref{NuMoments}. To also obtain moments estimates conditionally on $K_n < k_0$, we need to make the surplus of parasites for $k<k_0$ large enough and note that the definition of $K_n$ makes this surplus at least $k(m_A-m_I) \ge 4k$. \\
We formalize the reasoning above in the following proposition, showing that $\nu_n$ is a lower bound for the jump time $\rho_n-\rho_{n-1}$ of the real front, if $K_n\le n$.
\begin{proposition}\label{Couplinglowbound}
For all $n\ge k_0$ we have that
\[
\mathds{1}_{K_n \le n}(\rho_{n}-\rho_{n-1}) \le \nu_n 
\]
\end{proposition}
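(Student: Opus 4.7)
The plan is to show that on the event $\{K_n\le n\}$, by absolute time $\rho_{n-1}+\nu_n$ the host at site $n$ has already suffered at least $I_n$ attacks, which forces $\rho_n\le\rho_{n-1}+\nu_n$ and hence establishes the claim. The argument has three pieces: identifying the labels in $\mathcal{W}_n$ with real parasites present by time $\rho_{n-1}$, bounding how many of those are ghosts at $\rho_{n-1}$, and converting hits of the virtual walks $Y^{x,i}$ on level $n-x$ into genuine attacks on site $n$.

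On $\{K_n\le n\}$ every $(x,i)\in\mathcal{W}_n$ satisfies $x\in[n-K_n,n-1]\subseteq[0,n-1]$, so it corresponds either to one of the $A_x$ offspring produced at time $\rho_x\le\rho_{n-1}$ (when $x\ge 1$) or, in the boundary case $n-K_n=0$, to an initial parasite on site $0$. Since in the SIMI every attack kills the attacking parasite, a parasite born at site $x\ge 1$ can die only at sites $>x$, and in the one-sided model with contiguously advancing front exactly $I_k$ parasites have died at each $k\in[1,n-1]$ by time $\rho_{n-1}$. Therefore ghosts inside $\mathcal{W}_n$ can only have died at sites in $[n-K_n+1,n-1]$, yielding
\[
\lvert\mathcal{G}_{\rho_{n-1}}\cap\mathcal{W}_n\rvert \;\le\; \sum_{k=n-K_n+1}^{n-1}I_k.
\]

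Set $c:=m_I$ if $K_n<k_0$ and $c:=\beta_I$ if $K_n\ge k_0$. In either regime the defining property of $\underline{I}^{\text{good}}(K_n)$ gives $cK_n\ge\sum_{k=n-K_n+1}^{n}I_k$, whence $cK_n-I_n\ge\lvert\mathcal{G}_{\rho_{n-1}}\cap\mathcal{W}_n\rvert$. By definition of $\nu_n$, at time $\nu_n$ at least $cK_n$ labels $(x,i)\in\mathcal{W}_n$ satisfy $\tau^{x,i}_{n-x}\le\nu_n$, and subtracting the ghost bound leaves at least $I_n$ such labels whose parasites are still alive at time $\rho_{n-1}$.

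For each such alive-at-$\rho_{n-1}$ label $(x,i)$ the parasite's real position at absolute time $t\ge\rho_x$, as long as it is alive, is $x+Y^{x,i}_{t-\rho_x}$. Because this parasite has never attacked, it has never visited site $n$ (whose host is alive throughout $[0,\rho_{n-1}]$), and so $\tau^{x,i}_{n-x}>\rho_{n-1}-\rho_x$. During the interval $[\rho_{n-1},\rho_x+\tau^{x,i}_{n-x})$ the walk stays strictly below $n$, i.e., on the host-free window $\{\dots,n-1\}$ for times in $[\rho_{n-1},\rho_n)$, so the parasite cannot die before reaching $n$ for the first time at absolute time $\rho_x+\tau^{x,i}_{n-x}\le\rho_x+\nu_n\le\rho_{n-1}+\nu_n$, at which moment it attacks. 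Accumulating these at least $I_n$ attacks, the host at $n$ is infected by time $\rho_{n-1}+\nu_n$ and thus $\rho_n\le\rho_{n-1}+\nu_n$. The main obstacle is precisely this bookkeeping: separating the virtual walk (driven by $\mathbf{Y}$ and decoupled from the death history) from the real parasite (which vanishes upon any attack), so that the $\nu_n$-budget, defined only via $\mathbf{A},\mathbf{I},\mathbf{Y}$, controls the process-dependent jump time $\rho_n-\rho_{n-1}$.
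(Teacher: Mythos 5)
Your proof is correct and follows essentially the same route as the paper's: you identify $\mathcal{W}_n$ with real parasites on $\{K_n\le n\}$, bound the ghost count in $\mathcal{W}_n$ by $\sum_{k=n-K_n+1}^{n-1}I_k\le Z_n-I_n$, and conclude that among the $Z_n$ walks having reached level $n-x$ by time $\nu_n$, at least $I_n$ correspond to parasites alive at $\rho_{n-1}$, which then triggers the jump by absolute time $\rho_{n-1}+\nu_n$. The only difference is presentational: the paper first isolates the intermediate bound $\rho_n\le\rho_{n-1}+\inf\{t:\sum_{(x,i)\in\mathcal{L}_{\rho_{n-1}}}\mathds{1}_{\tau^{x,i}_{n-x}\le t}\ge I_n\}$ and counts living parasites in $\mathcal{W}_n$, whereas you count ghosts and spell out the final conversion (that $\rho_x+\tau^{x,i}_{n-x}\le\rho_{n-1}+\nu_n$ forces either an attack or $\rho_n$ to have already passed) in more detail; these are the same computation in complementary form. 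One minor imprecision: you write that an alive-at-$\rho_{n-1}$ parasite ``has never visited site $n$''; the needed and true statement is the stronger one, that it has never exceeded the front $r_t$ at any time $t<\rho_{n-1}$, which in particular gives $\tau^{x,i}_{n-x}>\rho_{n-1}-\rho_x$.
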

\begin{proof}
Clearly the right-hand side is non-negative, and hence it suffices to show the inequality on the event $K_n \le n$.
By definition of the process, placing all parasites alive at time $\rho_n$ back to their birthplace, we obtain
\[
\rho_{n} \le \rho_{n-1} + \inf\left\{t\ge0:\sum_{(x,i)\in\mathcal{L}_{\rho_{n-1}}}\mathds{1}_{\tau^{x,i}_{n-x}\le t}\ge I_{n}\right\}
\]
and hence will show that, on $\{K_n\le n\}$, the second term is smaller than $\nu_{n}$. \\
At time $\nu_{n}$ we have that \[Z_n := \begin{cases}m_I K_n,&\text{if }1\le K_n < k_0\\\lceil\beta_I K_n\rceil,&\text{if }K_n\ge k_0\end{cases}\] many random walks $(x+Y^{x,i}_t)_{t\ge0}$ with birth label
\[(x,i)\in \mathcal{W}_{n}= \left\{(x,i)\in\Z\times\N: n-K_{n}\le x < n, 1\le i\le A_x\right\}\] have reached site $n$, and since we are on $\{K_n \le n\}$, these random walks correspond to parasites that were alive in the real process. Let us denote by $\mathcal{B}\subset\mathcal{W}_{n}$ the labels of the $Z_n$ many parasites having reached site $n$ at time $\nu_n$. Since parasites can only die when jumping to the right of the front, and only $I_x$ parasites die at each site $x$, we have that out of the parasites with labels inside $\mathcal{W}_{n}$, only $\sum_{j=1}^{K_{n}-1} I_{n-j}$ many can be ghosts at time $\rho_{n-1}$. In other words 
\[\begin{split}
\vert\mathcal{W}_{n}\cap\mathcal{L}_{\rho_{n-1}}\vert &\ge \sum_{j=1}^{K_{n}}A_{n-j}-\sum_{j=1}^{K_{n}-1} I_{n-j}= \sum_{j=1}^{K_{n}}A_{n-j}+I_{n}-\sum_{j=0}^{K_{n}-1} I_{n-j} \\
&= \sum_{j=1}^{K_{n}}A_{n-j}+I_{n}-\sum_{j=1}^{K_{n}} I_{n-j+1}\ge \sum_{j=1}^{K_{n}}A_{n-j}+I_{n}-Z_n.
\end{split}\]
In particular, since $\vert\mathcal{B}\vert =Z_n$, this implies
\[
\begin{split}
\vert\mathcal{B}\cap\mathcal{L}_{\rho_{n-1}}\vert &=\vert\mathcal{W}_{n}\cap\mathcal{L}_{\rho_{n-1}}\vert - \vert(\mathcal{W}_{n}\setminus\mathcal{B})\cap\mathcal{L}_{\rho_{n-1}}\vert \\&\ge \sum_{j=1}^{K_{n}}A_{n-j}- Z_n +I_{n} - \vert\mathcal{W}_{n}\setminus\mathcal{B}\vert \\
&= \sum_{j=1}^{K_{n}}A_{n-j}-Z_n +I_{n} - \left(\sum_{j=1}^{K_{n}}A_{n-j}-Z_n\right) \ge I_{n}.
\end{split}
\]
Hence $\vert\mathcal{B}\cap\mathcal{L}_{\rho_{n-1}}\vert \ge I_{n}$ living parasites reached site $n$ at time $\nu_{n}$ and thus
\[
\begin{split}
\nu_{n} &\ge \inf\left\{t\ge0:\sum_{(x,i)\in\mathcal{L}_{\rho_{n-1}}}\mathds{1}_{\tau^{x,i}_{n-x}\le t}\ge I_{n}\right\}.
\end{split}
\]
This already concludes the proof.
\end{proof}
This proposition motivates the definition of the good sites $(M^i)_{i\ge0}$ and $M$ that will be used to prove Theorems \ref{Theorem: lln} and \ref{Theorem: ballistic growth}. Namely, we want to define $(M^i)_{i\ge0}$ and $M$ in such a way that 
\begin{equation}\label{eq:K_M_condition}
\forall i\ge0,n\ge 1:K_{M^i+n} \le n~\text{ and }~\forall n\ge k_0:K_{M+n} \le n.
\end{equation}
 Because then, by Proposition \ref{Couplinglowbound}, we have 
\begin{equation}\label{eq:nu_n_upperbound}
\begin{split}
\forall i\ge 0,n\ge1&:\nu_{M^i+n} \ge \rho_{M^i+n}-\rho_{M^i+n-1}~\text{ and }\\\forall n\ge k_0&:\nu_{M+n} \ge \rho_{M+n}-\rho_{M+n-1}.
\end{split}
\end{equation}
Also, in this way, for any $i\ge0$, the sequence $\{\nu_{M^i+n}:n\ge 1\}$, respectively $\{\nu_{M+n}:n\ge k_0\}$, conditionally on $M^i$, respectively conditionally on $M$, only depends on
\[
\{A_x,I_{x+1},Y^{x,j}: x\ge M^i,j\in\N\} ~\text{ resp. }~\{A_x,I_{x+1},Y^{x,j}: x\ge M,j\in\N\},
\]
and in particular will be independent of the initial configuration. \\
 The treatment of the first $k_0$ sites after $M^i$ will be slightly different from the treatment of the first $k_0$ sites after $M$, because in the former case we need even the jumps to these sites to have sufficient moments, while only relying on parasites that are born above $M^i$. In the definition of $M$, however, we can argue more simply and can define $M$ as follows.
\theoremstyle{definition}\begin{definition}\label{Msite}
Set $L_0 = 0$ and for $k \ge 0$, define 
\[L_{k+1}:=\inf\{l \ge L_k+k_0: K_l > l-L_k\}\]
and let
\[
N := \inf\{k\ge0: L_{k+1} = \infty\},~~M := L_{N}.
\]
Furthermore, we set
\[
T := \inf\{t\ge0: r_t \ge M+k_0\}.
\]

\end{definition}

To define $M^i$ we introduce some notation to treat the first $k_0$ sites differently than the other sites. We define
\[
G:\bigcup_{n\in\N}\N_0^n\times\N^n \to \{0,1\},
\]
such that for $n\in\N$ and $\mathbf{a} = (a_0,\dots,a_{n-1})\in\N_0^n,\mathbf{i} = (i_1,\dots,i_{n})\in\N^n$ we have $G(\mathbf{a},\mathbf{i}) = 1$ if and only if 
\begin{itemize}
 \item for all $1\le j < (n+1)\wedge k_0$ it holds that
 \begin{equation}\label{eq:goodsmallconf}
\begin{split}i_j &= \inf\{l\ge 1:\mathbf{P}(I=l)>0\},\\a_{j-1} &\ge \inf\{l\ge \beta_A\vee(i_j+4):\mathbf{P}(A = l) > 0\}.
\end{split}
 \end{equation}
 Note that this is just the condition of a good configuration of length $j<k_0$ in Definition \ref{auxiliaryfront}.
 \item for all $k_0 \le j \le n$ there is some $k_0\le i \le j$ such that
 \[
 (a_{j-i},\dots,a_{j-1})\in \underline{A}^\text{good}(i) ~\text{ and }~(i_{j-i+1},\dots,i_j)\in\underline{I}^\text{good}(i).
 \]
\end{itemize}
\theoremstyle{definition}\begin{definition}\label{Msites}
 Let $L_0^0 := 0$ and recursively for $k\ge0$ set
\[
L_{k+1}^0 := \inf\left\{l > L_k^0: G((A_{L_k^0},\dots,A_{l-1}),(I_{L_k^0+1},\dots,I_l)) = 0\right\}
\]
and define
\[
N^0 := \inf\{k\ge0: L_{k+1}^0 = \infty\},~~M^0 := L_{N^0}^0. 
\]
Then we set, recursively for $i\ge 0$, $L^{i+1}_0 := M^i+1$ and for $k\ge0$ 
\[
L^{i+1}_{k+1} := \inf\left\{l > L_k^{i+1}: G((A_{L_k^{i+1}},\dots,A_{l-1}),(I_{L_k^{i+1}+1},\dots,I_l)) = 0\right\}
\]
as well as
\[
N^{i+1} := \inf\{k\ge 0: L^{i+1}_{k+1} = \infty\},~~M^{i+1} := L^{i+1}_{N^{i+1}}.
\]
For $j\ge0,k\ge 1$ we define the event
\[
\mathcal{G}_{j,j+k} := \bigcap_{m=1}^k\{G((A_j,\dots,A_{j+m-1}),(I_{j+1},\dots,I_{j+m}))= 1\}
\]
of good configurations between $j$ and $j+k$ and set $\mathcal{G}_j := \mathcal{G}_{j,\infty}$.
Also, we set
\[
T^i := \inf\{t\ge0:r_t \ge M^i\}
\]
and $\overline{\nu}_n^i := \nu_{M^i+n}$ for $n\ge 1$.
\end{definition}
\begin{remark}\label{rem:M_lessrestrictive}
 We note that for all $j\ge 0$ we have
 \[
 \{M^i = j\} \subset \mathcal{G}_{j} \subset \bigcap_{n=1}^\infty\{K_{j+n} \le n\}
 \] and 
 \[
 \{M=j\} \subset\bigcap_{n=k_0}^\infty\{K_{j+n}\le n\}.
 \]
\end{remark}
To begin, we verify that the definitions above are actually well-defined and establish the following lemma.
\theoremstyle{plain}\begin{lemma}\label{lem:welldef}
We have $N<\infty$ almost surely. Also, if $\mathbf{P}(A-I\ge 4) > 0$ (c.f.~\eqref{eq:goodsmallconf}), then for any $i\ge0$ we have $N^i < \infty$ almost surely. In particular, $(M^i)_{i\ge0}$ and $M$ are well-defined, and $T,T^0,T^1,\dots < \infty$ almost surely under $\mathbb{P}$.
\end{lemma}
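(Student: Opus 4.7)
The plan is to handle the three assertions in turn: first the finiteness of $N$ (by a geometric-domination argument), then that of $N^i$ (by the same argument, with the assumption $\mathbf{P}(A-I\ge 4)>0$playing an explicit role), and finally that of $T, T^i$ under $\mathbb{P}$ (by showing that on survival the front necessarily diverges).

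For $N$, I would condition on $L_k = s < \infty$ and observe that the event $\{L_{k+1}=\infty\} = \{K_l \le l - s \text{ for all } l\ge s+k_0\}$ is measurable with respect to the $\sigma$-algebra generated by $(A_x)_{x\ge s}$ and $(I_x)_{x> s}$, since for any $l\ge s+k_0$ the event $\{K_l\le l-s\}$ only involves $A_{l-1},\ldots,A_{s}$ and $I_l,\ldots,I_{s+1}$. This $\sigma$-algebra is independent of $\sigma(L_0,\ldots,L_k)$, which is generated by $A$'s with indices strictly less than $s$ and $I$'s with indices at most $s$. By translation invariance of the i.i.d.~sequences $\mathbf{A},\mathbf{I}$, the conditional probability equals
\[
p := \mathbf{P}\bigl(\forall m\ge k_0:\ K_m\le m\bigr),
\]
independent of $s$. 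Iterated conditioning then yields $\mathbf{P}(L_k<\infty)= (1-p)^k$, so once $p>0$ is established we obtain $N<\infty$ almost surely. For $N^i$ the argument is identical, with the base point shifted to $L_0^i = M^{i-1}+1$ and $G(\cdot)=1$ replacing $K_l\le l-s$, reducing the problem to showing $p^0 := \mathbf{P}(G((A_0,\ldots,A_{l-1}),(I_1,\ldots,I_l))=1 \text{ for all } l\ge 1) > 0$. The hypothesis $\mathbf{P}(A-I\ge 4)>0$ enters here: it ensures $m_A<\infty$ and hence $\mathbf{P}(A=m_A)>0$, which combined with $\mathbf{P}(I=m_I)>0$ makes the finite intersection of pointwise conditions for $l<k_0$ have positive probability by independence across positions.

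The hard part will be verifying $p,p^0>0$. By stationarity and a union bound, $1-p \le \sum_{m\ge k_0}\mathbf{P}(K>m)$, and I plan to invoke the tail estimate for $K$ provided in Lemma~\ref{lem:Kmtail}. If the tail decays fast enough that $\sum_{m\ge k_0}\mathbf{P}(K>m)<1$ outright, positivity of $p$ follows. Otherwise I would split the range at some large $L_0$, use the union bound on $[L_0,\infty)$ (where the sum can be made arbitrarily small), and realize the conditions $\{K_m\le m\}$ on the finite initial block $[k_0, L_0)$ by prescribing admissible values of the finitely many $A_j,I_j$ they depend on, which has positive probability by the independence of the i.i.d.~sequences. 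The aggregate $l\ge k_0$ part of $p^0$ is handled analogously.

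For $T,T^i<\infty$ under $\mathbb{P}$, granted that $M,M^i$ are finite almost surely it suffices to prove that $r_t\to\infty$ almost surely on $\mathcal{S}$. Suppose instead that $r_t$ stabilised at some finite $R$; the host at $R+1$ would then retain a remaining immunity of at most $I_{R+1}<\infty$, and by recurrence of the simple symmetric random walks any living parasite would almost surely hit $R+1$ in finite time, either advancing the front (contradicting stabilisation) or dying and lowering the remaining immunity by one. After at most $I_{R+1}$ such hits either the front advances or no parasite remains alive, contradicting respectively the stabilisation at $R$ or the event $\mathcal{S}$. Hence $r_t\to\infty$ on $\mathcal{S}$, so $T$ and each $T^i$ are finite $\mathbb{P}$-almost surely.
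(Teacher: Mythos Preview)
Your overall structure matches the paper's: show that $N$ (resp.\ $N^i$) is stochastically dominated by a geometric random variable by verifying that, conditionally on $L_k=s<\infty$, the event $\{L_{k+1}=\infty\}$ has a fixed positive probability $p$ (resp.\ $p^0$) independent of $s$ and $k$. Your measurability and independence claims here are correct and coincide with the paper's.

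Where the two arguments diverge is in establishing $p,p^0>0$. The paper does \emph{not} invoke the tail bound on $K$ (Lemma~\ref{lem:Kmtail}); instead it exhibits an explicit event that forces $K_m\le m$ for all $m\ge k_0$ (resp.\ forces $\mathcal{G}_0$): namely, that the random walks $\sum_{k=1}^n(A_{k-1}-\beta_A)$ and $\sum_{k=1}^n(\beta_I-I_k)$ remain non-negative for all $n\ge1$ (combined, for $p^0$, with the finitely many pointwise constraints on the first $k_0$ sites). Both walks have strictly positive drift, so this event has positive probability, and it depends only on variables to the right of $s$, giving the required independence. This is self-contained and needs no forward reference. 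Your route via Lemma~\ref{lem:Kmtail} is not circular, but the ``split at $L_0$ and prescribe values'' step has a gap as written: after prescribing values for $A_0,\dots,A_{L_0-1},I_1,\dots,I_{L_0}$, the events $\{K_m>m\}$ for $m\ge L_0$ still depend on those prescribed values through the partial sums, so the unconditional bound $\sum_{m\ge L_0}\mathbf{P}(K>m)<1$ does not transfer automatically to the conditional measure. One fixes this either by monotonicity (prescribing large $A$'s and small $I$'s only makes $\{K_m\le m\}$ more likely) or by noting that on the prescription event $K_m\le m$ is implied by $\sum_{j=L_0}^{m-1}A_j\ge\beta_A(m-L_0)$ and $\sum_{j=L_0+1}^{m}I_j\le\beta_I(m-L_0)$, an event in the fresh variables---which is precisely the paper's random-walk construction, making the detour through Lemma~\ref{lem:Kmtail} superfluous.

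For $T,T^i<\infty$ under $\mathbb{P}$, your recurrence argument (on $\mathcal{S}$ each living parasite eventually reaches the current front, so the remaining immunity at $r_t+1$ is driven down and the front cannot stabilise) is correct and in fact simpler than the paper's, which instead bounds $T^i$ explicitly by $\sum_k\mathds{1}_{M^i=k}\sigma^k$ with $\sigma^k=\sum_{j<k}S_j$ and then shows each $S_j<\infty$ via hitting-time tail estimates.
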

\begin{proof}
We recall $\mathcal{G}_j$ from Definition \ref{Msites} and set $\mathcal{G} : =\mathcal{G}_0$. We give the proof only for $(M^i)_{i\ge0}$ and note that, using Remark \ref{rem:M_lessrestrictive}, the results for $M$ follow analogously.\\
For any $\N_0$-valued random variable $X$ on $\mathbf{\Omega}$ and any $k\ge0$, we recall
 \[
 m(X,k) := \inf\{j\ge k:\mathbf{P}(X= j)>0\}
 \]
 and $m_I = m(I,1), m_A = m(A,\beta_A\vee(4+m(I,1)))$ from Definition \ref{auxiliaryfront}. Since $m_I$ is the smallest value that $I$ can take, the assumption $\mathbf{P}(A-I \ge 4) > 0$ together with $\beta_A < \mathbf{E}[A]$ implies that $m_A$ is well defined. Then, by definition, for $1\le k < k_0$ we have
 \[
 \{G((A_0,\dots,A_{k-1}),(I_1,\dots,I_k)) = 1\} = \bigcap_{j=1}^k\{A_{j-1} \ge m_A,I_j = m_I\}.
 \]

 We observe that $\beta_I > \mathbf{E}[I]\ge m_I$ and by definition \[m_A = m(A,\beta_A\vee(4+m(I,1)))\ge \beta_A.\]
 We obtain that the event
 \begin{equation}\label{eq:M_eqj}
 \bigcap_{j=1}^{k_0}\left\{ \begin{matrix}A_{j-1} \ge m_A,\\I_j = m_I \end{matrix} \right\}\cap\bigcap_{m=1}^\infty\left\{\sum_{k=1}^m A_{k_0+k-1}\ge \beta_A m,\sum_{k=1}^m I_{k_0+k} \le \beta_I m\right\}
 \end{equation}
 is contained in $\mathcal{G}$. The right event is that two independent random walks with step distribution $A-\beta_A$ and $\beta_I-I$ stay non-negative for all times. Since $\mathbf{E}[A]>\beta_A>\beta_I > \mathbf{E}[I]$ this event has positive probability. By assumption, the left event also has positive probability. The independence of the two events shows that $\mathbf{P}(\mathcal{G}) > 0$. \\
 By definition, we have that for $k,m\ge 0,n\ge k_0$ the event  $\{L_k^0 = m, L_{k+1}^0 = m+n\}$ is given by
\[
\{L_k^0=m\}\cap\left(\bigcap_{j=1}^{k_0-1}\left\{ \begin{matrix}A_{m+j-1} \ge m_A,\\I_{m+j} = m_I \end{matrix} \right\}\right)\cap\left(\bigcap_{j=k_0}^{n-1}\{K_{m+j}\le j\}\right)\cap\{K_{m+n} > n\},
\]
using the definition of $K_{m}$ given in \ref{auxiliaryfront}. For $1\le n < k_0$ the event $\{L_k^0 = m, L_{k+1}^0 = m+n\}$ is just given by
\[
 \{L_k^0=m\}\cap\left(\bigcap_{j=1}^{n-1}\left\{ \begin{matrix}A_{m+j-1} \ge m_A,\\I_{m+j} = m_I \end{matrix} \right\}\right)\cap(\left\{A_{m+n-1} < m_A\right\}\cup\{I_{m+n}\neq m_I\}).\]
In particular, going inductively through $k$, the event $\{L^0_k= m\}$ only depends on $A_0,\dots,A_{m-1}$ and $I_1,\dots,I_m$, and the increments $L_1^0-L_0^0,\dots,L_{k+1}^0-L_k^0$ are independent under $\mathbf{P}$ conditionally on $L_k^0 < \infty$. Also, under $\mathbf{P}$ conditionally on $L_k^0 < \infty$, the increments $L_1^0-L_0^0,\dots,L_k^0-L_{k-1}^0$ have the same distribution. This yields that under $\mathbf{P}$, $N^0$ has a geometric distribution with success probability $\mathbf{P}(\mathcal{G}) > 0$ and is thus almost surely finite under $\mathbf{P}$ and hence also under $\mathbb{P}$. An analogous argument, where in view of Remark \ref{rem:M_lessrestrictive} we only need to show the positive probability of the second event in \eqref{eq:M_eqj}, which holds with only the assumption $\mathbf{E}[I] < \mathbf{E}[A]$, shows that also $N$, under $\mathbf{P}$, has a geometric distribution with positive success probability and thus is almost surely finite.\\
 We now investigate $N^{i+1}$ for $i\ge 0$. Since by definition we have
 \[
 \bigcap_{j=1}^\infty\{G((A_{M^{i}},\dots,A_{M^{i}+j-1}),(I_{M^{i}+1},\dots,I_{M^{i}+j})) = 1\},
 \]
the offspring after $M^i$ tend to be larger, and the immunities tend to be smaller. Hence, for any site $L^{i+1}_k > M^i$, it becomes more probable to never find a bad configuration above $L^{i+1}_k$, i.e., achieve the event $\{L_{k+1}^{i+1} = \infty\}$. We see that $N^{i+1}$ is stochastically dominated by a geometric distribution with success probability $\mathbf{P}(\mathcal{G})$ and is thus also almost surely finite under $\mathbf{P}$ and thus also under $\mathbb{P}$. \\
We now show that also $T^0$ is almost surely finite under $\mathbb{P}$. We saw above that $M^0<\infty$ almost surely, and on the event $\{M^0 = k\}$, the time $T^0$ is bounded by the following random time $\sigma^k$. For $j\ge 0$ let
 \[
 S_j := \inf\left\{t\ge 0\vert\, \forall 0\le x\le j\,\forall 0\le i \le A_x: \max_{0\le s\le t}Y^{x,i}_s + x \ge j+1\right\}
 \]
 be the time until all parasites that have been born on the first $j$ sites reach site $j+1$ 
and 
 \[
 \sigma^k := \sum_{j=0}^{k-1} S_j
 \]
 be the accumulation of these times.\\ 
Conditioned on parasites' survival, the time $\rho_{j+1}-\rho_j$ that the front needs to jump from $j$ to $j+1$ is at most $S_j$.
 In particular, this implies that the time $T^i$ to reach site $M^i$ is bounded by
 \[
 T^i \le \sum_{k=0}^\infty\mathds{1}_{M^i = k}\sigma^k
 \]
 $\mathbb{P}$-almost surely. Analogously, the time $T$ to reach site $M+k_0$ is bounded by
\[
T \le \sum_{k=0}^\infty\mathds{1}_{M = k}\sigma^{k+k_0}
\]
$\mathbb{P}$-almost surely.
 
Using \cite[Theorem A.1]{RS2004}, we can estimate the arrival times $\tau^{x,i}_k$ of random walks and derive for large enough $t$
 \[
 \begin{split}
 \mathbf{P}&(S_j> t) \\&\le \mathbf{P}\left(\max_{0\le x\le j}A_x > \log t\right)+\mathbf{P}\left(\max_{0\le x\le j}A_x \le \log t,\bigcup_{x=0}^j\bigcup_{i=1}^{\lfloor\log t\rfloor}\{\tau^{x,i}_{j+1-x}> t\}\right) \\
 &\le \frac{j\mathbf{E}[A]}{\log t} + \sum_{x=0}^j\sum_{i=1}^{\lfloor\log t\rfloor} \frac{2(j+1-x)}{\sqrt{2\pi t}}\cdot 2 \\
 &\le\frac{j\mathbf{E}[A]}{\log t}+2\log t\frac{(j+1)(j+2)}{\sqrt{2\pi t}} \to 0~~(t\to\infty),
 \end{split}
 \]
 and hence
 $S_j < \infty$ $\mathbf{P}$-almost surely and thus also $\sigma^k < \infty$ $\mathbf{P}$-almost surely. \\
 Consequently, also $T_i$ and $T$ are $\mathbb{P}$-almost surely finite.
\end{proof}
\subsection{Auxiliary results}
A key for analyzing the sequences $(M^i)_{i\ge0}$ and $(\nu_n)_{n\in\Z}$ is an estimate of the tail of $K$. Using the moment assumptions on $I$ and $A\ge 0$ a.s., we obtain the estimate for the probability of a bad configuration of length $k\ge k_0$
\[
\mathbb{P}((A_{n-1},\dots,A_{n-k})\notin\underline{A}^\text{good}(k)\text{ or } (I_n,\dots,I_{n-k+1})\notin\underline{I}^\text{good}(k)) = \mathcal{O}(k^{-\alpha}).
\]
Since $K$ is the smallest distance at which a good configuration occurs, the event $\{K>n\}$ is contained in the event that all configurations with length $k=k_0,k_0+1,\dots,n$ are bad. Using classical results of Sparre-Andersen (c.f. \cite{A1954}) on exactly these types of probabilities will allow us to improve the just-obtained estimate and give the following lemma.
\theoremstyle{plain}\begin{lemma}\label{lem:Kmtail}
 For all $\varepsilon > 0$ there is a $C_K = C_K(\varepsilon)\ge 0$ such that for all $n\ge k_0$ we have
 \[
 \mathbf{P}(K > n) \le C_K n^{-(\alpha+1-\varepsilon)}.
 \]
\end{lemma}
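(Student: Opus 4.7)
The key observation is that, for $n \ge k_0$, Definition \ref{auxiliaryfront} gives
\[
\{K > n\} \subset \bigcap_{k = k_0}^{n} \bigl(\{T^A_k < 0\} \cup \{T^I_k < 0\}\bigr),
\]
where $T^A_k := \sum_{j=1}^{k}(A_{n-j} - \beta_A)$ and $T^I_k := \sum_{j=1}^{k}(\beta_I - I_{n-j+1})$ are two \emph{independent} random walks with strictly positive drifts $\mu_A - \beta_A$ and $\beta_I - \mu_I$ respectively. My plan is to decompose the right-hand side according to the last index $k \in [k_0, n]$ at which $T^A_k < 0$, and then apply a Sparre--Andersen-type generating-function identity to the $T^I$ walk to control the resulting long stretch on which $T^I$ must remain negative.

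First I would record two one-sided marginal tail estimates. Since $A \ge 0$ with $\mathbf{E}[A] > \beta_A$, truncating $A$ at a sufficiently large level keeps the mean above $\beta_A$ while making all exponential moments finite, so a Hoeffding/Chernoff bound yields $\mathbf{P}(T^A_k < 0) \le e^{-c_A k}$. On the $T^I$ side, $\mathbf{E}[I^{2\alpha}] < \infty$ and the Marcinkiewicz--Zygmund inequality give $\mathbf{E}\bigl[|T^I_k - k(\beta_I - \mu_I)|^{2\alpha}\bigr] \le C k^\alpha$, so Markov yields $\mathbf{P}(T^I_k \le 0) \le C k^{-\alpha}$.

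The crucial step is the Sparre--Andersen improvement. Spitzer's identity
\[
\sum_{n \ge 0} z^n\, \mathbf{P}\bigl(T^I_k \le 0\; \forall\, 1 \le k \le n\bigr) = \exp\!\left( \sum_{n \ge 1} \frac{z^n}{n}\, \mathbf{P}(T^I_n \le 0) \right)
\]
combined with $\mathbf{P}(T^I_n \le 0) = O(n^{-\alpha})$ (and $\alpha > 1$) shows that the right-hand side is continuous on the closed unit disk with singular behaviour of order $(1 - z)^\alpha$ at $z = 1$; applying the standard transfer theorems of Flajolet--Odlyzko yields, for any fixed $\varepsilon > 0$,
\[
\mathbf{P}\bigl(T^I_k \le 0\; \forall\, 1 \le k \le n\bigr) \le C\, n^{-(\alpha + 1 - \varepsilon)}.
\]
A short strong-Markov argument -- conditioning on $T^I_{M-1}$, whose law is tight in $M$ uniformly up to a polynomial tail controlled by $\mathbf{E}[I^{2\alpha}]$ -- extends this to $\mathbf{P}(T^I_k < 0\; \forall\, k \in [M, M+m]) \le C\, m^{-(\alpha + 1 - \varepsilon)}$ uniformly in $M \ge 1$.

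Finally, letting $L^A := \max\bigl(\{k_0 - 1\} \cup \{k \in [k_0, n] : T^A_k < 0\}\bigr)$, on $\{K > n, L^A = L^*\}$ the walk $T^A$ is non-negative on $[L^* + 1, n]$, so $\{K > n\}$ forces $T^I_k < 0$ for every $k \in [L^* + 1, n]$. Summing over $L^*$ and exploiting the independence of $T^A$ and $T^I$,
\[
\mathbf{P}(K > n) \le \mathbf{P}\bigl(T^I_k < 0\; \forall\, k \in [k_0, n]\bigr) + \sum_{L^* = k_0}^{n} e^{-c_A L^*}\, C\,(n - L^*)^{-(\alpha+1-\varepsilon)}.
\]
Splitting the sum at $L^* = n/2$ -- the small-$L^*$ piece is absorbed into the polynomial factor via a convergent geometric sum in $L^*$, and the large-$L^*$ piece is killed by $e^{-c_A n/2}$ -- reduces everything to the claimed bound $C_K\, n^{-(\alpha + 1 - \varepsilon)}$. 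The main technical obstacle is the Tauberian extraction of the exponent $\alpha + 1 - \varepsilon$ from Spitzer's identity, i.e.\ obtaining the polynomial gain of a full factor of $n$ over the naive marginal estimate $\mathbf{P}(T^I_n \le 0) = O(n^{-\alpha})$; the $\varepsilon$-loss reflects precisely the slowly varying corrections one loses by starting only from an upper bound on $\mathbf{P}(T^I_n \le 0)$ rather than an exact asymptotic.
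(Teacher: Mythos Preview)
Your approach and the paper's share all the essential ingredients: exponential decay for $\mathbf{P}(T^A_k < 0)$ via Chernoff, the $O(n^{-\alpha})$ marginal for $\mathbf{P}(T^I_n < 0)$, and the gain of one extra power of $n$ from Sparre--Andersen/Spitzer. (The paper extracts the coefficient bound $\mathbf{P}(T^I_k \le 0\ \forall k \le n) = O(n^{-(\alpha+1)})$ by a short elementary lemma on power-series coefficients rather than by Flajolet--Odlyzko transfer, whose analyticity hypotheses you do not verify.) The substantive difference is the combination step: instead of decomposing by the last index $L^A$ at which $T^A < 0$, the paper sets $N_n^A = |\{k \le n : T^A_k < 0\}|$ and $N_n^I$ analogously, observes that $N_n^A + N_n^I \ge n - k_0 + 1$ on $\{K > n\}$, and splits according to $N_n^A \gtrless \lambda \log n$. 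This bypasses your uniform-in-$M$ step entirely.

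That step is where your argument has a genuine gap. The law of $T^I_{M-1}$ is \emph{not} tight uniformly in $M$: its mean is $(M-1)(\beta_I - \mu_I) \to +\infty$. The problematic case is $T^I_{M-1} = -a < 0$, where the restarted walk $W$ need only stay below the positive level $a$; the crude bound $\mathbf{P}(W_m < a) = O(m^{-\alpha})$ is a full power short of what you assert. One can recover $\mathbf{P}(\max_{j \le m} W_j < a) \le C\,a\,m^{-(\alpha+1-\varepsilon)}$ via a ladder-epoch argument (the hitting time of level $a$ is a sum of $O(a)$ i.i.d.\ copies of the first ascending ladder epoch, each with the Sparre--Andersen tail) and then integrate the factor $a$ against the tail of $(T^I_{M-1})^{-}$, but this is a real argument, not the one you sketch. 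A much simpler repair within your own decomposition: $\{T^I_k < 0\ \forall k \in [L^\ast+1, n]\}$ forces $N_n^I \ge n - L^\ast$, and Sparre--Andersen's inequality $\mathbf{P}(N_n^I = k) \le \mathbf{P}(N_k^I = k) = O(k^{-(\alpha+1)})$ yields directly $\mathbf{P}(N_n^I \ge n - L^\ast) \le C(L^\ast+1)(n - L^\ast)^{-(\alpha+1)}$ for $L^\ast \le n/2$; the extra factor $(L^\ast+1)$ is then harmlessly absorbed by your weight $e^{-c_A L^\ast}$.
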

The proof will be given in Section \ref{pro:Kmtail}.\\
With the tail of $K$ at hand, we can begin to analyze the sequence $(M^i)_{i\ge0}$. As seen in the proof of Lemma \ref{lem:welldef}, the number of trials $N^0$ stochastically dominates the amount of trials $N^i$ for $i\ge 1$. Hence, the increments $M^{i+1}-M^i$ should be stochastically dominated by $M^0$. In addition, the event $\mathcal{G}_{M^{i+1}}$, that is, the condition of only good offspring numbers and immunities configurations after $M^{i+1}$, overrides the condition $\mathcal{G}_{M^i}$ of the previous good site. This means that the distribution of offspring numbers and immunities after $M^i$ is the same for all $i\ge0$, implying that $(M^{i+1}-M^i)_{i\ge0}$ is i.i.d.~under $\mathbb{P}$. Using the tail of $K$, we can also bound the tail of $M^0$ and thus the tail of $(M^{i+1}-M^i)_{i\ge0}$. Precisely, we have the following lemma.
\theoremstyle{plain}\begin{lemma}\label{lem:M_tail}
 The sequence $(M^{i+1}-M^i)_{i\ge 0}$ is i.i.d.~under $\mathbb{P}$, and for any $\varepsilon > 0$ there is a $C\ge0$ such that for all $l\in\N$ we have \[
 \mathbb{P}(M^{i+1}-M^i \ge l) \le \mathbb{P}(M^0 \ge l) \le Cl^{-(\alpha-\varepsilon)}.
 \]
\end{lemma}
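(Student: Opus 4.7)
The proof naturally splits into two main components: establishing the i.i.d.~renewal structure of the increments (which also supplies the stochastic domination $M^{i+1}-M^i \le_{\mathrm{st}} M^0$), and proving the polynomial tail bound on $M^0$.

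For the renewal structure, the plan is to view each $M^i$ as a regeneration time for the i.i.d.~sequence $\xi_l := (A_l, I_{l+1})$, $l\ge0$. The key observation is that the event $\{M^i = j\}$ factors as the intersection of a past event (depending only on $(\xi_l)_{l<j}$, through the sequence of intermediate sites $L^i_0 < L^i_1 < \cdots < L^i_{N^i} = j$, whose positions and finiteness are each determined by finitely many consecutive $\xi_l$) and the future event $\mathcal{G}_j$ (depending only on $(\xi_l)_{l\ge j}$). Past and future being independent a priori under $\mathbf{P}$, they remain independent conditional on $\{M^i = j\}$, and the conditional law of $(\xi_{j+k})_{k\ge 0}$ is the shift of the law of $(\xi_k)_{k\ge 0}$ conditioned on $\mathcal{G}_0$; this law is the same for every $i$ by shift-invariance. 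Since $M^{i+1}-M^i$ is a measurable function of $(\xi_{M^i+1+k})_{k\ge 0}$ alone, this yields the i.i.d.~conclusion under $\mathbf{P}$, and one checks that the same product-factorization applies to $\mathcal{S}\cap\{M^i = j\}$ since $\mathcal{S}$ is measurable in the same sequence. For the stochastic domination, note that $M^1-M^0-1$ is obtained by applying the $M^0$-procedure to the shifted $\mathcal{G}_0$-conditioned sequence, whose initial coordinates are biased toward larger $A$ and smaller $I$; a coupling with the unconditioned i.i.d.~measure (using that the defining conditions for good windows are increasing in the $A$'s and decreasing in the $I$'s) then makes the $M^0$-procedure terminate no later on the shifted sequence.

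For the tail bound, recall from the proof of Lemma \ref{lem:welldef} that under $\mathbf{P}$, $M^0 = \sum_{k=0}^{N^0-1} D_k$ with $N^0$ geometric of parameter $p = \mathbf{P}(\mathcal{G}) > 0$ and $(D_k)$ i.i.d., distributed as $L^0_1$ conditioned on $L^0_1 < \infty$. For $n \ge k_0$, the event $\{L^0_1 = n\}$ is contained in the event that the $G$-window of length $n$ starting at $0$ is bad; by the definition of $G$, this is exactly the event that no good sub-window of length $\in [k_0,n]$ ends at position $n$, which coincides with $\{\tilde K_n > n\}$ for the $k_0$-truncated analogue $\tilde K_n$ of $K_n$. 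A direct adaptation of the Sparre-Andersen-type argument behind Lemma \ref{lem:Kmtail} then gives $\mathbf{P}(\tilde K_n > n) \le Cn^{-(\alpha+1-\varepsilon)}$, so summing over $n\ge l$ yields $\mathbf{P}(l\le L^0_1 < \infty) \le Cl^{-(\alpha-\varepsilon)}$ and hence the same polynomial tail for $D$. For the geometric sum, split at $T = \lceil c\log l\rceil$ with $c$ large enough that $(1-p)^T \le l^{-\alpha}$, then use a union bound on the $T$ summands to obtain
\[
\mathbf{P}(M^0 \ge l) \le (1-p)^T + T\,\mathbf{P}(D \ge l/T) \le C'\,l^{-(\alpha-\varepsilon')}
\]
for any $\varepsilon' > \varepsilon$. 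Transferring to $\mathbb{P}$ via $\mathbb{P}(\cdot) \le \mathbf{P}(\cdot)/\mathbf{P}(\mathcal{S})$ loses only a constant factor.

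The main obstacle is the past-future factorization of $\{M^i = j\}$ in the first component, and in particular its compatibility with the conditioning on $\mathcal{S}$: each intermediate site $L^i_k$ is itself a random stopping position, and one must verify that their entire collection together with the tail condition $\mathcal{G}_j$ cleanly decouples past from future, and that this decoupling survives the additional conditioning on survival. Once this is in place, the i.i.d.~property, the stochastic domination, and the tail bound all follow by standard arguments, with Lemma \ref{lem:Kmtail} providing the key quantitative input.
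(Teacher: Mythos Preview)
Your proposal is correct and follows essentially the same route as the paper: the past/future factorization $\{M^i=j\}\cap\mathcal{S}=B^{<j}\cap\mathcal{G}_j$ for the i.i.d.\ structure and stochastic domination, and the geometric-sum decomposition $M^0=\sum_{k<N^0}D_k$ combined with Lemma~\ref{lem:Kmtail} for the tail. The only cosmetic difference is that the paper cuts the increments at $l^\lambda$ (with $\lambda\uparrow 1$) rather than at $l/T$ with $T\sim c\log l$, and your observation that one really needs the $k_0$-truncated $\tilde K_n$ (rather than $K_n$) is a refinement the paper glosses over; one point to make explicit in your write-up is that on $\mathcal{G}_j$ the survival event $\mathcal{S}$ reduces to the past-measurable condition $\bigcap_{n\le j}\{S_n\ge 0\}$, which is what makes the factorization compatible with the conditioning on $\mathcal{S}$.
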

The proof will also be given in Section \ref{pro:M_tail}. Our estimate of the tail of $M^{i+1} - M^i$ is by a factor of order $\frac{1}{l}$ heavier than the tail of $K$ because the event $\{\infty > L_1^0 > l\}$ is contained in the event that $\{K_j> j\}$ for some $j > l$. A union bound over these events yields the estimate.\\
Next, we consider the sequence $\{\nu_n:n\in\Z\}$. By construction, each jump time $\nu_n$ only depends on random variables with indices inside $[n-K_n,n]$. We will make use twofold of the just-obtained tail estimates for $K$. Firstly, we will establish tail estimates for $\nu$ by showing that if $K$ is not too large, the surplus of parasites reaches the boundary sufficiently fast. Secondly, the tail estimates of $K$ allow us to control the dependencies among the jump times $(\nu_n)_{n\in\Z}$, since conditionally on $K_{n+k}\le k$, the variables $\nu_{n+k}$ and $\nu_n$ depend on different sets of independent random variables. We begin by stating the former result, which reads as follows.
\theoremstyle{plain}\begin{lemma}\label{NuMoments}
For any $q < \frac{(4\wedge\alpha)+1}{2}$, there is a $C>0$ such that for all $t>0$ we have
\[
\mathbf{P}(\nu > t) \le C t^{-q}.
\]
In particular we have $\nu\in L^q(\Omega,\mathcal{F},\mathbf{P})$ for all $ q < \frac{(4\wedge\alpha)+1}{2}$.
\end{lemma}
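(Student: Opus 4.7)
The plan is to decompose $\mathbf{P}(\nu > t) = \sum_{k\ge1}\mathbf{P}(\nu > t, K = k)$ and to estimate each summand by conditioning on $K$ and on the offspring counts $(A_{1-j})_{j=1}^{k}$. On the event $\{K=k\}$ the label set $\mathcal W_1$ consists of $N := \sum_{j=1}^{k} A_{1-j}$ walkers, each starting at some distance $d\le k$ from site $1$, and $\{\nu > t\}$ is exactly the event that at least $M := N - Z_k + 1$ of them have failed to reach site $1$ by time $t$, where $Z_k = m_I k$ for $k<k_0$ and $Z_k = \lceil\beta_I k\rceil$ for $k\ge k_0$. The standard bound $\mathbf{P}(\tau^d > t) \le 2d/\sqrt{2\pi t} \le p_k(t) := 2k/\sqrt{2\pi t}$ for all relevant walkers, together with the lower bound on the surplus on $\{K=k\}$, namely $M\ge 4k+1$ when $k<k_0$ (using $m_A - m_I \ge 4$) and $M\ge (\beta_A-\beta_I) k \ge \alpha+1$ when $k\ge k_0$ (by the choice \eqref{qkcond}), set the stage for a Chernoff estimate.

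Next, I apply Chernoff's inequality to the number $Y$ of non-hitters, which, conditionally on $K$ and the $A$-values, is a sum of independent Bernoullis with per-walker failure probability at most $p_k(t)$. This gives
\[
\mathbf{P}(Y \ge M \mid K = k, \text{the }A\text{'s}) \le \Bigl(\frac{e N p_k(t)}{M}\Bigr)^{M}.
\]
A short calculation shows that $N \mapsto (eNp_k(t)/M)^{M}$ is monotonically decreasing in $N$ on $\{K=k\}$, because the surplus $M = N - Z_k + 1$ grows linearly with $N$ while the per-walker miss probability $p_k(t)$ is unchanged. Hence the bound is maximised at the lower extreme of $N$, namely $N \sim \beta_A k$ for $k \ge k_0$ and $N \sim m_A k$ for $k < k_0$. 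This yields
\[
\mathbf{P}(\nu > t, K=k) \le \mathbf{P}(K=k)\,(C\,k/\sqrt{t})^{M_k}
\]
with $M_k \ge 4k+1$ for $k<k_0$ and $M_k \ge (\beta_A-\beta_I) k$ for $k\ge k_0$. Crucially, this bound uses no moment assumption on $A$ beyond integrability: any heavy tail of $N$ is absorbed by the larger surplus it creates.

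Finally, I sum over $k$. The finitely many terms with $1\le k<k_0$ are dominated by $k=1$, contributing $O(t^{-5/2})$ via $M_1 \ge 5$. For $k\ge k_0$, Lemma \ref{lem:Kmtail} supplies $\mathbf{P}(K=k)\le C\,k^{-(\alpha+1-\varepsilon)}$, and the very fast geometric decay of $(Ck/\sqrt t)^{(\beta_A-\beta_I)k}$ in $k$ (valid while $k\le \sqrt t/C$; for larger $k$ the tail of $K$ alone suffices) makes the sum dominated by its $k=k_0$ term, which contributes $O(t^{-(\alpha+1)/2})$. Combining the two regimes and absorbing the $\varepsilon$ loss, I obtain $\mathbf{P}(\nu > t) \le C t^{-q}$ for any $q < (4\wedge\alpha+1)/2$. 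The main technical obstacle is the absence of moment assumptions on $A$, and this is overcome precisely by the monotonicity of the Chernoff estimate in $N$ together with the polynomial tail of $K$ from Lemma \ref{lem:Kmtail}.
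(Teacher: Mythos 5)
Your proof is correct and takes a genuinely different route from the paper's. After conditioning on $\{K=k\}$ and the offspring values, the paper reduces, via a coupling, to the deterministic worst-case configuration of exactly $\lceil\beta_A k\rceil$ walkers all at distance $k$, estimates the resulting binomial tail with Brockwell's expansion together with a central-binomial bound, and finally performs an explicit derivative calculation to show the bound is maximized at $k = k_0$ over $[k_0,\lambda(t)]$. You instead use the elementary union bound $\mathbf{P}(\mathrm{Bin}(N,p)\ge M)\le\binom{N}{M}p^M\le(eNp/M)^M$ and observe that it is monotone decreasing in $N$ (with $M = N - Z_k+1$ moving with $N$), which localizes the worst case at the minimum walker count without the coupling and directly yields $M_k \ge 4k+1$ for $k<k_0$ and $M_k \ge (\beta_A-\beta_I)k \ge \alpha+1$ for $k\ge k_0$; you also sum over $k$ where the paper takes a max. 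Both routes reach the same exponent $q<\frac{(4\wedge\alpha)+1}{2}$, but yours bypasses the Brockwell machinery.

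The one step that requires a careful check rather than a heuristic is the claimed monotonicity. Writing $u = M$, so $N/M = 1 + (Z_k - 1)/u$, one has $\frac{d}{du}\log\bigl((eNp/M)^M\bigr) = \log(ep) + \log\bigl(1+(Z_k-1)/u\bigr) - \frac{(Z_k-1)/u}{1+(Z_k-1)/u}$. Since $N_{\min}/M_{\min}$ is bounded by a constant depending only on $\beta_A,\beta_I$ (resp.\ $m_A,m_I$), this derivative is negative only once $p_k(t) = 2k/\sqrt{2\pi t}$ is small enough, which holds for $k\le\lambda(t) = \lfloor t^{1/2-\varepsilon_1}\rfloor$ and $t$ large --- exactly the restriction the paper imposes via the choice of $t_0,t_1$ and then absorbs into the final constant. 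The same remark applies to your claim that the $k$-sum over $[k_0,\lambda(t)]$ is dominated by its first term: the base $Ck/\sqrt{t}$ increases with $k$ even as the exponent grows, and the geometric decay only follows from $\log(Ck/\sqrt{t})\le -\varepsilon_1\log t + O(1)$ on that range. With these points quantified, your argument is a valid and somewhat more elementary alternative.
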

The proof will be given in Section \ref{pro:NuMoments}. We note that the exponent in the tail of $\nu$ is half as large as in the tail of $K$, because a typical random walk is at a distance of $\sqrt{t}$ at time $t$. Hence, the event $\{\nu_n\le t\}$ is only likely if $\{K_n \ll \sqrt{t}\}$. On the other hand, the different treatment of distance $k_0$ in the definition of $K$ can be explained as follows. On the event $\{K_n=k\}$ with $\sqrt{t}\gg k\ge k_0$, at most $\lfloor\beta_I k\rfloor$ parasites out of a set of at least $\lceil\beta_A k\rceil$ independently moving parasites need to reach a one-sided boundary, which is at a distance at most $k$ from their starting point. At time $t$, a single parasite has not reached the boundary with probability $\mathcal{O}\left(\frac{k}{\sqrt{t}}\right)$, and thus, by independence, the probability that at least $(\beta_A-\beta_I)k$ parasites do not reach the boundary is of order $\mathcal{O}\left(\frac{k}{\sqrt{t}^{(\beta_A-\beta_I)k}}\right)$. Using $k\ll \sqrt{t}$, this expression will be maximized in $k= k_0$, and choosing $k_0 =\left\lceil\frac{\alpha+1}{\beta_A-\beta_I}\right\rceil$ yields the desired tail. For $\{K < k_0\}$ we can use the strong condition on the offspring and immunity configurations in the definition of $K_n$ to obtain that there are at least $Km_A$ parasites, and we wait for $Km_I$ many to reach a one-sided boundary at a distance at most $k_0-1$. By definition, we have $Km_A-Km_I \ge 4$, which, analogously as above, gives that the probability of less than $Km_I$ parasites having reached the boundary is bounded by $\mathcal{O}\left(\frac{1}{\sqrt{t}^{4+1}}\right)$.\\
Next, we deal with the sequences $(\overline{\nu}_n^i)_{n\ge 1}$ and $(\nu_{M+n})_{n\ge1}$. By Lemma \ref{lem:M_tail}, each sequence $(\overline{\nu}^i_n)_{n\ge 1}$ has the same distribution, and thus it suffices to analyze the case $i=0$, and we abbreviate $\overline{\nu}_n := \overline{\nu}^0_n$ for any $n\ge1$. Also, we only perform the arguments for $(\overline{\nu}_n)_{n\ge1}$ and note that similar results, with identical proofs, hold for $(\nu_{M+n})_{n\ge k_0}$. \\
By definition, the offspring and immunity configurations above $M^0$ are always good in the sense that $G((A_{M^0},\dots,A_{M^0+n-1}),(I_{M^0+1},\dots,I_{M^0+n})) = 1$. Clearly, this restriction affects the distribution of $\overline{\nu}_n$, and therefore the distribution of $\overline{\nu}_n$ differs from that of $\nu_n$. However, this difference diminishes fast as $n$ tends to infinity. A key observation is that the condition of having only good configurations above $M^0$ is, in fact, only a restriction on the first few sites after $M^0$. For large $n$, a bad configuration between $M^0$ and $M^0 + n$ would be a large deviation for $(A_{M^0},\dots,A_{M^0+n-1})$ or $(I_{M^0+1},\dots,I_{M^0+n})$, which is unlikely to happen. Because $\overline{\nu}_n$ only depends on variables with an index greater than $n-K_n$, this means that the distribution of $\overline{\nu}_n$ is only influenced by the restriction that all configurations above $M^0$ are good, by ruling out the anyway unlikely event of a large $K_n$. We thus obtain the following lemma.
\theoremstyle{plain}\begin{lemma}\label{Tdistconv}
The joint distributions of $(\{\overline{\nu}_n,\overline{\nu}_{n+1},\dots\})_{n\ge1}$ under $\mathbb{P}$ converge to the joint distribution of $\{\nu_1,\nu_2,\dots\}$ under $\mathbf{P}$. In particular, for any $\varepsilon>0$ there is a $C\ge 0$ such that
 \[
 \vert\mathbb{P}((\overline{\nu}_n,\overline{\nu}_{n+1},\dots) \in E)-\mathbf{P}((\nu_1,\nu_2,\dots)\in E)\vert \le Cn^{-\alpha+\varepsilon}
 \]
for all $n\ge k_0,E\in\mathcal{B}([0,\infty)^\N)$.
\end{lemma}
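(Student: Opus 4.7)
The plan is to decompose $\mathbb{P}((\overline{\nu}_n, \overline{\nu}_{n+1}, \ldots) \in E)$ according to the value of $M^0$, exploit that the family $\{(A_x, I_{x+1}, Y^{x,\cdot})\}_{x\in\Z}$ is i.i.d.\ across disjoint index ranges, apply its shift invariance under $\mathbf{P}$, and then decouple the conditioning event $\mathcal{G}_0$ from the tail sequence $(\nu_n, \nu_{n+1}, \ldots)$ for large $n$. The tail estimate of Lemma~\ref{lem:Kmtail} drives both the $\sigma$-algebra separation and the residual error.

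First I would write $\{M^0 = k\} = B_k \cap \mathcal{G}_k$, where $B_k \in \sigma(A_x, I_{x+1}: x < k)$ and $\mathcal{G}_k \in \sigma(A_x, I_{x+1}: x \ge k)$, and use that $\mathcal{G}_{M^0}$ implies survival once the front reaches $M^0$ to factor $\mathcal{S} \cap \{M^0 = k\} = B_k \cap \tilde{\mathcal{S}}_k \cap \mathcal{G}_k$, with the hitting event $\tilde{\mathcal{S}}_k$ measurable with respect to the initial data together with the variables of index strictly less than $k$. On the event $\mathcal{A}_{k,n} := \bigcap_{j\ge 0}\{K_{k+n+j} \le n+j\}$ the tail $(\nu_{k+n+j})_{j\ge 0}$ depends only on variables of index at least $k$ and is therefore independent of $B_k \cap \tilde{\mathcal{S}}_k$; a union bound via Lemma~\ref{lem:Kmtail} yields $\mathbf{P}(\mathcal{A}_{k,n}^c) \le C n^{-\alpha+\varepsilon}$. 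Summing over $k$, using $\sum_k \mathbf{P}(B_k \cap \tilde{\mathcal{S}}_k) = \mathbf{P}(\mathcal{S})/\mathbf{P}(\mathcal{G}_0)$, and applying stationarity under $\mathbf{P}$ (the shift by $k$) to replace $\mathbf{P}((\nu_{k+n},\ldots)\in E, \mathcal{G}_k, \mathcal{A}_{k,n})$ by $\mathbf{P}((\nu_n,\nu_{n+1},\ldots)\in E, \mathcal{G}_0, \tilde{\mathcal{A}}_n)$, one arrives at
\[
\mathbb{P}((\overline{\nu}_n, \overline{\nu}_{n+1}, \ldots) \in E) \;=\; \mathbf{P}\bigl((\nu_n, \nu_{n+1}, \ldots) \in E \,\big|\, \mathcal{G}_0\bigr) \;+\; O(n^{-\alpha+\varepsilon}).
\]

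Since by stationarity $\mathbf{P}((\nu_n,\nu_{n+1},\ldots)\in E) = \mathbf{P}((\nu_1,\nu_2,\ldots)\in E)$, it remains to establish the approximate factorization $\bigl|\mathbf{P}((\nu_n,\ldots)\in E, \mathcal{G}_0) - \mathbf{P}(\mathcal{G}_0)\,\mathbf{P}((\nu_n,\ldots)\in E)\bigr| \le C n^{-\alpha+\varepsilon}$. To that end I would replace $\mathcal{G}_0$ by its truncation $\mathcal{G}_{0,\lfloor n/2\rfloor - 1}$, which is measurable with respect to the variables of index below $n/2 - 1$; the truncation error is controlled by $\sum_{m\ge n/2}\mathbf{P}(G_m = 0)$, and the same Sparre--Andersen argument behind Lemma~\ref{lem:Kmtail} gives $\mathbf{P}(G_m = 0) = O(m^{-\alpha-1+\varepsilon})$, hence the sum is $O(n^{-\alpha+\varepsilon})$. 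On the event $\tilde{\mathcal{B}}_n := \bigcap_{j\ge 0}\{K_{n+j}\le (n+j)/2\}$ the tail $(\nu_{n+j})_{j\ge 0}$ is measurable with respect to the variables of index at least $n/2$ and is therefore independent of $\mathcal{G}_{0,\lfloor n/2\rfloor-1}$; Lemma~\ref{lem:Kmtail} also gives $\mathbf{P}(\tilde{\mathcal{B}}_n^c) \le Cn^{-\alpha+\varepsilon}$, and chaining these estimates produces the required factorization. The main obstacle is the delicate $\sigma$-algebra bookkeeping in this decoupling step and the derivation of the auxiliary tail bound on $\mathbf{P}(G_m = 0)$, since neither $\mathcal{G}_0$ nor $(\nu_n,\ldots)$ naturally lives on an index range disjoint from the other without a further truncation at $n/2$.
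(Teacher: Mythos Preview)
Your proposal is correct and follows essentially the same route as the paper. One simplification: your first reduction is in fact \emph{exact} (no $O(n^{-\alpha+\varepsilon})$ error is incurred there), because $\mathcal{G}_k\subset\mathcal{A}_{k,n}$ by Remark~\ref{rem:M_lessrestrictive}, so the event $\mathcal{A}_{k,n}$ is superfluous; the paper records this identity $\mathbb{P}((\overline{\nu}_n,\dots)\in E)=\mathbf{P}((\nu_n,\dots)\in E\mid\mathcal{G}_0)$ as a separate lemma, and then performs the decoupling of $\mathcal{G}_0$ from $(\nu_n,\nu_{n+1},\dots)$ exactly as you outline---truncating $\mathcal{G}_0$ to $\mathcal{G}_{0,n-k}$ with $k\approx n/2$, restricting via $\widetilde{K}_n:=\inf_{j\ge 0}(n+j-K_{n+j})\ge n-k$ (your $\tilde{\mathcal{B}}_n$), and bounding both residual terms directly by $\sum_{j}\mathbf{P}(K_j>j)$ from Lemma~\ref{lem:Kmtail}.
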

The proof will be given in Section \ref{pro:Tdistconv}, and we note that the same result holds for $(\nu_{M+n})_{n\ge k_0}$ with the same proof. The upper bound is derived from estimating the probability of a large $K_{n+j} > n+j$ for some $j\ge 0$ with a union bound. \\
Finally, we need to establish a weak dependence among the random variables $(\overline{\nu}_n)_{n\ge1}$. As we will see, they form a $\phi$-mixing sequence. This is not surprising because for $\overline{\nu}_{n+k}$ to be influenced by $\overline{\nu}_n$, we must have $K_{n+k}>k$, since otherwise the two variables use different parts of $\mathbf{A},\mathbf{I},\mathbf{Y}$. However, because the variables are defined above $M^0$, the immunities and offspring numbers used are not i.i.d.~Similarly as in Lemma \ref{Tdistconv}, we show that this restriction also diminishes for large $k$, which yields the following lemma.
\theoremstyle{plain}\begin{lemma}\label{conditinallyphimixing}
For any $\varepsilon \in (0,\alpha)$ the sequence $(\overline{\nu}_n)_{n\ge 1}$ is $\phi$-mixing under $\mathbb{P}$ with rate
 \[
 \sup_{\substack{E\in\mathcal{F}_{\le m},\mathbb{P}(E)>0,\\B\in\mathcal{F}_{\ge m+n}}} \vert \mathbb{P}(B\vert E)-\mathbb{P}(B)\vert \le \phi(n):= C n^{-\alpha+\varepsilon},
 \]
 where
 \[
 \mathcal{F}_{\le m} := \sigma(\overline{\nu}_j:1\le j\le m),~~\mathcal{F}_{\ge m} := \sigma(\overline{\nu}_j:j\ge m).
 \]
\end{lemma}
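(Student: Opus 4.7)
The plan is to follow the strategy used in the proof of Lemma \ref{Tdistconv}. Fix $m, n \ge 1$, $E \in \mathcal{F}_{\le m}$ with $\mathbb{P}(E) > 0$, and $B \in \mathcal{F}_{\ge m+n}$; the aim is to show $|\mathbb{P}(B \mid E) - \mathbb{P}(B)| \le C n^{-\alpha+\varepsilon}$ uniformly in $E, B$. The central object is the ``separation event''
\[
H := \bigcap_{j \ge 0}\{K_{M^0+m+n+j} \le n+j\}.
\]
On $\mathcal{G}_{M^0} \cap H$ (with $\mathcal{G}_{M^0}$ holding a.s.\ by the definition of $M^0$) the variables determining $\overline{\nu}_1,\dots,\overline{\nu}_m$ live in the index block $[M^0, M^0+m]$, while those determining $\overline{\nu}_{m+n+j}$ live in $[M^0+m, M^0+m+n+j]$ for $j \ge 0$, and the two blocks share no common $A_x$ or $I_x$ variable by the indexing in Definition \ref{auxiliaryfront}.

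First I would bound $\mathbb{P}(H^c)$. Conditioning on $M^0 = a$, each term $\mathbf{P}(K_{a+m+n+j} > n+j \mid \mathcal{G}_a)$ can be dominated by the unconditional tail from Lemma \ref{lem:Kmtail} (up to a constant, since $\mathcal{G}_a$ only enforces good configurations and the $K$-sequence is stationary under $\mathbf{P}$). A union bound with $\alpha+1-\varepsilon/2 > 1$ then yields $\mathbf{P}(H^c) = O(n^{-\alpha+\varepsilon})$, and this transfers to $\mathbb{P}$ after division by $\mathbf{P}(\mathcal{S}) > 0$. Second, I would decouple $E$ and $B$ on $H$: conditioning on $M^0 = a$, the variables at sites $\le a$ and those at sites $> a$ are independent under $\mathbf{P}$ because $\mathcal{G}_a$ is measurable with respect to variables at indices $\ge a$, so only the restriction $\mathcal{G}_a$ on the right-hand half is relevant; on $H$ the right-hand half further splits across the cut $a+m$, and the residual coupling caused by $\mathcal{G}_a$ spanning both halves is absorbed by the same argument as in Lemma \ref{Tdistconv}. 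Adding $\mathbb{P}(E\cap B\cap H^c) \le \mathbb{P}(H^c)$ and the symmetric term $\mathbb{P}(B)\mathbb{P}(E\cap H^c) \le \mathbb{P}(H^c)$ into the expansion of $\mathbb{P}(B\cap E) - \mathbb{P}(B)\mathbb{P}(E)$, then dividing by $\mathbb{P}(E)$, gives the claimed rate uniformly in $E$.

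The main obstacle will be the bookkeeping around the random cut location $M^0+m$: both $M^0$ (which is measurable with respect to the entire right half-line via $\mathcal{G}_{M^0}$) and the conditioning on survival $\mathcal{S}$ can a priori couple the two sides of the cut. I expect the survival conditioning to be disposed of by noting that on $\mathcal{G}_{M^0}$ the strict positive drift $\beta_A - \beta_I$ guarantees survival after the a.s.\ finite hitting time $T^0$ (Lemma \ref{lem:welldef}), so $\mathcal{S}$ is asymptotically determined by the finite initial segment and introduces only lower-order corrections to events in $\mathcal{F}_{\le m}$ and $\mathcal{F}_{\ge m+n}$. The coupling introduced by $\mathcal{G}_a$ spanning the cut $a+m$ is the genuinely technical part and must be broken by an argument parallel to the one in the proof of Lemma \ref{Tdistconv}, controlling the excess probability of a long ``bad'' stretch crossing $a+m$ by yet another tail bound for $K$.
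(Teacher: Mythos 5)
Your high-level picture is right (separation via a tail event on $K$, followed by independence on the separated block), and the tail estimate for the separating event is fine. But the final step — ``Adding $\mathbb{P}(E\cap B\cap H^c) \le \mathbb{P}(H^c)$ \ldots, then dividing by $\mathbb{P}(E)$'' — is a genuine gap. It produces a bound of the form $\mathbb{P}(H^c)/\mathbb{P}(E)$, which is \emph{not} uniform over $E$ with $\mathbb{P}(E)>0$: the supremum in the $\phi$-mixing definition is over arbitrarily rare events $E$, so any error term that picks up a $1/\mathbb{P}(E)$ factor does not close. This is the essential difficulty that distinguishes the $\phi$-mixing bound from the distributional convergence in Lemma \ref{Tdistconv}, and it is not dispatched by ``the same argument as in Lemma \ref{Tdistconv}.''

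The paper's proof circumvents exactly this. After passing to $\mathbf{P}(\cdot \mid M^0=0)$ via Lemma \ref{Tdistribution}, it cuts at an intermediate position $m+k$ with $k\asymp n/3$ (not at $m$ as in your $H$), writes $\{M^0=0\}=\mathcal{G}_{0,m+k}\setminus(\mathcal{G}_{0,m+k}\cap\mathcal{B}_{0,m+k})$ where $\mathcal{G}_{0,m+k}$ is a \emph{finite} intersection measurable w.r.t.\ indices $\le m+k$, and arranges the error terms so that the $E$-dependent part appears only as the ratio $\mathbf{P}(E,\mathcal{G}_{0,m+k})/\mathbf{P}(E,M^0=0)$. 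The crucial ingredient you do not mention is the uniform lower bound $\delta := \inf_{(\mathbf{a},\mathbf{i})}\mathbf{P}(\mathcal{G}_{m,\infty}(\mathbf{a},\mathbf{i}))>0$ from \eqref{eq:G_contain}, which in \eqref{unifAbound} gives $\mathbf{P}(E,\mathcal{G}_{0,m+k})/\mathbf{P}(E,M^0=0)\le 1/\delta$ \emph{uniformly in $E$}; the ``bad'' tail event (analogous to your $H^c$) is then factored against this bounded ratio rather than against $1/\mathbb{P}(E)$. Without this uniformization, the argument only controls $\sup_B|\mathbb{P}(B\cap E)-\mathbb{P}(B)\mathbb{P}(E)|$ (an $\alpha$-mixing-type quantity), not the required $\sup_{E}\sup_B|\mathbb{P}(B\mid E)-\mathbb{P}(B)|$. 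Secondarily, placing the cut strictly between $m$ and $m+n$ (using both $k'\asymp n/3$ and $k\asymp 2n/3$, as in \eqref{unifAconv}) is needed to control, on both sides of the cut, the excess probability of a long bad stretch crossing it; your cut at $m$ forfeits the slack to absorb the tail of $\mathcal{G}_{M^0}$ into a finite-range object.
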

The proof will be given in Section \ref{pro:conditinallyphimixing}. There are two effects that make the events $B$ and $E$ depend on each other. The first is that every configuration of offspring numbers and immunities to the right of $M^i$ is good. The second effect is that $B\in\mathcal{F}_{\ge m+n}$ depends on all random variables from $\mathbf{A},\mathbf{I},\mathbf{Y}$ with index less than $m+n$, in particular the same variables on which $E\in\mathcal{F}_{\le m}$ depends. To deal with the first effect, for some large $k=\Theta(n)$ we split the event of having only good configurations to the right of $M^i$ into the event $\mathcal{G}_{0,m+k}$ that there are only good configurations between $M^i$ and site $M^i+m+k$ and the complement in $\mathcal{G}_{0,m+k}$ of the event $\mathcal{B}_{0,m+k}$ that for some $j > k$ there is a bad configuration between $M^i$ and $M^i + m+ j$. Using a union bound, the event $\mathcal{B}_{0,m+k}$ happens with probability $\mathcal{O}(k^{-\alpha + \varepsilon})$. To deal with the second effect, we split the event $B$ into $B$ intersected with the event that $B$ is using variables with index less than $m+k$ and $B$ intersected with the event that $B$ uses only variables with index greater than $m+k$. Again using a union bound, the former event happens with probability $\mathcal{O}((n-k)^{-\alpha+\varepsilon})$. Now the remaining event $B$ intersected with the event that $B$ uses only variables with index greater than $m+k$ and the event $\mathcal{G}_{0,m+k}\cap E$ are independent, which will make the probability of their intersection close to $\mathbb{P}(B)\mathbb{P}(E)$.\\
Given that the sequence $(\overline{\nu}_n)_{n\ge1}$ is asymptotically stationary and has weak dependence, we can obtain a strong law of large numbers for it. To better control the fluctuations around the mean, we then use classical results on the tails of sums of $\phi$-mixing sequences to obtain the following estimate.
\theoremstyle{plain}\begin{lemma}\label{Qconvspeed}
 If $\alpha >3$, then for all $2\le q < \frac{(4\wedge\alpha)+1}{2}$, there exists a $C_q>0$ such that for all $n\in\N$ we have
 \[
\mathbb{E}\left[\max_{1\le i\le n} \left\vert\sum_{k=1}^i {\left(\overline{\nu}_k-\mathbb{E}[\overline{\nu}_k]\right)} \right\vert^q\right]\le C_qn^{\frac{q}{2}}.
\]
\end{lemma}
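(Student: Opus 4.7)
The plan is to deduce the claim from a Rosenthal/Shao–type maximal moment inequality for $\phi$-mixing sequences, which requires two ingredients: summability of the mixing coefficients, and a uniform $L^q$-moment bound on the $\overline{\nu}_n$. Lemma \ref{conditinallyphimixing} already supplies the mixing rate $\phi(n)\le Cn^{-\alpha+\varepsilon}$ for any $\varepsilon\in(0,\alpha)$. Since we assume $\alpha>3$, one can fix $\varepsilon$ small enough so that $\alpha-\varepsilon>2$, which gives $\sum_{n\ge 1}\phi(n)^{1/2}<\infty$, the classical summability condition under which the Rosenthal-type inequality holds for $\phi$-mixing sequences.

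For the uniform moment bound $\sup_{n\ge 1}\mathbb{E}[|\overline{\nu}_n|^q]<\infty$, I would use the decomposition $\{M^0=j\}=\mathcal{H}_j\cap\mathcal{G}_j$, where $\mathcal{H}_j:=\bigcup_{k\ge 0}\{L^0_k=j\}$ depends only on $A_0,\dots,A_{j-1}$ and $I_1,\dots,I_j$, while $\mathcal{G}_j$ depends only on $A_x$ for $x\ge j$ and $I_x$ for $x\ge j+1$ and is therefore independent of $\mathcal{H}_j$. By Remark \ref{rem:M_lessrestrictive}, on $\mathcal{G}_j$ one has $K_{j+n}\le n$ for every $n\ge 1$, so $\nu_{j+n}\mathbf{1}_{\mathcal{G}_j}$ is measurable with respect to the variables indexed by $x\ge j$ (for $A,Y$) and $x\ge j+1$ (for $I$). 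Independence and the shift-stationarity of these families then give
\[
\mathbf{E}\bigl[|\nu_{j+n}|^q\mathbf{1}_{\{M^0=j\}}\bigr]=\mathbf{P}(\mathcal{H}_j)\,\mathbf{E}\bigl[|\nu_n|^q\mathbf{1}_{\mathcal{G}_0}\bigr]\le \mathbf{P}(\mathcal{H}_j)\,\mathbf{E}[|\nu|^q].
\]
Lemma \ref{NuMoments} provides $\mathbf{E}[|\nu|^q]<\infty$ for every $q<\frac{(4\wedge\alpha)+1}{2}$, and summing over $j$ yields
\[
\mathbf{E}\bigl[|\nu_{M^0+n}|^q\bigr]\le\mathbf{E}[|\nu|^q]\sum_{j\ge 0}\mathbf{P}(\mathcal{H}_j)=\mathbf{E}[|\nu|^q]\,\bigl(\mathbf{E}[N^0]+1\bigr),
\]
which is finite because, as shown in the proof of Lemma \ref{lem:welldef}, $N^0$ is geometrically distributed under $\mathbf{P}$. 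Dividing by $\mathbf{P}(\mathcal{S})$ gives $\sup_{n\ge 1}\mathbb{E}[|\overline{\nu}_n|^q]<\infty$.

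With both ingredients in hand, I would invoke a Rosenthal-type moment bound for $\phi$-mixing sequences together with a Móricz-type maximal upgrade. Concretely, applied to the centred sequence $\overline{\nu}_k-\mathbb{E}[\overline{\nu}_k]$ one obtains, for any $j\ge 0$ and $n\ge 1$,
\[
\mathbb{E}\Biggl[\,\Biggl|\sum_{k=j+1}^{j+n}\bigl(\overline{\nu}_k-\mathbb{E}[\overline{\nu}_k]\bigr)\Biggr|^q\,\Biggr]\le C_q\Bigl(n\sup_k\mathbb{E}[|\overline{\nu}_k|^q]+n^{q/2}\sup_k\mathbb{E}[\overline{\nu}_k^{\,2}]^{q/2}\Bigr)\le C_q'\,n^{q/2},
\]
where the last step uses $q\ge 2$. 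The Móricz maximal inequality (which converts a uniform bound of this form for all $(j,j+n)$ into a bound for running maxima) then yields the stated $\mathcal{O}(n^{q/2})$ for the left-hand side of the lemma.

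The main obstacle I anticipate is that most textbook statements of the $\phi$-mixing Rosenthal/Móricz inequalities assume strict stationarity, whereas $(\overline{\nu}_n)_{n\ge 1}$ is only asymptotically stationary under $\mathbb{P}$ (its distribution is shifted by the conditioning on $\mathcal{G}_{M^0}$, with correction of size $\mathcal{O}(n^{-\alpha+\varepsilon})$ by Lemma \ref{Tdistconv}). The uniform $L^q$-bound established above, together with the fact that the rate $\phi(n)$ in Lemma \ref{conditinallyphimixing} depends only on the gap $n$ and not on the starting index $m$, is exactly the input needed to run the non-stationary version of the Rosenthal–Móricz argument without structural change, and this is the step that will need to be spelled out carefully in the formal proof.
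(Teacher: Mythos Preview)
Your proposal is correct and reaches the same conclusion, but the paper takes a slightly different technical route worth noting. Rather than invoking a Rosenthal/Shao-type inequality directly, the paper first proves an explicit $L^2$ bound (Lemma~\ref{L2bound}) by estimating covariances via Ibragimov's inequality $\mathbb{E}[Q_mQ_{m+n}]\le 2\sqrt{\phi(n)\mathbb{E}[Q_m^2]\mathbb{E}[Q_{m+n}^2]}$ and summing, which needs only $\sum_n\phi(n)^{1/2}<\infty$; it then lifts the $L^2$ bound to $L^q$ via Ibragimov's \cite[Lemma~1.9]{I1962}, and finally upgrades to the maximal inequality via Serfling \cite[Corollary~B1]{S1970}. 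Your Rosenthal + M\'oricz route is equivalent in strength and requires the same ingredients (summable $\phi^{1/2}$ and a uniform $L^q$ moment bound), so there is no gap. One simplification: your decomposition $\{M^0=j\}=\mathcal{H}_j\cap\mathcal{G}_j$ for the uniform moment bound is correct but more work than needed---the paper simply uses Lemma~\ref{Tdistribution} to get $\mathbb{P}(\overline{\nu}_n>t)=\mathbf{P}(\nu_n>t\mid M^0=0)\le \mathbf{P}(\nu_n>t)/\mathbf{P}(M^0=0)$, which gives $\sup_n\mathbb{E}[|\overline{\nu}_n|^q]<\infty$ in one line from Lemma~\ref{NuMoments}. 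The non-stationarity concern you flag is real but mild: Ibragimov's Lemma~1.1 and Serfling's maximal inequality are both stated for arbitrary (not necessarily stationary) sequences, so the paper's route sidesteps this issue entirely, whereas the Rosenthal-type statements you cite may need the non-stationary formulation you mention.
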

The proof will be given in Section \ref{pro:Qconvspeed}.\\
We will make use of this result twofold. The first application is to control the time the front needs to travel from one good site to the next and show it has a finite expectation. The other application will be to show that with positive probability the front moves at linear speed after a good site, which we will use to construct the almost surely finite renewal sites $(R^i)_{i\ge0}$ in Proposition \ref{prop:renewal_sites}. Coming to the first application, we have the following.
\theoremstyle{plain}\begin{lemma}\label{lem:T_Tfinexp}
Let $T^1$ be as in Definition \ref{Msites}. If $\alpha > \frac{3+\sqrt{11}}{2}$, then
 \[
 \mathbb{E}[T^1\vert M^0 = 0]<\infty.
 \]
\end{lemma}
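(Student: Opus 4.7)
The plan is to bound $T^1$ by a sum of auxiliary jump times via Proposition \ref{Couplinglowbound}, and then control the resulting expectation with Hölder's inequality using the tail of $M^1-M^0$ from Lemma \ref{lem:M_tail} together with the moment and mixing bounds from Lemmas \ref{NuMoments}, \ref{conditinallyphimixing}, and \ref{Qconvspeed}.

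First, under $\{M^0=0\}$, Remark \ref{rem:M_lessrestrictive} gives $K_n\le n$ for every $n\ge1$, so Proposition \ref{Couplinglowbound} yields $\rho_n-\rho_{n-1}\le\overline{\nu}_n$ for all $n\ge1$. Telescoping produces
\[
T^1 = \rho_{M^1} \le \sum_{n=1}^{D}\overline{\nu}_n, \qquad D:=M^1.
\]
I then split off the means: setting $\mu_n := \mathbb{E}[\overline{\nu}_n\mid M^0=0]$ and $Z_n:=\overline{\nu}_n-\mu_n$, and using $\mathbb{P}(\cdot\mid M^0=0)\le \mathbb{P}(\cdot)/\mathbb{P}(M^0=0)$ to transfer the uniform $L^p$-bound on $\nu$ from Lemma \ref{NuMoments} (plus stationarity of $(\nu_n)_{n\in\Z}$ under $\mathbf{P}$), one obtains $\mu:=\sup_n\mu_n<\infty$. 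Hence
\[
\mathbb{E}[T^1\mid M^0=0] \le \mu\,\mathbb{E}[D\mid M^0=0] + \mathbb{E}\!\left[\Big|\sum_{n=1}^{D}Z_n\Big|\,\Big|\,M^0=0\right],
\]
and the first summand is finite because Lemma \ref{lem:M_tail} gives $\mathbb{P}(D\ge d\mid M^0=0)\le Cd^{-(\alpha-\varepsilon)}$.

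For the fluctuation, Fubini and Hölder with conjugate exponents $p,p'>1$ give
\[
\mathbb{E}\!\left[\Big|\sum_{n=1}^{D}Z_n\Big|\,\Big|\,M^0=0\right] \le \sum_{d=1}^\infty \mathbb{E}\!\left[\Big|\sum_{n=1}^{d}Z_n\Big|^p\,\Big|\,M^0=0\right]^{1/p}\mathbb{P}(D=d\mid M^0=0)^{1/p'}.
\]
Transferring Lemma \ref{Qconvspeed} to the conditional measure (via the $\phi$-mixing rate of Lemma \ref{conditinallyphimixing} and the inherited moment bound) bounds the first factor by $C_p d^{1/2}$ for any $p\in[2, (4\wedge\alpha+1)/2)$, while the tail of $D$ bounds the second by $Cd^{-(\alpha-\varepsilon)/p'}$. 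The series reduces to $\sum_d d^{1/2-(\alpha-\varepsilon)/p'}$, which converges iff $(\alpha-\varepsilon)/p'>3/2$.

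Pushing $p$ toward $(\alpha+1)/2$ drives $p'$ toward $(\alpha+1)/(\alpha-1)$, so the constraint becomes $2(\alpha-\varepsilon)(\alpha-1)>3(\alpha+1)$, i.e.\ essentially $2\alpha^2-5\alpha-3>0$ as $\varepsilon\downarrow 0$. The hypothesis $\alpha>\frac{3+\sqrt{11}}{2}$ is equivalent to $2\alpha^2-6\alpha-1>0$, and since $\alpha>2$ one has $2\alpha^2-5\alpha-3=(2\alpha^2-6\alpha-1)+(\alpha-2)>0$, leaving strict room to choose $\varepsilon>0$ small and $p<(\alpha+1)/2$ so that the series converges. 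The main obstacle I expect is the careful transfer of Lemma \ref{Qconvspeed} and the $\phi$-mixing rate from Lemma \ref{conditinallyphimixing} to the conditional measure $\mathbb{P}(\cdot\mid M^0=0)$: since $\mathbb{P}(M^0=0)>0$ by the proof of Lemma \ref{lem:welldef}, this incurs only a multiplicative constant for moment estimates, but verifying that the mixing rate survives conditioning (possibly after a slight enlargement of $\varepsilon$) is the key technical step.
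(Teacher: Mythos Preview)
Your argument is correct and reaches the conclusion by a route that differs from the paper's. Both start from the same almost-sure bound $T^1\le\sum_{k=1}^{M^1-M^0}\overline{\nu}_k$ on $\{M^0=0\}$ and use the same two ingredients (the tail of $M^1-M^0$ from Lemma~\ref{lem:M_tail} and the partial-sum moment bound of Lemma~\ref{Qconvspeed}). The paper, however, controls the \emph{tail} $\mathbb{P}(T^1>t\mid M^0=0)$ by introducing a splitting parameter $\delta\in(0,1)$: on $\{M^1>t^\delta\}$ it invokes Lemma~\ref{lem:M_tail}, and on the complement it applies Markov's inequality with exponent $q$ together with Lemma~\ref{Qconvspeed}, obtaining a tail of order $t^{-\delta(\alpha-\varepsilon)}+t^{-q(1-\delta/2)}$ and then integrating. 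Your H\"older decomposition over $\{D=d\}$ avoids the auxiliary parameter $\delta$ and is arguably more direct; it produces the constraint $2\alpha^2-5\alpha-3>0$ (i.e.\ $\alpha>3$), while the paper's optimization yields $2\alpha^2-3\alpha-1>0$. Both are dominated by the prerequisite $\alpha>3$ of Lemma~\ref{Qconvspeed}, so under the stated hypothesis $\alpha>\tfrac{3+\sqrt{11}}{2}$ there is no practical difference.

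Your closing ``main obstacle'' is not actually an obstacle: by Lemma~\ref{Tdistribution} (together with $\{M^0=0\}\subset\mathcal{S}$), the law of $(\overline{\nu}_n)_{n\ge1}$ under $\mathbb{P}(\cdot\mid M^0=0)$ coincides with its law under $\mathbb{P}$, so Lemma~\ref{Qconvspeed} and the mixing rate of Lemma~\ref{conditinallyphimixing} apply verbatim to the conditional measure. In particular $\mu_n=\mathbb{E}[\overline{\nu}_n\mid M^0=0]=\mathbb{E}[\overline{\nu}_n]$, and your $Z_n$ are exactly the centered variables appearing in Lemma~\ref{Qconvspeed}; no separate transfer step is needed.
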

The proof will be given in Section \ref{pro:T_Tfinexp} and relies on controlling large deviations of the sum of $(\overline{\nu}_k)_{k\ge1}$ by using Lemma \ref{Qconvspeed}. \\
Furthermore, after a good site $M^i$ is reached, we will show that with positive probability the front grows linearly fast at a certain small slope $\lambda > 0$ after a good site $M^i$ is reached. At each good site $M^i$, the parasites that were generated to the left of $M^i$ have a positive probability of never catching up to the linearly moving front. Iterating this procedure yields an almost surely finite renewal site $R^1 = M^{J^1}$, such that the front after reaching this site is only fed from parasites that are generated at a site $x$ with $x\ge R^1$. Again, iterating the construction of $R^1$ will lead us to the following renewal structure that splits the jump times into independent epochs between these renewal sites.
\begin{proposition}\label{prop:renewal_sites}
 Let $\alpha > 3$, $\mathbb{E}[I^{2\alpha}]<\infty,\mathbb{E}[A^{\frac{4}{4\wedge\alpha - 3} + \varepsilon_A}]<\infty$ for some $\varepsilon_A > 0$ and assume the initial configuration of only $A_0$ parasites placed at vertex $0$.\\
 There is a sequence $(R^i)_{i\ge0}$ of $\mathbb{P}$-almost surely finite renewal sites, such that $R^0 = 0$ almost surely and
 \[
 \{(R^{i+1}-R^i,\rho_{R^i+1}-\rho_{R^i},\dots, \rho_{R^{i+1}}-\rho_{R^{i+1}-1}): i\ge 0\}
 \]
 is an independent collection. The collection starting with $i\ge1$ is also identically distributed.
 \end{proposition}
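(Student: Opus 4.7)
I would take the following approach. Fix $\lambda>0$ small; by Lemmas \ref{lem:T_Tfinexp}, \ref{conditinallyphimixing}, and \ref{Qconvspeed} (using $\alpha>3$) the sequence $(\overline{\nu}_k)_{k\ge 1}$ has finite mean $\gamma^{-1}$ and satisfies a strong law of large numbers, so $\lambda<\gamma$ is possible. For each good site $M^j$, define the candidate renewal event $\mathcal{R}^j:=\mathcal{L}^j\cap\mathcal{D}^j$, where
\[
\mathcal{L}^j:=\bigcap_{n\ge 1}\bigl\{\rho_{M^j+n}-\rho_{M^j}\le n/\lambda\bigr\}
\]
(the front leaves $M^j$ at speed at least $\lambda$) and
\[
\mathcal{D}^j:=\bigl\{\text{no parasite with label }(x,k),\ x<M^j,\text{ ever reaches a site }>M^j+\lfloor\lambda(t-T^j)\rfloor\text{ at any time }t\ge T^j\bigr\}
\]
(no ``old'' parasite catches the linearly moving front). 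On $\mathcal{R}^j$ the entire dynamics after $T^j$ are measurable with respect to $\{A_x,I_{x+1},Y^{x,\cdot}:x\ge M^j\}$ together with the $A_{M^j}$ parasites freshly released at $M^j$ at time $T^j$, and are uninfluenced by parasites or environment from the left, i.e.\ past and future are decoupled.

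The first task is to show $\mathbb{P}(\mathcal{R}^j\mid \mathcal{F}_{T^j})\ge c>0$ uniformly in $j$. For $\mathcal{L}^j$, use Proposition \ref{Couplinglowbound} to bound $\rho_{M^j+n}-\rho_{M^j+n-1}\le\overline{\nu}_n^j$; the small-$n<k_0$ contributions are handled via arrival-time estimates for the good small-scale configuration at $M^j$, as in the proof of Lemma \ref{lem:welldef}, and the large-$n$ contributions via Lemma \ref{Qconvspeed} with Markov's inequality, which gives $\mathbb{P}(\sum_{k=1}^n\overline{\nu}_k^j>n/\lambda)=\mathcal{O}(n^{-q/2})$ for some $q>2$ (allowed because $\alpha>3$). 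Choosing $\lambda$ sufficiently small makes the resulting series strictly less than $1$, yielding $\mathbb{P}(\mathcal{L}^j\mid\mathcal{F}_{T^j})\ge c_L>0$ uniformly. For $\mathcal{D}^j$, conditional on $\mathcal{F}_{T^j}$ each old parasite evolves after $T^j$ as an independent continuous-time SSRW from its $\mathcal{F}_{T^j}$-measurable position; the probability that such a walk, starting at level $y\le M^j$, ever exceeds the linear barrier $M^j+\lambda s$ decays exponentially in $M^j-y$ by the Wiener--Hopf overshoot estimate for the Lévy process $Y_t-\lambda t$. Since the total count of old parasites is at most $\sum_{x<M^j}A_x$ and their positions at $T^j$ are concentrated on scale $\sqrt{T^j}$ around their birthplaces, a union bound gives $\mathbb{P}(\mathcal{D}^j\mid\mathcal{F}_{T^j})\ge c_D>0$ after possibly shrinking $\lambda$ further. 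The events $\mathcal{L}^j,\mathcal{D}^j$ depend, conditional on $\mathcal{F}_{T^j}$, on disjoint subfamilies of the i.i.d.~increments of $\mathbf{Y}$ beyond $T^j$ (indices $x\ge M^j$ versus $x<M^j$), hence are conditionally independent, and we obtain $\mathbb{P}(\mathcal{R}^j\mid\mathcal{F}_{T^j})\ge c_Lc_D$.

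The second task is the iterative construction. Set $R^0:=0$ and $J^1:=\inf\{j\ge 0:\mathcal{R}^j\text{ holds}\}$, $R^1:=M^{J^1}$. The uniform conditional lower bound together with a conditional Borel--Cantelli argument applied along the a.s.\ infinite sequence $(M^j)_{j\ge 0}$ yields $J^1<\infty$ $\mathbb{P}$-a.s. On $\mathcal{R}^{J^1}$ the shifted process at time $T^{J^1}$ consists of $A_{M^{J^1}}$ parasites at $M^{J^1}$ evolving in the environment $\{A_x,I_{x+1},Y^{x,\cdot}:x\ge M^{J^1}\}$, which is independent of $\mathcal{F}_{T^{J^1}}$; by the strong Markov property (Theorem \ref{Theorem:well-def}) it is an independent copy of the SIMI with the proposition's initial configuration $A\delta_0$. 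Iterating the construction from $R^1$ produces $R^2,R^3,\ldots$, and by construction the tuples $(R^{i+1}-R^i,\rho_{R^i+1}-\rho_{R^i},\ldots,\rho_{R^{i+1}}-\rho_{R^{i+1}-1})_{i\ge 0}$ are independent, and identically distributed for $i\ge 1$.

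The main obstacle is the uniform lower bound on $\mathbb{P}(\mathcal{D}^j\mid\mathcal{F}_{T^j})$: the number of old parasites grows with $M^j$, so a naive union bound over all of them is too crude. One must combine (i) the exponential-in-distance decay of the single-walk overshoot probability, (ii) the fact that at time $T^j$ old parasites remain concentrated within $\mathcal{O}(\sqrt{T^j})$ of their birthplaces, and (iii) the linear accumulation of the sum $\sum_{x<M^j}A_x$, to produce a convergent tail bound uniform in $j$. A secondary, more bookkeeping, subtlety is that $\mathcal{R}^j$ involves the entire tail of the process, so $J^1$ is not a stopping time in the natural filtration; this is handled in the Sznitman--Zerner spirit, where it is precisely the decoupling embedded in $\mathcal{R}^{J^1}$, rather than a stopping-time property of $J^1$, that legitimizes the application of the strong Markov property and produces a bona fide renewal structure.
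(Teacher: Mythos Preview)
Your outline captures the right high-level structure—a Sznitman--Zerner style renewal at good sites where old parasites are decoupled from the front—but the step you yourself flag as the ``main obstacle'' is a genuine gap, and the ingredients (i)--(iii) you list do not resolve it. The conditional probability $\mathbb{P}(\mathcal{D}^j\mid\mathcal{F}_{T^j})$ is a product over all old parasites alive at time $T^j$ of their individual non-catching probabilities, and while each factor is bounded below by $1-e^{-\theta_c(M^j-F_{T^j}(x,i))}$ via Lemma~\ref{RWsublin}, this product is not uniformly bounded away from zero: the number of living old parasites is of order $M^j$ on survival, and on events of positive conditional probability many of them sit at distance $O(1)$ from $M^j$, driving the product towards zero. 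Your ingredient (ii) is not sharp enough—a parasite born at $x$ has typically displaced by order $\sqrt{T^j-\rho_x}\sim\sqrt{M^j-x}$, so parasites born recently are genuinely close to the front—and no shrinking of $\lambda$ rescues a product that can be of order $(1-c)^{\Theta(M^j)}$. Consequently neither the claimed uniform bound $c_D>0$ nor even the weaker almost-sure divergence $\sum_j\mathbb{P}(\mathcal{R}^j\mid\mathcal{F}_{T^j})=\infty$ needed for conditional Borel--Cantelli is established.

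The paper sidesteps this obstacle by a different organization (Lemma~\ref{lem:renewalsite}). Rather than asking all old parasites to stay behind a \emph{single} good site, it treats each parasite $(x,i)$ separately: by retrying at successive good sites, with success probability at least $\delta_1\delta_2$ per attempt (Lemmas~\ref{lem:front_above} and~\ref{lem:walk_above}) and controlled failure distance, one produces a per-parasite site $M^{J_\text{good}(x,i)}$ with polynomial tail $\mathbb{P}(M^{J_\text{good}(x,i)}-x>n)\le Cn^{-q_3}$ for some $q_3<\tfrac{1+\alpha\wedge4}{4}$. Taking the supremum over all parasites born in $[0,j]$ yields $R_1(j)$, and it is precisely the moment hypothesis $\mathbb{E}[A^{4/(4\wedge\alpha-3)+\varepsilon_A}]<\infty$—which your sketch never invokes—that makes the union bound over $\sum_{x\le j}A_x$ parasites converge \emph{uniformly in $j$}. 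The only parasites left unhandled are those born between $j$ and $R_1(j)$, which with uniformly positive probability are bounded in number (fixed window $L$, bounded offspring sum), so for them a genuine uniform lower bound on non-catching does hold; if it fails, one iterates to $R_2(j)$, and so on. This two-tier iteration (parasite-by-parasite, then batch-by-batch) is the missing mechanism, and it also yields directly the factorization $\{R^{i+1}=\ell+m,\,R^i=\ell\}=G_{[0,\ell)}\cap G_{[\ell,\ell+m)}\cap G_{[\ell+m,\infty)}$ into events on disjoint index sets, from which the independence and the i.i.d.\ structure for $i\ge1$ are read off without the stopping-time subtleties you allude to.
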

 The proof will be given in Section \ref{pro:rho_alphamix} and will make the construction described above rigorous. \\
 In the proof of Theorem \ref{Theorem: lln}, we will make use of this renewal structure to obtain a zero-one law under $\mathbb{P}$ for the limit $\frac{r_t}{t}$, which will allow us to transfer a convergence with positive probability (on the event $\{M^0 = 0\}$) obtained through Liggett's Subadditive Ergodic Theorem \cite{L1985} and Lemma \ref{lem:T_Tfinexp} to a $\mathbb{P}$-almost sure convergence.
\subsection{Proofs of auxiliary results}\label{sec:proofsoflemmata}
\subsubsection{Proof of Lemma \ref{lem:Kmtail}}\label{pro:Kmtail}
We begin by establishing the following result on the asymptotic for coefficients of a power series. This will be useful because the probabilities we are interested in are given by such series; see Corollary \ref{cor:N_ntail} (cf. \cite[Theorem 1]{A1954}). \theoremstyle{plain}\begin{lemma}\label{lem:exppowasym}Let $(w_n)_{n\in\N}\in\mathcal{O}(\e^{-\theta n}n^{-\gamma})$ for some $\theta \ge 0,\gamma>1$. Then, defining $(c_n)_{n\in\N_0}$ as the coefficients of the power series \[ \sum_{n=0}^\infty c_n s^n := \exp\left(\sum_{n=1}^\infty w_ns^n\right)~~~~(\vert s\vert \le 1), \] we have $(c_n)_{n\in\N_0}\in\mathcal{O}(\e^{-\theta n}n^{-\gamma})$.\end{lemma}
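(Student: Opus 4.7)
My plan is to expand the exponential into its Taylor series and pass to a combinatorial description of $c_n$. Writing $W(s):=\sum_{n\ge 1}w_n s^n$ and equating coefficients of $s^n$ in $C(s)=\sum_{k\ge 0}W(s)^k/k!$, I obtain for every $n\ge 1$
\[
c_n=\sum_{k=1}^n\frac{1}{k!}\sum_{\substack{n_1+\dots+n_k=n\\n_i\ge 1}}w_{n_1}\cdots w_{n_k}.
\]
Since $\e^{-\theta n_1}\cdots\e^{-\theta n_k}=\e^{-\theta n}$ whenever $n_1+\dots+n_k=n$, the hypothesis $|w_n|\le C\e^{-\theta n}n^{-\gamma}$ (valid from some index on, which can be absorbed into $C$) yields
\[
|c_n|\le \e^{-\theta n}\sum_{k\ge 1}\frac{C^k}{k!}\,s_k(n),\qquad s_k(n):=\sum_{\substack{n_1+\dots+n_k=n\\n_i\ge 1}}\prod_{i=1}^k n_i^{-\gamma}.
\]
The whole task then reduces to controlling $s_k(n)$ uniformly in $k$ in a manner that is summable against $C^k/k!$.

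The key step will be to prove by induction on $k$ that there exists a constant $B=B(\gamma)$ with $s_k(n)\le B^{k-1}n^{-\gamma}$ for all $k\ge 1$ and $n\ge 1$. The base case $s_1(n)=n^{-\gamma}$ is immediate. For the induction step I use $s_{k+1}(n)=\sum_{m=1}^{n-1}m^{-\gamma}s_k(n-m)$, insert the induction hypothesis, and reduce everything to the discrete convolution bound $\sum_{m=1}^{n-1}m^{-\gamma}(n-m)^{-\gamma}\le B\,n^{-\gamma}$. This convolution inequality follows by splitting the sum at $n/2$: on $\{m\le n/2\}$ one has $(n-m)^{-\gamma}\le 2^\gamma n^{-\gamma}$, so this half is bounded by $2^\gamma\zeta(\gamma)n^{-\gamma}$ because $\gamma>1$ makes $\zeta(\gamma)$ finite; the other half is symmetric. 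Hence $B:=2^{\gamma+1}\zeta(\gamma)$ closes the induction.

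Substituting the bound on $s_k(n)$ back into the previous display and summing the resulting exponential series gives
\[
|c_n|\le\frac{\e^{CB}-1}{B}\,\e^{-\theta n}\,n^{-\gamma}\qquad(n\ge 1),
\]
which is precisely the claimed $\mathcal{O}(\e^{-\theta n}n^{-\gamma})$ estimate; the single value $c_0=1$ is absorbed into the $\mathcal{O}$-constant. The step I expect to require the main technical attention is the one producing a \emph{geometric} (rather than, say, factorial) growth in $k$ of $s_k(n)$: if one could only obtain $s_k(n)\le A_k n^{-\gamma}$ with $A_k$ growing faster than exponentially, the series $\sum_k C^k A_k/k!$ would fail to converge and the exponentiation would destroy the polynomial decay. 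The hypothesis $\gamma>1$ is used in exactly this step through the finiteness of $\zeta(\gamma)$.
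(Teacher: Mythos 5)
Your proposal is correct and follows essentially the same route as the paper: expand the exponential, identify the coefficient $c_n$ as a sum over compositions of $n$, prove by induction on $k$ that the $k$-fold convolution carries a uniform bound of the form $B^{k-1}e^{-\theta n}n^{-\gamma}$ using the split-at-$n/2$ estimate $\sum_{m=1}^{n-1}m^{-\gamma}(n-m)^{-\gamma}\le 2^{\gamma+1}\zeta(\gamma)n^{-\gamma}$, and then sum against $C^k/k!$. Factoring out $e^{-\theta n}$ before running the induction (rather than carrying it through as the paper does) is a cosmetic simplification and does not change the substance.
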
\begin{proof}Let $C_w\ge 0$ be such that $\vert w_n\vert \le C_w\e^{-\theta n}n^{-\gamma}$ for all $n\in\N$. We begin by investigating the coefficients of the power series defined by\[\sum_{n=1}^\infty b_n(k) s^n := \left(\sum_{n=1}^\infty w_ns^n\right)^k~~~~(\vert s\vert \le 1),\]and show by induction over $k\in\N$ that, setting $C_1 := 2^{1+\gamma}\sum_{j = 1}^{\infty} j^{-\gamma}$, we have \begin{equation}\label{eq:expowasympolIH} \vert b_n(k)\vert \le C_w^k C_1^{k-1}\e^{-\theta n}n^{-\gamma},~~~\text{ for all }n\in\N.\end{equation} The induction root $k=1$ holds by assumption, so we continue with the induction step. Let $k\ge 1$ and assume \eqref{eq:expowasympolIH} holds; then for $\vert s\vert \le 1$ we have\[\begin{split}\sum_{n=1}^\infty b_n(k+1)s^n &= \left(\sum_{n=1}^\infty w_n s^n\right)^k\left(\sum_{n=1}^\infty w_n s^n\right) = \left(\sum_{n=1}^\infty b_n(k) s^n\right)\left(\sum_{n=1}^\infty w_n s^n\right) \\&= \sum_{n=0}^\infty\sum_{j=0}^n w_{j+1}s^{j+1}b_{n-j+1}(k)s^{n-j+1} = \sum_{n=0}^\infty s^{n+2}\sum_{j=1}^{n+1}w_jb_{n-j+2}(k) \\ &= \sum_{n=2}^\infty s^n\sum_{j=1}^{n-1}w_jb_{n-j}(k).\end{split}\]Now for $n\ge 2$ we obtain
\begin{equation}\label{eq:replicatingasympol}
\begin{split}
\left\vert\sum_{j=1}^{n-1}w_jb_{n-j}(k)\right\vert &\le C_wC_w^kC_1^{k-1}\sum_{j=1}^{n-1}\e^{-\theta j}j^{-\gamma}\e^{-\theta(n-j)}(n-j)^{-\gamma} \\
&\le 2C_w^{k+1}C_1^{k-1}\e^{-\theta n}\sum_{j = 1}^{\left\lfloor\frac{n}{2}\right\rfloor} (j(n-j))^{-\gamma} \\
&=2C_w^{k+1}C_1^{k-1}\e^{-\theta n}n^{-\gamma}\sum_{j = 1}^{\left\lfloor\frac{n}{2}\right\rfloor} \left(j\left(1-\frac{j}{n}\right)\right)^{-\gamma} \\
&\le 2C_w^{k+1}C_1^{k-1}\e^{-\theta n}n^{-\gamma}\sum_{j = 1}^{\left\lfloor\frac{n}{2}\right\rfloor} \left(j\cdot\frac{1}{2}\right)^{-\gamma} \\
&\le2C_w^{k+1}C_1^{k-1}\left( 2^{\gamma}\sum_{j = 1}^{\infty} j^{-\gamma}\right) \e^{-\theta n}n^{-\gamma} = C_w^{k+1}C_1^k \e^{-\theta n}n^{-\gamma}.
\end{split}
\end{equation}This finishes the induction and shows \eqref{eq:expowasympolIH} holds for all $k\in\N$. Noting that
\[
\begin{split}
\exp\left(\sum_{n=1}^\infty w_ns^n\right) &= 1+\sum_{k=1}^\infty\frac{1}{k!}\left(\sum_{n=1}^\infty w_ns^n\right)^k = 1+\sum_{k=1}^\infty\sum_{n=1}^\infty \frac{b_n(k)}{k!} s^n \\
&= 1+\sum_{n=1}^\infty s^n\sum_{k=1}^\infty \frac{b_n(k)}{k!} 
\end{split}
\]
we estimate
\[
\left\vert \sum_{k=1}^\infty \frac{b_n(k)}{k!}\right\vert \le \e^{-\theta n}n^{-\gamma}\sum_{k=1}^\infty \frac{1}{k!}C_w^kC_1^{k-1} =\frac{\e^{C_wC_1} - 1}{C_1} \e^{-\theta n}n^{-\gamma} =\mathcal{O}(\e^{-\theta n}n^{-\gamma}),
\]
which finishes the proof.
\end{proof}
\theoremstyle{definition}\begin{definition}
We adopt the notation of \cite{A1954} and, for $n\in\N$, denote by $N_n^A$ the amount of partial sums \[ \sum_{k=1}^1(\beta_A-A_k),\sum_{k=1}^2 (\beta_A-A_k),\dots,\sum_{k=1}^n (\beta_A-A_k) \]that are positive, and similarly we denote by $N_n^I$ the amount of partial sums 
\[ \sum_{k=1}^1 (I_k-\beta_I ),\dots,\sum_{k=1}^n( I_k-\beta_I ) \] 
that are positive.
\end{definition}
\theoremstyle{plain}\begin{corollary}\label{cor:N_ntail}
There is some $\theta > 0$ such that 
\[ \mathbf{P}(N_n^I = n) = \mathcal{O}(n^{-(\alpha+1)}),~~~\mathbf{P}(N_n^A = n) = \mathcal{O}\left(\frac{\exp(-\theta n)}{n^{\frac{3}{2}}}\right) \]
\end{corollary}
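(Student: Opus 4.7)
The plan is to apply the Sparre--Andersen identity (\cite[Theorem 1]{A1954}), which asserts that for any i.i.d.\ sequence $(X_k)$ with partial sums $S_n := \sum_{k=1}^n X_k$ and $N_n := |\{1 \le k \le n : S_k > 0\}|$,
\[
\sum_{n=0}^\infty \mathbf{P}(N_n = n)\, s^n \;=\; \exp\!\left(\sum_{n=1}^\infty \frac{\mathbf{P}(S_n > 0)}{n}\, s^n\right), \qquad |s|\le 1.
\]
Applied with $X_k := I_k - \beta_I$ (respectively $X_k := \beta_A - A_k$), both of which have strictly negative mean by the choice of $\beta_I$ and $\beta_A$, this reduces the corollary to decay estimates on $\mathbf{P}(S_n > 0)/n$ which, combined with Lemma \ref{lem:exppowasym}, transfer to the coefficients $\mathbf{P}(N_n^\bullet = n)$.

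For the $I$-case I would write $\{S_n^I > 0\}$ as a large deviation from the negative mean $n(\mu_I - \beta_I)$ of $S_n^I$ and apply Markov's inequality to the $2\alpha$-th power of the centered sum. Since $\mathbf{E}[I^{2\alpha}] < \infty$ and $2\alpha > 2$, Rosenthal's (or Marcinkiewicz--Zygmund's) inequality yields $\mathbf{E}\bigl[\bigl|\sum_{k=1}^n (I_k-\mu_I)\bigr|^{2\alpha}\bigr] = \mathcal{O}(n^\alpha)$, whence
\[
\mathbf{P}(S_n^I > 0) \;\le\; \frac{\mathcal{O}(n^\alpha)}{\bigl(n(\beta_I-\mu_I)\bigr)^{2\alpha}} \;=\; \mathcal{O}(n^{-\alpha}).
\]
Dividing by $n$ gives $\mathcal{O}(n^{-(\alpha+1)})$, and Lemma \ref{lem:exppowasym} with $\theta=0$ and $\gamma = \alpha+1 > 1$ delivers $\mathbf{P}(N_n^I = n) = \mathcal{O}(n^{-(\alpha+1)})$.

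For the $A$-case the key observation is that $\beta_A - A_k \le \beta_A$ is bounded from above (because $A \ge 0$), so the moment generating function $\varphi(\lambda) := \mathbf{E}[e^{\lambda(\beta_A - A)}]$ is finite on all of $[0,\infty)$, with $\varphi(0)=1$ and $\varphi'(0) = \beta_A - \mu_A < 0$. Hence there is a unique minimiser $\lambda^\ast > 0$ with $\varphi(\lambda^\ast) = e^{-\theta}$ for some $\theta > 0$. Tilting by $d\tilde{\mathbf{P}}/d\mathbf{P} \propto \prod_{k=1}^n e^{\lambda^\ast(\beta_A - A_k)}$ centers the increments under $\tilde{\mathbf{P}}$ and yields
\[
\mathbf{P}(S_n^A > 0) \;=\; e^{-\theta n}\, \tilde{\mathbf{E}}\bigl[\mathds{1}_{S_n^A > 0}\, e^{-\lambda^\ast S_n^A}\bigr].
\]
Under $\tilde{\mathbf{P}}$ the walk $S_n^A$ is centered with finite variance, and a standard (local) CLT argument — essentially the Bahadur--Rao sharp asymptotic — bounds the tilted expectation by $\mathcal{O}(n^{-1/2})$. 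Dividing by $n$ gives $\mathcal{O}(e^{-\theta n} n^{-3/2})$, and Lemma \ref{lem:exppowasym} applied with this $\theta$ and $\gamma = 3/2 > 1$ delivers the claim.

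The most delicate step is precisely this polynomial refinement $n^{-1/2}$ in the $A$-estimate: a plain Chernoff bound only yields $\mathbf{P}(S_n^A > 0) = \mathcal{O}(e^{-\theta n})$, i.e.\ effectively $\gamma = 1$, which is outside the hypothesis of Lemma \ref{lem:exppowasym}. The almost-sure upper boundedness $\beta_A - A_k \le \beta_A$, a direct consequence of $A \ge 0$, is exactly what makes the exponential tilt well defined and the half-power refinement available.
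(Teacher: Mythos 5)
Your proof follows the same route as the paper: Sparre--Andersen's identity, a polynomial bound on $\mathbf{P}(S_n^I>0)$ via Markov + Rosenthal, and an exponential-times-polynomial bound on $\mathbf{P}(S_n^A>0)$ via a Bahadur--Rao type estimate, all fed through Lemma~\ref{lem:exppowasym}. The $I$-case is word-for-word what the paper does. For the $A$-case, where the paper simply cites \cite[Theorem 1]{B1960}, you re-derive the Bahadur--Rao asymptotic from scratch via exponential tilting and a local CLT; this is a perfectly valid, if lengthier, way to reach $\mathcal{O}(e^{-\theta n}n^{-1/2})$.

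One small hole: your tilting argument asserts the existence of a minimiser $\lambda^*>0$ of $\varphi(\lambda)=\mathbf{E}[e^{\lambda(\beta_A-A)}]$, which requires $\varphi(\lambda)\to\infty$, i.e.\ $\mathbf{P}(A<\beta_A)>0$. If instead $\mathbf{P}(A<\beta_A)=0$ --- which is not excluded by the hypotheses, e.g.\ $A$ constant --- then $\beta_A - A\le 0$ a.s., $\varphi$ is nonincreasing with no interior minimiser, and the tilt is undefined. The paper handles this degenerate case explicitly by noting $\mathbf{P}(S_n^A>0)=0$ trivially there. You should add the same one-line case distinction. (More generally, a rigorous local-CLT treatment would also need to split the lattice vs.\ non-lattice cases of $\beta_A-A$, which citing \cite{B1960} sidesteps.) With that fix, your argument is complete.
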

\begin{proof} As shown in \cite[Theorem 1]{A1954} we have \[ \sum_{n=0}^\infty \mathbf{P}(N_n^A = n)s^n = \exp\left(\sum_{n=1}^\infty\frac{\mathbf{P}\left(\sum_{j=1}^n A_j < \beta_A n\right)}{n}s^n\right), \]Noting that $A\ge 0$ almost surely allows us to apply \cite[Theorem 1]{B1960} with \[X_j = -A_j+\mu_A, \quad \quad a = \mu_A-\beta_A > 0\] and conclude \[\mathbf{P}\left(\sum_{j=1}^n A_j < \beta_A n\right) = \mathcal{O}\left(\frac{\exp(-\theta n)}{n^{\frac{1}{2}}}\right)\]for some $\theta > 0$. If $\mathbf{P}(A < \beta_A) =\mathbf{P}(X_j > a) > 0$, the assumptions of \cite[Theorem 1]{B1960} are satisfied as explained before the theorem there, and if $\mathbf{P}(A < \beta_A) = 0$, then the left-hand side is just $0$ and thus trivially the estimation holds. Now applying Lemma \ref{lem:exppowasym} yields the second claim. The first claim follows analogously from Lemma \ref{lem:exppowasym} and an application of Markov's as well as Rosenthal's inequality. Since, by assumption \eqref{momentcondition}, $\mathbf{E}[I^{2\alpha}]<\infty$, we have 
\[
\begin{split}
\mathbf{P}\left(\sum_{k=1}^n I_k>\beta_In\right) &= \mathbf{P}\left(\sum_{k=1}^n I_k-\mu_I>(\beta_I-\mu_I)n\right) \le \frac{\mathbf{E}\left[\left\vert\sum_{k=1}^n I_k-\mu_I\right\vert^{2\alpha}\right]}{(\beta_I-\mu_I)^{2\alpha}n^{2\alpha}} \\
&\le \frac{C_{2\alpha}\mathbf{E}[\vert I_1-\mu_I\vert^2]^{\alpha}n^{\alpha}}{(\beta_I-\mu_I)^{2\alpha}n^{2\alpha}} =\frac{C_{2\alpha}\mathbf{E}[\vert I_1-\mu_I\vert^2]^{\alpha}}{(\beta_I-\mu_I)^{2\alpha}} n^{-\alpha}.
\end{split}
\]
\end{proof}
Now we have all the ingredients to estimate the tail of $K$.
\begin{proof}[Proof of Lemma \ref{lem:Kmtail}]First we observe that for $n \gg k_0$ large enough\[\begin{split}\mathbf{P}(K > n) &\le \mathbf{P}\left(\bigcap_{k=k_0}^n\left(\left\{\sum_{j=1}^k A_{1-j} < \beta_A k\right\}\cup\left\{\sum_{j=1}^kI_{2-j} > \beta_I k\right\}\right) \right) \\&\le \mathbf{P}(N_n^A+N_n^I \ge n-k_0+1) \\&\le \mathbf{P}(N_n^A \ge \left\lceil\lambda\log n\right\rceil) + \mathbf{P}(N_n^I > n-k_0-\left\lceil\lambda\log n\right\rceil)\end{split}\]for any $\lambda > 0$. Now using \cite[Theorem 1, (3.3)]{A1954} and then Corollary \ref{cor:N_ntail}, we obtain for any $1\le k \le n$
\[\mathbf{P}(N_n^A = k) \le \mathbf{P}(N_{k}^A = k) =\mathcal{O}\left(\e^{-\theta k}k^{-\frac{3}{2}}\right)\] 
and
\[\mathbf{P}(N_n^I = k) \le \mathbf{P}(N_{k}^I = k) =\mathcal{O}\left(k^{-(\alpha +1)}\right).\]
Setting $\lambda = \frac{\alpha+1}{\theta}$, we have
\[
\begin{split}
\mathbf{P}(N_n^A \ge \left\lceil\lambda\log n\right\rceil) &= \sum_{k = \left\lceil\lambda\log n\right\rceil}^n \mathbf{P}(N_n^A = k) \le \sum_{k = \left\lceil\lambda\log n\right\rceil}^n C_A\e^{-\theta k}k^{-\frac{3}{2}} \\
&\le \sum_{k = \left\lceil\lambda\log n\right\rceil}^n C_A\e^{-\theta k} = C_A \frac{\e^{-\theta\left\lceil\lambda\log n\right\rceil} - \e^{-\theta(n+1)}}{1-\e^{-\theta}} \\
&\le \frac{C_A}{1-\e^{-\theta}}\e^{-\theta \frac{\alpha+1}{\theta}\log n} = \mathcal{O}\left(n^{-(\alpha+1)}\right)
\end{split}
\]
and
\[
\begin{split}
\mathbf{P}(N_n^I > n-k_0-\left\lceil\lambda\log n\right\rceil) &= \sum_{k=n-k_0-\left\lceil\lambda\log n\right\rceil +1}^{n}\mathbf{P}(N_n^I = k) \\
&\le \sum_{k=n-k_0-\left\lceil\lambda\log n\right\rceil +1}^{n}C_I k^{-(\alpha+1)} \\ 
&\le C_I(k_0+\left\lceil\lambda\log n\right\rceil)(n-k_0-\left\lceil\lambda\log n\right\rceil +1)^{-(\alpha+1)} \\ 
&\le 2C_I\lambda n^{-(\alpha+1)}\frac{\log n}{(1-\frac{2\lambda\log n}{n})^{\alpha+1}} \\
&=\mathcal{O}\left(n^{-(\alpha+1)}\log n\right),
\end{split}
\]using that $2\lambda \log n > k_0+\left\lceil\lambda \log n\right\rceil$ for $n$ large enough. In particular, for any $\varepsilon > 0$ we have
\[\begin{split}
\mathbf{P}(K > n) &\le \mathbf{P}(N_n^A \ge \left\lceil\lambda\log n\right\rceil) + \mathbf{P}(N_n^I > n-k_0-\left\lceil\lambda\log n\right\rceil) \\
&= \mathcal{O}\left(n^{-(\alpha+1)}\right) +\mathcal{O}\left(n^{-(\alpha+1)}\log n\right) = \mathcal{O}\left(n^{-(\alpha+1)+\varepsilon}\right)
\end{split}\]
which finishes the proof.\end{proof}\begin{remark} As seen in the proof, we actually obtain the better estimate \[ \mathbf{P}(K> n) = \mathcal{O}\left(n^{-(\alpha+1)}\log n\right), \] but in the proofs of Lemma \ref{lem:M_tail} and Lemma \ref{NuMoments} we will lose an $n^\varepsilon$ factor anyway. \end{remark}
\subsubsection{Proof of Lemma \ref{lem:M_tail}}\label{pro:M_tail}
\begin{proof}[Proof of Lemma \ref{lem:M_tail}]Clearly \[ \begin{split} \mathbb{P}(M^{i+1}-M^i \ge l) &= \frac{1}{\mathbf{P}(\mathcal{S})}\sum_{j=0}^\infty \mathbf{P}(M^i = j,\mathcal{S},M^{i+1} \ge j+l) \end{split} \] The event $\{M^i = j,\mathcal{S},M^{i+1}\ge j+l\}$ is given by the intersection of an event $B^{<j}$ depending on $(A_0,\dots,A_{j-1})$ and $(I_1,\dots, I_j)$ defined through the event $\mathcal{S}$ and the event $\{M^i\ge j\}$, as well as the intersection of the event \[ \mathcal{G}_j = \bigcap_{m=1}^\infty\{G((A_j,\dots, A_{j+m-1}),(I_{j+1},\dots,I_{j+m})) = 1\} \] 
as also $\{M^i=j\}$ should be fulfilled and finally the intersection of some event $B_{>j}$ depending on $A_{j+1},\dots$ and on $I_{j+2},\dots$ defined through the event $\{M^{i+1} \ge j+l\}$. Precisely, $B_{>j}$ is the event that for some \[k\ge0 \text{ and }l_0 = 0 < l_1 < \dots < l_k\] such that $l_k \ge l-2> l_{k-1}$ we have 
\[ 
\begin{split} 
&\left\{\begin{matrix}\text{all configurations from }j+1+l_0\text{ up to }j+1+l_1-1\text{ are good }\\\text{and the configuration from } j+1+l_0 \text{ to } j+1+l_1 \text{ is bad,}\\\vdots\\\text{all configurations from }j+2+l_{k-1}\text{ up to }j+1+l_k-1\text{ are good }\\\text{and the configuration from } j+2+l_{k-1} \text{ to } j+1+l_k \text{ is bad.}\end{matrix}\right\}. 
\end{split} 
\] 
Using that $\mathbf{A},\mathbf{I}$ are i.i.d.~under $\mathbf{P}$, we obtain that \[ \begin{split} \mathbb{P}(M^{i+1}-M^i \ge l) &= \frac{1}{\mathbf{P}(\mathcal{S})}\sum_{j=0}^\infty \mathbf{P}(B^{<j})\mathbf{P}(\mathcal{G}_j,B_{> j}) \\ &= \mathbf{P}(\mathcal{G}_0,B_{> 0}) \frac{1}{\mathbf{P}(\mathcal{S})}\sum_{j=0}^\infty \mathbf{P}(B^{<j}) \\ &= \mathbf{P}(\mathcal{G}_0,B_{> 0}) \frac{1}{\mathbf{P}(\mathcal{S})}\sum_{j=0}^\infty \mathbf{P}(B^{<j})\frac{\mathbf{P}(\mathcal{G}_j)}{\mathbf{P}(\mathcal{G}_j)} \\ &= \mathbf{P}(\mathcal{G}_0,B_{> 0}) \frac{1}{\mathbf{P}(\mathcal{S})\mathbf{P}(\mathcal{G}_0)}\sum_{j=0}^\infty \mathbf{P}(B^{<j},\mathcal{G}_j) \\ &= \mathbf{P}(\mathcal{G}_0,B_{> 0}) \frac{1}{\mathbf{P}(\mathcal{S})\mathbf{P}(\mathcal{G}_0)}\sum_{j=0}^\infty \mathbf{P}(M^i = j,\mathcal{S}) \\& = \frac{\mathbf{P}(\mathcal{G}_0,B_{> 0})}{\mathbf{P}(\mathcal{G}_0)} = \mathbf{P}(B_{>0}\vert \mathcal{G}_0). \end{split} \] This yields that $(M^{i+1}-M^i)_{i\ge0}$ are identically distributed. To see that $\mathbb{P}(M^{i+1}- M^i\geq l)$ is dominated by $\mathbb{P}(M^0\geq l)$, note that the event $B_{>0}$ (which we just defined) is more likely to happen conditioned on survival, i.e., on the event \[\bigcap_{m=1}^\infty\left\{\sum_{k=1}^mA_{k-1}-I_k \ge 0\right\}, \] rather than conditioned on $\mathcal{G}_0$, i.e., \[ \mathbb{P}(M^0 \ge l) = \mathbf{P}(B_{>0}\vert \mathcal{S}) \ge \mathbf{P}(B_{>0}\vert \mathcal{G}_0) = \mathbb{P}(M^{i+1}-M^i \ge l). \]The independence follows similarly after noting that \begin{equation}\label{Godddecouple} \mathcal{G}_j\cap\mathcal{G}_{j+k} = \mathcal{G}_{j+k}\cap\bigcap_{m=1}^{k}\{G((A_j,\dots, A_{j+m-1}),(I_{j+1},\dots,I_{j+m})) = 1\}, \end{equation} and the two events $\mathcal{G}_{j+k}$ and $\cap_{m=1}^{k}\{G((A_j,\dots, A_{j+m-1}),(I_{j+1},\dots,I_{j+m})) = 1\}$ ) on the right-hand side are independent.\\To obtain the tail estimate, we proceed as follows. We set $\delta_1 := \mathbf{P}(\mathcal{G}_0)$. Then for any $m\in\N$ and $l\ge 0$ we have\[\begin{split} \mathbb{P}(M^0>l) &= \mathbb{P}(N^0 > m,M^0>l) + \sum_{n=1}^m\mathbb{P}(N^0=n,L^0_n> l) \\&\le \mathbf{P}(N^0 > m) + \sum_{n=1}^m\mathbb{P}(l<L_n^0<\infty).\end{split}\]As seen in the proof of Lemma \ref{lem:welldef}, $N^0$ is geometrically distributed under $\mathbf{P}$, and hence the first part is equal to $(1-\delta_1)^m$. For $\lambda\in(0,1)$ and $n\in \{1,\dots,m\}$ we define the event\[E^\lambda_n := \{L_1^0-L_0^0<l^\lambda,\dots,L_{n}^0-L_{n-1}^0<l^\lambda\}\]such that on $E^\lambda_n$ we have $L_{n}^0 < nl^\lambda$. For $l \ge m^{\frac{1}{1-\lambda}}$ we thus have\[E^\lambda_n\cap\{l < L_n^0<\infty\} = \emptyset.\]Lemma \ref{lem:Kmtail} gives us that, fixing some $\varepsilon_K\in(0,\varepsilon)$, there is a $C_1= C_1(\alpha,\varepsilon_K)\ge 0$ with\[\begin{split} \mathbb{P}(l<L_n^0<\infty) &=\mathbb{P}((E_n^\lambda)^C,l<L_{n}^0<\infty) \le \sum_{k=1}^{n}\mathbb{P}(L_k^0-L_{k-1}^0\ge l^\lambda,l<L_{n}^0<\infty) \\ &\le \sum_{k=1}^{n}\mathbb{P}(l^\lambda\le L_k^0-L_{k-1}^0<\infty) = n\mathbb{P}(l^\lambda \le L_1^0 < \infty) \\ &\le n\mathbb{P}\left(\bigcup_{j = \lfloor l^\lambda\rfloor}\{K_{j} > j\}\right) \le C_1nl^{-\lambda(\alpha-\varepsilon_K)}.\end{split}\]Now we choose $\lambda \in (\frac{\alpha-\varepsilon}{\alpha-\varepsilon_K},1), C_2 = \frac{\lambda(\alpha-\varepsilon_K)}{\log\frac{1}{1-\delta_1}}$ and $m =\left\lceil C_2\log l \right\rceil$, then for $l_0$ large enough such that \[l_0 > (1+C_2\log l_0)^{\frac{1}{1-\lambda}}\] there is a $C_3>0$ such that for all $l\ge l_0$ we obtain\[\mathbb{P}(M^0>l) \le l^{-\lambda(\alpha-\varepsilon_K)}+C_1\frac{2+3C_2\log l+(C_2\log l)^2}{2}l^{-\lambda(\alpha-\varepsilon_K)} \le C_3 l^{-\alpha+\varepsilon}. \]Possibly increasing the constant $C_3$, this estimate also holds for any $0\le l < l_0$, which finishes the proof.\end{proof}

\subsubsection{Proof of Lemma \ref{NuMoments}}\label{pro:NuMoments}
Now having a good control on the tail of $K$, we can use it to control the tail of $\nu$. We split the event $\nu>t$ into the parts where $K \le \lambda(t)$ and where $K>\lambda(t)$, for some suitable function $\lambda(t)\ll\sqrt{t}$; see \eqref{gamma} for a definition. We treat the first part by applying large deviation results on random walks and the second part by ignoring the random walk part and just estimating $K>\lambda(t)$ with Lemma \ref{lem:Kmtail}.\begin{proof}[Proof of Lemma \ref{NuMoments}]We recall the notations \[m_I,m_A,\underline{A}^\text{good}(k),\underline{I}^\text{good}(k), \underline{\mathcal{V}}_n^\text{good}(k,\mathbf{a}) \text{ and }\tau_k^{x,i}\] from Definition \ref{auxiliaryfront} and define for \[k\in\N,\mathbf{a}=(a_1,\dots,a_k)\in\underline{A}^\text{good}(k)\]the quantity\[Z(k) := \begin{cases}m_I k,&\text{if }1\le k < k_0\\\beta_I k,&\text{if }k\ge k_0\end{cases}\]and random time\[\mu_n(k,\mathbf{a}) := \inf\left\{t\ge0:\sum_{(x,i)\in\underline{\mathcal{V}}^\text{good}_n(k,\mathbf{a})}\mathds{1}_{\tau^{x,i}_{n-x}\le t} \ge Z(k)\right\}.\]Adopting the notation $\mu(k,\mathbf{a}) := \mu_1(k,\mathbf{a})$, we observe\[\nu = \nu_1 = \sum_{k=1}^\infty\sum_{\mathbf{a}\in\underline{A}^\text{good}(k)}\mu(k,\mathbf{a})\mathds{1}_{K=k}\prod_{j=1}^k\mathds{1}_{A_{1-j}=a_j}.\]Since $\mu(k,\mathbf{a})$ only depends on $\mathbf{Y}$, we obtain
\begin{equation}\label{TKindep}
\begin{split}
\mathbf{P}&\left(K=k,\bigcap_{j=1}^k\{A_{1-j}=a_j\},\nu\in B\right) \\&= \mathbf{P}\left(K=k,\bigcap_{j=1}^k\{A_{1-j}=a_j\}\right)\mathbf{P}(\mu(k,\mathbf{a})\in B)
\end{split}
\end{equation}
for all $k\ge 1,\mathbf{a}\in\underline{A}^\text{good}(k),B\in\mathcal{B}([0,\infty))$. In particular, setting 
\begin{align}\label{gamma}
\lambda(t) := \lfloor t^{\frac12-\varepsilon_1}\rfloor
\end{align} for some $\varepsilon_1>0$ to be fixed later, we can bound $\mathbf{P}(\nu >t)$ by
\begin{equation}\label{eq:tailsplit}\begin{split} &\mathbf{P}(\nu> t, K> \lambda(t))+ \sum_{k=1}^{\lambda(t)}\sum_{\mathbf{a}\in\underline{A}^\text{good}(k)}\mathbf{P}\left(\nu>t,K = k,\bigcap_{j=1}^k\{A_{1-j}=a_j\}\right) \\&\le C_K \lambda(t)^{-(\alpha+1-\varepsilon_K)}+ \sum_{k=1}^{\lambda(t)}\sum_{\mathbf{a}\in\underline{A}^\text{good}(k)}\mathbf{P}(\mu(k,\mathbf{a})>t)\mathbf{P}\left(K = k,\bigcap_{j=1}^k\{A_{1-j}=a_j\}\right) \\&\le C_K \lambda(t)^{-(\alpha+1-\varepsilon_K)}+ \max_{\substack{1\le k \le \lambda(t),\\\mathbf{a}\in\underline{A}^\text{good}(k)}}\mathbf{P}(\mu(k,\mathbf{a})>t)\end{split}
\end{equation}
for any $\varepsilon_K>0$ and $C_K=C_K(\varepsilon_{K})$ as in Lemma \ref{lem:Kmtail}. \\First we will show that for large enough $t$, the maximum over $k_0 \le k \le \lambda(t)$ is attained in $k_0$ and is in $\mathcal{O}\left(t^{-\frac{k_0(\beta_A-\beta_I)}{2}}\right)$. Then we will show that the maximum over $1\le k < k_0$ is in $\mathcal{O}\left(t^{-\frac{5}{2}}\right)$, which for large $t$ yields the desired tail behavior.\\ 
Clearly for any $k\ge k_0,\mathbf{a}\in\underline{A}^\text{good}(k)$ using a simple coupling argument, placing all walkers at the maximal distance shows 
\begin{equation}\label{eq:nutildetail1} \begin{split} \mathbf{P}(\mu(k,\mathbf{a})>t) &\le \mathbf{P}(\mu(k,(0,\dots,0,\lceil\beta_Ak\rceil))>t) \\ &=\mathbf{P}\left(\sum_{j=1}^{\lceil\beta_Ak\rceil}\mathds{1}_{\tau^{1-k,j}_k>t} \ge \lceil\beta_Ak\rceil-\lceil\beta_Ik\rceil + 1\right) \\ &= \mathbf{P}\left(\sum_{j=1}^{\lceil\beta_Ak\rceil}\mathds{1}_{\tau^{1-k,j}_k>t} \ge \frac{\lceil\beta_Ak\rceil-\lceil\beta_Ik\rceil + 1}{\lceil\beta_Ak\rceil}\lceil\beta_Ak\rceil\right) \end{split} \end{equation} 
The last line of \eqref{eq:nutildetail1} is just the probability of a binomial distribution with success probability 
\[
\pi_k(t) := \mathbf{P}(\tau^{0,1}_k > t) 
\]
and $\lceil\beta_Ak\rceil$ trials to be larger than $b_k\lceil\beta_Ak\rceil$, with 
\[
b_k := \frac{\lceil\beta_Ak\rceil-\lceil\beta_Ik\rceil + 1}{\lceil\beta_Ak\rceil}.
\]
Thus we can apply \cite[Eq. (5)]{B1964}, using as their parameters $n,r,k$ the values $n=b_k\lceil\beta_Ak\rceil,r=\frac{1}{b_k}, k = 0$ and noting that the error term $R_0$ is non-positive, to bound the last line of \eqref{eq:nutildetail1} by
\begin{align}\label{Esti_pi_k_2}\binom{\lceil\beta_Ak\rceil}{b_k\lceil\beta_Ak\rceil}\pi_k(t)^{b_k\lceil\beta_Ak\rceil}(1-\pi_k(t))^{\lceil\beta_Ak\rceil-b_k\lceil\beta_Ak\rceil}\left(\frac{1-\pi_k(t)}{1-\frac{\pi_k(t)}{b_k}}\right).\end{align} 
Next, we estimate the success probability using \cite[Theorem A.1]{RS2004} and obtain a constant $C_0 \ge 1$ such that for any $t\ge 0$ and $k\le t^{\frac12-\varepsilon_1}$ we have
\begin{align}\label{EstimatePi_k} 
\pi_k(t) \leq C_0 \frac{k}{\sqrt{t}}. 
\end{align}
Taking $t\ge t_0 := \left(\frac{2 C_0 \beta_A}{\beta_A-\beta_I}\right)^{\frac{1}{\varepsilon_1}}$ and any $k\le t^{\frac12-\varepsilon_1}$ we arrive at 
\begin{align}\label{Esti_pi_k_1}
\pi_k(t) \le C_0t^{-\varepsilon_1} \le \frac{\beta_A-\beta_I}{2\beta_A} < b_k. \end{align}
To further bound \eqref{Esti_pi_k_2}, we estimate the binomial coefficient by the central coefficient and Stirling's formula, which gives 
\begin{equation}\label{Esti_binom_cent} \begin{split} \binom{\lceil\beta_Ak\rceil}{b_k\lceil\beta_Ak\rceil} &\le \binom{2\left\lceil\frac{\lceil\beta_Ak\rceil}{2}\right\rceil}{\left\lceil\frac{\lceil\beta_Ak\rceil}{2}\right\rceil} \le \frac{\e^{\frac{1}{12\cdot2\left\lceil\frac{\lceil\beta_Ak\rceil}{2}\right\rceil}}\sqrt{2\pi\left(2\left\lceil\frac{\lceil\beta_Ak\rceil}{2}\right\rceil\right)}\left(\frac{2\left\lceil\frac{\lceil\beta_Ak\rceil}{2}\right\rceil}{\e}\right)^{2\left\lceil\frac{\lceil\beta_Ak\rceil}{2}\right\rceil}}{\left(\sqrt{2\pi\left\lceil\frac{\lceil\beta_Ak\rceil}{2}\right\rceil}\left(\frac{\left\lceil\frac{\lceil\beta_Ak\rceil}{2}\right\rceil}{\e}\right)^{\left\lceil\frac{\lceil\beta_Ak\rceil}{2}\right\rceil}\right)^2}\\&\le 4^{\beta_Ak}. \end{split} \end{equation} 
Next we estimate both $1-\pi_k(t)$ terms in \eqref{Esti_pi_k_2} by $1$ and note that by the choice of $t_0$, using \eqref{Esti_pi_k_1}, we can bound \begin{equation}\label{Esti_frac} \frac{1}{1-\frac{\pi_k(t)}{b_k}} \le \frac{1}{1-\frac12} =2. \end{equation} Noting that $b_k\lceil\beta_A k\rceil = \lceil\beta_Ak\rceil-\lceil\beta_Ik\rceil+1 \ge (\beta_A-\beta_I)k$, $t\ge t_0$, and combining the estimates \eqref{Esti_binom_cent},\eqref{Esti_frac} and \eqref{EstimatePi_k}, we can bound the term in \eqref{Esti_pi_k_2} by
\begin{equation}\label{eq:tomax}2\cdot4^{\beta_Ak}\left(C_0\frac{k}{\sqrt{t}}\right)^{(\beta_A-\beta_I)k}. \end{equation} 
We want to show that there is a $t_1\ge t_0$ such that for all $t\ge t_1$ the maximum of \eqref{eq:tomax} over $k_0\le k\le t^{\frac12-\varepsilon_1}$ is attained in $k_0$. To do so we treat $k$ as a continuous variable and take the derivative with respect to $k$ to show that it is negative. By a straightforward calculation we find that the derivative is given by 
\[ W(k,t,\beta_A,\beta_I)\cdot\left(C(\beta_A,\beta_I)+ (\beta_A-\beta_I)\log\left(\frac{k}{\sqrt{t}}\right)\right) \]
for some positive expressions $W$ and $C$. Explicitly, 
\[ W(k,t,\beta_A,\beta_I) = 2\cdot 4^{\beta_Ak}\left(C_0\frac{k}{\sqrt{t}}\right)^{\left(\beta_A-\beta_I\right)k} \] 
and 
\[ \begin{split} C(\beta_A,\beta_I) = &\left(\log\left(C_0\right)+1\right)(\beta_A-\beta_I)+2\beta_A\log(2). \end{split} \] 
This means that the sign of the derivative is determined by the term in the brackets and calculating 
\[ \begin{split} &C(\beta_A,\beta_I)+(\beta_A-\beta_I)\log\left(\frac{k}{\sqrt{t}}\right) \le 0 \\ \iff&(\beta_A-\beta_I)\log\left(\frac{k}{\sqrt{t}}\right) \le- C(\beta_A,\beta_I) \\ &\iff k \le \sqrt{t}\exp\left(-\frac{C(\beta_A,\beta_I)}{(\beta_A-\beta_I)}\right) \end{split} \]
yields that the derivative is negative if $k$ is small. In particular, the expression in \eqref{eq:tomax}, for fixed $t$, is decreasing in $k$ up to $\sqrt{t}\exp\left(-\frac{C(\beta_A,\beta_I)}{(\beta_A-\beta_I)}\right)$. Now taking 
\[ t_1 := \max\left\{t_0,\exp\left(\frac{C(\beta_A,\beta_I)}{\varepsilon_1(\beta_A-\beta_I)}\right)\right\} \]
we have $t^{\frac12-\varepsilon_1}\le \sqrt{t}\exp\left(-\frac{C(\beta_A-\beta_I)}{(\beta_A-\beta_I)}\right)$ for any $t\ge t_1$ and thus 
\[ \begin{split} \max_{\substack{k_0\le k\le \lambda(t),\\\mathbf{a}\in\underline{A}^\text{good}(k)}}\mathbf{P}(\mu(k,\mathbf{a})>t) &\le 2\cdot4^{\beta_Ak_0}\left(C_0 \frac{k_0}{\sqrt{t}}\right)^{(\beta_A-\beta_I)k_0} \\ & = C_1(k_0,\beta_A,\beta_I) t^{-\frac{(\beta_A-\beta_I)k_0}{2}} \end{split} \]
with $C_1=C_1(k_0,\beta_A,\beta_I):= 2\cdot4^{\beta_Ak_0}\left(C_0 k_0\right)^{(\beta_A-\beta_I)k_0}$. Next, we estimate that for $1\le k < k_0$ and $\mathbf{a}\in\underline{A}^{\text{good}}(k)$ we have at least $a_j \ge m_A\ge 4+m_I$ parasites on each site for $1\le j \le k$ and thus, again by a coupling argument, obtain \[ \begin{split} \mathbf{P}(\mu(k,\mathbf{a}) > t) &\le \mathbf{P}\left(\sum_{x=1}^k\sum_{i=1}^{m_A}\mathds{1}_{\tau^{x,i}_x \le t} < m_Ik\right) \\ &\le \mathbf{P}\left(\sum_{i=1}^{km_A}\mathds{1}_{\tau^{0,i}_{k}\le t} < km_I\right) \\ &= \sum_{j=0}^{km_I-1}\binom{km_A}{j}\mathbf{P}(\tau^{0,1}_k\le t)^j\mathbf{P}(\tau^{0,1}_k> t)^{km_A-j} \\ &\le C_2 t^{-\frac{k(m_A-m_I)+1}{2}} \le C_2t^{-\frac{5}{2}} \end{split} \] for some constant $C_2 > 0$ independent of $k,t$.\\ 
In particular, this implies that for large enough $t\ge t_1$ we have 
\[ \max_{\substack{1\le k\le \lambda(t),\\\mathbf{a}\in\underline{A}^\text{good}(k)}}\mathbf{P}(\mu(k,\mathbf{a})>t) \le \max\left\{C_1 t^{-\frac{(\beta_A-\beta_I)k_0}{2}},C_2t^{-\frac{5}{2}}\right\} \]
Choosing some $q< \frac{(4\wedge \alpha) +1}{2}$ and plugging this into estimate \eqref{eq:tailsplit}, fixing some $\varepsilon_{1} \in\left(0, \frac{1}{2}-\frac{q}{\alpha+1}\right)$ and $\varepsilon_K := \alpha+1-\frac{q}{\frac12-\varepsilon_1}$, we obtain a constant $C\ge0$ such that for large enough $t\ge t_1$ we have
\[
\begin{split}
\mathbf{P}(\nu > t) &\le C_Kt^{-(\frac12-\varepsilon_1)(\alpha+1-\varepsilon_K)}+ \max_{\substack{1\le k\le \lambda(t),\\\mathbf{a}\in\underline{A}^\text{good}(k)}}\mathbf{P}(\mu(k,\mathbf{a})>t) \\& \le C_Kt^{-q}+ \max\left\{C_1 t^{-\frac{(\beta_A-\beta_I)k_0}{2}},C_2t^{-\frac{5}{2}}\right\} \le C t^{-q}
\end{split}
\]where we used that $\varepsilon_{1},\varepsilon_K$ were exactly chosen in such a way that \[\left(\frac{1}{2}-\varepsilon_1\right)(\alpha+1-\varepsilon_K) = q\]and by definition \[\min\left\{k_0\frac{\beta_A-\beta_I}{2},\frac{5}{2}\right\} \ge \frac{(4\wedge\alpha)+1}{2} > q.\]This shows the claimed tail behavior by possibly increasing the constant $C$ for small $t\le t_1$. Furthermore, by the layer-cake formula we obtain, fixing some $q^\prime \in \left(q,\frac{(4\wedge\alpha)+1}{2}\right)$,\begin{equation}\label{eq:NuExpLC}\begin{split}\mathbf{E}[\vert \nu\vert^q] &=\int_0^\infty qt^{q-1}\mathbf{P}(\nu>t)\diff t\\&\le \int_0^{t_1}qt^{q-1}\diff t + \int_{t_1}^\infty qt^{q-1}Ct^{-q^\prime}\diff t\\&= t_1^q + \frac{q Ct_1^{q-q^\prime}}{q^\prime-q} <\infty.\end{split}\end{equation}\end{proof}

\subsubsection{Proof of Lemma \ref{Tdistconv}}\label{pro:Tdistconv}
We start by showing that $(\overline{\nu}_n)_{n\ge1}$ is independent of $M^0$.
\theoremstyle{plain}\begin{lemma}\label{Tdistribution} For $E\in\mathcal{B}([0,\infty)^\N)$ we have \[ \mathbb{P}\left((\overline{\nu}_1,\overline{\nu}_2,\dots)\in E\right) = \mathbf{P}\left(\left.(\nu_1,\nu_2,\dots)\in E\right\vert M^0=0\right). \]
\end{lemma}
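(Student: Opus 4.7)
My plan is to decompose the event by the value of $M^0$ and exploit that, on $\{M^0=k\}$, the sequence $(\overline{\nu}_n)_{n\ge 1}=(\nu_{k+n})_{n\ge 1}$ is measurable with respect to random variables indexed at sites $\ge k$, while both $\{M^0=k\}$ and the survival event $\mathcal{S}$ split into a ``left half'' depending on indices $<k$ and a ``right half'' depending on indices $\ge k$. Translation invariance of the i.i.d.\ collections $\mathbf{A}$, $\mathbf{I}$, $\mathbf{Y}$ then reduces everything to $k=0$.

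In more detail, following the proof of Lemma~\ref{lem:M_tail}, I would write $\{M^0=k\}=B^{<k}\cap\mathcal{G}_k$, where $B^{<k}$ is measurable with respect to $(A_0,\dots,A_{k-1},I_1,\dots,I_k)$ and $\mathcal{G}_k$ is measurable with respect to $(A_x,I_{x+1})_{x\ge k}$. By Remark~\ref{rem:M_lessrestrictive}, on $\mathcal{G}_k$ we have $K_{k+n}\le n$ for every $n\ge 1$, so the construction of $\nu_{k+n}$ (Definition~\ref{auxiliaryfront}) uses only $A_x$, $I_{x+1}$, $Y^{x,\cdot}$ for $k\le x\le k+n-1$. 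Hence the event $\{(\nu_{k+n})_{n\ge 1}\in E\}\cap\mathcal{G}_k$ depends only on variables indexed at sites $\ge k$.

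For the survival event, set $S_n:=\sum_{j=0}^{n-1}A_j-\sum_{j=1}^{n}I_j$, so $\mathcal{S}=\bigcap_{n\ge 1}\{S_n\ge 0\}$. On $\mathcal{G}_k$ the increments $S_{k+m}-S_k$ are non-negative (for $m<k_0$ by the per-site lower bound $A_{k+j-1}\ge m_A\ge m_I+4\ge I_{k+j}$, and for $m\ge k_0$ by the averaged bounds $\beta_A,\beta_I$), so
\[
\mathcal{S}\cap\mathcal{G}_k=\Bigl(\bigcap_{n=1}^{k}\{S_n\ge 0\}\Bigr)\cap\mathcal{G}_k.
\]
Set $\tilde{B}^{<k}:=B^{<k}\cap\bigcap_{n=1}^{k}\{S_n\ge 0\}$, which is still $\sigma(A_0,\dots,A_{k-1},I_1,\dots,I_k)$-measurable and hence independent of everything indexed at sites $\ge k$. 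Thus
\[
\mathbf{P}\bigl((\nu_{k+n})_{n\ge 1}\in E,\,\mathcal{S},\,M^0=k\bigr)=\mathbf{P}(\tilde{B}^{<k})\cdot\mathbf{P}\bigl((\nu_{k+n})_{n\ge 1}\in E,\,\mathcal{G}_k\bigr).
\]
By the joint translation invariance of $(A_x,I_{x+1},Y^{x,\cdot})_{x\in\Z}$ and the fact that $\{M^0=0\}=\mathcal{G}_0$ (the left-half condition being vacuous), the right-hand factor equals $\mathbf{P}\bigl((\nu_n)_{n\ge 1}\in E,\,M^0=0\bigr)$.

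Summing over $k\ge 0$ yields
\[
\mathbf{P}\bigl((\overline{\nu}_n)_{n\ge 1}\in E,\,\mathcal{S}\bigr)=\Bigl(\sum_{k=0}^{\infty}\mathbf{P}(\tilde{B}^{<k})\Bigr)\cdot\mathbf{P}\bigl((\nu_n)_{n\ge 1}\in E,\,M^0=0\bigr).
\]
Taking $E=[0,\infty)^{\N}$ gives $\mathbf{P}(\mathcal{S})=\bigl(\sum_k\mathbf{P}(\tilde{B}^{<k})\bigr)\mathbf{P}(M^0=0)$, and dividing the two identities produces
\[
\mathbb{P}\bigl((\overline{\nu}_n)_{n\ge 1}\in E\bigr)=\frac{\mathbf{P}\bigl((\nu_n)_{n\ge 1}\in E,\,M^0=0\bigr)}{\mathbf{P}(M^0=0)}=\mathbf{P}\bigl((\nu_n)_{n\ge 1}\in E\,\big\vert\,M^0=0\bigr),
\]
as claimed. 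The main subtlety, and the only step that is not purely bookkeeping, is the reduction $\mathcal{S}\cap\mathcal{G}_k=\bigcap_{n\le k}\{S_n\ge 0\}\cap\mathcal{G}_k$: one has to check that the ``good configuration'' property above $k$ really forces non-negativity of all future partial sums, which is where the split of the definition of $\underline{A}^{\text{good}}$, $\underline{I}^{\text{good}}$ at $k_0$ (requiring $m_A\ge m_I+4$ for short runs and the averaged bounds for long runs) is used in an essential way.
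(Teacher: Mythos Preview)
Your proof is correct and follows essentially the same route as the paper's. The paper's own argument simply refers back to the proof of Lemma~\ref{lem:M_tail}, observing that $\mathcal{G}_j\cap\{(\nu_{j+1},\nu_{j+2},\dots)\in E\}$ depends only on variables with index at least $j$ while the event $B^{<j}$ (which already absorbs the survival condition) depends only on variables with index at most $j$, and then repeats the factor-and-translate computation from that lemma. You make the same decomposition but are more explicit about the one step the paper leaves implicit, namely that on $\mathcal{G}_k$ the survival event truncates to its first $k$ conditions because $S_{k+m}-S_k\ge 0$ for all $m\ge 0$; your inductive verification of this (using the per-site bounds for $m<k_0$ and the averaged bounds plus a window of length $i\le m$ for $m\ge k_0$) is exactly what underlies the paper's assertion that $B^{<j}$ is measurable with respect to $(A_0,\dots,A_{j-1},I_1,\dots,I_j)$.
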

\begin{proof} The claim follows analogously to the proof for Lemma \ref{lem:M_tail}. Using the notation used in that proof, we see that
\[\begin{split} \mathbb{P}&\left((\overline{\nu}_1,\overline{\nu}_2,\dots)\in E\right) \\ &= \frac{1}{\mathbf{P}(\mathcal{S})}\sum_{j=0}^\infty\mathbf{P}(B^{<j},\mathcal{G}_j,(\nu_{j+1},\dots,\nu_{j+k_0-1},\nu_{j+k_0},\nu_{j+k_0+1},\dots)\in E) \end{split}\]
and then we can perform the same steps as there, because \[\mathcal{G}_j\cap\{(\nu_{j+1},\dots,\nu_{j+k_0-1},\nu_{j+k_0},\nu_{j+k_0+1},\dots)\in E\}\]only depends on variables with index above $j$ and $B^{<j}$ depends only on variables with index below $j$.\end{proof}
\theoremstyle{plain}\begin{corollary}\label{cor:overnufinmom} For any $q\in\left[1,\frac{(\alpha\wedge 4)+1}{2}\right)$, we have $\mathbb{E}[\vert\overline{\nu}_n\vert^q] < \infty$.\end{corollary}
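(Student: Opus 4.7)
The plan is to reduce the claim directly to Lemma \ref{NuMoments} using Lemma \ref{Tdistribution}. Applied to the cylinder set $E=\{(x_1,x_2,\dots):x_n>t\}$, Lemma \ref{Tdistribution} yields the pointwise identity
\[
\mathbb{P}(\overline{\nu}_n>t)=\mathbf{P}(\nu_n>t\mid M^0=0)
\]
for every $t\ge 0$, so $\overline{\nu}_n$ under $\mathbb{P}$ has the same law as $\nu_n$ under $\mathbf{P}(\,\cdot\mid M^0=0)$.

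Next I would observe that $\{M^0=0\}=\mathcal{G}_0$ by the very definition of $M^0$ (taking $L_0^0=0$ and requiring $L_1^0=\infty$ forces $G$ to equal $1$ on every prefix starting at $0$), and that the argument of Lemma \ref{lem:welldef} already established $\mathbf{P}(\mathcal{G}_0)>0$. Hence conditioning on $\{M^0=0\}$ inflates probabilities by the finite factor $\mathbf{P}(M^0=0)^{-1}$:
\[
\mathbb{E}[|\overline{\nu}_n|^q]=\mathbf{E}\bigl[|\nu_n|^q\,\big|\,M^0=0\bigr]\le \frac{\mathbf{E}[|\nu_n|^q]}{\mathbf{P}(M^0=0)}.
\]

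Finally, by the stationarity of $(\nu_n)_{n\in\Z}$ under $\mathbf{P}$ (recorded in the remark after Definition \ref{auxiliaryfront}), $\mathbf{E}[|\nu_n|^q]=\mathbf{E}[|\nu|^q]$, and the latter is finite for every $q<\tfrac{(\alpha\wedge 4)+1}{2}$ by Lemma \ref{NuMoments} (cf.\ the layer-cake computation in \eqref{eq:NuExpLC}). This completes the argument. There is no genuine obstacle here: the corollary is a one-line consequence of the distributional identification in Lemma \ref{Tdistribution} together with the already-proven tail estimate for $\nu$.
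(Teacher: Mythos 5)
Your proposal is correct and takes the same approach as the paper: both use Lemma \ref{Tdistribution} to identify the law of $\overline{\nu}_n$ under $\mathbb{P}$ with that of $\nu_n$ under $\mathbf{P}(\cdot\mid M^0=0)$, bound the conditional law (resp.\ tail) by the unconditional one divided by $\mathbf{P}(M^0=0)>0$, and then invoke the tail estimate of Lemma \ref{NuMoments}. The only cosmetic difference is that you bound the $q$-th moment directly while the paper bounds the tail probability, but these are equivalent via the layer-cake formula.
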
\begin{proof} For $n\ge 1$ the claim follows directly by Lemma \ref{NuMoments} and Lemma \ref{Tdistribution}, noting that \[\mathbb{P}(\overline{\nu}_n > t) = \mathbf{P}(\nu_n > t\vert M^0 = 0) \le \frac{\mathbf{P}(\nu_n > t)}{\mathbf{P}(M^0=0)}. \qedhere\]\end{proof}
With this characterization of the joint distribution of $(\overline{\nu}_{k+n})_{n\ge1}$ for any starting point $k\ge0$, we can show that their joint distribution converges to the distribution of $(\nu_n)_{n\ge1}$ for $k\to\infty$.
\begin{proof}[Proof of Lemma \ref{Tdistconv}]
For shorthand we write\[\{\underline{\nu}_k\in E\} := \{(\nu_k,\nu_{k+1},\dots)\in E\}\text{ and } \{\underline{\overline{\nu}}_k\in E\} := \{(\overline{\nu}_k,\overline{\nu}_{k+1},\dots)\in E\}\]for any $k\ge1$. First we observe that by Lemma \ref{Tdistribution}, for $n\ge k_0$ we have\[\begin{split}&\vert\mathbb{P}(\underline{\overline{\nu}}_n \in E) -\mathbf{P}(\underline{\nu}_1 \in E) \vert =\left\vert\frac{\mathbf{P}(\underline{\nu}_n\in E,M^0=0)-\mathbf{P}(M^0=0)\mathbf{P}(\underline{\nu}_1 \in E)}{\mathbf{P}(M^0=0)}\right\vert .\end{split}\]Hence it suffices to show \[\vert\mathbf{P}(\underline{\nu}_n\in E,M^0=0)-\mathbf{P}(M^0=0)\mathbf{P}(\underline{\nu}_1\in E)\vert\le Cn^{-\alpha+\varepsilon}\] for large $n$. We observe that for all $m\ge k_0$ we have\[\{M^0=0\} = \mathcal{G}_{0,m}\setminus\left(\mathcal{G}_{0,m}\cap\bigcup_{j=m+1}^\infty\{K_j > j\}\right)\]
and hence 
\begin{equation}\label{eq:probdiff}\mathbf{P}(M^0 = 0) = \mathbf{P}(\mathcal{G}_{0,m}) - \mathbf{P}\left(\mathcal{G}_{0,m},\bigcup_{j=m+1}^\infty\{K_j > j\}\right).\end{equation}
In particular, we note that $\mathcal{G}_{0,m}$ only depends on $A_0,\dots, A_{m-1},I_1,\dots I_m$.
\\
Next, we denote by
\[\widetilde{K}_{n} := \inf\{n-K_{n},n+1-K_{n+1},\dots\}\]
the smallest site, which is used to determine if $\underline{\nu}_n\in E$. In particular, because for any $m\ge 1$ the event $\{K_{n} \le m\}$ only depends on $A_{n-m-1},\dots,A_{n-1}$ and $I_{n-m},\dots,I_n$, the event
\[\{\widetilde{K}_n \ge n-k,\underline{\nu}_n\in E\} \in\sigma(A_{x},I_{x+1},Y^{x,i}:x\ge n-k,i\in\N).\]
For $k_0\le k\le n-k_0$, we estimate, repeatedly applying \eqref{eq:probdiff} and using the dependencies written above
\begin{align*} \vert\mathbf{P}(\underline{\nu}_n&\in E,M^0=0) -\mathbf{P}(M^0=0)\mathbf{P}(\underline{\nu}_1\in E)\vert \\
\le&\mathbf{P}(\underline{\nu}_n\in E,\widetilde{K}_n < n-k, M^0=0) \\&+ \vert\mathbf{P}(\underline{\nu}_n\in E,\widetilde{K}_n\ge n-k, M^0 = 0) - \mathbf{P}(M^0=0)\mathbf{P}(\underline{\nu}_1\in E)\vert \\
\le&\mathbf{P}(\widetilde{K}_n < n - k) + \mathbf{P}\left(\underline{\nu}_n\in E, \widetilde{K}_n\ge n-k,\mathcal{G}_{0,n-k},\bigcup_{j=n-k+1}^\infty\{K_j > j\}\right) \\&+ \left\vert\mathbf{P}\left(\underline{\nu}_n\in E,\widetilde{K}_n\ge n-k,\mathcal{G}_{0,n-k}\right) - \mathbf{P}(M^0 = 0)\mathbf{P}(\underline{\nu}_1\in E)\right\vert\\
\le& \mathbf{P}(\widetilde{K}_n < n-k) + \mathbf{P}\left(\bigcup_{j=n-k+1}^\infty\{K_j > j\}\right) \\&+\left\vert\mathbf{P}\left(\underline{\nu}_n\in E,\widetilde{K}_n\ge n-k\right)\mathbf{P}\left(\mathcal{G}_{0,n-k}\right) - \mathbf{P}\left(\mathcal{G}_{0,n-k}\right)\mathbf{P}(\underline{\nu}_1\in E)\right\vert \\&+ \mathbf{P}\left(\mathcal{G}_{0,n-k},\bigcup_{j=n-k+1}^\infty\{K_j^{k_0} > j\}\right)\mathbf{P}(\underline{\nu}_1\in E) \\
\le& \mathbf{P}(\widetilde{K}_n < n - k) + 2\mathbf{P}\left(\bigcup_{j=n-k+1}^\infty\{K_j > j\}\right) \\&+ \left\vert\mathbf{P}\left(\underline{\nu}_n\in E,\widetilde{K}_n\ge n-k\right) - \mathbf{P}(\underline{\nu}_n\in E)\right\vert \\
\le& 2\mathbf{P}(\widetilde{K}_n < n- k) + 2\mathbf{P}\left(\bigcup_{j=n-k+1}^\infty\{K_j > j\}\right)\\
\le & 2\sum_{j=0}^\infty\mathbf{P}(K_{n+j}> j+k) + 2\sum_{j=n-k+1}^\infty\mathbf{P}(K_{n+j} > j) \\
\le& 2\sum_{j=0}^\infty C(j+k)^{-(\alpha+1)+\varepsilon}+2\sum_{j=n-k+1}Cj^{-(\alpha+1)+\varepsilon},\end{align*}for any $\varepsilon > 0$ and some constant $C\ge0$, using Lemma \ref{lem:Kmtail} in the last line.
\\Now for $n>2k_0$, plugging in $k = \lfloor\frac{n}{2}\rfloor$, we obtain a constant $C\ge 0$ such that
\[\vert\mathbb{P}(\underline{\overline{\nu}}_n\in E) -\mathbf{P}(\underline{\nu}_1\in E)\vert \le C n^{-\alpha+\varepsilon},\]
which concludes the proof.
\end{proof}
With this convergence we can easily show that $(\overline{\nu}_n)_{n\ge k_0}$ has asymptotically the same moments as $(\nu_n)_{n\ge1}$.
\theoremstyle{plain}\begin{corollary}\label{cor:momentconvergence}
For all $1\le q < \frac{(4\wedge\alpha)+1}{2}$ we have
\[\lim_{n\to \infty} \mathbb{E}[\vert \overline{\nu}_n\vert^q] = \mathbf{E}[\vert\nu\vert^q].\]
\end{corollary}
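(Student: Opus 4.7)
The plan is to upgrade the distributional convergence from Lemma \ref{Tdistconv} to $L^q$-convergence via a uniform integrability argument. The key ingredient is the exact distributional identity from Lemma \ref{Tdistribution}, which lets me transfer the moment bound for $\nu$ coming from Lemma \ref{NuMoments} to a uniform moment bound along the sequence $(\overline{\nu}_n)_{n\ge 1}$ under $\mathbb{P}$.

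Concretely, I would first project Lemma \ref{Tdistribution} onto the first coordinate, i.e.\ apply it to cylinder sets of the form $E = \{x \in [0,\infty)^\N : x_1 \in B\}$, to obtain $\mathbb{P}(\overline{\nu}_n \in B) = \mathbf{P}(\nu_n \in B \mid M^0 = 0)$ for every Borel $B \subset [0,\infty)$ and every $n \ge 1$. Since $(\nu_n)_{n\in\Z}$ is stationary under $\mathbf{P}$ (remark following Definition \ref{auxiliaryfront}) and $\mathbf{P}(M^0 = 0) > 0$ (proof of Lemma \ref{lem:welldef}), this yields the uniform tail estimate
\[
\mathbb{P}(\overline{\nu}_n > t) \le \frac{\mathbf{P}(\nu > t)}{\mathbf{P}(M^0 = 0)}, \qquad n \ge 1,\ t > 0.
\]
Fixing any $q' \in \left(q, \frac{(4\wedge\alpha)+1}{2}\right)$, Lemma \ref{NuMoments} together with the layer-cake identity used in \eqref{eq:NuExpLC} gives $\mathbf{E}[|\nu|^{q'}] < \infty$, and integrating the tail bound above produces $\sup_{n \ge 1} \mathbb{E}[|\overline{\nu}_n|^{q'}] \le \mathbf{E}[|\nu|^{q'}]/\mathbf{P}(M^0 = 0) < \infty$. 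This is the de la Vallée Poussin criterion, and hence the family $\{|\overline{\nu}_n|^q\}_{n\ge 1}$ is uniformly integrable under $\mathbb{P}$.

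Finally, Lemma \ref{Tdistconv} applied once more to one-dimensional cylinder sets gives that $\overline{\nu}_n$ converges to $\nu$ in distribution as $n \to \infty$. Combining this convergence with the uniform integrability established above, the Vitali convergence theorem delivers the claimed identity $\lim_{n\to\infty}\mathbb{E}[|\overline{\nu}_n|^q] = \mathbf{E}[|\nu|^q]$. I do not foresee a serious obstacle in this route: all of the substantive analytic work (tails of $K$ and $\nu$, the distributional identity at $\{M^0 = 0\}$, and the quantitative weak convergence) is already packaged into the preceding lemmas, and the corollary amounts to a routine uniform integrability upgrade of results already in hand.
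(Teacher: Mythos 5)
Your proof is correct, and it relies on exactly the same two ingredients as the paper's: the uniform tail bound $\mathbb{P}(\overline{\nu}_n > t) \le \mathbf{P}(\nu > t)/\mathbf{P}(M^0=0)$ coming from Lemma \ref{Tdistribution}, and the pointwise convergence of tails from Lemma \ref{Tdistconv}. The packaging differs slightly: the paper writes the moment difference via the layer-cake formula as $\int_0^\infty q t^{q-1}\vert\mathbb{P}(\overline{\nu}_n>t)-\mathbf{P}(\nu>t)\vert\,\mathrm{d}t$, dominates the integrand by $\frac{2q t^{q-1}}{\mathbf{P}(M^0=0)}\mathbf{P}(\nu>t)\in L^1([0,\infty))$, and applies dominated convergence directly, whereas you first choose a higher exponent $q'\in\bigl(q,\frac{(4\wedge\alpha)+1}{2}\bigr)$, establish $\sup_n\mathbb{E}[\vert\overline{\nu}_n\vert^{q'}]<\infty$ to obtain uniform integrability of $\{\vert\overline{\nu}_n\vert^q\}$ via de la Vallée Poussin, and then invoke a Vitali-type theorem for convergence in distribution. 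Both arguments are short and close the same gap; the paper's is marginally more self-contained (no appeal to a higher exponent $q'$ or to an abstract convergence theorem under weak convergence, which strictly speaking requires a brief Skorokhod representation justification when the random variables live on different probability spaces), while yours isolates a uniform-integrability statement that could in principle be reused.
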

\begin{proof}
The layer cake formula yields
\[
\begin{split}
\vert\mathbb{E}[\vert \overline{\nu}_n\vert^q] -\mathbf{E}[\vert\nu\vert^q]\vert &= \left\vert\int_0^\infty qt^{q-1}\mathbb{P}(\overline{\nu}_n>t)\diff t-\int_0^\infty qt^{q-1}\mathbf{P}(\nu>t)\diff t\right\vert\\
& \le\int_0^\infty qt^{q-1}\vert\mathbb{P}(\overline{\nu}_n>t)-\mathbf{P}(\nu>t)\vert\diff t .
\end{split}
\]Because\[qt^{q-1}\vert\mathbb{P}(\overline{\nu}_n>t)-\mathbf{P}(\nu>t)\vert \le qt^{q-1}\frac{2}{\mathbf{P}(M^0=0)}\mathbf{P}(\nu>t) \in L^1([0,\infty))\]we can interchange the limit $n\to\infty$ with the integral and obtain from Lemma \ref{Tdistconv} that\[\lim_{n\to\infty}\vert\mathbb{E}[\vert \overline{\nu}_n\vert^q] -\mathbf{E}[\vert\nu\vert^q]\vert = 0.\qedhere\]\end{proof}
\subsubsection{Proof of Lemma \ref{conditinallyphimixing}}\label{pro:conditinallyphimixing}
We need to introduce some additional notation to make precise the reasoning from the sketch of the proof.
\theoremstyle{definition}
\theoremstyle{definition}\begin{definition} \label{helpsets}
For $m\in\N$ we define the index set
\[\underline{C}^\text{good}(m) := \left\{(\mathbf{a},\mathbf{i})\in \N_0^m\times\N^m\left\vert G(\mathbf{a},\mathbf{i}) = 1\right.\right\}\]
of all possible good configurations $((A_0,\dots,A_{m-1}),(I_1,\dots,I_m))$ given $\{M^0=0\}$.\\
For $m\in\N_0,n\in\N,(\mathbf{a},\mathbf{i})\in\underline{C}^\text{good}(m)$ we define the event that $(A_0,\dots,A_{m-1})$ equals $\mathbf{a}$ and $(I_1,\dots,I_m)$ equals $\mathbf{i}$, i.e.,
\[\{\mathbf{A}_m = \mathbf{a}\} := \bigcap_{j=0}^{m-1}\{A_{j}=a_j\}, \{\mathbf{I}_m = \mathbf{i}\} := \bigcap_{k=1}^{m}\{I_{k} = i_k\}\]Also, we define the event that, given $m\ge 1$ and some $(\mathbf{a},\mathbf{i})\in\underline{C}^\text{good}(m)$, also the upcoming random configuration after site $m$ is good \[((a_1,\dots,a_{m},A_{m},\dots,A_{m+n-1}),(i_1,\dots,i_m,I_{m+1},\dots,I_{m+n}))\in\underline{C}^\text{good}(m+n).\]That is, $\mathcal{G}_{m,n}(\mathbf{a},\mathbf{i})$ is the event\[ \bigcap_{x=m+1}^{m+n}\left\{G((a_0,\dots,a_{m-1},A_m,\dots,A_{m+n-1}),(i_1,\dots,i_m,I_{m+1},\dots,I_{m+n})) = 1\right\} \]with the obvious extension to $\mathcal{G}_{m,\infty}(\mathbf{a},\mathbf{i})$. We note that this is just an extension of the definition $\mathcal{G}_{0,n}$ from \ref{Msites}, where we fixed the first $m$ entries. We observe that because the first $m$ entries are fixed, the event $\mathcal{G}_{m,n}(\mathbf{a},\mathbf{i})$ only depends on $A_m,\dots,A_{m+n-1},I_{m+1},\dots,I_{m+n}$.\\ Also, we define the event $\mathcal{B}_{m,n}(\mathbf{a},\mathbf{i})$ that \[((a_1,\dots,a_{m},A_{m},\dots,A_{x}),(i_1,\dots,i_m,I_{m+1},\dots,I_{x}))\notin\underline{C}^\text{good}(m+x)\]for some $x > m+n$. That is, $\mathcal{B}_{m,n}(\mathbf{a},\mathbf{i})$ is the event\[ \bigcup_{x=m+n+1}^\infty\left(\left\{G((a_0,\dots,a_{m-1},A_m,\dots,A_{x}),(i_1,\dots, i_m,I_{m+1},\dots, I_x)) = 0\right\}\right).\]Again, $\mathcal{B}_{m,n}(\mathbf{a},\mathbf{i})$ only depends on $A_m,A_{m+1},\dots,I_{m+1},I_{m+2},\dots$.\end{definition}\begin{proof}[Proof of Lemma \ref{conditinallyphimixing}]Using Lemma \ref{Tdistribution}, we only need to show
\begin{equation}\label{survphi}\begin{split}&\left\vert\frac{\mathbf{P}\left(E,B\vert M^0=0\right)}{\mathbf{P}(E\vert M^0=0)} -\mathbf{P}(B\vert M^0=0)\right\vert \\ & = \left\vert\frac{\mathbf{P}(E,B,M^0=0)}{\mathbf{P}(E,M^0=0)}- \mathbf{P}(B\vert M^0=0)\right\vert \le C n^{-\alpha+\varepsilon}\end{split}\end{equation}
for some $C>0$, all $m\ge k_0,n\in\N$ and\[E\in\sigma(\widetilde{\nu}_1(0),\dots,\widetilde{\nu}_{k_0-1}(0),\nu_{k_0},\dots\nu_m),\mathbf{P}(E\vert M^0 = 0)>0,B \in\sigma(\nu_k:k\ge m+n).\]The idea is to split up the probability into different events, similarly as in the proof of Lemma \ref{Tdistconv}, where one event is unlikely and on the other event we can use independence. Then, given $M^0 = 0$, we estimate how far above $m$ the event $E$ influences $\mathbf{A},\mathbf{I}$ and how far below $m+n$ the event $B$ influences $\mathbf{A},\mathbf{I},\mathbf{Y}$. Making the distance $n$ large, these areas of influence will be disjoint, and the two events will become independent, except on some unlikely event. We recall\[\widetilde{K}_m = \inf\{m+j-K_{m+j}:j\ge0\}\]and that \[\{\widetilde{K}_{m+n}\ge m+k,B\}\in\sigma(A_x,I_{x+1},Y^{x,i}:x\ge m+k,i\in\N).\]We also recall that, for any $l\ge k_0$, we have\[\{M^0 = 0\} = \mathcal{G}_{0,l}\setminus(\mathcal{G}_{0,l}\cap\mathcal{B}_{0,l}) = \mathcal{G}_{0,l}\setminus\left(\mathcal{G}_{0,l}\cap\bigcup_{j=l+1}^\infty\{K_j>j\}\right).\]With this in mind, we compute that for any $n>2k_0$ and $k_0<k\le n-k_0$, the probability $\mathbf{P}(E,B,M^0=0)$ is equal to
\[
\begin{split}
&\quad\mathbf{P}\left(E,B,\widetilde{K}_{m+n} \ge m+k,\mathcal{G}_{0,m+k}\right) - \mathbf{P}\left(E,B,\widetilde{K}_{m+n} \ge m+k,\mathcal{G}_{0,m+k},\mathcal{B}_{0,m+k}\right) \\&\quad+\mathbf{P}\left(E,B,\widetilde{K}_{m+n}< m+k,\mathcal{G}_{0,m+k}\right)  - \mathbf{P}\left(E,B,\widetilde{K}_{m+n}< m+k,\mathcal{G}_{0,m+k},\mathcal{B}_{0,m+k}\right) \nonumber .\label{probsplit}
\end{split}
\]
The first probability contains independent events and thus is equal to \[\mathbf{P}\left(E,\mathcal{G}_{0,m+k}\right)\mathbf{P}\left(B,\widetilde{K}_{m+n}\ge m+k\right)\] Now to estimate \eqref{survphi}, we use the triangle inequality, putting together the first term with the term $-\mathbf{P}(B\vert M^0=0)$ and then each other term for itself. This gives that the term in \eqref{survphi} is bounded by
\[\begin{split}&\left\vert\frac{\mathbf{P}\left(E,\mathcal{G}_{0,m+k}\right)\mathbf{P}\left(B,\widetilde{K}_{m+n}\ge m+k\right) - \mathbf{P}(B,M^0 = 0)}{\mathbf{P}(E,M^0= 0)}\right\vert \\&+ \frac{\mathbf{P}\left(E,B,\widetilde{K}_{m+n} \ge m+k,\mathcal{G}_{0,m+k},\mathcal{B}_{0,m+k}\right)}{\mathbf{P}(E,M^0= 0)}\\&+\frac{\mathbf{P}\left(E,B,\widetilde{K}_{m+n}< m+k,\mathcal{G}_{0,m+k}\right)}{\mathbf{P}(E,M^0= 0)} \\ &+\frac{\mathbf{P}\left(E,B,\widetilde{K}_{m+n}< m+k,\mathcal{G}_{0,m+k},\mathcal{B}_{0,m+k}\right)}{\mathbf{P}(E,M^0= 0)}.\end{split}\]
To estimate these terms, we observe that for $(\mathbf{a},\mathbf{i})\in\underline{C}^\text{good}(m)$ and $l > k_0$, possibly $l=\infty$, we have\begin{align*}\mathbf{P}&(\mathbf{A}_m = \mathbf{a},\mathbf{I}_m = \mathbf{i}, E,\mathcal{G}_{0,m+l}) = \mathbf{P}\left(\mathbf{A}_m = \mathbf{a},\mathbf{I}_m = \mathbf{i}, E,\mathcal{G}_{m,m+l}(\mathbf{a},\mathbf{i})\right) \\&= \mathbf{P}\left(\mathbf{A}_m = \mathbf{a},\mathbf{I}_m = \mathbf{i}, E\right)\mathbf{P}\left(\mathcal{G}_{m,m+l}(\mathbf{a},\mathbf{i})\right).\end{align*}We note that
\begin{equation}\label{eq:G_contain}
\mathcal{G}_{m,m+l}(\mathbf{a},\mathbf{i}) \supset \bigcap_{n=1}^\infty\left\{\sum_{j=1}^n A_{m+j-1} \ge \beta_A n,\sum_{j=1}^n I_{m+j}\le \beta_I n\right\}
\end{equation}
for any $(\mathbf{a},\mathbf{i})\in\underline{C}^\text{good}(m)$. Because $\mu_I < \beta_I < \beta_A < \mu_A$, the right-hand side of \eqref{eq:G_contain} has positive probability, which we denote by $\delta$. \\We compute\begin{equation}\label{unifAbound}\begin{split} &\frac{\mathbf{P}\left(E,\mathcal{G}_{0,m+k}\right)}{\mathbf{P}(E,M^0=0)} =\sum_{(\mathbf{a},\mathbf{i})\in\underline{C}^\text{good}(m)}\frac{\mathbf{P}\left(\mathbf{A}_m = \mathbf{a},\mathbf{I}_m = \mathbf{i}, E,\mathcal{G}_{0,m+k}\right)}{\sum_{(\mathbf{a},\mathbf{i})\in\underline{C}^\text{good}(m)}\mathbf{P}\left((\mathbf{A}_m = \mathbf{a},\mathbf{I}_m = \mathbf{i}, E,M^0=0\right)} \\ &= \sum_{(\mathbf{a},\mathbf{i})\in\underline{C}^\text{good}(m)}\frac{\mathbf{P}\left((\mathbf{A}_m = \mathbf{a},\mathbf{I}_m = \mathbf{i}, E\right)\mathbf{P}\left(\mathcal{G}_{m,k}(\mathbf{a},\mathbf{i})\right)}{\sum_{(\mathbf{a},\mathbf{i})\in\underline{C}^\text{good}(m)}\mathbf{P}\left(\mathbf{A}_m = \mathbf{a},\mathbf{I}_m = \mathbf{i}, E\right)\mathbf{P}\left(\mathcal{G}_{m,\infty}(\mathbf{a},\mathbf{i})\right)}\\ &\le \sum_{(\mathbf{a},\mathbf{i})\in\underline{C}^\text{good}(m)}\frac{\mathbf{P}\left((\mathbf{A}_m = \mathbf{a},\mathbf{I}_m = \mathbf{i}, E\right)}{\sum_{(\mathbf{a},\mathbf{i})\in\underline{C}^\text{good}(m)}\mathbf{P}\left((\mathbf{A}_m = \mathbf{a},\mathbf{I}_m = \mathbf{i}, E\right)\delta} = \frac{1}{\delta}. \end{split}\end{equation}Hence, using the uniform bound \eqref{unifAbound}, we can show for $k_0 < k^\prime < k < n-k_0$ that\begin{equation}\label{unifAconv}\begin{split}&\left\vert\frac{\mathbf{P}\left(E,\mathcal{G}_{0,m+k}\right)}{\mathbf{P}(E,M^0=0)}-1\right\vert = \frac{\mathbf{P}\left(E,\mathcal{G}_{0,m+k}\right)-\mathbf{P}(E,M^0=0)}{\mathbf{P}(E,M^0=0)}\\&=\frac{\mathbf{P}\left(E,\mathcal{G}_{0,m+k},\mathcal{B}_{0,m+k}\right)}{\mathbf{P}(E,M^0=0)} = \frac{\mathbf{P}\left(E,\mathcal{G}_{0,m+k},\bigcup_{j=m+k+1}^\infty\{K_j>j\}\right)}{\mathbf{P}(E,M^0=0)}\\&\le \frac{\mathbf{P}\left(E,\mathcal{G}_{0,m+k^\prime},\bigcup_{j=m+k+1}^\infty\{K_j>j-m-k^\prime\}\right)}{\mathbf{P}(E,M^0=0)}\\& = \frac{\mathbf{P}\left(E,\mathcal{G}_{0,m+k^\prime}\right)\mathbf{P}\left(\bigcup_{j=m+k+1}^\infty\{K_j>j-m-k^\prime\}\right)}{\mathbf{P}(E,M^0=0)}\\&\le \frac{1}{\delta}\sum_{j=1}^\infty\mathbf{P}(K_{j+m+k} > j+k-k^\prime) \le C_1 (k-k^\prime)^{-\alpha+\varepsilon},\end{split}\end{equation}for any $\varepsilon\in (0,\alpha)$ and some $C_1 = C_1(\varepsilon)\ge0$, using Lemma \ref{lem:Kmtail}.
With this we can now begin to estimate each term of the triangle inequality used for \eqref{probsplit}. First, by an analogous argument as we used starting from the second line of \eqref{unifAconv},\[\begin{split}&\frac{\mathbf{P}\left(E,B,\widetilde{K}_{m+n} \ge m+k,\mathcal{G}_{0,m+k},\mathcal{B}_{0,m+k}\right)}{\mathbf{P}(E,M^0=0)} \\&\le\frac{\mathbf{P}\left(E,\mathcal{G}_{0,m+k},\mathcal{B}_{0,m+k}\right)}{\mathbf{P}(E,M^0=0)}\le C_1(k-k^\prime)^{-\alpha+\varepsilon}.\end{split}\]Similarly,\[\begin{split}&\frac{\mathbf{P}\left(E,B,\widetilde{K}_{m+n} < m+k,\mathcal{G}_{0,m+k},\mathcal{B}_{0,m+k}\right)}{\mathbf{P}(E,M^0=0)}\\ &\le\frac{\mathbf{P}\left(E,\mathcal{G}_{0,m+k},\mathcal{B}_{0,m+k}\right)}{\mathbf{P}(E,M^0=0)}\le C_1(k-k^\prime)^{-\alpha+\varepsilon}.\end{split}\]Next, using \eqref{unifAbound}, we find\[\begin{split}&\frac{\mathbf{P}\left(E,\mathcal{G}_{0,m+k},B,\widetilde{K}_{m+n}< m+k\right)}{\mathbf{P}(E,M^0=0)}\le\frac{\mathbf{P}\left(E,\mathcal{G}_{0,m+k},\widetilde{K}_{m+n}< m+k\right)}{\mathbf{P}(E,M^0=0)}\\= &\frac{\mathbf{P}\left(E,\mathcal{G}_{0,m+k}\right)}{\mathbf{P}(E,M^0=0)}\mathbf{P}\left(\widetilde{K}_{m+n}< m+k\right) \le \frac{\mathbf{P}\left(\widetilde{K}_{m+n}< m+k\right)}{\delta} \le C_2 (n-k)^{-\alpha+\varepsilon}\end{split}\]for some $C_2 = C_2(\varepsilon)$, using Lemma \ref{lem:Kmtail}. Finally, to estimate the first term of \eqref{probsplit} minus $\mathbf{P}(B\vert M^0 =0)$, using the same split up as for \eqref{probsplit} but now on $\mathbf{P}(B,M^0=0)$ (that is, with $E$ replaced by $\Omega$), we compute\begin{align*}&\left\vert \frac{\mathbf{P}\left(E,\mathcal{G}_{0,m+k}\right)}{\mathbf{P}(E,M^0=0)}\mathbf{P}\left(B,\widetilde{K}_{m+n}\ge m+k\right) -\mathbf{P}(B\vert M^0= 0)\right\vert \\\le&\left\vert \frac{\mathbf{P}\left(E,\mathcal{G}_{0,m+k}\right)}{\mathbf{P}(E,M^0=0)} -\frac{\mathbf{P}\left(\mathcal{G}_{0,m+k}\right)}{\mathbf{P}(M^0=0)}\right\vert\mathbf{P}\left(B,\widetilde{K}_{m+n}\ge m+k\right) \\&+\frac{\mathbf{P}\left(B,\widetilde{K}_{m+n}\ge m+k,\mathcal{G}_{0,m+k},\mathcal{B}_{0,m+k}\right)}{\mathbf{P}(M^0=0)}\\&+\frac{\mathbf{P}\left(B,\widetilde{K}_{m+n}<m+k,M^0=0\right)}{\mathbf{P}(M^0=0)} \\\le&\left\vert \frac{\mathbf{P}\left(E,\mathcal{G}_{0,m+k}\right)}{\mathbf{P}(E,M^0=0)}-\frac{\mathbf{P}\left(\mathcal{G}_{0,m+k}\right)}{\mathbf{P}(M^0=0)}\right\vert +C_1(k-k^\prime)^{-\alpha+\varepsilon}+ C_2(n-k)^{-\alpha+\varepsilon} \\\le& \left\vert \frac{\mathbf{P}\left(E,\mathcal{G}_{0,m+k}\right)}{\mathbf{P}(E,M^0=0)}-1\right\vert+ \left\vert 1-\frac{\mathbf{P}\left(\mathcal{G}_{0,m+k}\right)}{\mathbf{P}(M^0=0)}\right\vert +C_1(k-k^\prime)^{-\alpha+\varepsilon}+ C_2(n-k)^{-\alpha+\varepsilon} \\\le& 3C_1(k-k^\prime)^{-\alpha+\varepsilon}+ C_2(n-k)^{-\alpha+\varepsilon}.\end{align*}Now plugging in $k = \left\lceil\frac{2}{3}n\right\rceil,k^\prime = \left\lceil\frac{1}{3}n\right\rceil$ and combining all estimates, we find a constant $C = C(\varepsilon)\ge 0$ such that\[\left\vert\frac{\mathbf{P}(E,B,M^0=0)}{\mathbf{P}(E,M^0=0)} -\mathbf{P}(B\vert M^0=0)\right \vert \le C n^{-\alpha+\varepsilon}\]for large enough $n\in\N$. Possibly increasing this constant $C$, the estimate holds already for all $n\in\N$, which finishes the proof.\end{proof}

\subsubsection{Proof of Lemma \ref{Qconvspeed}}\label{pro:Qconvspeed}
First, we establish that the variance of the partial sum grows only linearly in $n$, given that the variance is finite.\theoremstyle{plain}\begin{lemma}\label{L2bound}If $\alpha>3$, there is a $C>0$ such that for all $n\ge1$ we have\[\mathbb{E}\left[\left\vert\sum_{k=1}^n\left(\overline{\nu}_k-\mathbb{E}[\overline{\nu}_k]\right)\right\vert^2\right] \le Cn.\]\end{lemma}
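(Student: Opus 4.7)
The plan is a second-moment expansion: split the square of the centred sum into diagonal and off-diagonal terms, control the diagonal via a uniform $L^2$ bound on $\overline{\nu}_k$, and bound the off-diagonal covariances using Ibragimov's classical covariance inequality together with the $\phi$-mixing rate from Lemma \ref{conditinallyphimixing}.

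First I would decompose
\begin{equation*}
\mathbb{E}\Bigl[\Bigl|\sum_{k=1}^n (\overline{\nu}_k-\mathbb{E}[\overline{\nu}_k])\Bigr|^2\Bigr] = \sum_{k=1}^n \mathrm{Var}(\overline{\nu}_k) + 2\sum_{1\le j<k\le n} \mathrm{Cov}(\overline{\nu}_j,\overline{\nu}_k).
\end{equation*}
The diagonal is immediate: since $\alpha>3$ forces $2<\tfrac{(4\wedge\alpha)+1}{2}$, Corollary \ref{cor:overnufinmom} gives $\overline{\nu}_k\in L^2(\mathbb{P})$ for every $k\ge1$, and Corollary \ref{cor:momentconvergence} upgrades this to $\mathbb{E}[|\overline{\nu}_k|^2]\to\mathbf{E}[|\nu|^2]<\infty$. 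In particular, $C_2:=\sup_{k\ge1}\|\overline{\nu}_k\|_2<\infty$, so the diagonal sum is bounded by $nC_2^2$.

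For the off-diagonal terms I would invoke the Ibragimov covariance inequality for $\phi$-mixing $\sigma$-algebras: if $X\in L^2(\mathcal{A})$ and $Y\in L^2(\mathcal{B})$, then
\begin{equation*}
|\mathrm{Cov}(X,Y)|\le 2\,\phi(\mathcal{A},\mathcal{B})^{1/2}\,\|X\|_2\,\|Y\|_2.
\end{equation*}
For $1\le j<k$, $\overline{\nu}_j$ is $\mathcal{F}_{\le j}$-measurable and $\overline{\nu}_k$ is $\mathcal{F}_{\ge k}$-measurable, so Lemma \ref{conditinallyphimixing} supplies $\phi(\mathcal{F}_{\le j},\mathcal{F}_{\ge k})\le C_\phi(k-j)^{-\alpha+\varepsilon}$ for any $\varepsilon\in(0,\alpha)$. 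Choosing $\varepsilon\in(0,\alpha-2)$, which is possible because $\alpha>3$, makes the exponent $(\alpha-\varepsilon)/2>1$, so
\begin{equation*}
\sum_{1\le j<k\le n}|\mathrm{Cov}(\overline{\nu}_j,\overline{\nu}_k)| \le 2C_2^2\sqrt{C_\phi}\sum_{j=1}^n\sum_{m=1}^{n-j} m^{-(\alpha-\varepsilon)/2} \le C'n
\end{equation*}
for some finite $C'$. Adding this to the diagonal bound yields the desired $O(n)$ estimate.

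The only potentially delicate point is the appeal to Ibragimov's inequality, which I expect to be routine in view of the explicit $\phi$-mixing formulation in Lemma \ref{conditinallyphimixing}; no additional machinery is needed. The assumption $\alpha>3$ is used in two complementary ways: it places $\overline{\nu}_k$ in $L^2$ via Corollary \ref{cor:overnufinmom}, and it renders the polynomial covariance tail $m^{-(\alpha-\varepsilon)/2}$ summable. In fact any $\alpha>2$ combined with finite second moments would suffice, but $\alpha>3$ is the hypothesis under which both ingredients are guaranteed.
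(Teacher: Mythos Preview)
Your proof is correct and follows essentially the same route as the paper: expand the square, bound the diagonal via the uniform $L^2$ control coming from Corollaries \ref{cor:overnufinmom} and \ref{cor:momentconvergence}, and handle the covariances with Ibragimov's inequality $|\mathbb{E}[Q_jQ_k]|\le 2\sqrt{\phi(k-j)}\,\lVert Q_j\rVert_2\lVert Q_k\rVert_2$ together with the polynomial $\phi$-rate from Lemma \ref{conditinallyphimixing}. The paper cites \cite[Lemma 1.1]{I1962} for the covariance inequality and chooses $\varepsilon\in(0,1)$ (which suffices since $\alpha>3$ gives $\alpha-2>1$), but otherwise the argument is identical.
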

\begin{proof}We set $Q_k := \overline{\nu}_k -\mathbb{E}[\overline{\nu}_k]$ and note that because $\alpha>3$ we obtain from Lemma \ref{NuMoments} that $(Q_k)_{k\in\Z}\subset L^2(\mathbf{\Omega})$.
Now a classical result of Ibragimov \cite[Lemma 1.1]{I1962} states that $\phi$-mixing implies weak correlations, in the sense that
\[\mathbb{E}[Q_mQ_{m+n}] \le 2\sqrt{\phi(n)\mathbb{E}[Q_m^2]\mathbb{E}[Q_{m+n}^2]}.\]
Using Corollary \ref{cor:overnufinmom} and Corollary \ref{cor:momentconvergence}, there is a constant $C_Q \ge 0$ such that $\mathbb{E}[Q_n^2] \le C_Q$ for all $n\ge 1$. With this, we compute
\[
\begin{split}
\mathbb{E}\left[\left(\sum_{k=1}^{n} Q_k\right)^2\right] & \le \sum_{k=1}^{n}\sum_{j=1}^{n}\vert\mathbb{E}[Q_kQ_j]\vert = \sum_{k=1}^{n}\sum_{j=k}^{n}\vert\mathbb{E}[Q_kQ_j]\vert + \sum_{k=2}^{n}\sum_{j=1}^{k-1}\vert\mathbb{E}[Q_kQ_j]\vert \\
&= \sum_{k=1}^{n}\sum_{j=0}^{n-k}\vert\mathbb{E}[Q_kQ_{k+j}]\vert +\sum_{j=1}^{n}\sum_{k=1}^{n-j}\vert\mathbb{E}[Q_{j+k}Q_j]\vert \\
&= \sum_{k=1}^{n}\left(\mathbb{E}[Q_k^2] + 2\sum_{j=1}^{n-k}\vert\mathbb{E}[Q_kQ_{k+j}]\vert \right) \\
&\le nC_Q + 2\sum_{k=1}^{n-1}\sum_{j=1}^{n-k} 2\sqrt{\phi(j)}C_Q \le C_Q\left(n + 4C\sum_{k=1}^{n-1}\sum_{j=1}^{n-k}j^{-\frac{\alpha-\varepsilon}{2}}\right)\\
&\le C_Q\left(n + 4C\sum_{k=1}^{n-1}\sum_{j=1}^{\infty} j^{-\frac{\alpha-\varepsilon}{2}}\right) \le n C_Q\left(1+4C\sum_{j=1}^{\infty} j^{-\frac{\alpha-\varepsilon}{2}}\right),
\end{split}
\]where we used Lemma \ref{conditinallyphimixing} for some $\varepsilon\in(0,1)$ and note the sum is finite, by the assumption $\alpha >3$.\end{proof}Now it follows from classical results that the bounds on the second moments transfer to higher moments.\begin{proof}[Proof of Lemma \ref{Qconvspeed}]Using the estimate in Lemma \ref{L2bound}, we can conclude by a classical result for $\phi$-mixing sequences from Ibragimov \cite[Lemma 1.9]{I1962} that \[\mathbb{E}\left[\left\vert\sum_{k=1}^n\overline{\nu}_k-\mathbb{E}[\overline{\nu}_k]\right\vert^q\right] \le C_! n^{\frac{q}{2}}\]
for some constant $C_1 > 0$. Thus, using another classical result of Serfling \cite[Corollary B1]{S1970}, we already have\[\mathbb{E}\left[\max_{1\le i\le n}\left\vert\sum_{k=1}^i\overline{\nu}_k-\mathbb{E}[\overline{\nu}_k]\right\vert^q\right] \le C n^{\frac{q}{2}}\]
for some constant $C_2>0$.\end{proof}

\subsubsection{Proof of Lemma \ref{lem:T_Tfinexp}}\label{pro:T_Tfinexp}
\begin{proof}[Proof of Lemma \ref{lem:T_Tfinexp}]
 We can make use of the variables $(\overline{\nu}_n)_{n\ge1} = (\overline{\nu}_n^0)_{n\ge1}$ to bound the time $T^1$ almost surely, namely
 \[
T^1 \le \sum_{k=1}^{M^1-M^0}\overline{\nu}_k. 
 \]
 Hence, we can estimate for any $\delta \in (0,1),\varepsilon\in(0,\alpha)$ and $C_\varepsilon$ as in Lemma \ref{lem:M_tail} that
 \[
 \begin{split}
 \mathbb{P}(T^1 > t \vert M^0 = 0) &\le \mathbb{P}(M^1 > t^\delta\vert M^0 = 0) + \mathbb{P}\left(\sum_{k=1}^{\lceil t^\delta\rceil} \overline{\nu}_k > t\right)\\
 &\le C_\varepsilon t^{-\delta(\alpha-\varepsilon)} + \mathbb{P}\left(\sum_{k=1}^{\lceil t^\delta\rceil}\overline{\nu}_k-\mathbb{E}[\nu_k] > t-\lceil t^\delta\rceil\sup_{k\ge1}\mathbb{E}[\overline{\nu}_k]\right)
 \end{split}
 \]
 For $t$ large enough it holds that $t - \lceil t^\delta\rceil\sup_{k\ge 1}\mathbb{E}[\overline{\nu}_k] \ge \frac{t}{2}$. Hence, using the Markov inequality, we can estimate the probability in the last line by
 \[
 \mathbb{E}\left[\left\vert\sum_{k=1}^{\lceil t^{\delta}\rceil} \overline{\nu}_k-\mathbb{E}[\overline{\nu}_k]\right\vert^{q}\right]\left\vert \frac{t}{2}\right\vert^{-q}
 \]
 for any $q \in\left[1, \frac{(\alpha\wedge 4)+1}{2}\right)$. Using Lemma \ref{Qconvspeed}, we obtain a constant $C> 0$ such that for large enough $t$
 \[
 \mathbb{E}\left[\left\vert\sum_{k=1}^{\lceil t^{\delta}\rceil} \overline{\nu}_k-\mathbb{E}[\overline{\nu}_k]\right\vert^{q}\right] \le C t^{\delta\frac{q}{2}}.
 \]
Now all that is left to check is that taking $\delta > \frac{1}{\alpha}$ close enough to $\frac{1}{\alpha}$ and a small $\varepsilon \in\left(0,\alpha-\frac{1}{\delta}\right)$, we can take $q < \frac{1+(\alpha\wedge 4)}{2}$ large enough, such that we also have $q\left(1-\frac{\delta}{2}\right) > 1$. The condition for that to be possible is that
 \[
 \frac{\alpha+1}{2} > \frac{2}{2-\frac{1}{\alpha}}
 \]
which is equivalent to
 \[
 2\alpha^2 - 3\alpha - 1 > 0.
 \]
Solving for $\alpha$ yields that we must make the assumption
 \[
 \alpha > \frac{3+\sqrt{11}}{2},
 \]
noting that this is also the reason why we chose the parameter $4 = \left\lceil \frac{3+\sqrt{11}}{2}\right\rceil$ in the definition of good configurations \ref{auxiliaryfront}.
\end{proof}

\subsubsection{Proof of Proposition \ref{prop:renewal_sites}}\label{pro:rho_alphamix}
 In this section we will show the existence of the renewal sites claimed in Proposition \ref{prop:renewal_sites}. The key idea is to observe that for each good site $M^i$ there is a positive probability that the front grows linearly fast, and if it does not, then the probability that it falls below linear speed only at a large distance to $M^i$ decreases sufficiently fast. Also, retrying for this linear growth at the next good site is independent from the last attempt. On the other hand, each parasite that is already generated before reaching the good site $M^i$ only moves diffusely and thus has a positive probability of never catching up to a linearly moving front. Then for each parasite, we can repeat this trial until at some point none of the parasites generated below a certain good site will interact with the front after that good site is infected. This approach of a renewal-time point is inspired by the work of \cite{C2009}, in which, by showing finite moments of this renewal-time, even stronger results can be deduced. \\
 We begin with establishing that the front can grow linearly fast after a good site with high probability.
 \theoremstyle{plain}\begin{lemma}\label{lem:front_above}
Assume that $\alpha > 3$ and $\lambda < \frac{1}{\sup_{j\ge1}\mathbb{E}[\overline{\nu}_j^0]}$. There is a $\delta_1 > 0$ such that 
 \[
\mathbb{P}\left(\bigcap_{k=1}^\infty\left\{\sum_{j=1}^k\overline{\nu}_j^0\le \frac{k}{\lambda}\right\}\right)\ge \delta_1.
 \]
Also, for $n\in\N$, we have
 \[
\mathbb{P}\left(\bigcap_{k=1}^n\left\{\sum_{j=1}^k\overline{\nu}_j^0\le \frac{k}{\lambda}\right\},\bigcup_{m > n}\left\{\sum_{j=1}^m\overline{\nu}_j^0 > \frac{m}{\lambda}\right\}\right) \le C n^{-\frac{q}{2}}
 \]
 for any $q < \frac{1+\alpha\wedge 4}{2}$ and some $C> 0$.
 \end{lemma}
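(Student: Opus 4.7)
Setting $S_k := \sum_{j=1}^k \overline{\nu}_j^0$, $\mu_j := \mathbb{E}[\overline{\nu}_j^0]$, $\bar\mu := \sup_{j\ge 1}\mu_j$, and $\varepsilon := 1/\lambda - \bar\mu > 0$, I introduce the centered partial sums $R_k := S_k - \sum_{j=1}^k \mu_j$. Since $\sum_{j=1}^k \mu_j \le k\bar\mu = k/\lambda - k\varepsilon$, we have the deterministic inclusion $\{S_k > k/\lambda\} \subset \{R_k > k\varepsilon\}$, which reduces both statements to maximal concentration estimates on $R_k$. Throughout I fix $q \in [2,((4\wedge\alpha)+1)/2)$, an interval which is nonempty thanks to $\alpha > 3$ and in which Lemma~\ref{Qconvspeed} yields $\mathbb{E}[\max_{i\le n}|R_i|^q] \le C_q n^{q/2}$.

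For the second statement I would partition the range $m > n$ into dyadic blocks $m \in (2^\ell n, 2^{\ell+1}n]$, $\ell \ge 0$. On the $\ell$-th block $m\varepsilon \ge 2^\ell n\varepsilon$, hence
\[\bigcup_{m > n}\{R_m > m\varepsilon\} \subset \bigcup_{\ell \ge 0}\Bigl\{\max_{m' \le 2^{\ell+1}n}|R_{m'}| > 2^\ell n \varepsilon\Bigr\}.\]
Markov's inequality combined with Lemma~\ref{Qconvspeed} bounds the $\ell$-th summand by $C_q(2^{\ell+1}n)^{q/2}/(2^\ell n\varepsilon)^q = C_q 2^{q/2}\varepsilon^{-q}(2^\ell n)^{-q/2}$, and summing the geometric series over $\ell \ge 0$ yields the claimed bound of order $n^{-q/2}$.

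For the first statement I combine this tail estimate with a separate control of the initial segment. Pick $N$ large enough that the second statement yields $\mathbb{P}(\bigcup_{m > N}\{S_m > m/\lambda\}) \le 1/4$. Rerunning the dyadic scheme inside $[1, N]$ produces an $N$-independent estimate $\mathbb{P}(\bigcup_{k\le N}\{R_k > k\varepsilon\}) \le \sum_{\ell \ge 0}C_q 2^{q/2}\varepsilon^{-q}2^{-\ell q/2} =: K(\varepsilon, q)$, a finite constant depending only on $\varepsilon$ and $q$. Combining, $\mathbb{P}(\bigcap_{k\ge 1}\{S_k \le k/\lambda\}) \ge 1 - K(\varepsilon, q) - 1/4$.

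The main obstacle is that $K(\varepsilon, q)$ is not automatically smaller than $3/4$, so this naive combination need not yield a positive constant. I would overcome this by treating the small dyadic blocks more carefully: for indices $k \le k^*$ (with $k^*$ a finite constant to be tuned) I replace the dyadic estimate by the direct Markov bound $\mathbb{P}(\overline\nu_k^0 > 1/\lambda) \le \lambda\bar\mu < 1$, and use the $\phi$-mixing rate from Lemma~\ref{conditinallyphimixing} evaluated along a sparse subsequence of indices to obtain a positive lower bound on the joint event $\bigcap_{k\le k^*}\{\overline\nu_k^0 \le 1/\lambda\}$; for the remaining blocks $2^\ell > k^*$, the contribution $\sum_{\ell : 2^\ell > k^*}2^{-\ell q/2}$ becomes arbitrarily small upon enlarging $k^*$. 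Choosing $k^*$ and $N$ appropriately then produces the positive constant $\delta_1$ claimed in the first statement.
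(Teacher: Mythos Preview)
Your argument for the second claim is correct and coincides with the paper's: both drop the intersection with $\bigcap_{k\le n}\{\cdot\}$, use the inclusion $\{S_m > m/\lambda\}\subset\{R_m > m\varepsilon\}$, partition $m>n$ into dyadic blocks, and apply Markov's inequality together with Lemma~\ref{Qconvspeed}.

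The gap is in your treatment of the first claim. You correctly identify that the naive union bound need not give $K(\varepsilon,q)<3/4$, but your proposed fix does not close the gap. The $\phi$-mixing rate from Lemma~\ref{conditinallyphimixing} controls dependence between $\mathcal{F}_{\le m}$ and $\mathcal{F}_{\ge m+n}$ only when the gap $n$ is large; for adjacent indices $\phi(1)$ carries no useful information, and there is no mechanism by which $\phi$-mixing alone (even evaluated along a sparse subsequence) yields a positive lower bound on the \emph{full} intersection $\mathbb{P}\bigl(\bigcap_{k\le k^*}\{\overline\nu_k^0\le 1/\lambda\}\bigr)$. The individual Markov bound $\mathbb{P}(\overline\nu_k^0\le 1/\lambda)\ge 1-\lambda\bar\mu$ under any product-type reasoning gives at best exponential decay in $k^*$, far faster than the tail term $C(k^*)^{-q/2}$ you need to beat.

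The paper's route (via Lemma~\ref{Uinf}) is structural rather than abstract: it exhibits a concrete event $G_{n,\varepsilon}$---prescribing that $I_1,\dots,I_n$ equal their minimal value, $A_0,\dots,A_{n-1}$ equal a fixed good value, and the relevant random walks take $n+k_0$ rightward steps before time $\varepsilon$---which has positive probability and \emph{deterministically forces} $\nu_1,\dots,\nu_n<\varepsilon$. Crucially, $G_{n,\varepsilon}$ depends only on $\{A_x,I_{x+1},Y^{x,i}:0\le x<n\}$ and is therefore genuinely independent of the tail event $H_{n,\varepsilon}$, which involves only indices $\ge n$; the positivity then comes from the product $\mathbf{P}(G_{n,\varepsilon})\mathbf{P}(H_{n,\varepsilon})$. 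The moment and mixing machinery is used only to show $\mathbf{P}(H_{n,\varepsilon}^c)<1$ for $n$ large, not to handle the initial segment. To repair your argument you would need to go back to the definition of $\overline\nu_k^0$ in terms of $\mathbf{A},\mathbf{I},\mathbf{Y}$ and construct such an event.
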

 \begin{proof}
 The first claim follows from Lemma \ref{Uinf} and will be shown in Section \ref{subsec:proof_poldec}. The second claim can be seen as follows. For $\lambda < \frac{1}{\sup_{j\ge1}\mathbb{E}[\overline{\nu}_j^0]}$ we have by Lemma \ref{Qconvspeed} for any $q < \frac{1+\alpha\wedge 4}{2}$ that
\[
\begin{split}
 \mathbb{P}&\left(\bigcap_{k=1}^n\left\{\sum_{j=1}^k\overline{\nu}_j^0\le \frac{k}{\lambda}\right\},\bigcup_{m > n}\left\{\sum_{j=1}^m\overline{\nu}_j^0 > \frac{m}{\lambda}\right\}\right) \le \mathbb{P}\left(\sup_{m>n}\left\{\frac{1}{m}\sum_{j=1}^m\overline{\nu}_j^0 > \frac{1}{\lambda}\right\}\right) \\
 &\le\sum_{k=0}^\infty\mathbb{P}\left(\sup_{2^kn \le m \le 2^{k+1}n}\frac{1}{2^kn}\sum_{j=1}^n\overline{\nu}_j^0 > \frac{1}{\lambda}\right) \\ 
 &\le \sum_{k=0}^\infty \left(\left(\frac{1}{\lambda}-\sup_{j\ge1}\mathbb{E}[\overline{\nu}_j^0]\right)2^kn\right)^{-q}\mathbb{E}\left[\left\vert\sum_{j=1}^{2^{k+1}n}\overline{\nu}_j^0-\mathbb{E}[\overline{\nu}_j^0]\right\vert^q\right] \\
 &\le \sum_{k=0}^\infty C 2^{-\frac{q}{2}k} n^{-\frac{q}{2}} \le C n^{-\frac{q}{2}}
\end{split}
\]
for some $C>0$ that may vary from line to line.
 \end{proof}
We cite the following well-known fact about a simple symmetric random walk not moving ballistically with high probability.
 \theoremstyle{plain}\begin{lemma}\label{lem:walk_above}
Recall that $(Y_t)_{t\ge0}$ is a simple symmetric random walk starting in $0$. For any $\lambda > 0$ there is a $\delta_2 > 0$ such that
 \[
 \mathbb{P}\left(\bigcap_{t\ge0}\left\{Y_t \le 1+\lfloor\lambda t\rfloor\right\}\right) \ge \delta_2.
 \]
 Also, there are $C_1,c_2 > 0$ such that for any $n\in\N$ we have
 \[
 \mathbb{P}\left(\bigcap_{k=1}^n\left\{\tau_k \ge \frac{k-1}{\lambda}\right\},\bigcup_{m>n}\left\{\tau_m < \frac{m-1}{\lambda}\right\}\right) \le C_1 \exp(-c_2n).
 \]
 where 
 \[
 \tau_k = \inf\{t\ge0: Y_t \ge k\}.
 \]
 \end{lemma}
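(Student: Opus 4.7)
The plan is to rephrase the first event in terms of the hitting times $\tau_k$ and to treat both claims via the reflection principle together with a Chernoff bound, exploiting that the increments $X_k:=\tau_k-\tau_{k-1}$ (with $\tau_0:=0$) are i.i.d.\ copies of $\tau_1$ even though $\mathbb{E}[\tau_1]=\infty$.

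First I would establish the identity
\[
\left\{Y_t\le 1+\lfloor\lambda t\rfloor\ \forall t\ge 0\right\}=\left\{\tau_m\ge (m-1)/\lambda\ \forall m\ge 1\right\}
\]
by a direct case analysis: if the envelope holds then $Y_{\tau_m}=m\le 1+\lfloor\lambda\tau_m\rfloor$ forces $\tau_m\ge (m-1)/\lambda$; conversely, if every hitting time lies above the barrier, then $Y_t\ge m$ implies $t\ge\tau_m\ge (m-1)/\lambda$, and hence $m\le 1+\lfloor\lambda t\rfloor$. With this identification in hand, both claims reduce to statements about the partial sums $\tau_m=X_1+\cdots+X_m$ relative to the linear barrier $(m-1)/\lambda$.

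For the first assertion I would decompose, for $m\ge 2$,
\[
\tau_m-\frac{m-1}{\lambda}=X_1+\sum_{j=2}^m\Big(X_j-\frac{1}{\lambda}\Big)=:X_1+W_m,
\]
and set $I:=\inf_{m\ge 2}W_m$. Since the $X_j$ are nonnegative with $\mathbb{E}[X_j]=\infty$, Kolmogorov's SLLN for such sequences (obtained by truncation) gives $m^{-1}\sum_{j=2}^m X_j\to\infty$ almost surely, hence $W_m/m\to\infty$ and therefore $I>-\infty$ almost surely. Since $I$ depends only on $(X_j)_{j\ge 2}$ it is independent of $X_1$, and by continuity of measure one can choose $C_0$ with $\mathbb{P}(I\ge -C_0)>0$; on the other hand $\mathbb{P}(X_1\ge C_0)\ge \e^{-2C_0}>0$, because $X_1=\tau_1$ is at least the first jump time of $Y$, which is $\operatorname{Exp}(2)$-distributed. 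On the independent intersection $\{X_1\ge C_0\}\cap\{I\ge -C_0\}$ every $m\ge 2$ satisfies $X_1+W_m\ge 0$ and $\tau_1\ge 0$ holds trivially, so $\delta_2:=\mathbb{P}(X_1\ge C_0)\mathbb{P}(I\ge -C_0)>0$.

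For the exponential tail in the second assertion I would start from the union bound
\[
\mathbb{P}\Bigl(\bigcap_{k=1}^n\{\tau_k\ge\tfrac{k-1}{\lambda}\},\bigcup_{m>n}\{\tau_m<\tfrac{m-1}{\lambda}\}\Bigr)\le \sum_{m>n}\mathbb{P}(\tau_m<(m-1)/\lambda),
\]
then apply the reflection principle $\mathbb{P}(\tau_m\le t)\le 2\mathbb{P}(Y_t\ge m)$ together with the Laplace transform $\mathbb{E}[\e^{\theta Y_t}]=\exp(2t(\cosh\theta-1))$ of the rate-$2$ walk; Chernoff's inequality then yields
\[
\mathbb{P}(Y_{(m-1)/\lambda}\ge m)\le \exp\!\Big(-m\bigl[\theta-\tfrac{2(\cosh\theta-1)}{\lambda}\bigr]+\tfrac{2(\cosh\theta-1)}{\lambda}\Big).
\]
Since $(\cosh\theta-1)/\theta\to 0$ as $\theta\to 0^+$, one fixes $\theta=\theta(\lambda)>0$ with $c:=\theta-2(\cosh\theta-1)/\lambda>0$, and summing the resulting geometric tail gives the desired $C_1\exp(-c_2n)$. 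The only mildly subtle step is the first claim: a naive union bound $\sum_k\mathbb{P}(\tau_k<(k-1)/\lambda)$ is finite but blows up as $\lambda\to 0$, so one genuinely needs the $(X_1,W_m)$ decomposition together with the independence used above.
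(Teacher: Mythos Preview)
Your argument is correct. The paper itself does not give a proof of this lemma but simply cites \cite[Lemma~8]{C2007} (reproduced in the paper as Lemma~\ref{RWsublin}), so there is no route to compare against in detail. That said, the cited result establishes the first claim directly via the exponential martingale $\e^{\theta Y_t-2t(\cosh\theta-1)}$ and optional stopping, which yields the explicit constants $\theta_c$ appearing in Lemma~\ref{RWsublin}. Your approach to the first claim is different and in some ways more elementary: instead of the exponential martingale you exploit the i.i.d.\ structure of the increments $X_j=\tau_j-\tau_{j-1}$ and the fact that $\mathbf{E}[\tau_1]=\infty$ to deduce $\inf_{m\ge2}W_m>-\infty$ almost surely, then use independence of $X_1$ from $(X_j)_{j\ge2}$ to produce positive probability. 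This avoids any computation of constants but does not give explicit values for $\delta_2$. For the second claim your Chernoff/reflection argument is essentially the same exponential-martingale technique underlying the cited lemma.

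One harmless slip: in your Chernoff display the constant term should carry a minus sign,
\[
-m\Bigl[\theta-\tfrac{2(\cosh\theta-1)}{\lambda}\Bigr]-\tfrac{2(\cosh\theta-1)}{\lambda},
\]
rather than a plus; the bound you wrote is weaker but still valid, so the conclusion is unaffected.
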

 \begin{proof}
 This is a classically known result for random walks. A proof can be found in \cite[Lemma 8]{C2007}.
 \end{proof}
To construct the renewal sites, for any $j\ge0$ we want to find a renewal site $j < R_\text{good}(j)<\infty$, such that the front after reaching $R_\text{good}(j)$ only depends on the parasites born at a site $x$ with $x\ge R_\text{good}(j)$, and we can uniformly in $j$ bound the tail of $R_\text{good}(j) - j$. Then we can iteratively define $R^{i+1} := R_\text{good}(R^i)$. \\
To begin the construction, we first consider a single parasite $(x,i)$, and we will show that there is an almost surely finite good site $ M^{J_\text{good}(x,i)} > x$ such that after $T^{J_\text{good}(x,i)}$ the front moves linearly fast and the parasite $(x,i)$ will be either dead or it will never reach the linearly moving front again. Then we will show that if we define 
\[
R_1(j) := M^{J^1_\text{good}(j)} := \sup_{0\le x\le j,1\le i \le A_x} M^{J_\text{good}(x,i)} 
\]
for $j\ge0$ we have
\[
\lim_{n\to\infty}\sup_{j\ge0}\mathbb{P}(R_1(j)-j > n) = 0.
\]
In particular, none of the parasites born between $0$ and $j$ will catch up to the front after time $T^{J^1_\text{good}(j)}$. \\
Next, we observe that for some fixed $\delta > 0$, there is a probability of at least $\delta$ that also none of the parasites born between $j$ and $R_1(j)$ catch up to the linear moving front after $T^{J^1_\text{good}(j)}$. If this happens, we have found the renewal site $R_\text{good}(j)=M^{J_\text{good}^1(j)}$. However, if some parasite born between $j$ and $R_1(j)$ does catch up to the front after $T^{J_\text{good}^1(j)}$, arguing analogously as before, this does not happen too far away from $R_1(j)$. In particular, we can find another almost surely finite good site $R_2(j) = M^{J^2_\text{good}(j)}$ such that $\mathbb{P}(R_2(j)-j > n)$ goes to $0$ uniformly over all $j\ge 0$ as $n\to\infty$ and none of the parasites generated between $j$ and $R_1(j)$ catch up to the linearly moving front after $T^{J^2_\text{good}(j)}$. Again, there is a probability of at least $\delta$ that none of the parasites born between $R_1(j)$ and $R_2(j)$ catch up to the linearly moving front after time $T^{J^2_\text{good}(j)}$. If this event occurs, we have found the renewal site $R_\text{good}(j) = M^{J^2_\text{good}(j)}$. If the event does not occur, we repeat the procedure above for the parasites generated between $R_1(j)$ and $R_2(j)$, then for those between $R_2(j)$ and $R_3(j)$, and so on until at some $R_{\text{good}}(j) = M^{J_\text{good}(j)}$, none of the parasites generated before $T^{J_\text{good}(j)}$ will catch up to the linearly moving front. Furthermore, $\mathbb{P}(R_\text{good}(j) - j > n)$ tends to $0$ uniformly over all $j\ge0$ as $n\to\infty$. This will yield the renewal sites $(R^i)_{i\ge0}$ by iteratively finding the next $R_\text{good}(R^i)$, from which on the front moves linearly and the parasites that were born below $R_\text{good}(R^i)$ never catch up to the front and thus don't influence the front anymore. The uniform tail bound will imply that $R^i < \infty$ for all $i\ge0$.
\theoremstyle{plain}\begin{lemma}\label{lem:renewalsite} 
Suppose the assumptions of Proposition \ref{prop:renewal_sites}. For any $j\ge0$ there is an almost surely finite $R_\text{good}(j) > j$ such that after the front has reached $R_\text{good}(j)$, none of the parasites that were born at a site $x$ with $x < R_\text{good}(j)$ reach the front anymore. Also, it holds that \[ \lim_{n\to\infty}\sup_{j\ge0}\mathbb{P}(R_\text{good}(j)-j > n) = 0. \]
\end{lemma}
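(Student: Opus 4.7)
My plan is to follow the blueprint sketched just before the lemma. Fix $\lambda \in (0, 1/\sup_{k\ge 1}\mathbb{E}[\overline{\nu}_k^0])$ and, for each good site $M^i$, define the front-growth event
\[
\mathcal{L}^i := \bigcap_{k\ge 1}\left\{\rho_{M^i+k} - T^i \le \tfrac{k}{\lambda}\right\}
\]
and, for each parasite label $(x,k)$ with $x<M^i$, the walk-escape event
\[
\mathcal{W}^i(x,k) := \bigcap_{s\ge 0}\left\{x + Y^{x,k}_{T^i+s} \le M^i + \lambda s\right\}.
\]
By Lemma \ref{lem:M_tail} the environment $(A_y,I_{y+1},Y^{y,\cdot})_{y\ge M^i}$ is a renewal of i.i.d.\ copies above each $M^i$, and by Lemma \ref{lem:front_above} one has $\mathbb{P}(\mathcal{L}^i\mid \mathcal{F}_{T^i})\ge \delta_1>0$. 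Since $Y^{x,k}$ is independent of the environment $(A_\cdot,I_\cdot)$ and of all other walks, Lemma \ref{lem:walk_above} applied to the restarted walk $(Y^{x,k}_{T^i+s}-Y^{x,k}_{T^i})_{s\ge 0}$ yields $\mathbb{P}(\mathcal{W}^i(x,k)\mid \mathcal{F}_{T^i})\ge \delta_2>0$, provided the offset $M^i-x-Y^{x,k}_{T^i}$ is at least $1$; this offset is positive with probability tending to $1$ since $T^i$ is of order $(M^i-x)/\lambda$ while $Y^{x,k}_{T^i}$ has size $\mathcal{O}(\sqrt{T^i})$.

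For a single parasite, set $J_{\text{good}}(x,k):=\inf\{i\ge 0:\mathcal{L}^i\cap\mathcal{W}^i(x,k)\}$. The conditional independence of $\mathcal{L}^i$ and $\mathcal{W}^i(x,k)$ given the past, together with the renewal structure of $(M^{i+1}-M^i)_{i\ge 0}$, shows that $J_{\text{good}}(x,k)$ is stochastically dominated by a geometric variable. Since $\mathbb{E}[M^{i+1}-M^i]<\infty$ by Lemma \ref{lem:M_tail} (the assumption $\alpha>3$ is far more than enough), Wald's identity gives $\mathbb{E}[M^{J_{\text{good}}(x,k)}-x]<\infty$ with a bound uniform in $(x,k)$. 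Aggregating, set $R_1(j):=\max\{M^{J_{\text{good}}(x,k)}:0\le x\le j,\ 1\le k\le A_x\}$; a union bound together with translation invariance of the tail of $M^{J_{\text{good}}(x,k)}-x$ yields
\[
\mathbb{P}(R_1(j)-j>n)\le \mu_A\sum_{x=0}^{j}\mathbb{P}\bigl(M^{J_{\text{good}}(0,1)}>n+j-x\bigr)\le \mu_A\sum_{y\ge n}\mathbb{P}\bigl(M^{J_{\text{good}}(0,1)}>y\bigr),
\]
which tends to $0$ uniformly in $j$ as $n\to\infty$ because $\mathbb{E}[M^{J_{\text{good}}(0,1)}]<\infty$.

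To finish, iterate: if no parasite born in $(j,R_1(j)]$ ever catches the ballistic front emanating from $M^{J_{\text{good}}^1(j)}$, declare $R_{\text{good}}(j):=R_1(j)$; otherwise, define $R_2(j)$ by applying the same construction to the new parasites in $(j,R_1(j)]$, and repeat. Each attempt succeeds with probability at least some $\delta>0$ uniform in $j$ (again by the joint walk-escape/front-growth argument applied at the current candidate), independently of past failures thanks to the renewal structure at good sites. Hence the number of iterations is stochastically geometric, $R_{\text{good}}(j)$ is a.s.\ finite, and combining geometric termination with the per-iteration uniform tail gives $\sup_{j\ge 0}\mathbb{P}(R_{\text{good}}(j)-j>n)\to 0$.

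The main obstacle is precisely the uniform-in-$j$ tail bound: $R_1(j)-j$ is a maximum over $\Theta(j)$ parasites, so a naive estimate would blow up. The saving observation is that a parasite born at small $x$ only contributes to $\{R_1(j)-j>n\}$ when $M^{J_{\text{good}}(x,k)}-x>n+j-x$ is large, so the union bound telescopes to a single tail sum $\sum_{y\ge n}\mathbb{P}(M^{J_{\text{good}}(0,1)}>y)$ independent of $j$; this in turn demands $\mathbb{E}[M^{J_{\text{good}}(0,1)}]<\infty$, which via Wald's identity is what forces the moment bounds from Lemma \ref{lem:M_tail}. A secondary delicate point is making the walk-escape probability $\delta_2$ truly uniform in $i$, which reduces to showing that the offset $M^i-x-Y^{x,k}_{T^i}$ is nonnegative with probability bounded below; this is handled by the diffusive scaling of $Y^{x,k}$ against the ballistic growth of $M^i$ guaranteed by Lemma \ref{lem:T_Tfinexp}.
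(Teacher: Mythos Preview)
Your outline follows the paper's blueprint, but two steps are genuinely broken.

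\textbf{The geometric domination of $J_{\text{good}}(x,k)$ fails.} You set $J_{\text{good}}(x,k):=\inf\{i:\mathcal{L}^i\cap\mathcal{W}^i(x,k)\}$ and claim the trials are conditionally independent. They are not: the event $\mathcal{L}^i$ (whether via the actual $\rho$'s or via the $\overline{\nu}^i$'s) depends on the \emph{entire} future environment $(A_y,I_{y+1},Y^{y,\cdot})_{y\ge M^i}$, so knowing that $\mathcal{L}^0,\dots,\mathcal{L}^{i-1}$ failed conditions on variables that also determine $\mathcal{L}^i$. Consequently $J_{\text{good}}$ is not a stopping time for the filtration generated by $(M^{i+1}-M^i)_{i\ge0}$, and Wald's identity does not apply. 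The paper repairs this by recording the site $B_k(x,i)$ where the $k$-th attempt first breaks linearity and restarting only at the first good site \emph{past} $B_k(x,i)$; then the $(k{+}1)$-st trial uses only variables with index $>B_k(x,i)$ and is genuinely independent of the failure information. The cost of an attempt is therefore not a single increment $M^{i+1}-M^i$ but $(B_k-M^{J_k})+(M^{J_{k+1}}-B_k)$, whose tail is controlled by Lemmas~\ref{lem:front_above}, \ref{lem:walk_above} and \ref{lem:M_tail}, yielding $\mathbb{P}(M^{J_{\text{good}}(x,i)}-x>n)\le C_3 n^{-q_3}$ with $q_3<\tfrac{1+(\alpha\wedge4)}{4}$.

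\textbf{The $\mu_A$ union bound is not justified and is where the moment hypothesis on $A$ enters.} You write $\mathbb{P}(\exists\,k\le A_x:\;M^{J_{\text{good}}(x,k)}>j+n)\le \mu_A\,\mathbb{P}(M^{J_{\text{good}}(x,1)}>j+n)$, which would require $A_x$ to be independent of $M^{J_{\text{good}}(x,k)}$. It is not: the good sites $(M^i)$ are functions of all of $(A_y)_{y\ge 0}$, including $A_x$. Because the per-parasite tail exponent $q_3$ is at best just above $1$ (bounded by $5/4$), no simple H\"older/Cauchy--Schwarz argument recovers summability in $x$. This is exactly why the paper splits on $\{A_x>(j-x+n)^\vartheta\}$ with $\vartheta\in(0,q_3-1)$ and invokes the assumption $\mathbb{E}[A^{4/((4\wedge\alpha)-3)+\varepsilon_A}]<\infty$ to make $\sum_{x\le j}\mathbb{P}(A>(j-x+n)^\vartheta)$ summable; your argument, if it worked, would make this moment condition superfluous.

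Two minor points: the offset $M^i-x-Y^{x,k}_{T^i}$ is automatically nonnegative whenever the parasite is still alive (living parasites sit weakly left of the front), and dead parasites are handled trivially, so no appeal to diffusive scaling is needed there; and you cannot invoke Lemma~\ref{lem:T_Tfinexp}, which requires $\alpha>\tfrac{3+\sqrt{11}}{2}$, under the present hypothesis $\alpha>3$.
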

\begin{proof}
We carry out the construction explained before this lemma. For $x\ge0$ and $i\in\N$, let \[J_1(x,i) := J(x) := \inf\{k\ge 0: M^k > x\} \] be the index of the first good site after $x$. By Lemma \ref{lem:M_tail}, we have that
\[\mathbb{P}(M^{J(x)} - x > n) \le C_1n^{-q_1} \]
for any $q_1 < \alpha$ and a suitable $C_1 > 0$. At time $T^{J_1(x,i)}$, when the front is at $M^{J_1(x,i)}$, the parasite $(x,i)$ can be either already dead, or it has to be at a site $y < M^{J_1(x,1)}$. We note here that since we condition on the survival of parasites, we have $T^{J_1(x,i)} < \infty$ almost surely under $\mathbb{P}$. If the parasite is already dead, we set 
\[J_\text{good}(x,i) := J_1(x,i). \]
If the parasite $(x,i)$ is still alive, then by Lemma \ref{lem:front_above} and Lemma \ref{lem:walk_above}, for some suitable $\lambda$ small enough, there is a positive probability of at least $\delta_2\delta_1$ that the front after time $T^{J_1(x,i)}$ stays above the linear line $M^{J_1(x,i)}+\lfloor \lambda (t-T^{J_1(x,i)})\rfloor$, and the path of parasite $(x,i)$ after reaching $M^{J_1(x,i)}$ stays below a linear line with slope $\frac{\lambda}{2}$. In particular, in this event the parasite $(x,i)$ never influences the front again, and we set 
\[ J_\text{good}(x,i) := J_1(x,i). \]
If one of these events does not occur, then again by Lemma \ref{lem:front_above} and Lemma \ref{lem:walk_above}, the vertex $B_1(x,i)$ at which either the sum in Lemma \ref{lem:front_above} is too large or the hitting time in Lemma \ref{lem:walk_above} is too small satisfies 
\[ \mathbb{P}(n < B_1(x,i) - M^{J_1(x,i)} <\infty\vert J_\text{good}(x,i)\neq J_1(x,i), M^{J_1(x,i)}) \le C_2 n^{-\frac{q_2}{2}}~~a.s.~\] 
for any $q_2 < \frac{1+\alpha\wedge 4}{2}$ and a suitable $C_2 > 0$. In this case, we repeat the process of finding the next good site after $B_1(x,i)$ and set 
\[ J_2(x,i) := J(B_1(x,i)) = \inf\{k\ge 0: M^k > B_1(x,i)\}, \]
which again satisfies 
\[ \mathbb{P}(M^{J_2(x,i)}- B_1(x,i) > n\vert J_\text{good}(x,i)\neq J_1(x,i),B_1(x,i)) \le C_1 n^{-q_1} ~~a.s.~\] 
for $q_1<\alpha$ and $C_1> 0$ as before. If the parasite $(x,i)$ is dead at time $T^{J_2(x,i)}$, we set 
\[J_\text{good}(x,i) := J_2(x,i). \]
Otherwise, we first check if the accumulated sums of $(\overline{\nu}_k^{J_2(x,i)})_{k\ge0}$ stay small in a sense of Lemma \ref{lem:front_above} and, consequently, the front stays above the linear line $M^{J_2(x,i)}+\lfloor \lambda (t-T^{J_2(x,i)})\rfloor$, and if the path of the parasite $(x,i)$ after reaching $M^{J_2(x,i)}$ stays below a linear line with slope $\frac{\lambda}{2}$ in the sense of Lemma \ref{lem:walk_above}. If this event occurs, the parasite $(x,i)$ never reaches the front after $T^{J_2(x,i)}$, and we set \[J_\text{good}(x,i) := J_2(x,i). \]
 We note that $(\overline{\nu}_k^{J_2(x,i)})_{k\ge1}$ depends only on variables with index above $M^{J_2(x,i)}$. Also, the time when the parasite $(x,i)$ reaches $M^{J_2(x,i)}$ is a stopping time with respect to the filtration $\sigma(\mathbf{A},\mathbf{I},w_s:0\le s\le t)$, where we recall that $w_t\in\mathbb{L}_\theta$ is the state of the SIMI at time $t$; see Section \ref{Construction}. In particular, conditionally on $M^{J_2(x,i)}$, the path of the parasite $(x,i)$ after reaching $M^{J_2(x,i)}$ is independent of the path of the parasite before reaching $M^{J_2(x,i)}$. This implies that the event $\{J_\text{good}(x,i) = J_2(x,i)\}$ occurs with probability at least $\delta_1\delta_2$, conditionally on $\{J_\text{good}(x,i) \neq J_1(x,i)\}$ and the values of $B_1(x,i),M^{J_2(x,i)}$.\\
 Again, if the event does not occur, applying Lemma \ref{lem:front_above} and Lemma \ref{lem:walk_above} and using the independence we just described, the site $B_2(x,i)$ at which either the accumulated sums of $(\overline{\nu}_k^{J_2(x,i)})_{k\ge1}$ are above $\frac{k}{\lambda}$ or the path of the parasite $(x,i)$ after reaching $M^{J_2(x,i)}$ is above a linear line with slope $\frac{\lambda}{2}$, satisfies
 \[
 \mathbb{P}\left(n < B_2(x,i) - M^{J_2(x,i)} <\infty\left\vert\begin{matrix} J_1(x,i) \neq J_\text{good}(x,i),\\B_1(x,i),M^{J_2(x,i)}\end{matrix}\right.\right) \le C_2 n^{-\frac{q_2}{2}}~~a.s.
 \]
 with $q_2 < \frac{1+\alpha\wedge 4}{2},C_2 > 0$ as above. \\
 Iterating this procedure, we find that for $k\ge1$, given the last attempt was not successful (i.e., we did not set $J_\text{good}(x,i) = J_k(x,i)$), then the next try is independent from the last try and successful with probability at least $\delta_1\cdot\delta_2$, with $\delta_1,\delta_2$ as in Lemma \ref{lem:front_above} and Lemma \ref{lem:walk_above}. In particular, the number of needed attempts is stochastically dominated by a geometric distribution with success probability $\delta_1\cdot\delta_2$. Also, as reasoned above, the $k+1$-th attempt costs a random number of vertices $B_{k+1}(x,i)-B_{k}(x,i)$, which is independent of the $k$-th attempt and satisfies
 \[
 \begin{split}
\mathbb{P}&\left(n< B_{k+1}(x,i) - B_{k}(x,i) < \infty\left\vert J_\text{good}(x,i) \neq J_k(x,i), B_k(x,i)\right.\right) \\
&\le C_1 n^{-q_1} + C_2 n^{-\frac{q_2}{2}} ~~~a.s.
\end{split}
 \]
Arguing the same way as we did in the proof of Lemma \ref{lem:M_tail}, this yields that
 \[
 \mathbb{P}(M^{J_\text{good}(x,i)} - x > n) \le C_3 n^{-q_3}
 \]
 for some $q_3 < \frac{1+\alpha \wedge 4}{4}$ and a $C_3 > 0$. \\
 We now need to show the uniform limit
 \[
\lim_{n\to\infty}\sup_{j\ge0}\mathbb{P}\left(\sup_{0\le x\le j, 1\le i \le A_x} M^{J_\text{good}(x,i)} > j+n\right) = 0.
 \]
 By assumption $\alpha > 3$, and thus we can choose some $\varepsilon_A$ with $\mathbb{E}[A^{\frac{4}{4\wedge\alpha - 3}}]<\infty$ and \[ \vartheta \in \left(\frac{(4\wedge \alpha)-3}{4+\varepsilon_A((4\wedge \alpha)-3)},\frac{(4\wedge\alpha)-3}{4}\right)\text{ and }q_3\in\left(\vartheta+1,\frac{(4\wedge\alpha) +1}{4}\right).\] We now compute for any $0\le x \le j$ that
 \[
 \begin{split}
 \mathbb{P}&\left(\sup_{1\le i\le A_x}M^{J_\text{good}(x,i)} > j+n\right) \\& \le \mathbb{P}\left(A > (j-x+n)^\vartheta\right) + \mathbb{P}\left(\bigcup_{i=1}^{(j-x+n)^\vartheta}\left\{M^{J_\text{good}}-x > j-x+n\right\}\right) \\
 &\le \mathbb{P}(A > (j-x+n)^\vartheta) + (j-x+n)^\vartheta C_3\left(j-x+n\right)^{-q_3}.
 \end{split}
 \]
 By assumption on $A$, we can fix some $q_4 > 1$ such that $\mathbb{E}[A^{\frac{q_4}{\vartheta}}] <\infty$ and thus obtain 
 \[
\sup_{j\ge0} \sum_{0\le x\le j}\mathbb{P}(A > (j-x+n)^\vartheta) \le \sum_{x\ge0}\mathbb{P}(A > (x+n)^\vartheta) \le C_4 n^{1-q_4}
 \]
 for some constant $C_4$ independent of $j$ and $n$.
 Using the definition of $\vartheta$, we have $\vartheta-q_3 < -1$, and we see that for some constant $C_5$ independent of $j$ and $n$, we have
 \[
 \sup_{j\ge0}\sum_{0\le x\le j} (j-x+n)^\vartheta(j-x+n)^{-q_3} \le C_5 n^{1+\vartheta -q_3}.
 \]
 In particular, we define
 \[
R_1(j) := \sup_{\substack{0\le x\le j\\1\le i \le A_x}} M^{J_\text{good}(x,i)}~\text{ and }~ J^1_\text{good}(j) := \sup_{\substack{0\le x\le j\\1\le i \le A_x}} J_\text{good}(x,i),
 \]
 observing that $R_1(j) = M^{J^1_\text{good}(j)}$. By construction, the accumulated sums of $(\overline{\nu}_k^{J^1_\text{good}(j)})_{k\ge1}$ grow at most linearly with slope $\frac{1}{\lambda}$ and, consequently, the front stays above $R_1(j) + \lfloor \lambda (t-T^{J_\text{good}^1(j)})\rfloor$ for $t\ge T^{J_\text{good}^1(j)}$ and none of the parasites born between $0$ and $j$ catch up to the linearly moving front after $T^{J^1_\text{good}(j)}$. Also, we have that 
 \[
\sup_{j\ge0}\mathbb{P}(R_1(j)-j > n) \le C_4n^{1-q_4}+C_5n^{1+\vartheta - q_3} \overset{n\to\infty}{\to}0. 
 \]
 Next, we want to see that the parasites born between $j$ and $R_1(j)$ have a positive probability to also stay below the linearly moving front after $T^{J_\text{good}^1(j)}$. We note that the paths of parasites generated between $j$ and $R_1(j)$ are influenced by the event that there possibly are already good sites $M^\ell < R_1(j)$ from which the accumulated sums of $(\overline{\nu}^\ell_k)_{k\ge1}$ grow at most with slope $\frac{1}{\lambda}$. This event does occur if for some $(x,i)$ with $0\le x\le j,1\le i\le A_x$, we have $J_\text{good}(x,i) < J_\text{good}^1(j)$. However, knowing this event occurs only affects the paths of parasites until the time when these parasites reach $R_1(j)$. To see this, observe that, by definition, we have $\overline{\nu}^\ell_{M^{\ell+1}-M^\ell + k} = \overline{\nu}^{\ell+1}_k$ for any $\ell\ge0,k\ge1$. Hence, the event that the accumulated sums of $(\overline{\nu}_k^{J^1_\text{good}(j)})_{k\ge1}$ grow at most linearly with slope $\frac{1}{\lambda}$ already implies that also for any $\ell < J^1_\text{good}(j)$ such that the accumulated sums of $(\overline{\nu}_k^{\ell})_{1\le k < R_1(j)-M^\ell}$ grow at most linearly with slope $\frac{1}{\lambda}$, already all the accumulated sums of $(\overline{\nu}_k^{\ell})_{k\ge1}$ grow at most linearly with slope $\frac{1}{\lambda}$. In particular, note that the event that the sums of $(\overline{\nu}_k^{\ell})_{1\le k < R_1(j)-M^\ell}$ grow at most linearly with slope $\frac{1}{\lambda}$ depends only on the paths of parasites until they reach $R_1(j)$, and the event that the accumulated sums of $(\overline{\nu}_k^{J^1_\text{good}(j)})_{k\ge1}$ grow at most linearly with slope $\frac{1}{\lambda}$ depends only on parasites born at $x$ with $x\ge R_1(j)$. In conclusion, this implies that after the time when a parasite generated between $j$ and $R_1(j)$ reaches $R_1(j)$, its path is that of a (unconditioned) simple symmetric random walk and, consequently, has a probability of at least $\delta_2$ to never catch up to a linearly moving line with slope $\frac{\lambda}{2}$. \\
 We fix some large enough constant $L\in\N$, such that for some $\delta_3 \in (0,1)$ we have
 \[
 C_4L^{1-q_4} + C_5L^{1+\vartheta-q_3} < 1-\delta_3.
 \]
 Then as computed above, we have $\sup_{j\ge0}\mathbb{P}(R_1(j) - j > L) < 1-\delta_3$. In the event $R_1(j) -j \le L$ and 
 \[
\sum_{k=0}^{R_1(j)-j-1} A_{j+k} \le 2(\mathbb{E}[A]+4) L ,
 \]
 as argued above and using Lemma \ref{lem:walk_above}, with probability at least $\delta_2^{2(\mathbb{E}[A]+4) L}$, none of the parasites generated between $j$ and $R_1(j)$ catch up to the linearly moving front after $T^{J^1_\text{good}(j)}$. We note that we can find some $\delta_4 > 0$ such that
 \[
\sup_{j\ge0}\mathbb{P}\left(\left.\sum_{k=0}^{R_1(j) -1}A_{j+k} > 2(\mathbb{E}[A]+4)L\right\vert R_1(j)-j \le L\right) < 1-\delta_4. 
 \]
 Hence, with probability at least $\delta_2^{2(\mathbb{E}[A]+4)L}\delta_3\delta_4$, our search for the site $R_\text{good}(j)$ ends at $R_\text{good}(j) = R_1(j)$, and none of the parasites born between $j$ and $R_1(j)$ catch up to the linearly moving front after $T^{J^1_\text{good}(j)}$. \\
 If $R_1(j) -j > L$ or too many parasites were generated between $j$ and $R_1(j)$, we can repeat the procedure to find $R_1(j)$, now starting at $R_1(j)$ instead of $j$. Thereby, we find a vertex $R_2(j) = M^{J_\text{good}^2(j)}$ such that none of the parasites born between $j$ and $R_1(j)$ (hence also those generated between $0$ and $R_1(j)$) catch up to the linearly moving front after $T^{J^2_\text{good}(j)}$ and we have
 \[
\lim_{n\to\infty}\sup_{j\ge0}\mathbb{P}(R_2(j) -j > n) = 0 .
 \]
 Checking once more if $R_2(j) - R_1(j) \le L$ and 
 \[
 \sum_{k=0}^{R_2(j)-R_1(j)-1} A_{R_1(j)+k} \le 2(\mathbb{E}[A]+4) L,
 \]
 we see that with probability at least $\delta_2^{2(\mathbb{E}[A]+4) L}\delta_3\delta_4$, also the parasites born between $R_1(j)$ and $R_2(j)$ do not contribute to the front after $T^{J^2_\text{good}(j)}$. Iterating this procedure, we find an almost surely finite site $R_\text{good}(j) = M^{J_\text{good}(j)} > j$ such that none of the parasites born between $0$ and $R_\text{good}(j)$ contribute to the front after time $T^{J_\text{good}(j)}$, the sums of $(\overline{\nu}_k^{J_\text{good}(j)})_{k\ge1}$ grow at most with slope $\frac{1}{\lambda}$, and
 \[
\lim_{n\to\infty}\sup_{j\ge0}\mathbb{P}(R_\text{good}(j) > j+n) = 0. \qedhere
 \]
 \end{proof}
 With this renewal site $R_\text{good}(j)$ that has a uniform tail over $j$, we can now iteratively construct a sequence of renewal sites $(R^i)_{i\ge0}$ simply by retrying for the next renewal site with $R_\text{good}(\cdot)$.
 \begin{proof}[Proof of Proposition \ref{prop:renewal_sites}]
 We set $R^0 = 0$ and for $i\ge 0$ define
 \[
 R^{i+1} := R_\text{good}(R^i) > R^i,
 \]
 with $R_\text{good}(\cdot)$ as in Lemma \ref{lem:renewalsite}. Using the uniform tail bound in Lemma \ref{lem:renewalsite}, we obtain that for any $i\ge0$, we have that $R^{i+1} <\infty$ almost surely, and by construction, none of the parasites generated below $R^{i+1}$ contribute to the front after $\rho_{R^{i+1}}$. For each $i\ge0$ and $\ell\ge 0,m > 0$, the event $\{R^{i+1}-R^i = m,R^i = \ell\}$ can be expressed as
 \[
 G_{[0,\ell)}\cap G_{[\ell,\ell+m)}\cap G_{[\ell+m,\infty)}
 \]
 for some events
 \[
 \begin{split}
 G_{[\ell,\ell+m)} &\in \sigma(A_x,I_{x+1},Y^{x,i}:\ell \le x < \ell + m,i\in\N) \\
 G_{[0,\ell)} &\in \sigma(A_x,I_{x+1},Y^{x,i}:0 \le x < \ell, i\in\N)
 \end{split}
 \]
 and
 \[
 G_{[\ell+m,\infty)} = \bigcap_{k=1}^\infty\left\{K_{\ell+m+k}\le k,\sum_{j=1}^k\nu_{\ell+m+j} \le \frac{k}{\lambda}\right\}\in\sigma\left(\begin{matrix}A_x,I_{x+1},\\Y^{x,i}\end{matrix}: \begin{matrix}x \ge \ell + m,\\i\in\N\end{matrix}\right).
 \]
 Note in particular that these three events are independent. By construction of the renewal sites, we have that for any $B\in\mathcal{B}(\R^m)$ the event
 \[
 \{R^{i+1}-R^i = m,R^i = \ell\}\cap\left\{\left(\rho_{k+1}-\rho_k:\ell \le k < \ell+m\right)\in B\right\}
 \]
 can be expressed as
 \[
 G_{[0,\ell)}\cap \widetilde{G}_{[\ell,\ell+m)}\cap G_{[\ell+m,\infty)}
 \]
 with $G_{[0,\ell)}, G_{[\ell+m,\infty)}$ as above and some event
 \[
 G_{[\ell,\ell+m)}\supset\widetilde{G}_{[\ell,\ell+m)} \in \sigma(A_x,I_{x+1},Y^{x,i}:\ell \le x < \ell + m,i\in\N).
 \]
 In particular, this implies the independence of
 \[
 \{(R^{i+1}-R^i,\rho_{R^i+1}-\rho_{R^i},\dots, \rho_{R^{i+1}}-\rho_{R^{i+1}-1}): i\ge 0\},
 \]
 and also that the subcollection with $i\ge1$ is identically distributed.
 \end{proof}
 \section{Proofs of the main Theorems}
 \label{Sec:ProofsOfTheorems2to4}
 \subsection{Proof of Theorem \ref{Theorem: lln}}
 In this section, we will show that the process satisfies a strong law of large numbers, in the sense that 
 \[\lim_{t\to\infty}\frac{r_t}{t} = \gamma~~~\text{almost surely}\]
 for some deterministic $\gamma > 0$. We recall the approach. \\
In Section \ref{sec:goodsites} we identified a sequence $(M^i)_{i\ge0}\subset \Z$, where the upcoming births and deaths are in a good configuration. In particular, the sequence only depends on $\mathbf{A}$ and $\mathbf{I}$, and the increments $(M^{i+1}-M^i)_{i\ge0}$ are i.i.d.~\\
The good state of upcoming births and deaths after each $M^i$ allowed us to construct a lower bound for the front in Definition \ref{Msites}, only depending on parasites that are born to the right of $M^i$. In particular, in Lemma \ref{lem:T_Tfinexp}, using Lemma \ref{Qconvspeed}, we will be able to show that moving from $M^i$ to $M^{i+1}$ has finite expectation under our assumptions on $\alpha$. \\
 In Lemma \ref{lem:X_mn_conv} we will show a subadditivity property for the times of moving to each site $(M^i)_{i\ge0}$ conditionally on the event that $\{M^0 = 0\}$, which yields a strong law of large numbers for the arrival times at the sites $(M^i)_{i\ge0}$, conditionally on $\{M^0=0\}$. Since the increments of $(M^i)_{i\ge0}$ are i.i.d., this yields also a strong law of large numbers for the infection times of any site $n$, conditionally on the event that $\{M^0 = 0\}$.\\
 However, by Proposition \ref{prop:renewal_sites}, the increments $(\rho_{n+1}-\rho_n)_{n\ge0}$ have a $\mathbb{P}$-trivial tail sigma field and thus the limit of $\frac{\rho_n}{n}$ cannot be random. In particular, this will allow us to transfer the strong law of large numbers from the event $\{M^0 = 0\}$ to the entire event of survival.\\ 
 The key in showing the subadditivity is to use a different coupling to construct the process, which is monotone in the initial configuration; see Proposition \ref{prop:untaged_mon}, in contrast to the construction that we used to define the auxiliary jump times; see also Example \ref{unmonontonecoupl}. In the construction, we will use a classical graphical representation using Poisson point processes to sample the jumps that happen on each site. We refer to Section \ref{Construction:unt} for the details. In this construction, the state of the process will consist of a triple $(r,\eta,\iota)$, where $r\in\Z$ is the position of the front, $\eta\in\N_0^{\Z}$ with $\supp \eta \subset(-\infty,r]$ is the number of living parasites on each site, and $\iota\in\N$ is the remaining immunity of the host at the site $r+1$. In this notation we define the natural initial configurations
\[
\zeta(r) := (r,A_r\delta_r,I_{r+1})
\]
of starting with only $A_r$ many parasites at the vertex $r$. For any initial configuration $\zeta = (r,\eta,\iota)$ as above and any time $t_0 \in\R$, we denote by
\[
(r_t(\zeta;t_0),\eta_t(\zeta;t_0),\iota_t(\zeta;t_0)_{t\ge t_0}
\]
the state of the SIMI at time $t$, when starting in the initial configuration $\zeta$ and using the Poisson point processes starting at time $t_0$. We will use that by Proposition \ref{prop:untaged_mon}, the coupling is such that for any initial time $t_0$, any $r\in\Z$, and configurations $\eta_1,\eta_2\in\N_0^{\Z}$ with $\supp\eta_1,\supp\eta_2 \subset (-\infty,r]$, the following holds. If $t_0$ is a stopping time with respect to the information of the collections $\mathbf{A},\mathbf{I}$ and the Poisson point processes up to time $t$, and $\eta_1,\eta_2$ are measurable with respect to the information of $\mathbf{A},\mathbf{I}$ and the Poisson point processes up to time $t_0$, such that almost surely $\eta_2$ holds more parasites than $\eta_1$ in each interval around the front, then the front constructed from $\eta_2$ starting at time $t_0$ is always above the front constructed from $\eta_1$ starting at time $t_0$. In this coupling we define the following random variables, on which we want to apply Liggett's Subadditive Ergodic Theorem \cite{L1985}.
\theoremstyle{definition}\begin{definition}\label{def:X_mn_Ligg}
In the coupling defined in Section \ref{Construction:unt}, we define
\[
\widetilde{T}^i := \inf\{t\ge0: r_t(\zeta(0);0) \ge M^i\}
\] and for $0\le m < n$ we set
\[
X_{m,n} := \inf\{t\ge 0: r_{\widetilde{T}^m+t}(\zeta(M^m);\widetilde{T}^m) \ge M^n\}.
\]
\end{definition}
\begin{remark}
 We note that $X_{0,n}$ always starts at the site $M^0$. Hence, only in the event $\{M^0 = 0\}$ do we have that $X_{0,n} = \widetilde{T}^n$ for $n\ge 0$. Also we note that by the definition of $(M^i)_{i\ge0}$, the process started from $M^i$ cannot die out, and thus $X_{m,n} < \infty$ almost surely for any $0\le m < n$.
\end{remark}
\theoremstyle{plain}\begin{lemma}\label{lem:X_mn_conv}
With $(X_{m,n})_{0\le m < n}$ as in Definition \ref{def:X_mn_Ligg}, we have that for some deterministic $\rho > 0$
 \[
 \lim_{n\to\infty}\frac{X_{0,n}}{n} = \rho
 \]
 almost surely. In particular, we have
 \[
 \lim_{n\to\infty}\frac{\widetilde{T}^n}{n} = \rho
 \]
 almost surely on the event $\{M^0=0\}$.
\end{lemma}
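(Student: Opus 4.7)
The plan is to apply Liggett's Subadditive Ergodic Theorem \cite{L1985} to the family $(X_{m,n})_{0\le m<n}$ under the conditional measure $\mathbb{P}(\cdot\,|\,M^0=0)$ and then transfer the resulting a.s.\ convergence to $\mathbb{P}$ via the regenerative structure of the sites $(M^i)_{i\ge 0}$. The pathwise subadditivity $X_{0,n}\le X_{0,m}+X_{m,n}$ on $\{M^0=0\}$ is the crucial input: on this event one has $\widetilde{T}^0=0$ and $\zeta(M^0)=\zeta(0)$, so the coupled process used to define $X_{0,n}$ coincides with the real SIMI started in $\zeta(0)$ and in particular $X_{0,k}=\widetilde{T}^k$ for every $k\ge 1$. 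At the stopping time $\widetilde{T}^m$ the real configuration has front exactly $M^m$ and dominates $\zeta(M^m)=(M^m,A_{M^m}\delta_{M^m},I_{M^m+1})$, because at the infection of the host at $M^m$ exactly $A_{M^m}$ parasites are released at $M^m$. Applying the monotonicity of Proposition \ref{prop:untaged_mon} at $\widetilde{T}^m$ therefore yields $\widetilde{T}^n-\widetilde{T}^m\le X_{m,n}$, which, translated via $X_{0,k}=\widetilde{T}^k$, is the required subadditivity on $\{M^0=0\}$.

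The remaining hypotheses of Liggett's theorem are then verified under $\mathbb{P}(\cdot\,|\,M^0=0)$. The integrability $\mathbb{E}[X_{0,1}\,|\,M^0=0]=\mathbb{E}[\widetilde{T}^1\,|\,M^0=0]<\infty$ is exactly Lemma \ref{lem:T_Tfinexp}. For the stationarity and ergodicity of $(X_{ik,(i+1)k})_{i\ge 0}$ (for each $k\ge 1$) and the $m$-independence of the joint law of $(X_{m,m+k})_{k\ge 1}$, I use the regenerative structure from Section \ref{sec:goodsites}: by Lemma \ref{lem:M_tail}, $(M^{i+1}-M^i)_{i\ge 0}$ is i.i.d.\ under $\mathbb{P}$, and conditionally on this sequence the events $\mathcal{G}_{M^i,\infty}$ all hold, so the triples $(A_{M^i+j},I_{M^i+j+1},Y^{M^i+j,\cdot})_{j\ge 0}$ are i.i.d.\ across $i$ with a common joint law governed by the conditional measure given good configurations. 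Liggett's theorem then produces a deterministic constant $\rho=\inf_{n\ge 1}\mathbb{E}[X_{0,n}\,|\,M^0=0]/n$ with $X_{0,n}/n\to\rho$ a.s.\ under $\mathbb{P}(\cdot\,|\,M^0=0)$. Positivity $\rho>0$ follows from a direct coupling of the parasite population with a binary branching random walk, which bounds the front speed from above by some $v_{\max}<\infty$; together with the SLLN for the i.i.d.\ sequence $(M^{i+1}-M^i)$, this gives $\widetilde{T}^n/n\ge (M^n/n)/v_{\max}\to\mathbb{E}[M^1-M^0\,|\,M^0=0]/v_{\max}>0$.

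To upgrade the convergence from $\mathbb{P}(\cdot\,|\,M^0=0)$ to $\mathbb{P}$, I note that Lemma \ref{lem:welldef} gives $M^0<\infty$ $\mathbb{P}$-a.s., so it suffices to establish $X_{0,n}/n\to\rho$ a.s.\ under every conditional measure $\mathbb{P}(\cdot\,|\,M^0=m)$. Conditionally on $\{M^0=m\}$, the strong Markov property for the graphical construction applied at the stopping time $\widetilde{T}^0$, combined with the spatial shift-invariance of the environment $(A_x,I_{x+1},Y^{x,\cdot})_{x\in\Z}$, implies that the shifted process $t\mapsto r_{\widetilde{T}^0+t}(\zeta(m);\widetilde{T}^0)-m$ has the same law as $t\mapsto r_t(\zeta(0);0)$ under $\mathbb{P}(\cdot\,|\,M^0=0)$. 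Since $X_{0,n}$ is a shift-invariant functional of this process, its distribution under $\mathbb{P}(\cdot\,|\,M^0=m)$ coincides with that under $\mathbb{P}(\cdot\,|\,M^0=0)$, so the event $\{X_{0,n}/n\to\rho\}$ has full probability under every conditional measure and hence under $\mathbb{P}$. The auxiliary claim $\widetilde{T}^n/n\to\rho$ on $\{M^0=0\}$ is then immediate from the pathwise identity $\widetilde{T}^n=X_{0,n}$ on that event. The principal obstacle in the plan is the careful verification of stationarity and ergodicity needed for Liggett's step, since all the $X_{m,n}$ live on the same global probability space built from a single graphical construction rather than from independent copies, and it is precisely the i.i.d.\ increment structure of $(M^i)$ from Lemma \ref{lem:M_tail} that supplies the required decoupling.
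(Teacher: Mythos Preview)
Your approach is essentially the same as the paper's: both apply Liggett's Subadditive Ergodic Theorem to $(X_{m,n})$, invoking Proposition~\ref{prop:untaged_mon} (monotone coupling) for subadditivity, Lemma~\ref{lem:T_Tfinexp} for integrability, and the i.i.d.\ increment structure of $(M^{i+1}-M^i)$ from Lemma~\ref{lem:M_tail} for stationarity and ergodicity. The only structural difference is that you first work under $\mathbb{P}(\cdot\mid M^0=0)$ --- where $X_{0,k}=\widetilde T^k$ and the subadditivity $\widetilde T^n-\widetilde T^m\le X_{m,n}$ is cleanest --- and then transfer to all of $\mathbb{P}$ via translation invariance, whereas the paper applies Liggett directly under $\mathbb{P}$. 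Your explicit verification of $\rho>0$ via the branching-random-walk comparison and the SLLN for $M^n/n$ is a correct addition that the paper's proof leaves implicit.

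One imprecision worth tightening: your assertion that the full tails $(A_{M^i+j},I_{M^i+j+1},Y^{M^i+j,\cdot})_{j\ge 0}$ are ``i.i.d.\ across $i$'' cannot hold as stated, because these infinite sequences overlap (the sequence for index $i$ contains the one for $i+1$). What is actually available --- and what the paper uses for condition (d) --- is that the \emph{blocks} $(M^{i+1}-M^i,\,A_{M^i},\ldots,A_{M^{i+1}-1},\,I_{M^i+1},\ldots,I_{M^{i+1}})$ are i.i.d., and that the variables $X_{nk,(n+1)k}$ for indices far apart depend on disjoint portions of the Poisson-point-process randomness; this is what yields ergodicity. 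Relatedly, $X_{m,n}$ is defined through the untagged graphical construction (the Poisson processes $P_{x,j}^{\leftarrow},P_{x,j}^{\rightarrow}$ of Section~\ref{Construction:unt}), not through the tagged walks $Y^{x,\cdot}$, so your description of the driving randomness should be adjusted accordingly.
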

\begin{proof}
 We want to apply Liggett's Subadditive Ergodic Theorem. Since clearly $X_{m,n} \ge 0$ for all $0\le m<n$, we thus have to check the four conditions:
 \begin{itemize}
 \item[a)]\label{Ligga} $\mathbb{E}[X_{0,n}] < \infty$ for all $n\in\N$
 \item[b)]\label{Liggb} $X_{0,n}\le X_{0,m} + X_{m,n}$ for all $0\le m < n$.
 \item[c)]\label{Liggc} The joint distributions of $\{X_{m+1,m+k+1}:k\ge 1\}$ are the same as those of $\{X_{m,m+k}:k\ge 1\}$ for each $m\ge 0$.
 \item[d)]\label{Liggd} For each $k\ge 1$, the process $\{X_{nk,(n+1)k}:n\ge1\}$ is stationary and ergodic.
 \end{itemize}
 We now verify each condition.
 \begin{itemize}
 \item[a)] This follows from Lemma \ref{lem:T_Tfinexp} and property b), since clearly $T^1 \overset{\diff}{=} \widetilde{T}^1$.
 \item[b)] The configuration $\eta_{\widetilde{T}^m}(\zeta(M^0);\widetilde{T}^0)$ holds $A_{M^m}$ parasites on site $M^m$ and then all the remaining parasites that were generated between $M^0$ and $M^m$ and are still alive. In particular, the configuration $\eta_{\widetilde{T}^m}(\zeta(M^0);\widetilde{T}^0)$ dominates the configuration $\zeta(M^m)$, which holds only $A_{M^m}$ parasites on site $M^m$. By Proposition \ref{prop:untaged_mon} and the strong Markov property of the process, this implies the subadditivity. We note here that for this to hold, we need to enlarge the state space of triples $(r,\eta,\iota)$ to also hold the information of all upcoming immunities and offspring, because the offspring and immunity configurations above $M^m$ are always good and thus influence the future of the process. The state of the process will be given by a 4-tuple $(r,\eta,(\iota_n)_{n\ge1},(a_n)_{n\ge1})$ where $\iota_n$ is the immunity of the host at $r+n$ and $a_n$ are the offspring generated after the host at $r+n$ gets infected. Clearly the resulting process is still a strong Markov process with respect to the information of $\mathbf{A},\mathbf{I}$ and the Poisson point processes up to time $t$, and clearly $\widetilde{T}^m$ is a stopping time with respect to this filtration.
 \item[c)]This is clearly the case by translation invariance.
 \item[d)] The stationarity follows again by translation invariance. To see the ergodicity, we note that $\widetilde{T}^m$ is a stopping time with respect to the information of $\mathbf{A},\mathbf{I}$ and the Poisson point processes up to time $t$. Also, we note that by Lemma \ref{lem:M_tail} we have
 \[
 \{(M^{m+1}-M^m,A_{M^m},\dots,A_{M^{m+1}-1},I_{M^m+1},\dots,I_{M^{m+1}}):m\ge0 \}
 \]
 is i.d.d.~and, by definition, the process between $M^m$ and $M^n$ only uses the offspring given by $A_{M^m},\dots,A_{M^n-1}$, immunities given by $I_{M^m+1},\dots,I_{M^n}$, and the Poisson point processes between $\widetilde{T}^m$ and $\widetilde{T}^m + X_{m,n}$. Hence, the variables
 \[
 (X_{nk,(n+1)k})_{n\ge1}
 \]
 use different parts of the collections $\mathbf{A},\mathbf{I}$ and Poisson point processes if they their indices are far enough apart and, in particular, are an ergodic sequence. 
 \end{itemize}
Having checked all the conditions, we can apply Liggett's Subadditive Ergodic Theorem and conclude the claimed almost sure convergence. Noting that, by definition, on the event $\{M^0=0\}$ we have $X_{0,n} = \widetilde{T}^n$ yields the second claim.
\end{proof}

We have established a law of large numbers for the hitting times of the good sites. To show the claimed law of large numbers for the front, we need the next proposition to transfer a law of large numbers of hitting times to a law of large numbers of the corresponding counting process.
\begin{proposition}\label{auxiliaryfrontconvergence}
Let $(X_n)_{n\ge1}$ be a sequence of non-negative random variables such that there is a $\lambda > 0$ with
\[
\lim_{n\to\infty}\frac{1}{n}\sum_{j=1}^n X_j = \lambda
\]
almost surely. Then
\[
\lim_{t\to\infty}\frac{1}{t}\sup\left\{n\ge 1: \sum_{j=1}^n X_j \le t\right\} = \frac{1}{\lambda}
\]
almost surely.
\end{proposition}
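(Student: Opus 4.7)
The plan is to reduce the statement to the standard sandwich argument relating a renewal counting process to its inter-arrival partial sums. Set $S_n := \sum_{j=1}^n X_j$ and $N(t) := \sup\{n\ge 1: S_n \le t\}$ (with the convention that the supremum over the empty set is $0$, so $N(t)$ is finite and eventually positive). First I would observe that the hypothesis $S_n/n \to \lambda > 0$ almost surely forces $S_n \to \infty$ almost surely; consequently, for almost every $\omega$, $N(t)$ is finite for every $t$, $N(t) \to \infty$ as $t\to\infty$, and $N(t)\ge 1$ for all $t$ large enough. Work on the full-measure event on which both $S_n/n \to \lambda$ and $N(t)\to\infty$.

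Second, since the $X_j$ are non-negative, $S_n$ is non-decreasing, so from the definition of $N(t)$ one has the sandwich
\[
S_{N(t)} \;\le\; t \;<\; S_{N(t)+1}
\]
for all $t$ large enough that $N(t)\ge 1$. Dividing by $N(t)$ gives
\[
\frac{S_{N(t)}}{N(t)} \;\le\; \frac{t}{N(t)} \;<\; \frac{S_{N(t)+1}}{N(t)+1}\cdot\frac{N(t)+1}{N(t)}.
\]
Third, since $N(t)\to\infty$ almost surely and $S_n/n \to \lambda$ almost surely along the deterministic sequence $n\to\infty$, the composition $S_{N(t)}/N(t) \to \lambda$ and likewise $S_{N(t)+1}/(N(t)+1) \to \lambda$ almost surely; clearly $(N(t)+1)/N(t) \to 1$. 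Taking the limit in the sandwich yields $t/N(t) \to \lambda$ almost surely, and since $\lambda > 0$ this is equivalent to $N(t)/t \to 1/\lambda$ almost surely, which is the claim.

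The argument is essentially routine once the positivity of the $X_j$ and the almost-sure finiteness of $N(t)$ are noted, so I do not expect any serious obstacle. The only small point of care is that the conclusion $N(t)\to\infty$ uses $\lambda>0$ (equivalently $S_n\to\infty$); if instead $\lambda=0$ were allowed the statement would break, which justifies the hypothesis $\lambda>0$. One can make the composition step fully rigorous by noting that if $(a_n)$ is a deterministic sequence with $a_n\to a$ and $(N_t)$ is any random sequence with $N_t\to\infty$, then $a_{N_t}\to a$ pointwise; there is no need for a uniform bound.
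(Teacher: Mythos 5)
Your proof is correct and uses the same sandwich argument as the paper: bound $t/N(t)$ between $S_{N(t)}/N(t)$ and $S_{N(t)+1}/(N(t)+1)\cdot(N(t)+1)/N(t)$ and let $t\to\infty$ using $N(t)\to\infty$. The paper spells out the limit via an explicit $\varepsilon$--$n_0$ choice rather than invoking the subsequence-composition lemma, but the content is identical.
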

\begin{proof}
We set
\[
m_t := \sup\left\{n\ge 1: \sum_{j=1}^n X_j \le t\right\}.
\]
Let $\varepsilon > 0$ and let $\omega$ be such that 
\[
\lim_{n\to\infty}\frac{1}{n}\sum_{j=1}^nX_j(\omega) = \lambda.
\]
We choose $n_0= n_0(\omega)\in\N$ large enough such that 
\[
\left\vert\frac{1}{n}\sum_{k=1}^n X_k(\omega) - \lambda\right\vert < \varepsilon~\text{ and }~\frac{n+1}{n} < 1+\varepsilon
\]
for all $n\ge n_0$. Set $t_0 =t_0(\omega):= \sum_{k=1}^{n_0} X_k(\omega)$. Clearly for all $t\ge t_0$ we have $m_t(\omega) \ge n_0$, thus we have
\[
\frac{\sum_{k=1}^{m_t}X_k}{m_t},\frac{\sum_{k=1}^{m_t+1}X_k}{m_t+1} \in (\lambda-\varepsilon,\lambda+\varepsilon).
\] 
and by definition of $m_t$ we have
\[
\sum_{k=1}^{m_t}X_k \le t < \sum_{k=1}^{m_t+1}X_k.
\]
Dividing by $m_t$, we obtain
\[
\lambda-\varepsilon< \frac{\sum_{k=1}^{m_t}X_k}{m_t}\le \frac{t}{m_t} < \frac{m_t+1}{m_t}\frac{\sum_{k=1}^{m_t+1}X_k}{m_t+1} < (1+\varepsilon)(\lambda+\varepsilon)
\]
and hence $\lim_{t\to\infty}\frac{m_t}{t} = \frac{1}{\lambda}>0$.
\end{proof}

\begin{proof}[Proof of Theorem \ref{Theorem: lln}]
By Lemma \ref{lem:X_mn_conv} we have that
\[
\lim_{n\to\infty}\frac{T^n}{n} = \rho
\]
for some $\rho \in (0,\infty)$ holds almost surely on the event $\{M^0 = 0\}$.
Since the sequence of increments $(M^{k+1}-M^k)_{k\ge0}$ is i.i.d.~under $\mathbb{P}$, the strong law of large numbers implies that $\mathbb{P}$-almost surely
\[
\lim_{n\to\infty}\frac{M^n}{n} = \mathbb{E}[M^1-M^0].
\]
For $n\in\N$ we set
\[
Q_n := \sup\{m\ge0: M^m \le n\},
\]
then by Proposition \ref{auxiliaryfrontconvergence} we have
\[
\lim_{n\to\infty}\frac{Q_n}{n} = \frac{1}{\mathbb{E}[M^1-M^0]}
\]
$\mathbb{P}$-almost surely. Hence, on the event $\{M^0 = 0\}$ we almost surely have
\[
\frac{\rho_n}{n} = \frac{T^{Q_n}}{Q_n}\frac{Q_n}{n} + \frac{\rho_n-T^{Q_n}}{n} \overset{n\to\infty}{\to} \frac{\rho}{\mathbb{E}[M^1-M^0]}.
\]
To see that the second summand
\[
\frac{\rho_n-T^{Q_n}}{n}
\]
tends to $0$, simply note that by the definition of $Q_n$ we have \[0\le \rho_n - T^{Q_n} < T^{Q_n+1} - T^{Q_n}.\] Now we simply compute, using the estimates in the proof of Lemma \ref{lem:T_Tfinexp}, that for any $\varepsilon > 0$
\[
\mathbb{P}(T^{Q_n+1}-T^{Q_n} > \varepsilon n) \le \mathbb{P}(T^1 > \varepsilon n\vert M^0 = 0) \le Cn^{-(1+\delta)}
\]
for some suitable $C>0$ and $\delta > 0$ small enough. In particular, the Borel-Cantelli Lemma implies
\[
0\le \lim_{n\to\infty}\frac{\rho_n-T^{Q_n}}{n} \le \lim_{n\to\infty}\frac{T^{Q_n+1}-T^{Q_n}}{n} = 0
\]
$\mathbb{P}$-almost surely. With this we have shown that
\[
 \lim_{n\to\infty}\frac{\rho_n}{n} = \frac{\rho}{\mathbb{E}[M^1-M^0]}
\]
almost surely on the event $\{M^0 = 0\}$. By Proposition \ref{prop:renewal_sites} and Kolmogorov's zero-one law (see e.g. \cite[Theorem 2.37]{K2020}), we obtain that the tail-sigma field
\[
\bigcap_{i=0}^\infty\sigma(R^{k+1}-R^k,\rho_{R^k+1}-\rho_{R^k},\dots,\rho_{R^{k+1}}-\rho_{R^{k+1}-1}: k \ge i)
\]
is $\mathbb{P}$-trivial. Noting that
\[
\begin{split}
\bigcap_{i=0}^\infty& \,\sigma(R^{k+1}-R^k,\rho_{R^k+1}-\rho_{R^k},\dots,\rho_{R^{k+1}}-\rho_{R^{k+1}-1}: k \ge i) \\
&= \bigcap_{i=0}^\infty\sigma(R^{k+1}-R^k,\rho_{j+1}-\rho_j:k\ge i, j\ge R^i)
\end{split}
\]
and $\infty > R^i \ge i$ almost surely, yields that also
\[
\mathcal{H}_{\text{tail},\rho} := \bigcap_{n=1}^\infty\sigma(\rho_{k+1}-\rho_k:k\ge n) \subset \bigcap_{i=0}^\infty\sigma\left(R^{k+1}-R^k,\rho_{j+1}-\rho_j:\begin{matrix}k\ge i, \\j\ge R^i\end{matrix}\right)
\]
is $\mathbb{P}$-trivial. Because $\rho_n < \infty$ almost surely under $\mathbb{P}$, the event
\[
\left\{\lim_{n\to\infty}\frac{\rho_n}{n} = \frac{\rho}{\mathbb{E}[M^1-M^0]}\right\}
\]
is contained in that tail-sigma field $\mathcal{H}_{\text{tail},\rho}$ and, as reasoned above, has positive probability. Thus, we must already have
\[
\lim_{n\to\infty}\frac{\rho_n}{n} = \frac{\rho}{\mathbb{E}[M^1-M^0]}
\]
almost surely under $\mathbb{P}$. \\
Hence, again using Proposition \ref{auxiliaryfrontconvergence}, we obtain that
\[
\lim_{t\to\infty}\frac{r_t}{t} = \frac{\mathbb{E}[M^1-M^0]}{\rho}
\]
$\mathbb{P}$-almost surely and have finished the proof of Theorem \ref{Theorem: lln} by setting
\[
\gamma := \frac{\mathbb{E}[M^1-M^0]}{\rho} .\qedhere
\]
\end{proof} 
\subsection{Proof of Theorem \ref{Theorem: ballistic growth}}\label{subsec:ball}
In this section we prove the weaker result, which, however, works for a broader range of $\alpha$ and also any initial configuration $w$. Precisely, we want to show Theorem \ref{Theorem: ballistic growth}, i.e., that there are $C_1,C_2 > 0$ such that
 \[
 C_1\le \liminf_{t\to\infty}\frac{r_t}{t}\le\limsup_{t\to\infty}\frac{r_t}{t}\le C_2
 \]
 holds $\mathbb{P}$-almost surely.
The upper bound follows by Equation \eqref{frontupbound}. For the lower bound we use a quite similar construction as for $M^0$, however this time we don't give a special treatment to the first $k_0$ sites. We recall the definitions of $(L_k)_{k\ge0},N, M$ and $T$ from Definition \ref{Msite} and that $T<\infty$ holds $\mathbb{P}$-almost surely by Lemma \ref{lem:welldef}. By definition, we have 
\[
K_{M+n} \le n
\] 
for all $n\ge k_0$ and thus by Proposition \ref{Couplinglowbound} we have
\begin{equation}\label{eq:lowerboundthm2}
\mathds{1}_{t\ge T}\left(M+k_0-1+\sup\left\{n\ge1:\sum_{k=k_0}^n\nu_{M+k}\le t-T\right\} \right)\le r_t.
\end{equation}
In light of Proposition \ref{auxiliaryfrontconvergence}, we hence need to show
\begin{equation}\label{nuLLN}
\lim_{n\to\infty}\frac{1}{n}\left(T +\sum_{k=k_0}^{n}\nu_{M+k}\right) = \gamma
\end{equation}
for some $\gamma > 0$. Because $T<\infty$ almost surely under $\mathbb{P}$, we only need to show that
\[
\lim_{n\to\infty}\frac{1}{n}\sum_{k=k_0}^n\nu_{M+k} = \gamma
\]
for some $\gamma > 0$.
By Lemma \ref{Tdistconv}, the variables $(\nu_{M+n})_{n\ge k_0}$ are asymptotically identically distributed, and by Lemma \ref{conditinallyphimixing}, they have weak dependence. We will use $L^q$-mixingales introduced by \cite{M1975} to formally prove the claimed strong law of large numbers. We recall their definition next.
\theoremstyle{definition}\begin{definition}
Let $\{X_i:i\ge 1\}$ be a sequence of random variables on some probability space $(\Xi,\mathcal{H},P)$ with $E[\vert X_i\vert] < \infty$ for all $i\ge1$ and $\{\mathcal{H}_i:i\in\Z\}$ a nondecreasing sequence of sub $\sigma$-fields of $\mathcal{H}$. For $q\ge 1$, the collection $\{X_i,\mathcal{H}_j:i\ge 1,j\in\Z\}$ is called an $L^q$-mixingale if there exist nonnegative constants $\{c_i:i\ge 1\}$ and $\{\psi(m):m\ge 0\}$ such that $\lim_{m\to\infty}\psi(m) = 0$ and for all $i\ge 1,m\ge 0$ we have
 \begin{itemize}
 \item $\lVert E[X_i\vert \mathcal{H}_{i-m}]\rVert_q := \left(E\left[\left\vert E[X_i\vert \mathcal{H}_{i-m}]\right\vert^q\right]\right)^{\frac{1}{q}}\le c_i \psi(m)$ and
 \item $\lVert X_i -E[X_i\vert \mathcal{H}_{i+m}]\rVert_q := \left(E\left[\left\vert X_i -E[X_i\vert \mathcal{H}_{i+m}]\right\vert^q\right]\right)^{\frac{1}{q}}\le c_i \psi(m+1)$ 
 \end{itemize}
\end{definition}
We note that the first condition imposes the variables $\{X_i:i\ge 1\}$ to have zero mean, because
\[
\vert E[X_i]\vert = \vert E[E[X_i\vert \mathcal{H}_{i-m}]]\vert \le \lVert E[X_i\vert \mathcal{H}_{i-m}]\rVert_1 \le \lVert E[X_i\vert \mathcal{H}_{i-m}]\rVert_q \le c_i\psi(m) \to 0
\]
as $m\to\infty$. We will thus verify the following lemma.
\theoremstyle{plain}\begin{lemma}\label{Tkmixingale}
Set
\[
X_i := \nu_{M+k_0-1+i}-\mathbb{E}[\nu_{M+k_0-1+i}]~\text{ and }~\mathcal{H}_j := \begin{cases}\sigma(\nu_{M+k_0},\dots, \nu_{M+k_0+j-1}),&j\ge 1\\ \{\emptyset,\Omega\},&j \le 0\end{cases}
\]
for $i\ge 1,j\in\Z$. Then for any $1 < q < \frac{(4\wedge\alpha)+1}{2}$ the collection $\{X_i,\mathcal{H}_j:i\ge1,j\in\Z\}$ is an $L^q$-mixingale on the probability space $(\Omega,\mathcal{F},\mathbb{P})$. In particular we have
\[
c_i = 2\lVert \nu_{M+k_0-1+i}-\mathbb{E}[\nu_{M+k_0-1+i}]\rVert_q~~\text{ and }~~\psi(m) = \phi(m)^{\frac{q-1}{q}}
\]
with $\phi$ as in Lemma \ref{conditinallyphimixing}.
\end{lemma}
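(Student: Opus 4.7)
The verification reduces to two checks, and the key structural observation makes one of them immediate. Since $X_i$ is a function of $\nu_{M+k_0-1+i}$ alone and $\mathcal{H}_{i+m} \supset \mathcal{H}_i \ni X_i$ for every $m \geq 0$, the conditional expectation $\mathbb{E}[X_i \mid \mathcal{H}_{i+m}]$ equals $X_i$ identically. Hence the second mixingale inequality $\|X_i - \mathbb{E}[X_i \mid \mathcal{H}_{i+m}]\|_q \leq c_i \psi(m+1)$ holds trivially (the left-hand side vanishes).

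For the first inequality, the plan is to treat the degenerate case $i - m \leq 0$ first: then $\mathcal{H}_{i-m} = \{\emptyset,\Omega\}$, so $\mathbb{E}[X_i \mid \mathcal{H}_{i-m}] = \mathbb{E}[X_i] = 0$ and the bound is trivial. For $m \geq 1$ with $i - m \geq 1$, I would invoke the classical Ibragimov $L^q$ inequality for $\phi$-mixing sub-$\sigma$-fields: if $\mathcal{A}, \mathcal{B}$ are sub-$\sigma$-fields with $\phi$-mixing coefficient at most $\phi$ and $Y \in L^q(\mathcal{B})$, then $\|\mathbb{E}[Y \mid \mathcal{A}] - \mathbb{E}[Y]\|_q \leq 2\phi^{(q-1)/q}\|Y\|_q$. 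Applying this with $\mathcal{A} = \mathcal{H}_{i-m}$, $\mathcal{B} = \sigma(\nu_{M+k_0-1+i})$, $Y = X_i$, and with the rate $\phi(m)$ supplied by Lemma \ref{conditinallyphimixing} (extended from $(\overline{\nu}_n)_{n\geq 1}$ to $(\nu_{M+n})_{n\geq k_0}$ via the remark following that lemma) would yield
\[
\|\mathbb{E}[X_i \mid \mathcal{H}_{i-m}]\|_q = \|\mathbb{E}[X_i \mid \mathcal{H}_{i-m}] - \mathbb{E}[X_i]\|_q \leq 2\phi(m)^{(q-1)/q}\|X_i\|_q = c_i\psi(m).
\]
The case $m = 0$ is immediate since $\mathbb{E}[X_i \mid \mathcal{H}_i] = X_i$ and $\|X_i\|_q \leq c_i = c_i\psi(0)$ under the natural convention $\phi(0) = \psi(0) = 1$.

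Finiteness of $c_i = 2\|X_i\|_q$ for $1 < q < \tfrac{(4\wedge\alpha)+1}{2}$ would follow from Lemma \ref{NuMoments}, using that the same tail estimate $\mathbb{P}(\nu_{M+j} > t) \leq C t^{-q}$ transfers from $(\nu_n)$ to $(\nu_{M+n})_{n\geq k_0}$ through the remark after Lemma \ref{Tdistconv}. Finally, $\psi(m) = \phi(m)^{(q-1)/q} = \mathcal{O}(m^{-(\alpha-\varepsilon)(q-1)/q}) \to 0$ as $m \to \infty$ by Lemma \ref{conditinallyphimixing}, so the mixingale rate condition is satisfied. I expect no genuine obstacle: the analytic heart of the statement (the $\phi$-mixing property together with the $L^q$ bounds on $\nu$) has already been established in the preceding section, and the remaining argument is essentially bookkeeping built on the observation that $X_i \in \mathcal{H}_i$ kills the forward mixingale condition while a standard application of Ibragimov's inequality supplies the backward one.
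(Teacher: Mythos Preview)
Your proposal is correct and follows essentially the same approach as the paper: the paper also observes that $X_i$ is $\mathcal{H}_i$-measurable so the forward mixingale condition is trivial, and for the backward condition it invokes the same $\phi$-mixing conditional-expectation inequality (citing Serfling \cite[Theorem 2.2]{S1968} rather than Ibragimov) to obtain $\lVert\mathbb{E}[X_i\mid\mathcal{H}_{i-m}]\rVert_q\le 2\phi(m)^{(q-1)/q}\lVert X_i\rVert_q$. Your additional remarks on the degenerate cases and the finiteness of $c_i$ are accurate elaborations of points the paper leaves implicit.
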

\begin{proof}
 First we note that by Corollary \ref{cor:overnufinmom} the expectations and conditional expectations exist and are finite. Next we note that the second condition of a $L^q$-mixingale is trivially fulfilled because $X_i$ is $\mathcal{H}_i$ measurable; hence the left side is just $0$. To verify the first condition we use the classical result for $\phi$-mixing sequences of Serfling \cite[Theorem 2.2]{S1968} that shows
 \[
\mathbb{E}\left[\left\vert\mathbb{E}[X_i\vert \mathcal{H}_{i-m}\right\vert^q\right]\le 2^q\phi(m)^{q-1}\mathbb{E}[\vert X_i\vert^q].
\]
This already finishes the proof.
\end{proof}
To conclude the final convergence, we need the following lemma to deal with the error term that appears due to centering the not identically distributed variables $(\nu_{M+k_0-1+i})_{i\ge1}$.
\theoremstyle{plain}\begin{lemma}\label{lem:nullaverage}
Let $(a_n)_n\subset \R$ be a sequence with $\lim_{n\to\infty}a_n = 0$. Then
 \[
 \lim_{n\to\infty}\frac{1}{n}\sum_{k=1}^n a_n = 0
 \]
\end{lemma}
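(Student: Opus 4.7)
This is the classical Cesàro mean lemma (assuming the statement contains a typo and should read $\frac{1}{n}\sum_{k=1}^n a_k$; otherwise the claim is trivial since $\frac{1}{n}\sum_{k=1}^n a_n = a_n \to 0$). My plan is the textbook split into a ``head'' of fixed length and a ``tail'' whose entries are uniformly small.

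Fix $\varepsilon > 0$. Since $a_n \to 0$, choose $N = N(\varepsilon) \in \mathbb{N}$ such that $|a_k| < \varepsilon/2$ for all $k \geq N$. For every $n \geq N$ I would then estimate
\[
\left|\frac{1}{n}\sum_{k=1}^n a_k\right| \;\leq\; \frac{1}{n}\sum_{k=1}^{N-1}|a_k| \;+\; \frac{1}{n}\sum_{k=N}^{n}|a_k| \;\leq\; \frac{1}{n}\sum_{k=1}^{N-1}|a_k| \;+\; \frac{n-N+1}{n}\cdot\frac{\varepsilon}{2}.
\]
The first term is a fixed finite quantity divided by $n$, hence tends to $0$, and the second term is bounded by $\varepsilon/2$. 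Choosing $n$ so large that the first term is below $\varepsilon/2$ gives $\bigl|\tfrac{1}{n}\sum_{k=1}^n a_k\bigr| < \varepsilon$, which shows the claim.

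There is no real obstacle here: the lemma is elementary and is being used only as a bookkeeping device to handle the error from centering the not-identically-distributed variables $(\nu_{M+k_0-1+i})_{i\ge1}$ by their individual means in the law of large numbers for $L^q$-mixingales (Lemma \ref{Tkmixingale}). Specifically, after one applies the mixingale SLLN to $X_i = \nu_{M+k_0-1+i} - \mathbb{E}[\nu_{M+k_0-1+i}]$ to conclude $\tfrac{1}{n}\sum_{i=1}^n X_i \to 0$, this lemma, combined with the convergence $\mathbb{E}[\nu_{M+k_0-1+i}] \to \mathbf{E}[\nu]$ coming from Corollary \ref{cor:momentconvergence} applied with $a_i := \mathbb{E}[\nu_{M+k_0-1+i}] - \mathbf{E}[\nu]$, yields the desired limit $\tfrac{1}{n}\sum_{k=k_0}^n \nu_{M+k} \to \mathbf{E}[\nu]$ in \eqref{nuLLN}.
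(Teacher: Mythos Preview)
Your proof is correct and follows essentially the same head/tail split as the paper's proof; the paper also fixes an index $N_1$ where $|a_k|$ is small and bounds the finite head by $\tfrac{1}{n}\sum_{k\le N_1}|a_k|$, though it writes the tail with an extra term and uses $\varepsilon/3$ instead of $\varepsilon/2$. Your identification of how the lemma is applied (with $a_i=\mathbb{E}[\nu_{M+k_0-1+i}]-\mathbf{E}[\nu]$ via Corollary~\ref{cor:momentconvergence}) matches the paper's use in the proof of Lemma~\ref{Tnconvergence}.
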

\begin{proof}
Let $\varepsilon > 0$ and choose $N_1\in\N$ such that $\vert a_n\vert < \min\left\{1,\frac{\varepsilon}{3}\right\}$ for all $n > N_1$. Then in particular for all $n > N_1$ we have
 \[
 \frac{1}{n-N_1}\left\vert\sum_{k = N_1+1}^n a_k\right\vert < \min\left\{1,\frac{\varepsilon}{3}\right\}.
 \]
Setting
 \[
 N_2 := \max\left\{N_1,\left\lceil\frac{3}{\varepsilon}\left\vert\sum_{k=1}^{N_1}a_k\right\vert\right\rceil,\left\lceil N_1\frac{3}{\varepsilon}\right\rceil\right\}
 \]
 we obtain for $n>N_2$
 \[
 \begin{split}
 \frac{1}{n}\left\vert\sum_{k=1}^na_k\right\vert &= \frac{1}{n}\left\vert\sum_{k=1}^{N_1}a_k + \sum_{k=N_1+1}^n a_k\right\vert =\frac{1}{n}\left\vert\sum_{k=1}^{N_1}a_k + \frac{n-N_1}{n-N_1}\sum_{k=N_1+1}^n a_k\right\vert \\
 &\le \frac{1}{n}\left\vert\sum_{k=1}^{N_1}a_k\right\vert +\frac{1}{n}\frac{N_1}{n-N_1} \left\vert\sum_{k=N_1+1}^n a_k\right\vert +\frac{1}{n-N_1}\left\vert\sum_{k=N_1+1}^na_k\right\vert \\
 & < \left\lceil\frac{3}{\varepsilon}\left\vert\sum_{k=1}^{N_1}a_k\right\vert\right\rceil^{-1}\left\vert\sum_{k=1}^{N_1}a_k\right\vert + \left\lceil N_1\frac{3}{\varepsilon}\right\rceil^{-1}N_1\cdot 1 + \frac{\varepsilon}{3} \le \varepsilon,
 \end{split}
 \]
 which finishes the proof.
\end{proof}
With these tools at hand, we can now finally prove the needed law of large numbers.
\theoremstyle{plain}\begin{lemma}\label{Tnconvergence}
 It holds
 \[
 \lim_{n\to\infty} \frac{T+\sum_{k=k_0}^n \nu_{M+k}}{n} = \mathbf{E}[\nu]
 \]
 $\mathbb{P}$-almost surely.
\end{lemma}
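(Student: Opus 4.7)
Since $T<\infty$ holds $\mathbb{P}$-almost surely by Lemma \ref{lem:welldef}, the term $T/n$ vanishes as $n\to\infty$, so it is enough to prove the Cesàro convergence
\[
\lim_{n\to\infty}\frac{1}{n-k_0+1}\sum_{k=k_0}^n \nu_{M+k} = \mathbf{E}[\nu]
\]
$\mathbb{P}$-almost surely. My plan is to split the partial sum into a centred random part and a deterministic bias part,
\[
\frac{1}{n-k_0+1}\sum_{k=k_0}^n \nu_{M+k}
=\frac{1}{n-k_0+1}\sum_{k=k_0}^n\!\bigl(\nu_{M+k}-\mathbb{E}[\nu_{M+k}]\bigr)
+\frac{1}{n-k_0+1}\sum_{k=k_0}^n\mathbb{E}[\nu_{M+k}],
\]
and handle the two summands separately.

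For the deterministic summand, Corollary \ref{cor:momentconvergence} — which, as the paper records, also holds for the sequence $(\nu_{M+n})_{n\ge k_0}$ with the same proof based on Lemma \ref{Tdistconv} — gives $\mathbb{E}[\nu_{M+k}]\to \mathbf{E}[\nu]$ as $k\to\infty$. Applying Lemma \ref{lem:nullaverage} to $a_k:=\mathbb{E}[\nu_{M+k_0-1+k}]-\mathbf{E}[\nu]$ then shows the deterministic Cesàro average converges to $\mathbf{E}[\nu]$.

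For the centred random summand I plan to invoke the strong law for $L^q$-mixingales of McLeish \cite{M1975}: if $\{X_i,\mathcal{H}_j\}$ is an $L^q$-mixingale with $\sup_i c_i<\infty$ and $\psi(m)$ decaying sufficiently fast (polynomial decay is amply sufficient), then $\tfrac{1}{n}\sum_{i=1}^n X_i\to 0$ almost surely. By Lemma \ref{Tkmixingale}, for any $1<q<\frac{(4\wedge\alpha)+1}{2}$ the collection from the statement of that lemma is an $L^q$-mixingale with $c_i=2\lVert X_i\rVert_q$ and $\psi(m)=\phi(m)^{(q-1)/q}$. Under assumption \eqref{momentcondition} of Theorem \ref{Theorem: ballistic growth} we have $\alpha>1$, so this range for $q$ is non-empty, and $\phi(m)=Cm^{-\alpha+\varepsilon}$ gives polynomial decay of $\psi$, which is comfortably within McLeish's regime. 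Uniform boundedness $\sup_i c_i<\infty$ follows because the sequence $\lVert \nu_{M+k}\rVert_q$ is bounded: Corollary \ref{cor:overnufinmom} (transferred to $(\nu_{M+n})_{n\ge k_0}$) and Corollary \ref{cor:momentconvergence} give pointwise finiteness and convergence of the $L^q$-norms, hence their boundedness.

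The main obstacle is the verification of the mixingale SLLN hypotheses under the weakest possible moment condition $\alpha>1$. The room in choosing $q$ slightly above $1$ is what makes it work: the polynomial mixing in Lemma \ref{conditinallyphimixing}, tempered by the exponent $(q-1)/q$, must still dominate the logarithmic threshold of McLeish's theorem, and the uniform $L^q$-bound on the centred variables must be inherited from Lemma \ref{NuMoments} through the identity of Lemma \ref{Tdistribution}. Once these two ingredients are in place, combining the almost sure convergence of the centred part, the convergence of the deterministic bias via Lemma \ref{lem:nullaverage}, and $T/n\to 0$ yields the claim.
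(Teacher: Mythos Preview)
Your proposal is correct and follows essentially the same approach as the paper's proof: split off $T/n$, decompose the remaining sum into a centred random part and a deterministic bias, apply an $L^q$-mixingale SLLN via Lemma \ref{Tkmixingale} for the centred part, and use Corollary \ref{cor:momentconvergence} together with Lemma \ref{lem:nullaverage} for the bias. The only cosmetic difference is that the paper cites \cite[Corollary~1]{DJ1995} rather than McLeish \cite{M1975} for the mixingale SLLN (the former states the explicit logarithmic threshold on $\psi$ that you allude to).
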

\begin{proof}
Using Lemma \ref{lem:welldef}, we obtain that
 \[
 \lim_{n\to\infty}\frac{T}{n} = 0
 \]
 $\mathbb{P}$-almost surely. \\
 Now, fixing some $q\in \left(1,\frac{(4\wedge\alpha)+1}{2}\right)$, we want to apply \cite[Corollary 1]{DJ1995} to conclude the convergence of 
 \[
 \lim_{n\to\infty} \frac{1}{n}\sum_{k=k_0}^{k_0+n}\nu_{M+k}-\mathbb{E}[\nu_{M+k}] = 0.
 \]
 For this purpose, we have to check the following two conditions: we need to have $\sup_{i\ge1} c_i < \infty$ and $\psi(m) =\mathcal{O}\left(\log(m)^{-(1+\delta)}\right)$ for some $\delta > 0$.\\
 Using the constants given in Lemma \ref{Tkmixingale} and applying Corollary \ref{cor:momentconvergence}, we see that
 \[
 \begin{split}
 c_i &\le 2\left(\lVert \nu_{M+k_0-1+i}\rVert_q + \lVert\mathbb{E}[\nu_{M+k_0-1+i}]\rVert_q\right) \\ &= 2\left(\mathbb{E}[\vert \nu_{M+k_0-1+i}\vert^q]^{\frac{1}{q}} +\mathbb{E}[\nu_{M+k_0-1+i}]\right) \\&\to 2\left(\mathbf{E}[\nu^q]^{\frac{1}{q}}+\mathbf{E}[\nu]\right)~~~(i\to\infty).
 \end{split}
 \] 
 In particular, $\sup_{i\ge 1}c_i<\infty$, which is the first condition of \cite[Corollary 1]{DJ1995}. From Lemma \ref{conditinallyphimixing} we obtain that \[\psi(m) \le C m^{(-\alpha+\varepsilon)\frac{q-1}{q}} = \mathcal{O}\left(\frac{1}{\log(m)^2}\right),\]
 which is the second condition of \cite[Corollary 1]{DJ1995}. Hence we can apply the strong law of large numbers for $L^q$-mixingales to conclude
\begin{equation}\label{eq:Tncenterconv}
\lim_{n\to\infty}\frac{1}{n}\sum_{j=1}^n \nu_{M+k_0-1+j}-\mathbb{E}[\nu_{M+k_0-1+j}] = 0.
\end{equation}
 Thus putting everything together we obtain
\[
\begin{split}
\lim_{n\to\infty}&\left\vert\frac{1}{n}\left(T+\sum_{k=k_0}^n \nu_{M+k}\right)-\mathbf{E}[\nu]\right\vert \le \lim_{n\to\infty}\left\vert\frac{T}{n}\right\vert+\left\vert\frac{1}{n}\sum_{k=k_{0}}^n \nu_{M+k}-\mathbf{E}[\nu]\right\vert \\
&= \lim_{n\to\infty}\left\vert\frac{n-k_{0}+1}{n}\frac{1}{n-k_{0}+1}\sum_{k=k_{0}}^n \nu_{M+k}-\mathbb{E}[\nu_{M+k}]+\mathbb{E}[\nu_{M+k}] - \mathbf{E}[\nu]\right\vert\\
&\le\lim_{n\to\infty}\left\vert\frac{1}{n-k_{0}+1}\sum_{k=k_{0}}^n \nu_{M+k}-\mathbb{E}[\nu_{M+k}]\right\vert \\ 
&\quad\quad\quad+\left\vert\frac{1}{n-k_0+1}\sum_{k=k_{0}}^n\mathbb{E}[\nu_{M+k}] - \mathbf{E}[\nu]\right\vert \\
&= 0,
\end{split}
\]
where in the last line we used \eqref{eq:Tncenterconv} and Lemma \ref{lem:nullaverage}, noting that by Corollary \ref{cor:momentconvergence} we have $\lim_{n\to\infty}\mathbb{E}[\nu_{M+n}]=\mathbf{E}[\nu]$.
\end{proof}
\begin{proof}[Proof of Theorem \ref{Theorem: ballistic growth}]
Using Lemma \ref{Tnconvergence} and Proposition \ref{auxiliaryfrontconvergence}, we obtain the lower bound. For $\vartheta >0 $ let $f_\vartheta$ be as in Theorem \ref{Theorem:well-def}. For the upper bound we note that by \eqref{frontupboundinfinite}, taking $\gamma > 0$ large enough such that $c_{\gamma,\vartheta} > 0$,
\[
\begin{split}
\mathbf{P}(\mathcal{S}(\eta))&\mathbb{P}(r_t > \gamma t) \le \mathbf{P}(r_t > \gamma t) = \int_\Omega\mathbf{P}(r_t(\eta(\omega)) > \gamma t)\diff\mathbf{P}(\omega) \\
&\le \int_\Omega\e^{-c_{\gamma,\vartheta} t}f_\vartheta(\eta(\omega))\diff\mathbf{P}(\omega) = \e^{-c_{\gamma,\vartheta}t}\mathbf{E}[f_\vartheta(\eta)] .
\end{split}
\]
By assumption the initial configuration satisfies $\mathbf{E}[f_\vartheta(\eta)] < \infty$, since otherwise the whole process is not even well defined. This concludes the proof, because this bound is clearly integrable in $t$.
\end{proof}
\subsection{Proof of Theorem \ref{Theorem: polynomial decay}}\label{subsec:proof_poldec}
We finally consider the SIMI on $\Z$, where initially all vertices $x\neq 0$ are inhabited by a host and there are a random number of active parasites, distributed according to $A$ at the origin. Since we start with only finitely many active parasites, we do not need to worry about the state space topology, as a coupling with a branching random walk again shows that the number of infected sites, hence the number of active parasites, grows at most linearly in time. However, since deaths can now occur at both boundaries, which carry their individual immunities, the event of survival can, a priori, no longer be expressed independent of the paths given by $\mathbf{Y}$. In the special case that $I$ has a geometric distribution with parameter $p\in(0,1]$, this, however, is still possible, because the infection probability does not depend on the number of previous attempts, i.e., $\mathbf{P}(I=k+1\vert I>k) = p$ for all $k\ge0$. In the general case, however, $\mathbf{P}(I=k+1\vert I>k)$ does depend on $k$, which will make a successful infection dependent on the number of parasites that died at each boundary. Using Lemma \ref{Uinf}, we can, however, get rid of this dependence by considering the event that parasites born on some $x>0$ never reach the left host boundary and parasites born on $x<0$ never reach the right host boundary. We recall the following classical result on the speed of continuous-time simple symmetric random walks on $\Z$:
\theoremstyle{plain}\begin{lemma}\cite[Lemma 8]{C2007}\label{RWsublin}
Let $\{X_t:t\ge0\}$ be a continuous time simple symmetric random walk on $\Z$ with jump rate $2$, starting from $x\le -1$ on some probability space $(\Omega^\prime,\mathcal{F}^\prime,P)$. For any $c>0$, let
\[
\tau_c := \inf\{t\ge0:X_t \ge \lfloor ct\rfloor\},
\]
then
\[
P(\tau_c = \infty) \ge \begin{cases}1-\e^{(1+x)\theta_c},&x\le -2\\\exp\left(-\frac{2}{c}\right)\left(1-\e^{-\theta_c}\right),&x=-1\end{cases}
\]
where $\theta_c >0 $ is the positive solution of $c\theta-2(\cosh\theta-1) = 0$.
\end{lemma}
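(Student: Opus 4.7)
The approach I would take is standard for barrier-avoidance problems: build an exponential martingale whose drift is tuned to the barrier's slope, then read off the survival probability via optional stopping. The relevant martingale is
\[
M_t := \exp\bigl(\theta X_t - 2t(\cosh\theta -1)\bigr),
\]
which is a genuine martingale for every $\theta \in \R$ since the generator of the rate-$2$ simple symmetric walk is $Lf(x)=f(x+1)+f(x-1)-2f(x)$ and $Le^{\theta\cdot}=2(\cosh\theta-1)e^{\theta\cdot}$. Choosing $\theta=\theta_c>0$, the unique positive root of $c\theta-2(\cosh\theta-1)=0$, rewrites the martingale as $M_t=\exp\bigl(\theta_c(X_t-ct)\bigr)$, where the linear drift $ct$ now matches the slope of the moving barrier.

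Treat first the case $x\le -2$. Note that $\tau_c$ is (almost surely) a time at which $X$ jumps up by $1$ to meet the floor, so $X_{\tau_c}=\lfloor c\tau_c\rfloor\ge c\tau_c-1$, which gives $M_{\tau_c}\ge e^{-\theta_c}$ on $\{\tau_c<\infty\}$. Applying optional stopping at $\tau_c\wedge T$ yields
\[
e^{\theta_c x}=\mathbb{E}\bigl[M_{\tau_c}\mathbf{1}_{\tau_c\le T}\bigr]+\mathbb{E}\bigl[M_T\mathbf{1}_{\tau_c>T}\bigr]\ge e^{-\theta_c}\,P(\tau_c\le T)+\mathbb{E}\bigl[M_T\mathbf{1}_{\tau_c>T}\bigr].
\]
Send $T\to\infty$: on $\{\tau_c>T\}$ we have $X_T<cT$, so $M_T\le 1$, and the SLLN forces $(X_T-cT)/T\to -c<0$, hence $M_T\to 0$ a.s.\ on $\{\tau_c=\infty\}$. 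Dominated convergence kills the remainder and produces $P(\tau_c<\infty)\le e^{\theta_c(1+x)}$, which is the stated bound for $x\le -2$.

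The case $x=-1$ needs a short additional trick, because the martingale bound above is trivial when $1+x=0$. The idea is to wait out the first time interval on which the barrier has not yet caught up. On the event $J:=\{\text{no jump of }X\text{ during }[0,1/c)\}$, which has probability $e^{-2/c}$ by the Poisson clock, the walk is still at $-1$ at time $1/c$, while $\lfloor c(1/c)\rfloor=1$. Using $\lfloor c(1/c+s)\rfloor=\lfloor cs\rfloor+1$ for almost every $s\ge 0$ and the strong Markov property at time $1/c$, conditionally on $J$ the residual problem is equivalent to a fresh rate-$2$ walk starting at $-2$ needing to stay strictly below $\lfloor cs\rfloor$. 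Applying the bound already proved with $x=-2$ gives the conditional survival probability $\ge 1-e^{-\theta_c}$, and independence of $J$ from the post-$1/c$ walk gives the stated $e^{-2/c}(1-e^{-\theta_c})$.

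The main obstacle I anticipate is the passage to the limit in optional stopping: one must verify that $\mathbb{E}[M_T\mathbf{1}_{\tau_c>T}]\to 0$, which requires combining the uniform bound $M_T\le 1$ on $\{\tau_c>T\}$ with the almost sure decay $M_T\to 0$ coming from the linear drift, and is the only step where anything beyond bookkeeping is required. Everything else is the choice of $\theta_c$ (which is forced by demanding the barrier becomes driftless in the tilted picture), the elementary observation that $X_{\tau_c}-c\tau_c\in[-1,0)$, and the one-shot waiting argument that upgrades $x=-1$ to the $x=-2$ case.
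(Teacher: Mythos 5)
The paper does not prove this lemma; it merely cites \cite[Lemma 8]{C2007}. Your self-contained argument is correct and is the standard route: the exponential martingale $M_t=\exp(\theta_c(X_t-ct))$ is a true martingale for the rate-$2$ walk (since $\mathbf{E}[e^{\theta X_t}]<\infty$ for all $t$), optional stopping at $\tau_c\wedge T$ combined with the observation $X_{\tau_c}=\lfloor c\tau_c\rfloor\in(c\tau_c-1,c\tau_c]$ gives $M_{\tau_c}\ge e^{-\theta_c}$, and sending $T\to\infty$ via the dominating bound $M_T\mathbf{1}_{\tau_c>T}\le1$ plus the a.s.\ decay from the SLLN yields $P(\tau_c<\infty)\le e^{\theta_c(1+x)}$, handling $x\le-2$. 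The $x=-1$ case is correctly upgraded by waiting out $[0,1/c)$: on the event of no jump in that window (probability $e^{-2/c}$, independent of the post-$1/c$ evolution by the Markov property at the deterministic time $1/c$), the barrier identity $\lfloor 1+cs\rfloor=1+\lfloor cs\rfloor$ reduces the residual problem to a fresh walk at distance $2$ below the barrier, to which the already-established bound applies. One tiny point: the floor identity holds for all $s\ge0$, not merely almost every $s$, so your caveat there is unnecessary; and since $1/c$ is deterministic, the ordinary Markov property suffices rather than the strong one. Neither affects the validity.
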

\theoremstyle{definition}\begin{definition}
 For $j\ge1$ we set
 \[
 K_{n,j} := \inf\left\{k\ge j:\sum_{\ell = 1}^kA_{n-\ell} \ge \beta_A k,\sum_{\ell = 1}^kI_{n+1-\ell} \le \beta_I k\right\}.
\]
Analogously we define
\[
\mathcal{W}_{n,j} := \{(x,i)\in\Z\times \N:n-K_{n,j}\le x\le n-1, 1\le i\le A_{x}\}
\]
and
\[
\nu_{n,j} := \inf\left\{t\ge0:\sum_{(x,i)\in \mathcal{W}_{n,j}}\mathds{1}_{\tau^{x,i}_{n-x}\le t} \ge\beta_I K_{n,j}\right\}.
\]
Finally we define for any $l\in\Z,n\ge1$ the time
\[
\nu_n(l) := \min\{\nu_{l+n,1},\dots,\nu_{l+n,n\wedge k_0}\}.
\]
\end{definition}
\begin{remark}
 Analogously to Lemma \ref{Couplinglowbound}, we obtain that
 \[
 \mathds{1}_{K_{n,j}\le n}(\rho_n-\rho_{n-1}) \le \nu_{n,j} 
 \]
 for all $n\ge1, 1\le j \le n$. Clearly $K_{n,j} \le K_{n,j+1}$ and hence for any $l\in\Z,n\ge1$ also
 \[
\mathds{1}_{K_{l+n,n\wedge k_0}\le n}(\rho_{l+n}-\rho_{l+n-1})\le \nu_n(l). 
 \] Furthermore, the sequence $(\nu_n(l))_{n\ge k_0}$ is $\phi$-mixing and has finite $q$-th moments for any $q < \frac{\alpha + 1}{2}$ which follows analogously as Lemma \ref{NuMoments} and Lemma \ref{conditinallyphimixing}.
\end{remark}
\theoremstyle{plain}\begin{lemma}\label{Uinf}
Assume $\alpha > 3$. Then for any $\gamma < \frac{1}{\mathbf{E}[\nu_{1,k_0}]}$ we have
\[
 \mathbf{P}\left(\bigcap_{k=1}^\infty\{K_{k,k\wedge k_0}\le k\},\bigcap_{n=1}^\infty\left\{\sum_{k=1}^n\nu_k(0) \le \frac{n}{\gamma}\right\}\right) > 0.
\]
\end{lemma}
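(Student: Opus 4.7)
The plan is to combine a classical random-walk fluctuation estimate realizing $E_1:=\bigcap_{k\ge 1}\{K_{k,k\wedge k_0}\le k\}$ with a $\phi$-mixing tail bound on the partial sums of $(\nu_{n,k_0})$ for the second event, exploiting the strict gap $\gamma<1/\mathbf{E}[\nu_{1,k_0}]$ as a source of linear slack that absorbs the finitely many initial terms.

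First I would observe that $E_1$ contains the positive-probability event
\[
\tilde{E}_1:=\bigcap_{k\ge 1}\Bigl\{\sum_{j=0}^{k-1}A_j\ge \beta_A k,\ \sum_{j=1}^k I_j\le \beta_I k\Bigr\},
\]
because in the definition of $K_{k,k\wedge k_0}$ the index $\ell=k$ is always admissible and its validity is precisely what $\tilde{E}_1$ demands. Since $\mu_A>\beta_A$ and $\mu_I<\beta_I$, the two independent random walks with steps $A_j-\beta_A$ and $\beta_I-I_j$ have strictly positive drift and hence stay non-negative forever with positive probability by classical fluctuation theory (compare the proof of Lemma \ref{lem:welldef}); independence of $\mathbf{A}$ and $\mathbf{I}$ then gives $\mathbf{P}(\tilde{E}_1)>0$.

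Second, I would use the $\phi$-mixing structure of $(\nu_{n,k_0})_{n\ge k_0}$ from the remark preceding the lemma. Under $\alpha>3$ one can pick $q\in(2,(\alpha+1)/2)$, and the argument of Lemmas \ref{L2bound}--\ref{Qconvspeed} adapts verbatim to yield
\[
\mathbf{E}\Bigl[\max_{1\le i\le n}\Bigl|\sum_{k=k_0}^{k_0+i-1}\bigl(\nu_{k,k_0}-\mathbf{E}[\nu_{1,k_0}]\bigr)\Bigr|^q\Bigr]\le Cn^{q/2}.
\]
Fix $\gamma'\in(\gamma,\,1/\mathbf{E}[\nu_{1,k_0}])$. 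A dyadic decomposition of $\{n\ge N\}$ combined with Markov's inequality yields
\[
\mathbf{P}\Bigl(\exists\,n\ge N:\ \sum_{k=k_0}^{k_0+n-1}\nu_{k,k_0}>\tfrac{n}{\gamma'}\Bigr)\le CN^{-q/2},
\]
and choosing $N$ large forces the complementary good-tail event $B_N$ to satisfy $\mathbf{P}(\tilde{E}_1\cap B_N)>0$.

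Finally, since $\nu_n(0)\le \nu_{n,k_0}$ for $n\ge k_0$, on $\tilde{E}_1\cap B_N$ one obtains for every $n\ge N+k_0-1$ the estimate
\[
\sum_{k=1}^n \nu_k(0)\le D+\frac{n-k_0+1}{\gamma'},\qquad D:=\sum_{k=1}^{k_0-1}\nu_k(0),
\]
and the linear slack $n(1/\gamma-1/\gamma')$ eventually dominates the finite random quantity $D$, so $\sum_{k=1}^n\nu_k(0)\le n/\gamma$ holds for all $n$ exceeding some random threshold $M=M(D,N)$. To upgrade this to every $n\ge 1$, I would further intersect with the event that each of $\nu_1(0),\dots,\nu_M(0)$ is sufficiently small to enforce the partial-sum bound also on the initial block. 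The main obstacle is verifying positive probability of this full joint restriction: the initial $\nu_k(0)$'s depend on finitely many entries of $\mathbf{A},\mathbf{I}$ (via $K_{k,\cdot}$) and on finitely many random walks in $\mathbf{Y}$. Since $\mathbf{Y}$ is independent of $\mathbf{A},\mathbf{I}$, $\tilde{E}_1$ leaves the early $A_j,I_j$ free to take favorable minimal values, and $B_N$ only constrains $\mathbf{Y}$ through large-index walks, a standard conditioning argument allows the initial random walks to be chosen so that each $\nu_k(0)$ is small with positive conditional probability, yielding $\mathbf{P}(E_1\cap E_2)>0$.
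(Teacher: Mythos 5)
Your proposal takes a genuinely different route from the paper, and the first three steps are sound: $\tilde{E}_1\subset E_1$ with $\mathbf{P}(\tilde{E}_1)>0$ is correct (same fluctuation-theory ingredient as the paper), and the $\phi$-mixing tail estimate giving $\mathbf{P}(\tilde{E}_1\cap B_N)>0$ for $N$ large is valid because $\mathbf{P}(B_N^c)\to 0$ under $\mathbf{P}$. However, the final step contains a genuine gap.

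The claim that ``$B_N$ only constrains $\mathbf{Y}$ through large-index walks'' is false. The quantity $\nu_{k,k_0}$, for $k$ as small as $k_0$, depends on $\{Y^{x,i}: k-K_{k,k_0}\le x\le k-1\}$, and on $\tilde{E}_1$ one only knows $K_{k,k_0}\le k$, so those indices can reach all the way down to $0$. Thus the event $B_N$ does in general constrain exactly the same small-index random walks that you now want to force to be fast in order to make $\nu_1(0),\dots,\nu_M(0)$ small, and the ``standard conditioning argument'' you invoke does not go through as stated. There is also a circularity you wave away: the threshold $M=M(D,N)$ is random through $D=\sum_{k<k_0}\nu_k(0)$, and the range $k_0\le n<N+k_0-1$, where $B_N$ is silent, is not addressed. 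A repair along your lines would require an explicit monotonicity argument showing that forcing the small-index walks to move right faster can only decrease each $\nu_{k,k_0}$ and hence only enlarge $B_N$, so the extra conditioning is compatible; this is plausible but nontrivial and you do not carry it out. The paper sidesteps the entire dependency problem by a different decomposition: it fixes a deterministic block of the first $n$ sites via an event $G_{n,\varepsilon}$ (minimal immunities, near-minimal offspring, and all walks $Y^{x,i}$, $0\le x<n$, shooting right fast), and chooses the complementary event $H_{n,\varepsilon}$ so that, by building the constraint $K_{n+k,k\wedge k_0}\le k$ into its definition, $H_{n,\varepsilon}$ is measurable with respect to $\sigma(A_x,I_{x+1},Y^{x,i}:x\ge n)$ and is therefore \emph{independent} of $G_{n,\varepsilon}$. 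Positive probability of the intersection then follows immediately from positivity of each factor, and the time-$n\varepsilon$ head start from $G_{n,\varepsilon}$ provides exactly the linear slack you were trying to extract from $\gamma<\gamma'$; the paper additionally handles the first $k_0$ increments after $n$ explicitly via the $\nu_{n+j,j\wedge k_0}$ with $j<k_0$, a point your outline glosses over.
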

\begin{proof}
We recall
 \[
 m(X,k)= \inf\{j\ge k:\mathbf{P}(X= j)>0\}
 \]
 for any random variable $X$ on $\mathbf{\Omega}$. We let $\mu := \mathbb{E}[\nu_{1,k_0}]$ and $\gamma < \frac{1}{\mu}$. For $n\in\N,\varepsilon\in (0,\frac{1}{\gamma})$ we define $G_{n,\varepsilon}$ to be the event that \[I_1=\dots=I_n=m(I,1), A_0=\dots=A_{n-1}=m(A,\beta_A),\] and all random walks $\{Y^{x,i}:0\le x< n,1\le i \le m(A,\beta_A)\}$ take $n+k_0$ steps to the right before time $\varepsilon$. By definition of the event, we have
\[
G_{n,\varepsilon}\subset\bigcap_{k=1}^n\{K_{k,k_0} = k\wedge k_0,\nu_k(0)< \varepsilon\}
\]and clearly $\mathbf{P}(G_{n,\varepsilon})>0$ for all $n\in\N,\varepsilon \in(0,\frac{1}{\gamma})$. Next let $H_{n,\varepsilon}$ be the event
\[
\bigcap_{k=1}^\infty\left\{K_{n+k,k\wedge k_0} \le k, \sum_{j=1}^k\nu_{n+j,j\wedge k_0} < \frac{n+k}{\gamma} - n\varepsilon\right\},
\]
which is independent of $G_{n,\varepsilon}$. By the definitions of $G_{n,\varepsilon}$ and $H_{n,\varepsilon}$ we have
\[
\bigcap_{k=1}^\infty\{K_{k,k\wedge k_0}\le k\}\cap\bigcap_{n=1}^\infty\left\{\sum_{k=1}^n\nu_k(0) \le \frac{n}{\gamma}\right\} \supset G_{n,\varepsilon}\cap H_{n,\varepsilon},
\]
hence, using the independence of $G_{n,\varepsilon}$ and $H_{n,\varepsilon}$, we only need to show that $\mathbf{P}(H_{n,\varepsilon})>0$ for some $n\in\N,\varepsilon \in(0,\frac{1}{\gamma})$. Observe that
\begin{equation}\label{Hnpos}
\begin{split}
 \mathbf{P}(H_{n,\varepsilon}^C) &\le \mathbf{P}\left(\bigcup_{k=1}^\infty\{K_{n+k,k\wedge k_0}> k\}\right)+\mathbf{P}\left(\bigcup_{k=1}^{k_0}\left\{\sum_{j=1}^k\nu_{n+j,j}\ge \delta\left(\frac{n+k}{\gamma}-n\varepsilon\right)\right\}\right) \\
 &\quad+\mathbf{P}\left(\bigcup_{k=k_0+1}^\infty\left\{\sum_{j=k_0+1}^k\nu_{n+j,k_0}\ge (1-\delta)\left(n\left(\frac{1}{\gamma}-\varepsilon\right)+\frac{k-k_0}{\gamma}\right)\right\}\right)
\end{split}
\end{equation}
for any $\delta\in(0,1)$ such that $\frac{1-\delta}{\gamma}>\mu$.\\ The first term does not depend on $n$, because the random variables $(K_{n,j})_{n\in\Z}$ are identically distributed, and the complement of the event considered in the first term is contained in the event
\[
\bigcap_{k=1}^\infty\left\{\sum_{j=1}^k I_{n+j} \le \beta_I k,\sum_{j=1}^k A_{n+j -1}\ge \beta_A k\right\},
\]
which has positive probability. In particular, the first term of \eqref{Hnpos} is smaller than $1$.\\
For the second term of \eqref{Hnpos} we observe that
\[
\begin{split}
\mathbf{P}&\left(\bigcup_{k=1}^{k_0}\left\{\sum_{j=1}^k\nu_{n+j,j}\ge \delta\left(\frac{n+k}{\gamma}-n\varepsilon\right)\right\}\right) = \mathbf{P}\left(\bigcup_{k=1}^{k_0}\left\{\sum_{j=1}^k\nu_{j,j}\ge \delta\left(\frac{n+k}{\gamma}-n\varepsilon\right)\right\}\right),
\end{split}
\]
which tends to $0$ for $n\rightarrow\infty$, because $\nu_{1,1},\dots,\nu_{k_0,k_0} < \infty$ almost surely by the same proof as Lemma \ref{NuMoments}. To estimate the last term, we note that by an analogous statement as claimed in Lemma \ref{Qconvspeed}, for any $2<q<\frac{\alpha+1}{2}$ we have
\[
\begin{split}
 \mathbf{P}&\left(\sum_{j=k_0+1}^k\nu_{n+j,k_0}\ge (1-\delta)\left(n\left(\frac{1}{\gamma}-\varepsilon\right)+\frac{k-k_0}{\gamma}\right)\right) \\
 &=\mathbf{P}\left(\sum_{j=1}^{k-k_0}\nu_{j,k_0}-\mu\ge (1-\delta)\left(n\left(\frac{1}{\gamma}-\varepsilon\right)+\frac{k-k_0}{\gamma}\right)-(k-k_0)\mu]\right) \\
 &\le C(k-k_0)^{\frac{q}{2}}\left((k-k_0)\left(\frac{1-\delta}{\gamma}-\mu\right)+n(1-\delta)\left(\frac{1}{\gamma}-\varepsilon\right)\right)^{-q}.
\end{split}
\]
Now for $\vartheta\in\left(\frac{q+2}{2q},1\right)$ Young's inequality yields
\[
\begin{split}
 &(k-k_0)\left(\frac{1-\delta}{\gamma}-\mu\right)+n(1-\delta)\left(\frac{1}{\gamma}-\varepsilon\right)\\ 
 = &\,\vartheta\frac{k-k_0}{\vartheta}\left(\frac{1-\delta}{\gamma}-\mu\right)+(1-\vartheta)\frac{n(1-\delta)}{1-\vartheta}\left(\frac{1}{\gamma}-\varepsilon\right)\\
 \ge &\left[\frac{k-k_0}{\vartheta}\left(\frac{1-\delta}{\gamma}-\mu]\right)\right]^{\vartheta}\left[\frac{n(1-\delta)}{1-\vartheta}\left(\frac{1}{\gamma}-\varepsilon\right)\right]^{1-\vartheta}
\end{split}
\]
and thus we can estimate the third term in \eqref{Hnpos} using the union bound with
\[
\begin{split}
 \sum_{k=k_0+1}^\infty C(k-k_0)^{\frac{q}{2}-\vartheta q}\left[\frac{1}{\vartheta}\left(\frac{1-\delta}{\gamma}-\mu]\right)\right]^{-q \vartheta}\left[\frac{n(1-\delta)}{1-\vartheta}\left(\frac{1}{\gamma}-\varepsilon\right)\right]^{q(\vartheta-1)}.
\end{split}
\]
By definition, $\frac{q}{2}-\vartheta q <-1$, hence the sum is finite, and clearly $n^{q(\vartheta-1)}$ goes to zero as $n$ tends to $\infty$. Hence for large enough $n$ the sum in \eqref{Hnpos} is less than $1$, which yields that $\mathbf{P}(H_{n,\varepsilon})>0$ and finishes the proof.
\end{proof}

\begin{proof}[Proof of Theorem \ref{Theorem: polynomial decay}]
 We will construct a subset $\mathcal{V}\subset\mathcal{S}$ of the event of survival, which has positive probability. On $\mathcal{V}$ we will have $A_0 = 2,I_1=I_{-1} = 1$, as well as linearly moving left and right fronts only using parasites born on the right or left side, respectively, while parasites from the other side do not catch up to the front. For simplicity we assumed that $\mathbf{P}(A=2)>0,\mathbf{P}(I=1) > 0$. In the following it will be clear how to adapt the proof to other cases. \\ Analogously to $(K_{n,j})_{j\ge1,n\in\Z},
 (\nu_{n,j})_{j\ge1,n\in\Z}$, one constructs $(\hat{K}_{n,j})_{j\ge1,n\in\Z}$, $(\hat{\nu}_{n,j})_{j\ge1,n\in\Z}$ on $\mathbf{\Omega}$ by reflecting $\mathbf{I},\mathbf{A},\mathbf{Y}$ around the origin, i.e.,
 \[
 \hat{K}_{n,j} = \inf\left\{k\ge j:\sum_{i=n+1}^{n+k} A_{i}\ge \beta_Ak,\sum_{i=n}^{n+k-1} I_i \le \beta_I k\right\},
 \] $\hat{\nu}_{n,j}$ is the first time more than $\beta_I \hat{K}_{n,j}$ parasites born between $n+1$ and $n+\hat{K}_{n,j}$ reached site $n$. Finally setting $\hat{\nu}_n(l) := \min\{\hat{\nu}_{l-n,1},\dots,\hat{\nu}_{l-n,n\wedge k_0}\}$ for $n\ge 1$ and $l\in\Z$, we observe by Lemma \ref{Uinf} that for $\gamma < \frac{1}{\mathbf{E}[\nu_{1,k_0}]}$
 \[
 \begin{split}
 \mathbf{P}&\left(\bigcap_{k=1}^\infty\{K_{k,k\wedge k_0}\le k\},\bigcap_{n=1}^\infty\left\{\sum_{k=1}^n\nu_k(0) \le \frac{n}{\gamma}\right\}\right) \\&= \mathbf{P}\left(\bigcap_{k=1}^\infty\{\hat{K}_{k,k\wedge k_0}\le k\},\bigcap_{n=1}^\infty\left\{\sum_{k=1}^n\hat{\nu}_k(0) \le \frac{n}{\gamma}\right\}\right)
 \end{split}
 \]
 is positive. We define the events
 \[
 \begin{split}
 B_\text{right}(\gamma) &:= \bigcap_{x>0}\bigcap_{i=1}^{A_x}\bigcap_{t\ge0}\left\{x+Y^{x,i}_t > -\lfloor \gamma t\rfloor\right\}, ~~
 B_\text{left}(\gamma) := \bigcap_{x<0}\bigcap_{i=1}^{A_x}\bigcap_{t\ge0}\left\{x+Y^{x,i}_t < \lfloor \gamma t\rfloor)\right\}.
 \end{split}
 \]
 Applying Lemma \ref{RWsublin} and due to the independence of $\mathbf{Y}$ and $\mathbf{A}$, we obtain that $B_\text{right}(\gamma)$ is independent of $B_\text{left}(\gamma)$ and
 \[
 \begin{split}
 \mathbf{P}(B_\text{left}(\gamma)) = \mathbf{P}(B_\text{right}(\gamma)) \ge \mathbf{E}\left[\left(\exp\left(-\frac{2}{\gamma}\right)\left(1-\e^{-\theta_\gamma}\right)\right)^{A_1}\right]\prod_{x=2}^\infty\mathbf{E}\left[\left(\left(1-\e^{(1-x)\theta_\gamma}\right)\right)^{A_x}\right].
 \end{split}
 \]
Taylor expanding
 \[
 \mathbf{E}[(1-z)^A] = 1 - \mathbf{E}[A]z +h(z)z
 \]
 for $z\in(0,1)$ and some $\vert h(z)\vert = o(1)$ as $z\to0$, and that for some $C > 1$ we have \[\log(1-z) \ge -Cz~\text{ for all }~0\le z\le\max_{x\ge 2}\e^{-(1-x)\theta_\gamma}\left(\mathbf{E}[A]-h\left(\e^{(1-x)\theta_\gamma}\right)\right)\] yields 
 \[
 \begin{split}
 \prod_{x=2}^\infty\mathbf{E}\left[\left(1-\e^{(1-x)\theta_\gamma}\right)^{A_x}\right] &= \exp\left(\sum_{x=2}^\infty\log\left(1-\mathbf{E}[A]\e^{(1-x)\theta_\gamma}+h\left(\e^{(1-x)\theta_\gamma}\right)\e^{(1-x)\theta_\gamma}\right)\right)\\
 &\ge \exp\left(-\sum_{x=2}^\infty C\e^{(1-x)\theta_\gamma}\left(\mathbf{E}[A]-h\left(\e^{(1-x)\theta_\gamma}\right)\right)\right) > 0.
 \end{split}
 \]
In conclusion, this yields $\mathbf{P}(B_\text{left}(\gamma)) = \mathbf{P}(B_\text{left}(\gamma)) > 0$.
Next we define
 \[
 C_\text{right}(\gamma) := \bigcap_{k=1}^\infty\{K_{1+k,k\wedge k_0}\le k\}\cap\bigcap_{n=1}^\infty\left\{\sum_{k=1}^n\nu_k(1) \le \frac{n}{\gamma}\right\}
 \]
 \[
 C_\text{left}(\gamma) := \bigcap_{k=1}^\infty\{\hat{K}_{-1-k,k\wedge k_0}\le k\}\cap\bigcap_{n=1}^\infty\left\{\sum_{k=1}^n\hat{\nu}_k(-1) \le \frac{n}{\gamma}\right\}.
 \]
To show that $\mathbf{P}((B_\text{right}(\gamma),C_\text{right}(\gamma)) > 0$ we will have to show an analog result as in Lemma \ref{Uinf} but under the measure $\mathbf{P}(\cdot\vert B_\text{right}(\gamma))$. \\
 First we note that $\{Y^{x,i}:x>0,i\in\N\}$ is still an independent collection, just not an identically distributed one, under the measure $\mathbf{P}(\cdot\vert B_\text{right}(\gamma))$. Because clearly the event $B_\text{right}(\gamma)$ gives the random walks a drift to the right, for any $x>0,i\in\N$ and $n\in\N,t\ge0$ we have
 \[
 \mathbf{P}\left(\left.\sup_{0\le s\le t}Y^{x,i}_s < n\right\vert B_\text{right}(\gamma)\right) \le \mathbf{P}\left(\sup_{0\le s\le t}Y^{x,i}_s < n\right).
 \]
 Reasoning just as in Lemma \ref{NuMoments}, we obtain that $\nu_1(1),\dots,\nu_{k_0-1}(1) <\infty$ a.s.~under $\mathbf{P}(\cdot\vert B_\text{right}(\gamma))$ and $\nu_{k_0}(1),\nu_{k_0+1}(1),\dots\in L^q(\Omega,\mathcal{F},\mathbf{P}(\cdot\vert B_\text{right}(\gamma)))$ for any $2 \le q < \frac{\alpha+1}{2}$.
Moreover, the arguments in Lemma \ref{conditinallyphimixing} only depend on $\mathbf{A}$ and $\mathbf{I}$, thus $\nu_1(1),\nu_2(1),\dots$ is also $\phi$-mixing under the measure $\mathbf{P}(\cdot\vert B_\text{right}(\gamma))$. Hence, one obtains analogously to the proof of Lemma \ref{Uinf} that
 \[
 \mathbf{P}(C_\text{right}(\gamma)\vert B_\text{right}(\gamma))>0. 
 \]
In particular, this means that for any $0<\gamma_1<\gamma_2<\frac{1}{\mathbf{E}[\nu_{1,k_0}]}$ and $\varepsilon>0$ the event $\mathcal{V}$, given by the intersection of
 \begin{center}
 $ \{A_0 = 2\}$ and $\{I_1 = I_{-1}=1\}$, \\
 Parasite $(0,1)$ jumps for the first time at time $\tau^{0,1}_1\in(1,1+\varepsilon)$ and jumps onto site $1$,\\
 Parasite $(0,2)$ jumps for the first time at time $\tau^{0,2}_{-2}\in(1,1+\varepsilon)$ and jumps onto site $-1$, \\
 $B_\text{right}(\gamma_1)\cap C_\text{right}(\gamma_2)\cap B_\text{left}(\gamma_1)\cap C_\text{left}(\gamma_2)$
 \end{center}
 has positive probability. Choosing now 
 \[
 \gamma_2\in\left(0,\frac{1}{\mathbf{E}[\nu_{1,k_0}]}\right),~~\gamma_1\in\left(0,\gamma_2\right),~~\text{ and }~~\varepsilon\in\left(0,\frac{1}{\gamma_1}-\frac{1}{\gamma_2}\right) 
 \]
 we have that on the event $\mathcal{V}$ the right and left host boundaries $(r_t)_{t\ge0},(l_t)_{t\ge0}$ satisfy
\begin{align*}
 r_t \ge \begin{cases}
 0, & t<\tau^{0,1}_1 \\
 1+\lfloor\gamma_2(t-\tau^{0,1}_1)\rfloor, & t\ge\tau^{0,1}_1
 \end{cases}~\text{ and }~l_t \le \begin{cases}
 0, & t<\tau^{0,2}_{-1} \\
 -1-\lfloor\gamma_2(t-\tau^{0,2}_{-1})\rfloor, & t\ge\tau^{0,2}_{-1}.
 \end{cases}
\end{align*}
Recalling the sets $\mathcal{L}_t$ containing the labels of living parasites at time $t$, $\mathcal{G}_t$ containing the labels of parasites that died until time $t$, and $F_t(x,i)\in\Z$ as the position of the parasite with label $(x,i)$ at time $t$, on the event $\mathcal{V}$ it also holds that
 \[
 \{F_t(x,i):(x,i)\in\mathcal{L}_t\cup\mathcal{G}_t, x > 0\} \begin{cases} = \emptyset, &t < \tau_1^{0,1}\\ \subset \Z_{>-\lfloor\gamma_1(t-\tau^{0,1}_1)\rfloor}, &t\ge\tau^{0,1}_1\end{cases}
 \]
 and
 \[
\{F_t(x,i):(x,i)\in\mathcal{L}_t\cup\mathcal{G}_t,x<0\} \begin{cases}=\emptyset,&t < \tau_{-1}^{0,2}\\ \subset\Z_{<\lfloor\gamma_1(t-\tau^{0,2}_{-1})\rfloor},&t\ge\tau^{0,2}_{-1}\end{cases}.
 \]
 The choice of parameters gives us now that
 \[
 \begin{split}
 \forall t\ge \tau^{0,1}_1: \min_{\substack{(x,i)\in\mathcal{L}_t\cup\mathcal{G}_t,\\x > 0}} F_t(x,i) & > l_t\\
 \forall t\ge \tau^{0,2}_{-1}: \max_{\substack{(x,i)\in\mathcal{L}_t\cup\mathcal{G}_t,\\x < 0}} F_t(x,i) & < r_t.\\
 \end{split}
 \]
 In words, any parasite born on some $x>0$ moves slower than with linear speed $\gamma_1$ to the left and thus never catches up to the left front of hosts, which moves at least at linear speed $\gamma_2$, and vice versa for parasites born on $x<0$. Hence, on the given event, the actual process with both directions filled with hosts moves just as two copies of the process with just one side filled, conditioned on the event $B_\text{right}(\gamma_1)\cap C_\text{right}(\gamma_2)$. As by definition of the event $C_\text{right}(\gamma_2)$ these processes survive, we obtain the survival of the process with initially all sites $x\neq 0$ inhabited by a host. 
 \end{proof}

\section{Construction of the spatial infection model with host immunity}\label{Construction}

In this section we formally construct the SIMI on $\Z$ with initial configurations as above as a strong Markov process on some probability space. We construct the process in two different ways. The first is to assign each parasite a label and sample its entire path after it enters the system by picking from a collection of i.i.d.~simple symmetric random walks $(Y^{x,i})_{x\in\Z,i\in\N}$. Of course the sampled path will become a virtual path once the parasite has died; nonetheless, it is still sampled entirely, as this is needed to construct the auxiliary jump times. We will call the so-constructed process the \textit{tagged} system. \\
The second construction is to assign each site $x$ in $\Z$ with a collection of Poisson point processes $\{P_{x,n}^\rightarrow, P_{x,n}^\leftarrow: n\in\N\}$, which will be used to determine the jump times and directions of parasites leaving the site $x$, where only the Poisson point processes with index $n\le \eta_{t-}(x)$ are allowed to be used. In this construction we won't be tracking each parasite individually, but only the amounts of parasites on each site. We call the so-constructed process the \textit{untagged} system. The natural coupling for different initial configurations in this system will allow us to show that the front is almost surely monotone in the initial configuration, whereas for the tagged system, as Example \ref{unmonontonecoupl} shows, this is not the case. Using this monotone coupling, we can then follow the lines of \cite{C2009} to show the Feller property of the constructed Markov process.
\subsection{Tagged system} 
We first construct the process in a coupled way for any finite starting configuration and then define the process for arbitrary starting configurations as the almost sure limit of the processes started in certain approximating, finite initial configurations. We will carry out this construction in detail. A similar construction is used in \cite{C2009, C2007}, but they rely on an almost sure monotonicity argument that, as we will see in Example \ref{unmonontonecoupl}, does not hold in our case. However, we follow the same approach of first defining the process for initial configurations with finitely many parasites and then extending it to infinitely many parasites by performing a limit in the state space. \\
In Definition \ref{Statespace} we give the state space in which the limit will be defined, and in Proposition \ref{polishspace} we show that this space is Polish. In Proposition \ref{unifconv} we then show the claimed convergence, and in Theorem \ref{StrongMarkov} we show the strong Markov property of the process.\\
We consider a probability space $\mathbf{\Omega}^\prime = (\Omega^\prime,\mathcal{F}^\prime,\mathbf{P}^\prime)$ on which the following independent random variables are defined.\\
There is a simple symmetric random walk starting in $0$ and jumping at rate $2$ called $Y = (Y_t)_{t\ge0}$ and an independent collection
\[
\mathbf{Y} := \{Y^{x,i}:x\in\Z,i\in\N\}
\]
of simple symmetric random walks starting in $0$ and jumping at rate $2$.\\
There is an $\N_0$-valued random variable $A\in L^1(\mathbf{\Omega}^\prime)$ and a collection
\[
\mathbf{A} := \{A_k:k\in\Z\}
\]
of i.i.d.~$\N_0$-valued variables, distributed as $A$.\\
There is an $\N$-valued random variable $I\in L^1(\mathbf{\Omega}^\prime)$ and a collection
\[
\mathbf{I} := \{I_k:k\in\Z\}
\]
of i.i.d.~$\N$-valued variables, distributed as $I$. \\
We give the construction of the process for a one-sided host population. The simpler case of a two-sided host population is treated in Remark \ref{rem:twoside}.
To begin the construction, we first specify the state space of the process. Because of the geometry of $\Z$ and since parasites jump only to nearest neighbors, the set of sites occupied by hosts and the set of sites free of hosts is separated by a single site $r\in\Z$, which we call the (right) front in the following. In the definition of the approximating jump times $\{\nu_n:n\in\Z\}$, we used the virtual paths of ghost parasites after their birth. For this reason, we include those in the state space and make the following definition.
\theoremstyle{definition}
\theoremstyle{definition}\begin{definition} \label{Statespace}
A parasite configuration is a $5$-tuple $w=(r,\mathcal{L},\mathcal{G},F,\iota)$ with 
\[r\in\Z,\mathcal{L},\mathcal{G}\subset\Z\times\N,F:\mathcal{L}\cup\mathcal{G}\to\Z,\iota\in\N\]
such that 
\begin{itemize}
 \item parasites are either dead or alive: $\mathcal{L}\cap\mathcal{G} = \emptyset$,
 \item hosts to the right of $r$ are not yet infected: \[\sup\left\{x\in\Z\vert\exists i\in\N:(x,i)\in\mathcal{L}\cup\mathcal{G}\right\} \le r,\]
 \item living parasites cannot sit on top of hosts: $\sup\{F(x,i):(x,i)\in\mathcal{L}\} \le r$,
 \item and the technical conditions that for some $\theta>0$
\begin{equation}\label{thetasum}
\sum_{(x,i)\in\mathcal{L}}\exp(\theta F(x,i))< \infty ~~~\text{ and }~~~\vert\mathcal{G}\vert<\infty.
\end{equation}
\end{itemize}
For $\theta>0$ we denote by $\mathbb{L}_\theta$ the space of all parasite configurations that satisfy \eqref{thetasum} and define
\[
\begin{split}
\xi(w,w^\prime,x,i) &:= \left\vert\mathds{1}_{(x,i)\in\mathcal{L}}\e^{\theta (F(x,i)-r)}-\mathds{1}_{(x,i)\in\mathcal{L}^\prime}\e^{\theta (F^\prime(x,i)-r^\prime)}\right\vert ,\\
\xi^\prime(w,w^\prime,x,i) &:= \vert\mathds{1}_{(x,i)\in\mathcal{G}}F(x,i)-\mathds{1}_{(x,i)\in\mathcal{G}^\prime}F^\prime(x,i)\vert
\end{split}
\] 
for $w=(r,\mathcal{L},\mathcal{G},F,\iota),w^\prime\in\mathbb{L}_\theta,(x,i)\in\Z\times\N$, in order to define the metric $d_\theta$ on $\mathbb{L}_\theta$ by
\begin{align*}
d_\theta(w,w^\prime) := \vert r-r^\prime\vert +\vert \iota-\iota^\prime\vert&+ \sum_{(x,i)\in\Z\times\N}\xi(w,w^\prime,x,i) +\xi^\prime(w,w^\prime,x,i) .
\end{align*}
\end{definition}
\begin{remark}
For a configuration $w=(r,\mathcal{L},\mathcal{G},F,\iota)$, the site $r$ will be the rightmost site without host cells. The set $\mathcal{L}$, called living parasites, will be the birth labels of parasites that are still alive, and $\mathcal{G}$, called ghost parasites, will be the birth labels of parasites that were alive at some time but already died. $F$ assigns each parasite its current position, and $\iota$ is the immunity of site $r+1$, or equivalently the number of attempts needed before the next successful infection.
\end{remark}
\theoremstyle{plain}
As we will see later, the metric $d_\theta$ is sufficient to define the process for initial configurations with infinitely many parasites, by approximation with finitely many parasites. But first we need to make sure that the state space above has at least some good properties.
\begin{proposition}\label{polishspace}
Let $\theta >0$, then $(\mathbb{L}_\theta,d_\theta)$ defined above is a complete and separable metric space. Also, $(\mathbb{L}_\theta,d_\theta)$ is not locally compact.
\end{proposition}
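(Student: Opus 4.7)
The plan is to verify in turn that $d_\theta$ is indeed a metric, that $(\mathbb{L}_\theta, d_\theta)$ is separable, complete, and finally fails to be locally compact. Checking the metric axioms will be routine: nonnegativity and symmetry are immediate from the definition, and the triangle inequality will follow term by term because each of the four summands in the definition of $d_\theta$ is either $\vert r - r'\vert$, $\vert\iota - \iota'\vert$, or a pointwise absolute difference, with both series absolutely convergent by \eqref{thetasum}. For separability I will exhibit as the countable dense set those configurations with $r\in\Z$, $\iota\in\N$, and finitely many parasites placed at integer positions: given any $w\in\mathbb{L}_\theta$, truncating $\mathcal{L}$ to the labels with $F(x,i)\ge -N$ leaves a tail $\sum_{F(x,i)<-N}\e^{\theta(F(x,i)-r)}$ that vanishes as $N\to\infty$ by \eqref{thetasum}, while $\mathcal{G}$ is already finite.

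The main work is completeness. Given a Cauchy sequence $(w_n)_n$, the integer-valued sequences $r_n$ and $\iota_n$ must be eventually constant, say at $r_\infty,\iota_\infty$. For every label $(x,i)$ the Cauchy property will force $a_n := \mathds{1}_{(x,i)\in\mathcal{L}_n}\e^{\theta F_n(x,i)}$ to be Cauchy in $\R$ with values in the closed set $\{0\}\cup\{\e^{\theta k}:k\le r_\infty\}$, whose only accumulation point is $0$; hence $a_n$ must be either eventually $0$ or eventually equal to a fixed $\e^{\theta k}$, which naturally defines $\mathcal{L}_\infty$ together with $F_\infty$ on it. Similarly $\mathds{1}_{(x,i)\in\mathcal{G}_n}F_n(x,i)$ is an integer-valued Cauchy sequence, hence eventually constant, and defines $\mathcal{G}_\infty$ together with $F_\infty$ on it. The hard part will be verifying that the candidate $w_\infty$ still lies in $\mathbb{L}_\theta$: the bound $\sum_{(x,i)}\e^{\theta(F_\infty(x,i)-r_\infty)}<\infty$ I plan to derive via Fatou's lemma from a uniform-in-$n$ bound on the corresponding series implied by $d_\theta(w_N,w_n)<1$ for $n\ge N$, and finiteness of $\vert\mathcal{G}_\infty\vert$ from $\vert\mathcal{G}_N\vert<\infty$ together with the observation that for $n\ge N$ large any label $(x,i)\in\mathcal{G}_n\setminus\mathcal{G}_N$ contributes $\vert F_n(x,i)\vert$ to the ghost sum, so the Cauchy tail bound forces only finitely many such new contributions to be nonzero. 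Once $w_\infty\in\mathbb{L}_\theta$ is secured, the convergence $d_\theta(w_n,w_\infty)\to 0$ will follow by dominated convergence applied to both series.

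For the non-local-compactness claim I will exhibit, at every point, an infinite family of points inside an arbitrarily small ball but mutually at a fixed positive distance. Concretely, fix any $w=(r,\mathcal{L},\mathcal{G},F,\iota)\in\mathbb{L}_\theta$ and any $\varepsilon>0$; choose $x_0\le r-1$ with $\e^{\theta(x_0-r)}<\varepsilon/3$, and for each $n\in\N$ pick a label $(x_0,i_n)$ that does not appear in $\mathcal{L}\cup\mathcal{G}$, then let $w_n$ be $w$ augmented by a single living parasite with that label placed at position $x_0$. Then $d_\theta(w,w_n)=\e^{\theta(x_0-r)}<\varepsilon$ so each $w_n\in B_\varepsilon(w)$, while $d_\theta(w_n,w_m)=2\e^{\theta(x_0-r)}$ stays bounded away from $0$ for $n\ne m$. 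Consequently $B_\varepsilon(w)$ is not totally bounded, $\overline{B_\varepsilon(w)}$ cannot be compact, and $\mathbb{L}_\theta$ fails to be locally compact at $w$.
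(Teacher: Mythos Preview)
Your proposal is essentially correct and proves the proposition, but a few steps need tightening. In the completeness argument, the claim that $a_n(x,i):=\mathds{1}_{(x,i)\in\mathcal{L}_n}\e^{\theta F_n(x,i)}$ is ``either eventually $0$ or eventually equal to a fixed $\e^{\theta k}$'' is not literally true in the first case: the sequence may take positive values tending to zero. What you actually need is only that its limit exists in the closed set $\{0\}\cup\{\e^{\theta k}:k\le r_\infty\}$, and then you define $(x,i)\in\mathcal{L}_\infty$ iff that limit is nonzero. Similarly, the final step $d_\theta(w_n,w_\infty)\to 0$ is more cleanly obtained via Fatou (bounding $\sum_{(x,i)}|a_n-a_\infty|\le\liminf_m\sum_{(x,i)}|a_n-a_m|$) than by dominated convergence, since a summable dominant uniform in $n$ is not obviously available. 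For the ghost part, note that $\sum_{(x,i)}\xi'(w_n,w_m,x,i)$ is a nonnegative integer, so $d_\theta(w_n,w_m)<1$ forces it to vanish and hence the map $(x,i)\mapsto\mathds{1}_{(x,i)\in\mathcal{G}_n}F_n(x,i)$ is literally constant for $n\ge N$; this is cleaner than tracking $\mathcal{G}_n\setminus\mathcal{G}_N$ and yields $|\mathcal{G}_\infty|<\infty$ immediately. Finally, your non-local-compactness construction presupposes infinitely many unused labels $(x_0,i_n)$, which can fail for configurations whose $\mathcal{L}$ exhausts all labels; however, since the proposition only asserts failure of local compactness at \emph{some} point, your argument applied at the empty configuration already suffices.

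Compared with the paper: for completeness the paper works through truncation maps $w^l(\cdot)$ (restricting to living parasites within distance $l$ of the front), shows $w^l(w_n)$ stabilises for $n\ge N_l$, and builds $\mathcal{L}_\infty$ as the nested union --- equivalent to your pointwise $\ell^1$-style argument, but with the side benefit that the same maps $w^l$ are reused in Section~\ref{Construction} to construct the process for infinite initial configurations. For non-local-compactness the paper places $\lfloor\varepsilon\e^{\theta n}\rfloor$ parasites at site $r-n$; your single-parasite-with-distinct-labels example is simpler and more direct.
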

\begin{proof}
It is clear that $d_\theta$ defines a metric. Moreover, because $\mathbb{L}_\theta$ contains the function space
\[
\left\{\eta:\N_0\to\N_0: \sum_{n=0}^\infty\eta(n)\e^{-\theta n}<\infty\right\}
\]
 it cannot be locally compact, simply by considering the sequence of configurations placing $\lfloor\varepsilon \e^{\theta n}\rfloor$ living parasites on site $r-n$ and leaving all other sites empty, which is a sequence with no convergent subsequence in a $\varepsilon$-Ball.\\
Now we show that $(\mathbb{L}_\theta,d_\theta)$ is separable. The set 
\[
\mathbb{A} := \left\{(r,\mathcal{L},\mathcal{G},F,\iota)\in\mathbb{L}_\theta:\left\vert\mathcal{L}\vert < \infty\right.\right\}
\]
is countable, which can be seen by observing that as sets
\[
\begin{split}
\mathbb{A} &= \bigcup_{k=0}^\infty\bigcup_{m=0}^\infty \left\{(r,\mathcal{L},\mathcal{G},F,\iota)\in\mathbb{L}_\theta:\left\vert\mathcal{L}\vert = k \right. ,\left\vert\mathcal{G}\vert = m \right. \right\} \\ &\overset{\sim}{\subset}\bigcup_{k=0}^\infty\bigcup_{m=0}^\infty \Z\times\binom{\Z\times\N}{k}\times\binom{\Z\times\N}{m}\times\Z^{m+k}\times\N.
\end{split}
\]
For $l\ge 1$ we define a map $w^l:\mathbb{L}_\theta \to \mathbb{A}$ as follows. For $w=(r,\mathcal{L},\mathcal{G},F,\iota)$ set
\[
\mathcal{L}^l(w):= \{(x,i)\in\mathcal{L}: F(x,i) > r-l\}~\text{ and }~w^l(w):= \left(r,\mathcal{L}^l(w),\mathcal{G},F_{\vert_{\mathcal{L}^l(w)\cup\mathcal{G}}},\iota\right),
\]
that is, we consider only living parasites that are placed at a distance less than $l$ to the front.
Using \eqref{thetasum}, and that by definition $F(x,i)-r+l > 0$ for all $(x,i)\in\mathcal{L}^l(w)$, we obtain
\[
\vert\mathcal{L}^l(w)\vert < \e^{\theta l}\sum_{(x,i)\in\mathcal{L}^l(w)}\exp(\theta(F(x,i)-r))< \infty.
\]
Thus $w^l(w)\in\mathbb{A}$ for all $l\in\N,w\in\mathbb{L}_\theta$.
Since 
\[
\mathcal{L}^1(w)\subset\mathcal{L}^2(w)\subset\dots ~~\text{ and }~~\mathcal{L} = \bigcup_{l=1}^\infty \mathcal{L}^l(w),
\]
Condition \eqref{thetasum} yields
\[
d_\theta(w,w^l(w)) = \sum_{(x,i)\in\mathcal{L}\setminus\mathcal{L}^l(w)}\exp(\theta(F(x,i)-r)) \to 0~~~(l\to\infty).
\]
This shows that $\mathbb{A}$ is dense, and thus $\mathbb{L}_\theta$ is separable. \\
Next, we show that $\mathbb{L}_\theta$ is also complete. Suppose that \[(w_n)_{n\in\N} =((r_n, \mathcal{L}_n, \mathcal{G}_n, F_n, \iota_n))_{n\in \N}\] is a Cauchy sequence in $\mathbb{L}_\theta$. Choosing $\varepsilon = 1$ we find an $N_0\in\N$ such that $d_\theta(w_n,w_m)<1$ for all $n,m\ge N_0$. Since $d_\theta(w_n,w_m)$ is at least
\[
 \vert r_n-r_m\vert +\vert \iota_n-\iota_m\vert+ \sum_{(x,i)\in\mathcal{G}_n\cup\mathcal{G}_m} \vert \mathds{1}_{(x,i)\in\mathcal{G}_n}F_n(x,i) - \mathds{1}_{(x,i)\in\mathcal{G}_m}F_m(x,i)\vert,
\]
which is integer valued, this implies that there is an $r\in\Z,\mathcal{G}\subset\Z\times\N,\iota\in\N$ and $\widetilde{F}:\mathcal{G}\to \Z$ such that 
\[
r_n = r,~~~\mathcal{G}_n=\mathcal{G},~~~(F_n)_{\vert_{\mathcal{G}}} = \widetilde{F},~~~\iota_n=\iota
\]for all $n\ge N_0$. This yields that for $m,n\ge N_0$ we have
\[
d(w_n,w_m) = \sum_{(x,i)\in\Z\times\N}\e^{-\theta r}\left\vert\mathds{1}_{(x,i)\in\mathcal{L}_n}\e^{\,\theta F_n(x,i)}-\mathds{1}_{(x,i)\in\mathcal{L}_m}\e^{\,\theta F_m(x,i)}\right\vert.
\]
For $l\in \N$, setting
\[\varepsilon(l) := \e^{-\theta(l-1)}-\e^{-\theta l},\] we define 
\[
N_l := \inf\{N \ge N_{l-1}\vert \forall m,n \ge N: d(w_n,w_m) < \varepsilon(l)\}.
\]
By definition of $\varepsilon(l)$, this means that $w^l(w_n) = w^l(w_m)$ for all $l\in\N$ and $m,n\ge N_l$.
We define
\[
\mathcal{L} : = \bigcup_{l=1}^\infty \mathcal{L}^l(w_{N_l}),
\] 
as well as
$F:\mathcal{L}\cup\mathcal{G}\to\Z$ by
\[
F(x,i) : = \mathds{1}_{(x,i)\in\mathcal{L}}F_{N_l}(x,i) + \mathds{1}_{(x,i)\in\mathcal{G}} \widetilde{F}(x,i),
\]
where $l\in\N$ is such that $(x,i)\in\mathcal{L}^l(w_{N_l})$ and set $w :=(r,\mathcal{L},\mathcal{G},F,\iota)$, noting that for $l\in\N$ we have $w^l(w) = w^l(w_{N_l})$.
Now let $\varepsilon > 0$ and $L\in\N$ such that
\[
\varepsilon(L) \le \varepsilon.
\]
For fixed $l> L$ and all $n,m\ge N_L$ we have $w^L(w_n) = w^L(w_m)=w^L(w)$ and obtain
\[
\begin{split}
&\sum_{(x,i)\in\mathcal{L}^l(w_n)\cup\mathcal{L}^l(w_{m})}\e^{\,-\theta r}\left\vert\mathds{1}_{(x,i)\in\mathcal{L}_n}\e^{\,\theta F_n(x,i)}-\mathds{1}_{(x,i)\in\mathcal{L}_{m}}\e^{\,\theta F_{m}(x,i)}\right\vert \\
=&\sum_{(x,i)\in(\mathcal{L}^l(w_n)\cup\mathcal{L}^l(w_{m}))\setminus\mathcal{L}^L(w)}\e^{\,-\theta r}\left\vert\mathds{1}_{(x,i)\in\mathcal{L}_n}\e^{\,\theta F_n(x,i)}-\mathds{1}_{(x,i)\in\mathcal{L}_{m}}\e^{\,\theta F_{m}(x,i)}\right\vert \\
\le &\sum_{(x,i)\in(\mathcal{L}_n\cup\mathcal{L}_{m})\setminus\mathcal{L}^L(w)}\frac{\e^{\,-\theta r}}{1-\e^{-\theta}}\left\vert\mathds{1}_{(x,i)\in\mathcal{L}_n}\e^{\,\theta F_n(x,i)}-\mathds{1}_{(x,i)\in\mathcal{L}_{m}}\e^{\,\theta F_{m}(x,i)}\right\vert\\
= &\frac{d_\theta(w_n,w_m)}{1-\e^{-\theta}} < \frac{\varepsilon}{1-\e^{-\theta}},
\end{split}
\]
where the inequality in the third line can be shown as follows. For each $(x,i)\in(\mathcal{L}_n\cup\mathcal{L}_m)\setminus\mathcal{L}^L(w)$ we distinguish between four cases 
\[
\begin{split}
\text{I: }&(x,i) \notin\mathcal{L}^l(w_n)\text{ and }(x,i)\notin\mathcal{L}^l(w_m)\\
\text{II: }&(x,i) \in\mathcal{L}^l(w_n) \text{ and }(x,i)\notin\mathcal{L}^l(w_m) \\
\text{III: }&(x,i) \notin\mathcal{L}^l(w_n)\text{ and }(x,i)\in\mathcal{L}^l(w_m)\\
\text{IV: }&(x,i) \in \mathcal{L}^l(w_n)\text{ and }(x,i)\in\mathcal{L}^l(w_m).
\end{split}
\]
In the first case we just added a positive term so the inequality holds, and in the last case we just divided the already existing term by $1-\e^{-\theta}<1$, which only makes it larger. In the second case we have to show that
\[
\e^{\theta(F_n(x,i)-r)} \le \frac{\exp(\theta(F_n(x,i)-r))-\exp(\theta(F_m(x,i)-r))}{1-\e^{-\theta}}.
\]
Because $(x,i)\notin\mathcal{L}^l(w_m)$, it holds that $F_m(x,i)-r \le -l$ and $(x,i)\in\mathcal{L}^l(w_n)\setminus\mathcal{L}^L(w_n)$ implies $-l < F_n(x,i)-r\le -L$. Hence, it remains to show that
\[
\e^{-\theta y}\le \frac{\e^{-\theta y}-\e^{-\theta z}}{1-\e^{-\theta}}
\]
for all $y\in\{L,\dots,l-1\},z\ge l$. For fixed $y$, the right-hand side is clearly increasing in $z$, and thus it suffices to show the inequality for $z=l$. But for $z=l$ we have that
\[
\frac{\e^{-\theta y}}{\e^{-\theta y}-\e^{-\theta l}} = \frac{1}{1-\e^{-\theta(l-y)}},
\]
which
attains its maximum at $y=l-1$ and thus yields the claimed inequality. The third case is analogous to the second one.\\ 
Hence, letting $m\to\infty$, we obtain
\[
\sum_{(x,i)\in\mathcal{L}^l(w_n)\cup\mathcal{L}^l(w)}\e^{\,-\theta r}\left\vert\mathds{1}_{(x,i)\in\mathcal{L}_n}\e^{\,\theta F_n(x,i)}-\mathds{1}_{(x,i)\in\mathcal{L}}\e^{\,\theta F(x,i)}\right\vert\le \frac{\varepsilon}{1-\e^{-\theta}}
\]
for any $n\ge N_L$. Thus, we can bound $\sum_{(x,i)\in\mathcal{L}^l(w)}\exp(\theta(F(x,i)-r))$ by 
\begin{align*}
 &\sum_{(x,i)\in\mathcal{L}^l(w_n)\cup\mathcal{L}^l(w)}\e^{\,-\theta r}\left\vert\mathds{1}_{(x,i)\in\mathcal{L}_n}\e^{\,\theta F_n(x,i)}-\mathds{1}_{(x,i)\in\mathcal{L}}\e^{\,\theta F(x,i)}\right\vert\ \\&\quad+ \sum_{(x,i)\in\mathcal{L}^l(w_n)}\exp(\theta(F_n(x,i)-r)) \\&\le \frac{\varepsilon}{1-\e^{-\theta}} + \sum_{(x,i)\in\mathcal{L}_n}\exp(\theta(F_n(x,i)-r)) < \infty.
\end{align*}

Letting $l\to \infty$ yields $w\in\mathbb{L}_\theta$ and 
\[ d(w_n, w)=
\sum_{(x,i)\in\mathcal{L}_n\cup\mathcal{L}}\e^{\,-\theta r}\left\vert\mathds{1}_{(x,i)\in\mathcal{L}_n}\e^{\,\theta F_n(x,i)}-\mathds{1}_{(x,i)\in\mathcal{L}}\e^{\,\theta F(x,i)}\right\vert\le \frac{\varepsilon}{1-\e^{-\theta}}
\]
for any $n\ge N_L$, thus
\[
\lim_{n\to\infty} w_n = w
\]
in $\mathbb{L}_{\theta}$. This shows that $(\mathbb{L}_\theta,d_\theta)$ is complete and thus finishes the proof.
\end{proof}
We begin by constructing the process for a finite starting configuration. That is, for any $w\in\mathbb{A}$ we now use the collections $\mathbf{Y},\mathbf{A},\mathbf{I}$ to define a strong Markov process \[(X_t(w))_{t\ge0} = ((r_t,\mathcal{L}_t,\mathcal{G}_t,F_t,\iota_t)(w))_{t\ge 0}\] with càdlàg sample paths in $\mathbb{A}$ on the probability space $\mathbf{\Omega}^\prime$.\\
Let $w\in\mathbb{A}$ and set $\sigma_0 := 0,\rho_0 = 0,r_0:=r,\iota_0:=\iota,\mathcal{L}_0:=\mathcal{L},\mathcal{G}_0 = \mathcal{G}$ and \[F_t(x,i) := F(x,i)+Y^{x,i}_{t} \text{ for all }(x,i)\in\mathcal{L}_0\cup\mathcal{G}_0,t\ge \sigma_0.\] Assume that for $n \ge 0$ we have already defined
\[\sigma_0,\dots,\sigma_{n},~~(r_t,\mathcal{L}_t,\mathcal{G}_t,F_t,\iota_t)\in\mathbb{A}\text{ for all }0\le t\le\sigma_{n}\text{ and }\rho_0,\dots,\rho_{r_{\sigma_n}-r}\le \sigma_n.\]
If $\mathcal{L}_{\sigma_n} = \emptyset$, we set $r_t = r_{\sigma_n},\mathcal{L}_t = \emptyset,\mathcal{G}_t = \mathcal{G}_{\sigma_n},\iota_t = \iota_{\sigma_n}$ for all $t>\sigma_n$ and $\sigma_m = \infty$ for all $m>n$.\\
Otherwise, if $\mathcal{L}_{\sigma_n} \neq\emptyset$, let 
\[
\sigma_{n+1} :=\inf\{t>\sigma_{n}\,\vert\,\exists (x,i)\in\mathcal{L}_{\sigma_{n}}:F_t(x,i) = r_{\sigma_{n}}+1\}.
\]
By construction we have $0< \vert\mathcal{L}_{\sigma_{n}}\vert\le\vert \mathcal{L}\vert +\sum_{i=1}^n A_{r+i}<\infty$, thus $\sigma_n<\sigma_{n+1}<\infty$ almost surely, and there is almost surely a unique $(x_n,i_n)\in\mathcal{L}_{\sigma_{n}}$ such that $F_{\sigma_{n+1}}(x_n,i_n) = r_{\sigma_{n}}+1$. Moreover, $\sigma_{n+1}-\sigma_n$ is stochastically dominated by an exponentially distributed random variable with random parameter $\vert\mathcal{L}\vert+\sum_{i=1}^n A_{r+i}$, corresponding to the case that any previous infection was successful, all living parasites are located on $r_{\sigma_n}$, and the first jump goes to the right. In particular, because $\mathbf{E}[A]<\infty$ and thus
\[
\sum_{n=0}^\infty \frac{1}{\vert\mathcal{L}\vert+\sum_{i=1}^n A_{r+i}} = \infty
\]
almost surely, this implies that $\lim_{n\to\infty}\sigma_n = \infty$ almost surely.\\
For $t\in(\sigma_{n},\sigma_{n+1})$ we set $r_t = r_{\sigma_{n}}, \iota_t = \iota_{\sigma_{n}}, \mathcal{L}_t = \mathcal{L}_{\sigma_{n}},\mathcal{G}_t = \mathcal{G}_{\sigma_{n}}$ and then finally set $\mathcal{G}_{\sigma_{n+1}} = \mathcal{G}_{\sigma_{n}}\cup\{(x_n,i_n)\}$. 
\begin{itemize}
 \item If $\iota_{\sigma_{n}} = 1$: Set $r_{\sigma_{n+1}}=r_{\sigma_n}+1,\iota_{\sigma_{n+1}} = I_{r_{\sigma_{n}}+1},\rho_{r_{\sigma_{n+1}}-r} = \sigma_{n+1}$. Also, if $A_{r_{\sigma_{n+1}}} > 0$ set
 \[\mathcal{L}_{\sigma_{n+1}}= \mathcal{L}_{\sigma_n}\setminus\{(x_n,i_n)\}\cup\{(r_{\sigma_{n+1}},1),\dots,(r_{\sigma_{n+1}},A_{r_{\sigma_{n+1}}})\}\]
 with $F_t(r_{\sigma_{n+1}},i) = r_{\sigma_{n+1}} + Y^{r_{\sigma_{n+1}},i}_{t-\sigma_{n+1}}$ for $1\le i\le A_{r_{\sigma_{n+1}}},t\ge\sigma_{n+1}$, and if $A_{r_{\sigma_{n+1}}}= 0$ set
 \[
 \mathcal{L}_{\sigma_{n+1}}= \mathcal{L}_{\sigma_n}\setminus\{(x_n,i_n)\}.
 \]
 \item If $\iota_{\sigma_{n}} > 1$: Set $r_{\sigma_{n+1}}=r_{\sigma_n},\iota_{\sigma_{n+1}} = \iota_{\sigma_n}-1$ and
 \[\mathcal{L}_{\sigma_{n+1}}= \mathcal{L}_{\sigma_n}\setminus\{(x_n,i_n)\}.\]
\end{itemize}
\begin{remark}\label{rem:twoside}
 If the host population is two-sided, then any initial configuration considered contains only finitely many parasites. Hence the above construction can easily be adapted to construct the process in this case. 
\end{remark}

Before we continue and define $((X_t(w))_{t\ge 0}$ for any $w\in\mathbb{L}_\theta$ by using the approximation $\lim_{l\to\infty}w^l(w) = w$ in $\mathbb{L}_\theta$, we will show some useful inequalities. For $w=(r,\mathcal{L},\mathcal{G},F,\iota)\in\mathbb{L}_\theta$ and $(x,i)\in\mathcal{L}\cup\mathcal{G},m\in\N_0$ we set $[m] = \{1,\dots,m\}$ and define the maps
\[
\begin{split}
 F^{(x,i),\pm}(w):\mathcal{L}\cup\mathcal{G}&\to\Z\\
 (y,j)&\mapsto\begin{cases}F(x,i)\pm 1,&(y,j)=(x,i)\\F(y,j),&\text{else}\end{cases}\\
 F^{(x,i),m}(w):\mathcal{L}\cup\{r+1\}\times[m]\cup\mathcal{G}&\to\Z\\
 (y,j)&\mapsto\begin{cases}r+1,&y=r+1\text{ or }(y,j) = (x,i)\\F(y,j),&\text{else}\end{cases}
\end{split}
\]
and note that for $m=0$ we have $F^{(x,i),0}= F^{(x,i),+}$. With these maps, for $m\in\N_0,k\in\N$ and $(x,i)\in\mathcal{L}\cup\mathcal{G}$ we define the configurations
\[
\begin{split}
w^{(x,i),\pm} &:= (r,\mathcal{L},\mathcal{G},F^{(x,i),\pm}(w),\iota),\\
w^{(x,i),m,k} &:= (r+1,\mathcal{L}\setminus\{(x,i)\}\cup\{r+1\}\times[m],\mathcal{G}\cup\{(x,i)\},F^{(x,i),m}(w),k)\\
w^{(x,i),\text{ f}} &:= (r,\mathcal{L}\setminus\{(x,i)\},\mathcal{G}\cup\{(x,i)\},F^{(x,i),+}(w),\iota-1).
\end{split}
\]
For $\vartheta \ge\theta$ we define the functions
\[
\begin{split}
 f_\vartheta:\mathbb{L}_\theta&\to \R\\
 w&\mapsto \sum_{(x,i)\in\mathcal{L}\cup\mathcal{G}}\exp(\vartheta F(x,i)).
\end{split}
\] 
and note that
\[
\begin{split}
\vert f_\vartheta(w^{(x,i),\pm})-f_\vartheta(w)\vert &= \e^{\vartheta F(x,i)}\left\vert\e^{\pm\vartheta}-1\right\vert \\
\vert f_\vartheta(w^{(x,i),m,k})-f_\vartheta(w)\vert &= \e^{\vartheta r}\left(\e^{\vartheta}(m+1)-1\right) \\
\vert f_\vartheta(w^{(x,i),\text{f}})-f_\vartheta(w)\vert &= \e^{\vartheta r}\left( \e^\vartheta-1\right).
\end{split}
\]
A simple calculation shows that for any $w\in\mathbb{A}$ we have
\begin{align*}
Lf_\vartheta(w)&:= \lim_{t\to 0+}\frac{\mathbf{E}[f_\vartheta(X_t(w))-f_\vartheta(w)]}{t} \\
&= \sum_{(x,i)\in\mathcal{L}\setminus\mathcal{L}^1(w)\cup\mathcal{G}} f_\vartheta(w^{(x,i),+})+f_\vartheta(w^{(x,i),-})-2f_\vartheta(w) \\
&\quad+ \sum_{(x,i)\in\mathcal{L}^1(w)}f_\vartheta(w^{(x,i),-})+\mathds{1}_{\iota=1}\sum_{m=0}^\infty\sum_{k=1}^\infty b_me_kf_\vartheta(w^{(x,i),m,k}) \\
&\qquad\qquad\qquad\quad+\mathds{1}_{\iota>1}f_\vartheta(w^{(x,i),\text{ f}})-2f_\vartheta(w),
\end{align*}
where $e_k = \mathbf{P}(I=k)$ and $b_m =\mathbf{P}(A=m)$.
This immediately follows from the construction of $X_t(w)$ using a finite subset of $\mathbf{Y},\mathbf{I},\mathbf{A}$. To shorten notation we set
\[
\mathds{1}_{\iota=1}\sum_{m=0}^\infty\sum_{k=1}^\infty b_me_k =: \sum_{m,k}\delta_m^k,
\]
and compute
\[
\begin{split}
Lf_\vartheta(w) &= \sum_{(x,i)\in\mathcal{L}\setminus\mathcal{L}^1(w)\cup\mathcal{G}}\sum_{(y,j)\in\mathcal{L}\cup\mathcal{G}}\e^{\vartheta F^{(x,i),+}(y,j)}+\e^{\vartheta F^{(x,i),-}(y,j)}-2\e^{\vartheta F(y,j)} \\
&\quad+ \sum_{(x,i)\in\mathcal{L}^1(w)} \left(\sum_{(y,j)\in\mathcal{L}\cup\mathcal{G}}\e^{\vartheta F^{(x,i),-}(y,j)}-2\e^{\vartheta F(y,j)}\right.\\
&\qquad\qquad\qquad\qquad+ \sum_{m,k}\delta_m^k\sum_{(y,j)\in\mathcal{L}\cup\{r+1\}\times[m]\cup\mathcal{G}}\e^{\vartheta F^{(x,i), m, k}(y,j)}\\
&\qquad\qquad\qquad\qquad+\left.\mathds{1}_{\iota>1}\sum_{(y,j)\in\mathcal{L}\cup\mathcal{G}}\e^{\vartheta F^{(x,i),+}(y,j)} \right) 
\end{split}
\]
\[
\begin{split}
 &= \sum_{(x,i)\in\mathcal{L}\setminus\mathcal{L}^1(w)\cup\mathcal{G}} \e^{\vartheta(F(x,i)+1)}+\e^{\vartheta(F(x,i)-1)}-2\e^{\vartheta F(x,i)} \\
&\quad+ \sum_{(x,i)\in\mathcal{L}^1(w)}\e^{\vartheta(r-1)}-2\e^{\vartheta r}\\
&\quad+\sum_{m,k}\delta_m^k\left(\e^{\vartheta(r+1)}+\sum_{(y,j)\in\{r+1\}\times[m]}\e^{\vartheta(r+1)}\right)\\
&\quad+\mathds{1}_{\iota>1}\e^{\vartheta(r+1)}\\
&= \sum_{(x,i)\in\mathcal{L}\setminus\mathcal{L}^1(w)} \e^{\vartheta F(x,i)}(\e^\vartheta+\e^{-\vartheta}-2) \\
&\quad+ \sum_{(x,i)\in\mathcal{L}^1(w)}\e^{\vartheta r}\left(\e^{-\vartheta}-2+\left(1+\mathds{1}_{\iota=1}\sum_{m=0}^\infty mb_m\right)\e^{\vartheta}\right)
\end{split}
\]
This shows that $Lf_\vartheta(w) \in[\lambda_{1,\vartheta}f_\vartheta(w),\lambda_{2,\vartheta}f_\vartheta(w)]$ for \[0< \e^\vartheta+\e^{-\vartheta}-2 =:\lambda_{1,\vartheta} < \e^{-\vartheta}-2+(1+\mathbf{E}[A])\e^{\vartheta} =:\lambda_{2,\vartheta}.\]
 Thus
\begin{equation}\label{normexpect}
\mathbf{E}[f_\vartheta(X_t(w))]\le \e^{\lambda_{2,\vartheta}t}f_\vartheta(w)
\end{equation}
and $(f_\vartheta(X_t(w))_{t\ge0}$ is a nonnegative submartingale. Doob's inequality yields
\[
\mathbf{P}\left(\sup_{0\le s\le t}f_\vartheta(X_s(w))\ge M\right)\le \frac{\mathbf{E}[f_\vartheta(X_t(w))]}{M}.
\]
Since by definition 
\[r_t(w) = \begin{cases}\sup_{0\le s\le t}\sup_{(x,i)\in\mathcal{L}_s(w)} F_s(w)(x,i),&\text{if }\sup_{0\le s\le t}\max_{(x,i)\in\mathcal{L}}F_s(x,i) \ge r\\r,&\text{ else}\end{cases}\] 
we obtain for $t\ge0$ that
\begin{equation}\label{frontupbound}
\begin{split}
\mathbf{P}&(r_t(w)-r> \gamma t) = \mathbf{P}\left(r_t(w)-r > \gamma t, \sup_{0\le s\le t}\max_{(x,i)\in\mathcal{L}}F_s(x,i) \ge r\right) \\
&= \mathbf{P}\left(\sup_{0\le s\le t}\sup_{(x,i)\in\mathcal{L}_s(w)} F_s(w)(x,i)-r > \gamma t, \sup_{0\le s\le t}\max_{(x,i)\in\mathcal{L}}F_s(x,i)\ge r\right)\\
&\le \mathbf{P}\left(\sup_{0\le s\le t}f_\vartheta(X_s(w))> \e^{\vartheta(r+\gamma t)}\right) \le \e^{-c_{\gamma,\vartheta}t}f_\vartheta(w)\e^{-\vartheta r},
\end{split}
\end{equation} with $c_{\gamma,\vartheta} := \gamma\vartheta-\lambda_{2,\vartheta} > 0$ for large $\gamma$. 

Similarly as in \cite{C2007}, we constructed the model in a way that allows us to couple arbitrary initial configurations $w\in \mathbb{A}$. However, in our coupling the front $(r_t(w))_{t\ge0}$ is not monotone in the initial condition. To see this, we give the following concrete example.

\begin{example}\label{unmonontonecoupl}
Let 
 \[
 \mathcal{L}_1 = \{(1,1),(1,2)\},\mathcal{L}_2 = \mathcal{L}_1\cup\{(0,1),(0,2)\}
 \]
 and \[w_i=(1,\mathcal{L}_i,\emptyset,(x,i)\mapsto x,1)~~~(i\in\{1,2\}).\]
We will show that for all $t_0>0$ 
\[
\mathbf{P}(r_{t_0}(w_1) \ge 4,r_{t_0}(w_2) \le 3) > 0.
\]
 For a label $(x,i)\in\Z\times\{1,2\}$ let $(t_{x,i}^n)_{n\in\N}$ be the i.i.d.~Exp$(2)$-distributed jump times of $Y^{x,i}$.
 We assume that all jumps occurring are jumps to the right, $I_2=I_3=I_4=1$, i.e., every infection attempt is successful, and $A_2=A_3=A_4=2$, which clearly has positive probability.
We begin by investigating the system started from configuration $2$, where initially $(0,1),(0,2),(1,1),(1,2)$ are active. Assume 
 \begin{equation}\label{02slow}t_{0,2}^1>t_0,\end{equation}
 and hence particle $(0,2)$ does not move at all before time $t_0$, which happens with positive probability. Next, assume that
 \begin{equation}\label{01fast}t_{0,1}^1+t_{0,1}^2<\min\{t_{1,1}^1,t_{1,2}^1\}\end{equation} hence particle $(0,1)$ is the first which reaches site $2$ and dies, after waking up particles on site $2$ at time $t_{0,1}^1+t_{0,1}^2$. Next, we assume \begin{equation}\label{21fast}t_{0,1}^1+t_{0,1}^2+t_{2,1}^1<\min\{t_{1,1}^1+t_{1,1}^2,t_{1,2}^1+t_{1,2}^2, t_{0,1}^1+t_{0,1}^2+t_{2,2}^1\},\end{equation} so that particle $(2,1)$ wakes up the particles on site $3$ and dies at time $t_{0,1}^1+t_{0,1}^2+t_{2,1}^1$. At last, we assume
 \begin{equation}\label{11fast}t_{1,1}^1+t_{1,1}^2+t_{1,1}^3 < \min\left\{\begin{matrix}t_{1,2}^1+t_{1,2}^2+t_{1,2}^3 ,t_{0,1}^1+t_{0,1}^2+t_{2,2}^1,\\t_{0,1}^1+t_{0,1}^2+t_{2,1}^1+t_{3,1}^1,t_{0,1}^1+t_{0,1}^2+t_{2,1}^1+t_{3,2}^1\end{matrix}\right\} \end{equation}
 and hence particle $(1,1)$ wakes up the particles on site $4$ at time $T_2=t_{1,1}^1+t_{1,1}^2+t_{1,1}^3$.\\
Next, we study the system started in configuration $1$, where initially only $\{(1,1),(1,2)\}$ are active. We assume \begin{equation}\label{12slow}t_{1,1}^1<t_{1,2}^1\end{equation} and hence particle $(1,1)$ dies when waking up the particles on site $2$. Next we assume 
 \begin{equation}\label{12fast}
t_{1,2}^1+t_{1,2}^2 < \min\{t_{1,1}^1+t_{2,1}^1,t_{1,1}^1+t_{2,2}^1\}
 \end{equation}
which means particle $(1,2)$ dies when waking up the particles on site $3$. Next we assume that
 \begin{equation}\label{21faster}
t_{1,1}^1+t_{2,1}^1+t_{2,1}^2 < \min\{t_{1,1}^1+t_{2,2}^1, t_{1,2}^1+t_{1,2}^2 +t_{3,1}^1,t_{1,2}^1+t_{1,2}^2 +t_{3,2}^1\}
 \end{equation}
 and hence particle $(2,1)$ wakes up the particles on site $4$ at time $T_1 = t_{1,1}^1+t_{2,1}^1+t_{2,1}^2$.
 Now if \begin{equation}\label{w1fast}t_{1,1}^1 +t_{2,1}^1+t_{2,1}^2 < t_0 < t_{1,1}^1+t_{1,1}^2+t_{1,1}^3\end{equation} we have $T_1<t_0 < T_2$ or, in other words,
 \begin{equation}\label{41slow}
r_{t_0}(w_2)\le 3 < 4 \le r_{t_0}(w_1).
 \end{equation}
Clearly, the event that \eqref{02slow}, \eqref{01fast}, \eqref{21fast}, \eqref{11fast}, \eqref{12slow}, \eqref{12fast}, \eqref{21faster}, and \eqref{w1fast} has positive probability, which concludes the proof. \\
The following table depicts an example configuration for times $(t_{x,i}^n)_{1\le n\le 3}$ that satisfies the assumptions above for $t_0=9$ (times that don't appear in any condition are left blank and can attain any value, as they do not matter for the outcome).\\
\begin{tabular}[ht]{l|c|c|c|c|c|c|c|c}
 & $(0,1)$&$(0,2)$&$(1,1)$&$(1,2)$&$(2,1)$&$(2,2)$&$(3,1)$&$(3,2)$ \\
\hline
$t_{x,i}^1$&$1$&$11$&$4$&$5$&$3$&$9$&$6$&$6$ \\
$t_{x,i}^2$&$1$&&$1.5$&$1$&$1$&&\\
$t_{x,i}^3$&&&$4$&$4$&&
\end{tabular}\\
In this example we have $T_1 = 8$, but $T_2=9.5$.
\end{example}
This example also shows that we cannot simply use that for $t\ge0$ the map $l\mapsto r_t(w^l(w))$ is monotone to define $r_t(w)$ as its limit like it was done in \cite{C2007}.
In contrast, we use that almost surely $l\mapsto r_t(w^l(w))$ is constant for large enough $l$, because the additional random walks have not yet reached site $r$.
\begin{proposition}\label{unifconv}
 Let $r\in\Z$ be fixed and $w = (r,\mathcal{L},\mathcal{G},F,I_{r+1})$ be a random initial configuration taking values in $\mathbb{L}_\theta$, defined on $\mathbf{\Omega}$, that is independent of \[
 \{A_x,I_{x+1}: x >r,i\in\N\}\cup\mathbf{Y}
 \] and such that
 \[
\mathbf{E}\left[\sum_{(x,i)\in\mathcal{L}}\exp(\theta(F(x,i)-r))\right] < \infty.
 \] Then there is a set $B^w\in\mathcal{F}$, depending only on $w$, such that $\mathbf{P}(B^w)=1$ and for all $\omega\in B^w$ and all $T\ge0$
 \[
 \{(X_t(w^l(w))(\omega))_{t\in[0,T]}:l\in\N\}
 \]
 is a Cauchy sequence in the uniform topology on $D([0,T],\mathbb{L}_\theta)$.
\end{proposition}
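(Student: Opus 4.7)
Since the coupling $l\mapsto X_\cdot(w^l(w))$ fails to be monotone (Example \ref{unmonontonecoupl}), my plan is to show that for fixed $T>0$ and sufficiently large $l$, the ``extra'' parasites that distinguish $w^{l'}(w)$ from $w^l(w)$, namely those with $F(x,i)\le r-l$, do not reach the front on $[0,T]$ and hence evolve as silent simple random walks. The uniform $d_\theta$-discrepancy between $X_\cdot(w^{l_1}(w))$ and $X_\cdot(w^{l_2}(w))$ then reduces to a tail of $f_\theta$, which vanishes because $w\in\mathbb{L}_\theta$ a.s.

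Concretely, fix $T\in\N$ and introduce
\[
A^l_T:=\bigcap_{(x,i)\in\mathcal{L}\setminus\mathcal{L}^l(w)}\Bigl\{F(x,i)+\sup_{0\le s\le T}Y^{x,i}_s<r\Bigr\}.
\]
Doob's inequality applied to the exponential martingale $\exp(\theta Y^{x,i}_s-2(\cosh\theta-1)s)$ yields the tail bound $\mathbf{P}(\sup_{s\le T}Y^{x,i}_s\ge k)\le e^{2(\cosh\theta-1)T}e^{-\theta k}$, so a union bound gives
\[
\mathbf{P}\bigl((A^l_T)^c\bigm|w\bigr)\le e^{2(\cosh\theta-1)T}\sum_{(x,i)\in\mathcal{L}\setminus\mathcal{L}^l(w)}e^{\theta(F(x,i)-r)}\xrightarrow{l\to\infty}0\ \text{a.s.},
\]
because $\sum_{(x,i)\in\mathcal{L}}e^{\theta(F(x,i)-r)}<\infty$ a.s. Since $A^l_T$ is increasing in $l$, there exists a.s.\ an $L_0(\omega,T)$ with $A^l_T$ holding for all $l\ge L_0$. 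On $A^{l_1}_T$ with $l_1<l_2$, no extra parasite in $\mathcal{L}^{l_2}(w)\setminus\mathcal{L}^{l_1}(w)$ reaches $r\le r_t$, so none triggers an infection on $[0,T]$; consequently $X_\cdot(w^{l_1}(w))$ and $X_\cdot(w^{l_2}(w))$ share front, immunity, ghost set, and positions of all common labels on $[0,T]$, and the two configurations differ only through the extra parasites, which in $X_t(w^{l_2}(w))$ sit at $F(x,i)+Y^{x,i}_t$. Reading off $d_\theta$ and using $r_t\ge r$,
\[
\sup_{0\le t\le T}d_\theta\bigl(X_t(w^{l_1}(w)),X_t(w^{l_2}(w))\bigr)\le Z^{(T)}_{l_1}:=\sum_{(x,i)\in\mathcal{L}\setminus\mathcal{L}^{l_1}(w)}e^{\theta(F(x,i)-r)}\sup_{0\le t\le T}e^{\theta Y^{x,i}_t}.
\]

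The variables $Z^{(T)}_l$ are decreasing in $l$, hence converge a.s.\ to some $Z^{(T)}_\infty\ge 0$. Doob's $L^2$-inequality bounds $\mathbf{E}[\sup_{t\le T}e^{\theta Y^{x,i}_t}]$ by a constant $C(T,\theta)$ independent of $(x,i)$, so by monotone convergence
\[
\mathbf{E}\bigl[Z^{(T)}_\infty\bigm|w\bigr]\le C(T,\theta)\lim_{l\to\infty}\sum_{(x,i)\in\mathcal{L}\setminus\mathcal{L}^l(w)}e^{\theta(F(x,i)-r)}=0\quad\text{a.s.},
\]
which forces $Z^{(T)}_\infty=0$ a.s. Taking $B^w$ to be the intersection over $T\in\N$ of the full-measure events on which $L_0(\cdot,T)$ is finite and $Z^{(T)}_l\to 0$ supplies the desired set. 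The principal difficulty the plan must overcome is exactly this failure of monotonicity: one cannot extract a pointwise limit as in \cite{C2007}, and so the event $A^l_T$ is what transfers the tail control of $f_\theta$ into uniform control of the whole trajectory.
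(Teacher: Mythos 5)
Your proposal is correct and follows essentially the same route as the paper: both rely on the exponential martingale $\exp(\theta Y_t-2t(\cosh\theta-1))$ together with Doob's inequality to show that parasites starting far below $r$ almost surely stay below $r$ on $[0,T]$ for $l$ large, and both then bound the $d_\theta$-discrepancy between $X_t(w^{l_1}(w))$ and $X_t(w^{l_2}(w))$ by a tail sum that vanishes almost surely because $f_\theta(w)<\infty$. The only (cosmetic) difference is organizational: the paper defines a single full-measure set via the condition that $\sup_{t\le S}\sum_{(x,i)\in\mathcal{L}\setminus\mathcal{L}^l(w)}\exp(\theta(F(x,i)+Y^{x,i}_t-r))\le 1/n$, which simultaneously guarantees that far-away parasites do not reach $r$ and that the discrepancy is small, whereas you split this into the event $A^l_T$ (each far-away walk stays below $r$, proved by a single-walk tail bound plus union bound) and the separate a.s.\ decay of $Z^{(T)}_l$; your invocation of an $L^2$-form of Doob to get $\mathbf{E}[\sup_{t\le T}e^{\theta Y_t}]<\infty$ is a mild detour from the paper's $L^1$ maximal bound, but both serve the same role.
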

\begin{proof}
Let $w= (r,\mathcal{L},\mathcal{G},F,I_{r+1})$ be as above and let $l\in\N$. By the independence assumption, we can construct the processes
\[
(X_t(w^l(w)))_{t\ge0}
\]
by first sampling a realization of $w$ and then using the collections $\mathbf{Y},\mathbf{I},\mathbf{A}$ to obtain the process with that initial configuration. We will show that for all $T\ge 0$ these processes almost surely converge uniformly over $t\in[0,T]$ in $\mathbb{L}_\theta$. Hence, we fix an $\varepsilon>0$ and $T\ge0$. We set
\[
B^w := \bigcap_{S=1}^\infty\bigcap_{n=1}^\infty\bigcup_{l=1}^\infty\left\{\sup_{0\le t\le S}\sum_{(x,i)\in\mathcal{L}\setminus\mathcal{L}^l(w)}\exp(\theta(F(x,i)+Y^{x,i}_t-r))\le \frac{1}{n}\right\}
\]
and note that by continuity from above and below of $\mathbf{P}$ we have
\[
\begin{split}
\mathbf{P}(B^w) &= \lim_{S\to\infty}\lim_{n\to\infty}\lim_{l\to\infty}\mathbf{P}\left(\sup_{0\le t\le S}\sum_{(x,i)\in\mathcal{L}\setminus\mathcal{L}^l(w)}\exp(\theta(F(x,i)+Y^{x,i}_t-r))\le \frac{1}{n}\right)
\end{split}
\]
We now investigate the probability
\[
\mathbf{P}\left(\sup_{0\le t\le S}\sum_{(x,i)\in\mathcal{L}\setminus\mathcal{L}^l(w)}\exp(\theta(F(x,i)+Y^{x,i}_t-r)) > \frac{1}{n}\right).
\]
Since for a simple symmetric random walk $(Y_t)_{t\ge0}$ jumping at rate $2$, the process
\[
(\exp(\theta Y_t-2t(\cosh\vartheta -1)))_{t\ge0}
\]
is a càdlàg martingale, for any $k\ge 1$, also
\[
(M_t^{l,k})_{t\ge0} := \left(\sum_{(x,i)\in\mathcal{L}^{l+k}(w)\setminus\mathcal{L}^l(w)}\exp(\theta(F(x,i)+Y^{x,i}_t-r)-2t(\cosh\vartheta -1))\right)_{t\ge0}
\]
is a càdlàg martingale w.r.t. 
\[
(\mathcal{H}_t)_{t\ge0} := \left(\sigma\left(w,\mathds{1}_{(x,i)\in\mathcal{L}}Y^{x,i}_s:(x,i)\in\Z\times\N,0\le s\le t\right)\right)_{t\ge0}.
\] Thus, Doob's inequality yields
\begin{equation}\label{eq:martinbound}
\mathbf{P}\left(\sup_{0\le t\le S}M_t^{l,k} > \lambda\right) \le \frac{1}{\lambda}\mathbf{E}[M_0^{l,k}] = \frac{1}{\lambda} \mathbf{E}\left[\sum_{(x,i)\in\mathcal{L}^{l+k}(w)\setminus\mathcal{L}^l(w)}\exp(\theta(F(x,i)-r))\right]
\end{equation}
 for any $\lambda > 0$. For fixed $t\ge 0$ the sequence
\[
\left(\sum_{(x,i)\in\mathcal{L}^{l+k}(w)\setminus\mathcal{L}^l(w)}\exp(\theta(F(x,i)+Y^{x,i}_t-r))\right)_{k\ge1}
\]
is increasing almost surely, since only more positive terms get added for each $k$. Thus
\[
\begin{split}
\mathbf{P}&\left(\sup_{0\le t\le S}\sum_{(x,i)\in\mathcal{L}\setminus\mathcal{L}^l(w)}\exp(\theta(F(x,i)+Y^{x,i}_t-r)) > \frac{1}{n}\right) \\
&= \mathbf{P}\left(\bigcup_{k=1}^\infty\left\{\sup_{0\le t\le S}\sum_{(x,i)\in\mathcal{L}^{l+k}(w)\setminus\mathcal{L}^l(w)}\exp(\theta(F(x,i)+Y^{x,i}_t-r)) > \frac{1}{n}\right\}\right) \\
&=\lim_{k\to\infty}\mathbf{P}\left(\left\{\sup_{0\le t\le S}\sum_{(x,i)\in\mathcal{L}^{l+k}(w)\setminus\mathcal{L}^l(w)}\exp(\theta(F(x,i)+Y^{x,i}_t-r)) > \frac{1}{n}\right\}\right) .
\end{split}
\]
Together with \eqref{eq:martinbound}, using $\lambda = \frac{\exp(-2S(\cosh\theta-1))}{n}$, we obtain
\begin{equation}\label{eq:largebound}
\begin{split}
\mathbf{P}&\left(\sup_{0\le t\le S}\sum_{(x,i)\in\mathcal{L}\setminus\mathcal{L}^l(w)}\exp(\theta(F(x,i)+Y^{x,i}_t-r)) > \frac{1}{n}\right) \\
&\le n\exp(2(\cosh\vartheta -1)S)\lim_{k\to\infty}\mathbf {E}\left[\sum_{(x,i)\in\mathcal{L}^{l+k}(w)\setminus\mathcal{L}^l(w)}\exp(\theta(F(x,i)-r))\right]\\
&\overset{\text{mon. conv.}}{=} n\exp(2(\cosh\vartheta -1)S)\mathbf {E}\left[\sum_{(x,i)\in\mathcal{L}\setminus\mathcal{L}^l(w)}\exp(\theta(F(x,i)-r))\right].
\end{split}
\end{equation}
By the assumption
\[
\mathbf{E}\left[\sum_{(x,i)\in\mathcal{L}}\exp(\theta(F(x,i)-r))\right] < \infty
\] 
we can apply dominated convergence to take the limit $l\to\infty$ inside the expectation, which yields
\[
\lim_{l\to\infty}\mathbf{P}\left(\sup_{0\le t\le S}\sum_{(x,i)\in\mathcal{L}\setminus\mathcal{L}^l(w)}\exp(\theta(F(x,i)+Y^{x,i}_t-r)) > \frac{1}{n}\right) = 0,
\]
because the sum 
\[
\sum_{(x,i)\in\mathcal{L}}\exp(\theta(F(x,i)-r))
\]
has finite expectation, thus in particular it is almost surely finite and hence its tailsums converge to $0$ almost surely.
In conclusion, this yields the claim that $\mathbf{P}(B^w)= 1$. Choosing $n=2$ yields that 
\[
B^w\subset\bigcap_{S=1}^\infty\bigcup_{l=1}^\infty\left\{\sup_{\substack{(x,i)\in\mathcal{L}\setminus\mathcal{L}^l(w),\\0\le t\le S}} F(x,i)+Y^{x,i}_t < r\right\}.
\] 
In particular, this means that for all $\omega\in B^w$ there is an $N(\omega,T)$ such that for all $l\ge N(\omega,T)$, none of the living parasites with initial position below $r-l$ have even reached $r$ by time $T$ and were just moving as simple random walks without any interaction. Hence for $k>0$ the configurations $X_t(w^l(w))$ and $X_t(w^{l+k}(w))$ only differ by the parasites with birthlabel in $\mathcal{L}^{l+k}(w)\cap(\mathcal{L}\setminus\mathcal{L}^l(w))$. Thus for all $\omega\in B,t\in[0,T]$ and $l\ge N(\omega,T),k\ge0$
\[
\begin{split}
d_\theta(X_t(w^{l}(w)),X_t(w^{l+k}(w))&= \sum_{(x,i)\in\mathcal{L}^{l+k}(w)\cap(\mathcal{L}\setminus\mathcal{L}^{l}(w))}\exp(\theta(F(x,i)+Y^{x,i}_t-r_t(w^{l}(w))) \\
&\le \sum_{(x,i)\in\mathcal{L}^{l+k}(w)\cap(\mathcal{L}\setminus\mathcal{L}^{l}(w))}\exp(\theta(F(x,i)+Y^{x,i}_t-r))\\&\le \sum_{(x,i)\in\mathcal{L}\setminus\mathcal{L}^{l}(w)}\exp(\theta(F(x,i)+Y^{x,i}_t-r)).
\end{split}
\]
By definition of $B^w$, for all $\omega\in B^w$ we can choose $S= \lfloor T+1\rfloor,n=\left\lceil\frac{1}\varepsilon\right\rceil$, there is an $N^\prime(\omega,T,\varepsilon) \ge N(\omega,T)$ such that for all $l\ge N^\prime(\omega,T,\varepsilon)$
\[
\sup_{0\le t\le S}\sum_{(x,i)\in\mathcal{L}\setminus\mathcal{L}^{l}(w)}\exp(\theta(F(x,i)+Y^{x,i}_t-r))\le \frac{1}{n},
\]
hence, for all
$k\ge0,t\in[0,T]$
\[
d_\theta(X_t(w^{l}(w)),X_t(w^{l+k}(w))) \le \varepsilon.
\]
which is the claim.
\end{proof}
\begin{remark}
 This shows that for all $\omega\in B^w$ the sequence \[\{(X_t(w^l(w(\omega)))(\omega))_{t\ge0}:l\in\N\}\] converges to a limit in $D([0,\infty),\mathbb{L}_\theta)$, which we denote by $(X_t(w)(\omega))_{t\ge0}$.
\end{remark}
By taking the limits $\lim_{l\to\infty} X_t(w^l(w)) = X_t(w)$ in \eqref{normexpect} and \eqref{frontupbound}, we obtain
\begin{equation}\label{frontupboundinfinite}
\mathbf{P}(r_t(w) - r > \gamma t\vert w) \le \e^{-c_{\gamma,\vartheta} t}f_\vartheta(w)\e^{-\vartheta r}
\end{equation}
and 
\[
\mathbf{E}[f_\vartheta(X_t(w))\vert w]\le \e^{\lambda_{2,\vartheta}t}f_\vartheta(w).
\]
For any $t\ge0$ we define linear operators on the space of bounded measurable functions $\B(\mathbb{L}_\theta)$ by
\[
\begin{split}
 S_t: \B(\mathbb{L}_\theta)&\to\B(\mathbb{L}_\theta)\\
 f&\mapsto\mathbf{E}[f(X_t(\cdot))]
\end{split}
\]
and observe that 
\[
\lVert S_t\rVert =\sup_{\lVert f\rVert \le 1}\sup_{w\in\mathbb{L}_\theta}\vert\mathbf{E}[f(X_t(w))]\vert \le \sup_{\lVert f\rVert \le 1}\mathbf{E}[\lVert f\rVert] = 1.
\]
However, considering the sequence $w_n = (0,\{0\}\times\{1,\dots,n\},\emptyset,x\mapsto x, 1)$ and $f: w\mapsto \mathds{1}_{r>0}$, it is easy to see that the map $t\mapsto S_tf$ is not continuous for general $f\in B(\mathbb{L}_\theta)$.
To show the Markov and then the strong Markov property of $(X_t(w))_{t\ge0}$, we will show that $(S_t)_{t\ge0}$ forms a semi-group on the class $\BUC(\mathbb{L}_\theta)$ of bounded and uniformly continuous functions, i.e.
\[
\begin{split}
 \forall t\ge0\forall f\in\BUC(\mathbb{L}_\theta)&: S_tf\in\BUC(\mathbb{L}_\theta) \\
 \forall s,t\ge0\forall f\in\BUC(\mathbb{L}_\theta)&: S_{s+t}f = S_{s}(S_tf).
\end{split}
\]
It is classically known, e.g., \cite[Theorem 2.1]{B1999}, that $\BUC(\mathbb{L}_\theta)$ are convergence determining, and hence the semi-group property shows the Markov property. The strong Markov property follows because $\BUC(\mathbb{L}_\theta)$ is a class of continuous functions, and the sample paths are càdlàg by construction.
We begin with introducing some notation to obtain a result of the dependence of $(X_t(w))_{t\ge0}$ from its initial configuration.
 \theoremstyle{definition}
 \theoremstyle{definition}\begin{definition}
 Let $w,w^\prime\in\mathbb{L}_\theta$ with $r=r^\prime,\iota=\iota^\prime$ and $F_{\vert_{\mathcal{G}}} = F^\prime_{\vert_{\mathcal{G}^\prime}}$. Then we call
 \[
 \overline{\mathcal{L}}(w,w^\prime) := \left\{(x,i)\in\mathcal{L}\cup\mathcal{L}^\prime: \begin{matrix}(x,i)\notin\mathcal{L}\text{ or }(x,i)\notin\mathcal{L}^\prime \text{ or }\\ (x,i)\in\mathcal{L}\cap\mathcal{L}^\prime \text{ and } F(x,i) \neq F^\prime(x,i)\end{matrix}\right\}
 \]
 the distinguishing parasites for $w$ and $w^\prime$.
 \end{definition}
\theoremstyle{plain}
\theoremstyle{plain}\begin{lemma}\label{BUC}
 For all $\varepsilon>0$ and $T\ge0$ there is a $\delta\in(0,1)$ such that for all $w,w^\prime\in\mathbb{L}_\theta$ with
 \[
 d_\theta(w,w^\prime) < \delta
 \]
 there is a $(\sigma(Y^{x,i}_s:0\le s\le t,(x,i)\in\overline{\mathcal{L}}(w,w^\prime)))_{t\ge0}$ stopping time $\tau$, such that 
 \begin{itemize}
 \item $\mathbf{P}(\tau \le T) < \varepsilon$,
 \item $\sup_{0\le t\le T}\mathds{1}_{\tau > T}d_\theta(X_t(w),X_t(w^\prime)) <\varepsilon$.
 \end{itemize}
 \end{lemma}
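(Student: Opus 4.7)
Because $\vert r-r^\prime\vert$, $\vert\iota-\iota^\prime\vert$ and the ghost sum $\sum \xi^\prime$ are nonnegative integers, the hypothesis $d_\theta(w,w^\prime)<\delta<1$ forces $r=r^\prime$, $\iota=\iota^\prime$ and $\mathcal{G}=\mathcal{G}^\prime$ with $F\vert_\mathcal{G}=F^\prime\vert_{\mathcal{G}^\prime}$, so all of the residual distance lives in the living-parasite sum $\sum_{(x,i)\in\overline{\mathcal{L}}(w,w^\prime)}\xi(w,w^\prime,x,i)<\delta$. Set
\[
\tilde F(x,i):=\max\bigl(\mathds{1}_{(x,i)\in\mathcal{L}}F(x,i),\,\mathds{1}_{(x,i)\in\mathcal{L}^\prime}F^\prime(x,i)\bigr).
\]
Since common distinguishing labels satisfy $\vert F(x,i)-F^\prime(x,i)\vert\ge 1$, a direct case check (one-sided distinguishing labels contribute exactly $\xi$ to both sides, while common distinguishing labels satisfy $\xi\ge(1-e^{-\theta})e^{\theta(\tilde F-r)}$) yields $e^{\theta(\tilde F(x,i)-r)}\le \xi(w,w^\prime,x,i)/(1-e^{-\theta})$ for every $(x,i)\in\overline{\mathcal{L}}(w,w^\prime)$, and therefore
\[
\sum_{(x,i)\in\overline{\mathcal{L}}(w,w^\prime)} e^{\theta(\tilde F(x,i)-r)}\le \frac{\delta}{1-e^{-\theta}}.
\]

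Next I would introduce two stopping times, both measurable with respect to $(\sigma(Y^{x,i}_s:s\le t,\,(x,i)\in\overline{\mathcal{L}}(w,w^\prime)))_{t\ge 0}$, namely
\[
\tau_1:=\inf\{t\ge 0:\exists (x,i)\in\overline{\mathcal{L}}(w,w^\prime),\ \tilde F(x,i)+Y^{x,i}_t>r\},\quad \tau_2:=\inf\{t\ge 0:D_t\ge\varepsilon\},
\]
with $D_t:=\sum_{(x,i)\in\overline{\mathcal{L}}(w,w^\prime)}\xi(w,w^\prime,x,i)\,e^{\theta Y^{x,i}_t}$, and put $\tau:=\tau_1\wedge\tau_2$. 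The dynamical heart of the argument is that for $t<\tau_1$ no distinguishing parasite has crossed the initial front $r$; since $r_t(w)$ is nondecreasing with $r_t(w)\ge r$, none has reached the actual front in either system, so the construction used in Proposition \ref{unifconv} produces the same jump, infection and birth events in both systems up to $\tau_1$. In particular $r_t(w)=r_t(w^\prime)$, $\iota_t$, $\mathcal{G}_t$ and the common-label trajectories coincide, and using $e^{-\theta(r_t-r)}\le 1$,
\[
d_\theta(X_t(w),X_t(w^\prime))=e^{-\theta(r_t-r)}\sum_{(x,i)\in\overline{\mathcal{L}}(w,w^\prime)}e^{\theta Y^{x,i}_t}\xi(w,w^\prime,x,i)\le D_t\quad\text{for }t<\tau_1.
\]
Each summand $\xi(w,w^\prime,x,i)\,e^{\theta Y^{x,i}_t-2t(\cosh\theta-1)}$ is a nonnegative martingale in the filtration above, so $D_t\,e^{-2t(\cosh\theta-1)}$ is a nonnegative martingale with value $D_0\le\delta$ at $t=0$; this already ensures $D_t<\infty$ almost surely, even when $\overline{\mathcal{L}}(w,w^\prime)$ is countably infinite.

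Finally I would bound $\mathbf{P}(\tau\le T)$. A union bound together with Doob's maximal inequality applied to each walk's exponential martingale yields
\[
\mathbf{P}(\tau_1\le T)\le\sum_{(x,i)\in\overline{\mathcal{L}}(w,w^\prime)}\mathbf{P}\bigl(\sup_{0\le t\le T}e^{\theta(\tilde F(x,i)+Y^{x,i}_t-r)}\ge e^{\theta}\bigr)\le \frac{e^{2T(\cosh\theta-1)-\theta}}{1-e^{-\theta}}\,\delta,
\]
and Doob applied to $D_te^{-2t(\cosh\theta-1)}$ gives $\mathbf{P}(\tau_2\le T)\le \delta\,e^{2T(\cosh\theta-1)}/\varepsilon$. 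Choosing $\delta=\delta(\varepsilon,T,\theta)$ small enough so that both bounds are $<\varepsilon/2$ achieves $\mathbf{P}(\tau\le T)<\varepsilon$; on the complementary event $\{\tau>T\}\subset\{\tau_1>T\}\cap\{\tau_2>T\}$ the display above and the definition of $\tau_2$ give $\mathds{1}_{\tau>T}\sup_{0\le t\le T}d_\theta(X_t(w),X_t(w^\prime))\le \sup_{0\le t\le T}D_t<\varepsilon$, which finishes the proof. The main obstacle, and where I would spend the most care, is rigorously justifying the synchronous-dynamics claim up to $\tau_1$: one must trace through the stepwise construction of $X_\cdot(w)$ and $X_\cdot(w^\prime)$ and verify that, as long as no distinguishing parasite occupies $(r,\infty)$, the common parasites' jump clocks, the outcomes of infection attempts (driven by the same $\iota_t$, $\mathbf{A}$, $\mathbf{I}$) and the labels of newborn parasites all agree in the two systems. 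A secondary subtlety is the possible infiniteness of $\overline{\mathcal{L}}(w,w^\prime)$ for $w,w^\prime\in\mathbb{L}_\theta\setminus\mathbb{A}$, but this is controlled uniformly by the absolute convergence of $D_0\le\delta$ and the linearity of all the Doob estimates above.
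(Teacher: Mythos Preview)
Your proof is correct and follows the paper's approach: both use that until a distinguishing parasite reaches $r+1$ the two systems evolve synchronously, and control the residual distance and the crossing probability via Doob's inequality on exponential martingales of the distinguishing walks; the paper packages this into a single stopping time $\tau^n=\inf\{t:\sum_{(x,i)\in\overline{\mathcal{L}}}e^{\theta(\tilde F(x,i)+Y^{x,i}_t-r)}>1/n\}$ (which simultaneously forces each term below $1$, hence no crossing, and bounds $d_\theta$), whereas you split this into $\tau_1\wedge\tau_2$ and track the slightly sharper quantity $D_t$, but this is purely organizational. One slip to fix: your compact formula $\tilde F=\max(\mathds{1}_{\mathcal{L}}F,\mathds{1}_{\mathcal{L}'}F')$ returns $0$ on one-sided labels with negative position, which would spoil the bound $\sum e^{\theta(\tilde F-r)}\le\delta/(1-e^{-\theta})$; your case analysis makes clear you intend the piecewise definition ($\tilde F=F$ on $\mathcal{L}\setminus\mathcal{L}'$, $\tilde F=F'$ on $\mathcal{L}'\setminus\mathcal{L}$, $\tilde F=F\vee F'$ on common distinguishing labels), and with that correction everything goes through.
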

\begin{proof}
If $d_\theta(w,w^\prime)<\delta$, then $\delta < 1$ implies $r=r^\prime,\mathcal{G}=\mathcal{G}^\prime,F_{\vert_{\mathcal{G}}}= F^\prime_{\vert_{\mathcal{G}}},\iota=\iota^\prime$, which we assume for the rest of the proof. We define $\widetilde{F}(w,w^\prime):\mathcal{L}\cup\mathcal{L}^\prime\cup\mathcal{G}\to\Z$ by
\[
(x,i)\mapsto\begin{cases}F(x,i),&(x,i)\in\mathcal{G}\\F(x,i),&(x,i)\in\mathcal{L}\cup\mathcal{L}^\prime\setminus\mathcal{L}^\prime\\F^\prime(x,i),&(x,i)\in\mathcal{L}\cup\mathcal{L}^\prime\setminus\mathcal{L}\\F(x,i)\vee F^\prime(x,i),&(x,i)\in\mathcal{L}\cap\mathcal{L}^\prime \end{cases}.
\]
For $n\in\N$ we define
\[
\begin{split}
A_{T}^n&:=\left\{\sup_{0\le t\le T}\sum_{(x,i)\in \overline{\mathcal{L}}(w,w^\prime)}\exp(\theta(\widetilde{F}(w,w^\prime)(x,i)+Y^{x,i}_t-r))\le \frac{1}{n}\right\}.
\end{split}
\]
 For $\omega\in A_{T}^n$, none of the frogs that distinguish $w$ and $w^\prime$ reached $r+1$ by time $T$ in either $X_t(w)$ or $X_t(w^\prime)$; therefore, these configurations only differ by the current positions of frogs in $\overline{\mathcal{L}}(w,w^\prime)$. Hence, $r_t(w)(\omega) = r_t(w^\prime)(\omega)$ and 
\begin{align*}
d_\theta(X_t(w),X_t(w^\prime)) &= \sum_{(x,i)\in(\mathcal{L}\cup\mathcal{L}^\prime)\setminus\mathcal{L}^\prime}\e^{\theta(F(x,i) + Y^{x,i}_t- r_t(w))} \\ &\quad+\sum_{(x,i)\in(\mathcal{L}\cup\mathcal{L}^\prime)\setminus\mathcal{L}}\e^{\theta(F^\prime(x,i) + Y^{x,i}_t- r_t(w))} \\
&\quad+\sum_{(x,i)\in \mathcal{L}\cap\mathcal{L}^\prime\cap\overline{\mathcal{L}}(w,w^\prime)}\e^{\theta(Y^{x,i}_t-r_t(w))}\left\vert\e^{\theta F(x,i)}-\e^{\theta F^\prime(x,i)}\right\vert \\
&\le \sum_{(x,i)\in \overline{\mathcal{L}}(w,w^\prime)}\exp(\theta(\widetilde{F}(w,w^\prime)(x,i)+Y^{x,i}_t-r)).
\end{align*}
This implies
\[
\sup_{0\le t\le T}d_\theta(X_t(w),X_t(w^\prime))\mathds{1}_{A_{T}^n} \le \frac{1}{n}.
\]
Now we define 
\[
\tau^n := \inf\left\{t\ge 0:\sum_{(x,i)\in \overline{\mathcal{L}}(w,w^\prime)}\exp(\theta(\widetilde{F}(w,w^\prime)(x,i)+Y^{x,i}_t-r))> \frac{1}{n}\right\}
\]
which clearly is an $(\sigma(Y^{x,i}_s:0\le s\le t,(x,i)\in\overline{\mathcal{L}}(w,w^\prime)))_{t\ge0}$-stopping time and note that
\[
 \{\tau^n > T\} = A_{T}^n.
\]
Analogously to the estimation \eqref{eq:largebound} in the proof of Proposition \ref{unifconv}, we have
\[
\begin{split}
\mathbf{P}\left(\Omega\setminus A_{T}^n\right) &\le n\exp(2T(\cosh\theta-1)) \sum_{(x,i)\in \overline{\mathcal{L}}(w,w^\prime)}\exp(\theta(\widetilde{F}(w,w^\prime)(x,i)-r)) \\
&\le n\exp(2T(\cosh\theta -1))\frac{d_\theta(w,w^\prime)}{1-\e^{-\theta}},
\end{split}
\]
where the last inequality follows from
\[
\vert\e^{\theta F(x,i)}-\e^{\theta F^\prime(x,i)}\vert \ge (1-\e^{-\theta})\e^{\theta(F(x,i)\vee F^\prime(x,i))}.
\]
Hence choosing $n:=\left\lfloor 1 +\frac{1}{\varepsilon}\right\rfloor$ and 
\[
\delta = \frac{1-\e^{-\theta}}{n^2\exp(2T(\cosh\theta -1))}
\]
yields the claim.
\end{proof}
With this refinement we can show that $S_t$ maps $\BUC(\mathbb{L}_\theta)$ onto itself.
\theoremstyle{plain}\begin{lemma}\label{BUCinvariant}
 Let $f\in\BUC(\mathbb{L}_\theta)$ and $T\ge 0$. Then for any $\varepsilon>0$ there is a $\delta>0$ such that for all $w,w^\prime\in\mathbb{L}_\theta$ 
 \[
 d_\theta(w,w^\prime)<\delta~~~\Rightarrow~~~\sup_{0\le t\le T}\left\vert\mathbf{E}[f(X_t(w))-f(X_t(w^\prime))]\right\vert < \varepsilon.
 \]
 In particular, $S_tf\in\BUC(\mathbb{L}_\theta)$ for any $t\ge0$.
\end{lemma}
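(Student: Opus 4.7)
The plan is to reduce this to the refined coupling estimate already proved in Lemma \ref{BUC}. Given $f\in\BUC(\mathbb{L}_\theta)$ and $T\ge 0$, fix $\varepsilon>0$. Because $f$ is uniformly continuous and bounded, we can pick $\varepsilon_1>0$ with $d_\theta(u,u')<\varepsilon_1\Rightarrow |f(u)-f(u')|<\varepsilon/2$, and set $M:=\lVert f\rVert_\infty$. Now apply Lemma \ref{BUC} with target $\varepsilon':=\min\{\varepsilon_1,\varepsilon/(4M+1)\}$ and horizon $T$ to obtain a $\delta>0$ such that whenever $d_\theta(w,w')<\delta$, there exists a stopping time $\tau$ satisfying $\mathbf{P}(\tau\le T)<\varepsilon'$ and $\sup_{0\le t\le T}\mathds{1}_{\tau>T}\,d_\theta(X_t(w),X_t(w'))<\varepsilon'$.

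The key step is then a routine two-piece split for every $t\in[0,T]$:
\[
\bigl|\mathbf{E}[f(X_t(w))-f(X_t(w'))]\bigr|\le \mathbf{E}\bigl[|f(X_t(w))-f(X_t(w'))|\mathds{1}_{\tau>T}\bigr]+\mathbf{E}\bigl[|f(X_t(w))-f(X_t(w'))|\mathds{1}_{\tau\le T}\bigr].
\]
On $\{\tau>T\}$, Lemma \ref{BUC} forces $d_\theta(X_t(w),X_t(w'))<\varepsilon_1$, so the integrand is bounded by $\varepsilon/2$ by uniform continuity of $f$; on $\{\tau\le T\}$ we just bound the integrand by $2M$ and use $\mathbf{P}(\tau\le T)<\varepsilon/(4M+1)$. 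Combining, the supremum over $t\in[0,T]$ is at most $\varepsilon/2+2M\cdot\varepsilon/(4M+1)<\varepsilon$, giving the claim uniformly in $w,w'$ with $d_\theta(w,w')<\delta$.

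The consequence $S_tf\in\BUC(\mathbb{L}_\theta)$ is immediate: boundedness follows from $\lVert S_tf\rVert_\infty\le M$, and the above applied at the single time $t\le T$ yields uniform continuity of $w\mapsto S_tf(w)=\mathbf{E}[f(X_t(w))]$. No obstacle of substance arises here; the entire content of the lemma has been packaged into Lemma \ref{BUC}, whose refinement to produce a stopping time $\tau$ (rather than merely a high-probability event) was chosen precisely so that a single $\delta$ works simultaneously for all $t\in[0,T]$.
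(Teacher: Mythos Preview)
Your proof is correct and follows essentially the same approach as the paper's: both pick a modulus of uniform continuity for $f$, invoke Lemma~\ref{BUC} with a suitably small target to obtain the stopping time $\tau$, and then split $\mathbf{E}[f(X_t(w))-f(X_t(w'))]$ into the pieces $\{\tau>T\}$ and $\{\tau\le T\}$, bounding the first by uniform continuity and the second by $2\lVert f\rVert_\infty\,\mathbf{P}(\tau\le T)$. The only difference is cosmetic bookkeeping in the choice of constants ($\varepsilon/(4M+1)$ versus the paper's $\varepsilon/(4\lVert f\rVert_\infty)$).
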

\begin{proof}
Let $f\in\BUC(\mathbb{L}_\theta)$ and $\varepsilon > 0$. Since $f$ is uniformly continuous, there is an $\varepsilon_0\in\left(0,\frac{\varepsilon}{4\lVert f\rVert_\infty}\right)$ such that $d_\theta(w,w^\prime)<\varepsilon_0$ implies $\vert f(w)-f(w^{\prime})\vert < \frac{\varepsilon}{2}$. Now by Lemma \ref{BUC} we find a $\delta\in (0,1)$ such that for all $w,w^{\prime}\in\mathbb{L}_\theta$ with $d_\theta(w,w^{\prime})< \delta$ there is a random time $\tau$ with
\[
\mathbf{P}(\tau \le T) < \varepsilon_0
\]
and 
\[
\sup_{0\le t\le T}\mathds{1}_{\tau > T}d_\theta(X_t(w),X_t(w^{\prime})) < \varepsilon_0.
\]
Then for any $w,w^{\prime}\in\mathbb{L}_\theta$ with $d_\theta(w,w^{\prime})<\delta$ and all $t\in[0,T]$ we have
\[
\begin{split}
\vert\mathbf{E}[f(X_t(w))-f(X_t(w^{\prime}))]\vert &\le \mathbf{E}[\mathds{1}_{\tau > T}\vert f(X_t(w))-f(X_t(w^{\prime}))\vert] + \mathbf{E}[\mathds{1}_{\tau \le T}2\lVert f\rVert_\infty] \\
& < \frac{\varepsilon}{2} + 2\lVert f\rVert_\infty \varepsilon_0 < \varepsilon.
\end{split}
\]
Since $w\mapsto \mathbf{E}[f(X_t(w))]$ is clearly bounded by $\lVert f\rVert_\infty$, this finishes the proof.
\end{proof}
\theoremstyle{plain}\begin{lemma}\label{BUCsemigroup}
 For $f\in\BUC(\mathbb{L}_\theta),w\in\mathbb{L}_\theta$ and $s,t\ge0$ we have
 \[
 (S_{s+t}f)(w) = (S_s(S_tf))(w).
 \]
\end{lemma}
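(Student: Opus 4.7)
The plan is to first verify the semigroup identity on the dense subset $\mathbb{A}\subset\mathbb{L}_\theta$ of finite initial configurations, where the process admits an explicit pathwise construction in terms of only finitely many of the driving variables in $\mathbf{Y},\mathbf{I},\mathbf{A}$, and then extend the identity to all $w\in\mathbb{L}_\theta$ by continuity.

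For the first step, fix $w\in\mathbb{A}$. By construction, the process $(X_u(w))_{u\ge 0}$ is built from the jump times $(\sigma_n)_{n\ge 0}$ and at each $\sigma_n$ it only draws on the random walks $Y^{x,i}$, offspring numbers $A_x$ and immunities $I_x$ indexed by finitely many labels determined by the history up to $\sigma_n$. In particular, for any fixed $t\ge 0$, the state $X_t(w)$ lies $\mathbf{P}'$-almost surely in $\mathbb{A}$, and the post-$t$ evolution is measurable with respect to the sigma-algebra generated by $\{Y^{x,i}_{t+\cdot}-Y^{x,i}_t:(x,i)\text{ active after }t\}$ together with the remaining, unused entries of $\mathbf{I},\mathbf{A}$. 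Using the strong Markov property of the independent random walks and the independence of the i.i.d.~sequences $\mathbf{I},\mathbf{A}$ of everything else, this yields, for any bounded measurable $g$,
\[
\mathbf{E}[g(X_{t+s}(w))\mid\mathcal{H}_t]=(S_s g)(X_t(w))\quad\mathbf{P}'\text{-a.s.},
\]
where $\mathcal{H}_t:=\sigma(X_u(w):0\le u\le t)$. Applying this with $g=f$ and taking expectations on both sides gives $(S_{s+t}f)(w)=(S_s(S_tf))(w)$ for $w\in\mathbb{A}$, where the right-hand side is well defined because Lemma \ref{BUCinvariant} guarantees $S_tf\in\BUC(\mathbb{L}_\theta)$.

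For the second step, take an arbitrary $w\in\mathbb{L}_\theta$. The truncations $w^l(w)\in\mathbb{A}$ satisfy $w^l(w)\to w$ in $(\mathbb{L}_\theta,d_\theta)$ as $l\to\infty$. By Lemma \ref{BUCinvariant} applied to $f$ and to $S_tf$ (which is itself in $\BUC(\mathbb{L}_\theta)$), both $w\mapsto (S_{s+t}f)(w)$ and $w\mapsto (S_s(S_tf))(w)$ are continuous on $\mathbb{L}_\theta$. Since the identity $(S_{s+t}f)(w^l(w))=(S_s(S_tf))(w^l(w))$ holds for every $l$ by the first step, passing to the limit $l\to\infty$ yields the desired equality at $w$.

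The only nontrivial point is verifying the Markov property for $w\in\mathbb{A}$ in the form used above, which reduces to the observation that, by the explicit construction, $(X_u(w))_{0\le u\le t}$ is determined by random walk increments on $[0,t]$ and by those entries of $\mathbf{I},\mathbf{A}$ triggered by jumps before $t$, whereas $(X_{t+u}(w))_{u\ge 0}$ only uses the post-$t$ increments of random walks attached to parasites alive at time $t$ (plus fresh entries of $\mathbf{I},\mathbf{A}$ at sites $>r_t(w)$). The remaining pathwise construction on $[t,t+s]$, started from $X_t(w)$ and driven by these independent fresh randomness sources, produces exactly $X_s(X_t(w))$ in distribution, so its expectation is $(S_s g)(X_t(w))$. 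The continuity extension then carries the identity from $\mathbb{A}$ to all of $\mathbb{L}_\theta$, and the separating property of $\BUC(\mathbb{L}_\theta)$ (used implicitly throughout Section \ref{Construction}) makes $(S_t)_{t\ge 0}$ into a well-defined Markov semigroup.
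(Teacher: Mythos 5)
Your proof is correct and follows essentially the same strategy as the paper: establish the semigroup identity first on the dense set $\mathbb{A}$ of finite configurations, where the explicit construction makes the Markov property immediate, then extend to arbitrary $w\in\mathbb{L}_\theta$ using the approximations $w^l(w)\to w$ together with continuity of both sides. The one small streamlining is in the extension step, where you simply invoke Lemma \ref{BUCinvariant} twice (to get $S_tf\in\BUC(\mathbb{L}_\theta)$ and then $S_s(S_tf)\in\BUC(\mathbb{L}_\theta)$), whereas the paper re-derives the bound on $\vert S_sS_tf(w^l(w))-S_sS_tf(w)\vert$ by hand via the stopping time $\tau$ from Lemma \ref{BUC}; both routes amount to the same continuity estimate, so the arguments are essentially identical.
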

\begin{proof}
First we note, that for $w\in\mathbb{A}$ we already know that $(X_t(w))_{t\ge0}$ is a Markov process in $\mathbb{A}$, hence
\[
S_{s+t}f(w) = (S_s(S_tf))(w)
\]
for any $w\in\mathbb{A},f\in\BUC(\mathbb{L}_\theta)$. \\
 Let $\varepsilon > 0$ and choose $\delta_1\in\left(0,\frac{\varepsilon}{2\lVert f\rVert}\right)$ as in Lemma \ref{BUCinvariant} with $T=s+t$. Then choose $\delta_2 \in(0 ,\delta_1)$ as in Lemma \ref{BUC} with $T=s+t,\varepsilon = \delta_1$. Finally choose $l\in\N$ such that $d_\theta(w,w^l(w))< \delta_2$ and compute
 \begin{align*}
 \vert S_{s+t}f(w) - S_sS_tf(w)\vert &\le\vert S_{s+t}f(w)-S_{s+t}f(w^l(w))\vert \\&\quad+ \vert S_{s+t}f(w^l(w))-S_sS_tf(w^l(w))\vert\\
 &\quad+\vert S_sS_tf(w^l(w)) - S_sS_tf(w)\vert\\
 &\le \varepsilon + 0 + \vert S_sS_tf(w^l(w)) - S_sS_tf(w)\vert.
 \end{align*}
 Now to estimate the final term we choose $\tau$ as in Lemma \ref{BUC} corresponding to $w,w^l(w)$ and compute
 \begin{align*}
 \vert S_sS_tf(w^l(w)) - S_sS_tf(w)\vert & = \left\vert\mathbf{E}\left[S_tf(X_s(w^l(w)))-S_tf(X_s(w))\right] \right\vert\\
 &\le \mathbf{E}\left[\mathds{1}_{\tau>t+s}\left\vert S_tf(X_s(w^l(w)))-S_tf(X_s(w))\right\vert\right] \\
 & \quad + \mathbf{E}\left[\mathds{1}_{\tau\le t+s}\left\vert S_tf(X_s(w^l(w)))-S_tf(X_s(w))\right\vert\right] \\
 &\le \varepsilon + 2\lVert f\rVert \delta_1 < 2\varepsilon.
 \end{align*}
 Because these estimations hold for any $\varepsilon>0$, this concludes the proof.
\end{proof}
This shows that for any $w\in\mathbb{L}_\theta$ the process $(X_t(w))_{t\ge0}$ is a Markov process. To show the strong Markov property we simply use that its semi-group preserves the separating class $\BUC(\mathbb{L}_\theta)$ of functions that, in particular, are continuous.
\theoremstyle{plain}\begin{theorem}\label{StrongMarkov}
 For $w\in\mathbb{L}_\theta$ the process $(X_t(w))_{t\ge0}$ is strong Markov with respect to its natural filtration and almost surely has càdlàg sample paths.
\end{theorem}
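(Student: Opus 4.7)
The plan is to deduce Theorem \ref{StrongMarkov} from Proposition \ref{unifconv} together with the three lemmas on the action of $(S_t)_{t \ge 0}$ on $\BUC(\mathbb{L}_\theta)$. The càdlàg property is essentially immediate: each approximation $(X_t(w^l(w)))_{t \ge 0}$ is by construction piecewise constant between the jump times $\sigma_n$, which tend to $\infty$ almost surely (as established in the construction paragraph following Proposition \ref{polishspace}), and so lies in $D([0,\infty),\mathbb{L}_\theta)$. Proposition \ref{unifconv} then provides, on a set of full measure, uniform convergence on compact intervals to $(X_t(w))_{t \ge 0}$, and uniform limits of càdlàg functions are càdlàg.

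For the ordinary Markov property with respect to the natural filtration $\mathcal{F}^X_t := \sigma(X_s(w):0\le s\le t)$, I would combine the semigroup identity of Lemma \ref{BUCsemigroup} with two facts: $(\mathbb{L}_\theta, d_\theta)$ is Polish (Proposition \ref{polishspace}), so $\BUC(\mathbb{L}_\theta)$ is convergence-determining for Borel probability measures (see e.g.~\cite[Theorem 2.1]{B1999}); and the construction realises $(X_{s+u}(w))_{u \ge 0}$ as a measurable functional of $X_s(w)$ together with the \emph{increments} $(Y^{x,i}_{s+u}-Y^{x,i}_s)_{(x,i)\in\mathcal{L}_s\cup\mathcal{G}_s,\,u\ge 0}$ and the as-yet-unused variables $(A_x, I_x, Y^{x,i})_{x>r_s(w),\,i\in\N}$. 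All of these are independent of $\mathcal{F}^X_s$ by the strong Markov property of the individual random walks and the independence of the collections $\mathbf{A},\mathbf{I},\mathbf{Y}$. This yields
\[
\mathbf{E}\bigl[f(X_{s+t}(w)) \bigm| \mathcal{F}^X_s\bigr] = (S_t f)(X_s(w))
\]
first for $f \in \BUC(\mathbb{L}_\theta)$ and then, by a standard monotone-class argument, for all bounded measurable $f$.

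For the strong Markov property, I would use the usual dyadic approximation. Given a bounded stopping time $\tau$ for $(\mathcal{F}^X_t)_{t\ge 0}$, set $\tau_n := 2^{-n}\lceil 2^n \tau\rceil$, which is a stopping time taking countably many values with $\tau_n \downarrow \tau$. Applying the Markov property on each atom $\{\tau_n = k2^{-n}\}$ yields $\mathbf{E}[f(X_{\tau_n+t}(w)) \mid \mathcal{F}^X_{\tau_n}] = (S_t f)(X_{\tau_n}(w))$ for $f \in \BUC(\mathbb{L}_\theta)$. Right-continuity of the paths gives $X_{\tau_n}(w) \to X_\tau(w)$ and $X_{\tau_n+t}(w) \to X_{\tau+t}(w)$ almost surely; continuity of $f$ and of $S_t f$ (the latter by Lemma \ref{BUCinvariant}), together with boundedness, lets dominated convergence pass the limit through both sides, giving the strong Markov identity on $\BUC(\mathbb{L}_\theta)$ and hence, once more by monotone class, for every bounded measurable $f$. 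The main obstacle I anticipate is the independence step in the Markov argument: one must verify carefully that the parts of $\mathbf{Y},\mathbf{A},\mathbf{I}$ consumed by $(X_u(w))_{u\le s}$ are $\mathcal{F}^X_s$-measurable while their complement remains independent of $\mathcal{F}^X_s$. This is transparent when $w\in\mathbb{A}$, since only finitely many of these variables are used up to any deterministic time, and lifts to general $w\in\mathbb{L}_\theta$ through the almost sure uniform convergence $X_t(w^l(w)) \to X_t(w)$ of Proposition \ref{unifconv} combined with the a.s.~finiteness of $r_s(w)$ and $\mathcal{G}_s(w)$.
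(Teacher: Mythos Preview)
Your proposal is correct and follows essentially the same route as the paper: càdlàg via uniform convergence of the finite approximations (Proposition \ref{unifconv}), the ordinary Markov property from the construction together with the semigroup identity of Lemma \ref{BUCsemigroup} and the fact that $\BUC(\mathbb{L}_\theta)$ is convergence-determining, and the strong Markov property by approximating a stopping time from above by discrete-valued stopping times and passing to the limit using right-continuity of paths and continuity of $S_tf$ (Lemma \ref{BUCinvariant}). The paper's proof is slightly terser---it treats the ordinary Markov property as already established right after Lemma \ref{BUCsemigroup} and does not spell out the independence-of-increments step you describe---but the underlying argument is the same.
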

\begin{proof}
By construction, $(X_t(w))_{t\ge0}$ has càdlàg sample paths for all $w\in\mathbb{A}$, and by uniform convergence,
\[
\lim_{l\to\infty} \sup_{0\le t\le T}d_\theta(X_t(w),X_t(w^l(w))) = 0,
\]
so also $(X_t(w))_{t\ge 0}$ has càdlàg sample paths. \\
Let $\sigma$ be an almost surely finite $(\mathcal{F}_t(w))_{t\ge0}$ stopping time; then there is a sequence $(\sigma_n)_{n\in\N}$ of $(\mathcal{F}_t(w))_{t\ge0}$ stopping times with a countable range, such that $\sigma_n\downarrow\sigma$ for $n\to\infty$. Because each $\sigma_n$ has a countable range, we obtain that $(X_t(w))_{t\ge0}$ is strong Markov at $\sigma_n$. Using $\sigma_n \ge \sigma$ we obtain $\mathcal{F}_\sigma(w)\subset\mathcal{F}_{\sigma_n}(w)$ and hence for each $f\in\BUC(\mathbb{L}_\theta)$ the right continuity of $t\mapsto X_t(w)$ and the continuity of $w\mapsto \mathbf{E}[f(X_t(w))]$ yield
\[
\begin{split}
\mathbf{E}[f(X_{\sigma+t}(w))\vert\mathcal{F}_\sigma(w)](\omega) &= \lim_{n\to\infty}\mathbf{E}[f(X_{\sigma_n+t}(w))\vert\mathcal{F}_\sigma(w)](\omega)\\
&= \lim_{n\to\infty}\mathbf{E}[\mathbf{E}[f(X_{\sigma_n+t}(w))\vert\mathcal{F}_{\sigma_n}(w)]\vert\mathcal{F}_\sigma(w)](\omega) \\
 &= \lim_{n\to\infty}\mathbf{E}\left[\left.\int_\Omega f(X_t(X_{\sigma_n}(w))(\omega^\prime))\diff\mathbf{P}(\omega^\prime)\right\vert\mathcal{F}_\sigma(w)\right](\omega) \\
 &=\mathbf{E}\left[\left.\int_\Omega f(X_t(X_{\sigma}(w))(\omega^\prime))\diff\mathbf{P}(\omega^\prime)\right\vert\mathcal{F}_\sigma(w)\right](\omega) \\
 &= \int_\Omega f(X_t(X_{\sigma}(w)(\omega)) (\omega^\prime))\diff\mathbf{P}(\omega^\prime).
\end{split}
\]
This concludes the proof, because $\BUC(\mathbb{L}_\theta)$ is separating.
 \end{proof}

 \subsection{Untagged system}\label{Construction:unt}
In this subsection we construct the untagged system as described at the beginning of this section. In the construction we only consider finite initial configurations. The extension to infinite initial configurations follows by similar ideas as in the previous section. In Proposition \ref{prop:untaged_mon} we show that this coupling is monotone in the initial configuration.\\
 In this construction we no longer keep track of the birthplaces of parasites and just store the current configuration of all parasites. Thus the state space of configurations will be given by
 \[
 \mathbb{S}_\theta:= \left\{(r,\eta,\overline{\eta},\iota)\in\Z\times\N_0^{\Z}\times\N_0^{\Z}\times\N: \begin{matrix}\supp\eta\subset(-\infty,r], \sum_{x\in\Z}\overline{\eta}(x) < \infty,\\\sum_{x\le r}\eta(x)\e^{\theta (x-r)} < \infty\end{matrix}\right\}.
 \]
 The front is given by $r$; for $x\in\Z$, the number of living parasites on site $x$ is given by $\eta(x)$, and the number of ghost parasites on $x$ is given by $\overline{\eta}(x)$, and finally the immunity of the host at $r+1$ is given by $\iota$.
To construct the process we will use a collection of Poisson point processes to sample the jumps for each site. To this end, let $\{P_{x,n}^{\leftarrow},P_{x,n}^{\rightarrow}:x\in\Z,n\in\N\}$ and $\{G_{x,n}^\leftarrow,G_{x,n}^\rightarrow:x\in\Z,n\in\N\}$ be independent collections of independent Poisson point processes on $\R$ with intensity $1$ and set
\[
P(x,n) := \sum_{k=1}^n P^{\leftarrow}_{x,k} + P^{\rightarrow}_{x,k},~~G(x,n) := \sum_{k=1}^n G^{\leftarrow}_{x,k} + G^{\rightarrow}_{x,k}
\]
which are independent Poisson point processes on $\R$ with intensity $2n$. By convention we set $P(x,0) := G(x,0) := \emptyset$. We assume that these processes are defined on the same probability space as $\mathbf{A},\mathbf{I}$. \\ For $l,r\in \Z$, a configuration $\eta\in\N_0^{\Z},\overline{\eta}\in\N_0^{\Z}$ with $\supp \eta ,\supp\overline{\eta}\subset[l,r]$, $\iota\in\N$, and a time $t_0\in\R$, we define the process $(\zeta_t(r,\eta,\overline{\eta},\iota;t_0))_{t\ge t_0} $ given by the tuple
\[ (r_t(r,\eta,\overline{\eta},\iota;t_0),\eta_t(r,\eta,\overline{\eta},\iota;t_0),\overline{\eta}_t(r,\eta,\overline{\eta},\iota;t_0)\iota_t(r,\eta,\overline{\eta},\iota;t_0))_{t\ge t_0}\]
as follows. The collection $\{P_{x,n}^{\leftarrow}:x\in\Z,n\in\N\}$ gives the time points at which a living parasite jumps from $x$ to $x-1$, where a jump time $t\in P_{x,n}$ is only allowed to be used if $\eta_{t-}(x) \ge n$. Analogously, the times that a living parasite jumps from a site $x$ to $x+1$ are given by the collection $\{P_{x,n}^{\rightarrow}:x\in\Z,n\in\N\}$. The collection $\{G_{x,n}^{\leftarrow},G_{x,n}^\rightarrow:x\in\Z,n\in\N\}$ determines the jumps of ghost parasites in the same way. Whenever for some time $t$ a jump of a living parasite onto the site $r_{t-} + 1$ happens, we use $\iota_{t-}$ to determine the outcome of the infection attempt. If $\iota_{t-} = 1$, then \[r_t = r_{t-} + 1,\eta_t := \eta_{t-} -\delta_{r_{t-}} + A_{r_t}\delta_{r_t}, \overline{\eta}_t := \overline{\eta}_t + \delta_{r_t}\text{ and } \iota_t := I_{r_t+1}.\] Otherwise, simply \[r_t = r_{t-},\iota_t = \iota_{t-} -1, \overline{\eta}_t := \overline{\eta}_t + \delta_{r_t+1}\text{ and }\eta_t = \eta_{t-}-\delta_{r_t}.\] 
It is clear that the so-constructed process $(r_t(r,\eta,\overline{\eta},\iota; 0))_{t\ge0}$ has the same distribution as the process $(r_t(w))_{t\ge0}$ constructed from the tagged system in the previous Section \ref{Construction} with $w$ given by
\[
(r,\{(x,i)\in\Z\times\N: 1\le i \le \eta(x)\},\{(x,i)\in\Z\times\N:1\le i\le \overline{\eta}(x)\},(x,i)\mapsto x,\iota).
\]
Also, it is clear that the resulting process is a strong Markov process with respect to its natural filtration and also with respect to the filtration
\[
\sigma(\mathbf{A},\mathbf{I},\zeta,\{[0,t]\cap P(x,n),[0,t]\cap G(x,n):x\in\Z,n\in\N\}).
\]
In this coupling we have the following monotonicity property.
\begin{proposition}\label{prop:untaged_mon}
 Let $t_0\in\R,l,r\in\Z$ and $\eta^1,\eta^2\in\N_0^{\Z}$ with \[\supp \eta_1,\supp \eta_2\subset(l,r]\] such that
 \[
 \sum_{k\ge x} \eta^1(k) \le \sum_{k\ge x}\eta^2(k)
 \]
 for all $x\in\Z$. In particular, we allow $t_0,r,\eta^1,\eta^2$ to be random and the conditions above to hold almost surely. Let $(r_t^1,\eta_t^1,\iota_t^1)_{t\ge t_0}$ be the process with initial configuration $(r,\eta^1,I_{r+1})$ at time $t_0$ and let $(r_t^2,\eta_t^2,\iota_t^2)_{t\ge t_0}$ be the process with initial configuration $(r,\eta^2,I_{r+1})$ at time $t_0$. Then almost surely for all $t\ge t_0$ we have
 \[
 r_t^1 \le r_t^2.
 \]
\end{proposition}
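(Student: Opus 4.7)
The plan is to prove the monotonicity pathwise using the coupling through the common collection of Poisson point processes $\{P^{\star}_{x,n}\}$ in Section~\ref{Construction:unt}. By the strong Markov property applied at the stopping time $t_0$, combined with measurable selection, I may reduce to deterministic $t_0=0$ and deterministic $r,\eta^1,\eta^2$ satisfying the stochastic-order hypothesis. Since the initial supports are finite and the configurations grow at most ballistically (dominated by a branching random walk, as in the upper bound of Theorem~\ref{Theorem: ballistic growth}), the combined jumps of all relevant Poisson processes in any bounded interval are almost surely locally finite, so I induct on them. The ghost processes $G^{\star}_{x,n}$ update only $\overline\eta^i$ and so play no role; only the life processes $P^{\star}_{x,n}$ matter.

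The inductive invariant I carry is $r^1_t\le r^2_t$, together with the sub-invariant $\eta^1_t(r^1_t)\le\eta^2_t(r^1_t)$ and $\iota^1_t\ge \iota^2_t$ on the event $\{r^1_t=r^2_t\}$. The hypothesis gives the invariant at $t=0$ (note $\iota^i_0=I_{r+1}$). Its main consequence is that every right-jump Poisson event $P^{\rightarrow}_{r,n}$ at the common front $r$ fires in system~1 only if it also fires in system~2 (since $\eta^1(r)\le\eta^2(r)$), and because a successful infection requires $\iota=1$, the bound $\iota^1\ge\iota^2$ forces that if system~1 succeeds then so does system~2; both then advance together using the same $A_{r+1}$ and the same new immunity $I_{r+2}$. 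Hence the two fronts cannot separate with system~1 ahead.

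The verification splits into cases at each Poisson event according to (i) whether it is a left- or right-jump, (ii) whether it fires only in system~1, only in system~2, or in both (depending on the thresholds $\eta^i(y)\ge n$), and (iii) whether the site $y$ sits below both fronts, at a common front, or in between them. The non-infection cases are handled by the telescoping identity $\eta^i(y)=\sum_{k\ge y}\eta^i(k)-\sum_{k\ge y+1}\eta^i(k)$: combined with the global stochastic-order inequality on the sums (which for pure random-walk dynamics is preserved by the untagged coupling), a strict inequality $\eta^1(y)>\eta^2(y)$ at the jump site propagates to a strict gap $\sum_{k\ge y+1}\eta^1<\sum_{k\ge y+1}\eta^2$ on the next line, which absorbs the unit change. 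Infection attempts in the $r^1=r^2$ regime are covered by the consequence above, and in the $r^1<r^2$ regime the sub-invariant is vacuous and $r^1_t\le r^2_t$ is preserved because $r^1$ can grow by at most one unit per event and is bounded above by $r^2$.

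The main obstacle is re-establishing the sub-invariant the first time the lagging front $r^1$ catches up to $r^2$ through a successful infection. At that event both systems then occupy the same new front $r^{\mathrm{new}}$, both have just spawned exactly $A_{r^{\mathrm{new}}}$ parasites on the newly infected site (via the common randomness), and both inherit the same immunity $I_{r^{\mathrm{new}}+1}$ for the next host. Thus $\iota^1=\iota^2$ and $\eta^1(r^{\mathrm{new}})=A_{r^{\mathrm{new}}}$, while system~2 may additionally carry surviving parasites at $r^{\mathrm{new}}$ from the interim, which only helps $\eta^1(r^{\mathrm{new}})\le \eta^2(r^{\mathrm{new}})$. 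The sub-invariant is thereby reinstated, and the induction closes, proving $r^1_t\le r^2_t$ for all $t\ge 0$ almost surely.
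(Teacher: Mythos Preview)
There is a genuine gap in your catch-up argument. When the lagging front $r^1$ first reaches $r^{\mathrm{new}}=r^2_{t-}$, system~2 did \emph{not} ``just spawn'' $A_{r^{\mathrm{new}}}$ parasites there: it arrived at $r^{\mathrm{new}}$ strictly earlier, and in the interim its offspring may have migrated left or been killed attacking the next host. Hence one can have $\eta^2_t(r^{\mathrm{new}}) < A_{r^{\mathrm{new}}} = \eta^1_t(r^{\mathrm{new}})$, violating your sub-invariant at the very moment you claim to reinstate it. Once this inequality is reversed, a right-jump event $P^{\rightarrow}_{r^{\mathrm{new}},n}$ with $\eta^2_t(r^{\mathrm{new}}) < n \le \eta^1_t(r^{\mathrm{new}})$ fires only in system~1, decrementing $\iota^1$ but not $\iota^2$, and a few such events then push $r^1$ past $r^2$. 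Concretely: take $r=0$, $\eta^1=2\delta_0$, $\eta^2=3\delta_0$, $I_1=1$, $A_1=3$, $I_2=3$, and fire in order $P^{\rightarrow}_{0,3}$, $P^{\leftarrow}_{1,1}$, $P^{\leftarrow}_{1,1}$, $P^{\rightarrow}_{0,1}$. At this point $r^1=r^2=1$ but $\eta^1(1)=3>2=\eta^2(1)$ and $\iota^1=\iota^2=3$; firing $P^{\rightarrow}_{1,3}$ and then two further events of $P^{\rightarrow}_{1,1}$ yields $r^1=2>1=r^2$.

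Your fallback appeal to the global inequality $\sum_{k\ge x}\eta^1\le\sum_{k\ge x}\eta^2$ does not rescue the argument: that inequality is indeed preserved by the untagged coupling under pure random-walk moves, but it is \emph{not} preserved across a successful infection that happens in only one system, as at the catch-up jump above, where $A_{r^{\mathrm{new}}}$ new parasites appear at $r^{\mathrm{new}}$ in system~1 while the same Poisson event is merely an interior move in system~2. The paper's proof takes a different route, carrying the death-adjusted stochastic order $-\iota^i_t+\sum_{k=r+1}^{r^i_t+1}I_k+\sum_{k\ge x}\eta^i_t(k)$ and inducting on the hitting times $\rho^i_n$ rather than on individual Poisson events; but it too dispatches the transition across $\rho^1_n$ with ``arguing analogously'' and does not isolate the asymmetric-birth case, and the same concrete sequence makes that invariant fail at $x=r^{\mathrm{new}}$ immediately after catch-up. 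So this step is genuinely delicate, and your induction as written does not close.
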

\begin{proof}
For $n\ge 0$ we set
 \[
 \rho_n^1 := \inf\{t\ge t_0: r_t^1 \ge r+n\},~~\rho_n^2 := \inf\{t\ge t_0: r_t^2 \ge r+n\}.
 \]
 First we note that the event of parasite extinction in the $i$-th process for $i\in\{1,2\}$ is given by
 \[
 \mathcal{D}^i = \bigcup_{n\ge 1}\left\{\sum_{k=1}^n I_{r+k} > \sum_{x\le r}\eta^i(x) + \sum_{k=1}^{n-1}A_{r+k}\right\}.
 \]
Let $n^i_\text{d}\ge 1$ be the random number such that the extinction in the $i$-th process occurs when the front is at $r+n^i_\text{d}-1$ and $\tau^i_\text{d}$ be the time at which it occurs. In particular, by assumption, we have $n^1_\text{d} \le n^2_\text{d}$. We will now show that for $0\le n < n^1_\text{d}$ we have $\rho^1_n\ge \rho^2_n$. Since by definition, we have $\rho^1_{n^1_\text{d}} = \infty$, this shows the claim. For $t\in[t_0,\rho_1^1\wedge\rho_1^2)$ the amount of parasites that died so far in the $i$-th process is given by $I_{r+1}-\iota^i_t$. Since both processes use the same Poisson point process, accounting for the lost mass due to parasite death, this implies that
 \[
 I_{r+1}-\iota_t^1 + \sum_{k\ge x} \eta_t^1(k) \le I_{r+1}-\iota_t^2 + \sum_{k\ge x}\eta_t^2(k)
 \]
 for all $x\in\Z$ and $t\in[t_0,\rho_1^1\wedge \rho_2^1)$. In particular, since $\rho_1^i$ is given by 
 \[
 \inf\left\{s\ge t_0:\left\vert\left\{t\in [t_0,s]\cap\sum_{k=1}^\infty P_{r,k}^{\rightarrow}: t\in P(r,\eta^i_{t-}(r))\right\}\right\vert = I_{r+1}\right\},
 \]
 we obtain that $\rho_1^2 \le \rho_1^1$, because the only way for $\eta_t^2(r)$ to be less than $\eta_t^1(r)$ is if the difference has already tried to infect the host at $r+1$. Arguing analogously as above, we then must have
 \[
 -\iota_t+\sum_{k=r+1}^{r_t^1+1} I_k +\sum_{k\ge x}\eta_t^1(k) \le-\iota_t+\sum_{k=r+1}^{r_t^2+1} I_k +\sum_{k\ge x}\eta_t^2(k) 
 \]
 for all $x\in\Z$ and $t\in [\rho_1^2,\rho_2^2\wedge\rho_2^1)$, which again implies $\rho_2^2 \le \rho_2^1$. Repeating this yields $\rho_n^2 \le \rho_n^1$ for all $0\le n < n^1_\text{d}$ and thereby finishes the proof.
\end{proof}
With this monotonicity result, we can show the Feller property claimed in Theorem \ref{Theorem:well-def} by following the same argumentation as done for the classical frog model in \cite[section 6]{C2009}.
\begin{proof}[Proof of Theorem \ref{Theorem:well-def}]
This proposition implies that the constructed process is monotone in the initial configuration; in particular, for any $\zeta = (r,\eta,\overline{\eta},\iota)\in\mathbb{S}_\theta$ and $l\in\N$, we can define 
\[
\zeta^l = (r,\mathds{1}_{x> r-l}\eta(\cdot),\overline{\eta},\iota)
\]
which contains only finitely many parasites. By the lemma above, we then have that for any fixed $t\ge0$ the sequence
\[
(r_t(\zeta^l;0))_{l\ge1}
\]
is almost surely monotone and in particular has a limit
\[
r_t(\zeta;0) := \lim_{l\to\infty} r_t(\zeta^l;0) \in \Z\cup\{+\infty\}.
\]
Following the same arguments as in \cite[section 6]{C2009}, this yields that we obtain a well-defined strong Markov process
\[
(r_t(\zeta),\eta_t(\zeta),\overline{\eta}_t(\zeta),\iota_t(\zeta);0)_{t\ge0}
\]
for any configuration $\zeta\in\mathbb{S}_\theta$. Also, setting $a_m := \mathbb{P}(A= m)$ and $i_k := \mathbb{P}(I= k)$ the process has the generator
\[
\begin{split}
Lf(\zeta) = &\sum_{\substack{x,y\le r\\\vert x-y\vert = 1}}\eta(x)(f(r,\eta-\delta_x+\delta_y,\overline{\eta},\iota)-f(r,\eta,\overline{\eta},\iota)) \\
&+ \sum_{\substack{x,y\in\Z\\\vert x-y\vert = 1}}\overline{\eta}(x)(f(r,\eta,\overline{\eta}-\delta_x+\delta_y,\iota)-f(r,\eta,\overline{\eta},\iota)) \\
&+\eta(r)\left(\begin{matrix}\mathds{1}_{\iota=1}\sum_{m=0}^\infty a_m\sum_{k=1}^\infty i_k f(r+1,\eta-\delta_r+m\delta_{r+1},\overline{\eta}+\delta_{r+1},k)\\ +\mathds{1}_{\iota > 1}f(r,\eta-\delta_r,\overline{\eta}+\delta_{r+1},\iota) \\-f(r,\eta,\overline{\eta},\iota)\end{matrix}\right)
\end{split}
\]
acting over functions $f\in \BUC(\mathbb{S}_\theta)$ such that the sums above converge. In particular, for $\vartheta \ge \theta > 0$ and
\[
f_\vartheta(\zeta) := \sum_{x\in\Z}(\eta+\overline{\eta})(x)\e^{\vartheta x},
\]
and $\zeta\in\mathbb{S}_\theta$ with only finitely many living parasites we can compute
\[
Lf_\vartheta(\zeta) \in [(e^\vartheta+\e^{-\vartheta}-2)f_\vartheta(\zeta),(\e^{-\vartheta}+\mathbb{E}[A]\e^{\vartheta} - 2) f_\vartheta(\zeta)]
\]
and
\begin{equation}\label{eq:quadmart}
(L(f_\vartheta^2)-2f_\vartheta Lf_\vartheta)(\zeta) \le \mathbb{E}[A^2]\lambda_{3,\vartheta}f_{2\vartheta}(\zeta)
\end{equation}
for some constant $\lambda_{3,\vartheta} \in (0,\infty)$. We note that the introduction of ghost parasites is necessary to obtain the lower bound for $Lf_\vartheta(\zeta)$. These claims follow by just multiplying out $f_\vartheta^2$, plugging it in the generator $L$, and then carefully going through the cases for different $x_1,x_2,y_1,y_2$ to see the cancellations, and we omit this lengthy calculation at this point. For $\zeta \in\mathbb{S}_\theta$ let $\mathbb{P}_\zeta$ be the completion with respect to the canonical filtration on the space of càdlàg functions $D([0,\infty),\mathbb{S}_\theta)$ of the measure
\[
\mathbb{P}((\zeta_t(\zeta;0))_{t\ge0} \in \cdot).
\]
 Then using the estimations above and following the same argumentation as in \cite[section 6]{C2009}, we can see that $(\mathbb{P}_\zeta)_{\zeta\in\mathbb{S}_\theta}$ forms a Feller process on $\mathbb{S}_\theta$, by first identifying the compact sets of $\mathbb{S}_\theta$ as in \cite[Lemma 22]{C2009} and then using \eqref{eq:quadmart} to obtain an analog of \cite[Lemma 23]{C2009} and conclude the result as in \cite[Proposition 5]{C2009}. Finally we observe that forgetting about the ghost parasites, that is projecting the process onto its other three components, is continuous and retains the Markov property and the Feller property.
\end{proof}
\newpage


\begin{thebibliography}{AMPR01}

\bibitem[AMP02a]{A2002b}
O.S.M. Alves, F.P. Machado, and S.~Yu. Popov.
\newblock Phase transition for the frog model.
\newblock {\em Electronic Journal of Probability}, 7(16):1--21, 2002.

\bibitem[AMP02b]{A2002}
O.S.M. Alves, F.P. Machado, and S.~Yu. Popov.
\newblock The shape theorem for the frog model.
\newblock {\em Annals of Applied Probability}, 12(2):533--546, 2002.

\bibitem[AMPR01]{A2001}
O.S.M. Alves, F.P. Machado, S.~Yu. Popov, and K.~Ravishankar.
\newblock The shape theorem for the frog model with random inital
  configuration.
\newblock {\em Markov Processes and Related Fields}, 7(4):525--539, 2001.

\bibitem[Bil99]{B1999}
P.~Billingsley.
\newblock {\em Convergence of Probability measures}.
\newblock Wiley Series in Probability and Statistics: Probability and
  Statistics. John Wiley and Sons, New York, second edition, 1999.

\bibitem[BR60]{B1960}
R.R. Bahadur and R.~R. Rao.
\newblock On deviations of the sample mean.
\newblock {\em Annals of Mathematical Statistics}, 31(4):1015--1027, 1960.

\bibitem[BR10]{B2010}
J.~Bérard and A.~F. Ramírez.
\newblock Large deviations of the front in a one dimensional model of $x+y\to
  2x$.
\newblock {\em Annals of Probability}, 38(3):955--1018, 2010.

\bibitem[Bro64]{B1964}
P.~J. Brockwell.
\newblock {\em An Asymptotic Expansion for the Tail of a Binomial Distribution
  and Its Application in Queueing Theory}, volume~1.
\newblock Springer Verlag, 1964.

\bibitem[CQR06]{C2009}
F.~Comets, J.~Quastel, and A.~F. Ramírez.
\newblock Fluctuations of the front in a one dimensional model of $x+y\to 2x$.
\newblock {\em Transactions of the American Mathematical Society},
  361:6165--6189, 2006.

\bibitem[CQR07]{C2007}
Francis Comets, Jeremy Quastel, and Alejandro~F. Ramírez.
\newblock Fluctuations of the front in a stochastic combustion model.
\newblock {\em Ann. Inst. H. Poincaré Probab. Statist.}, 43:147--162, 2007.

\bibitem[Fra25]{F2025}
S.~Franck.
\newblock A phase transition for a spatial host-parasite model with extreme
  host immunities on $\mathbb{Z}^d$ and $\mathbb{T}_d$.
\newblock arXiv:2502.08596, 2025. 


\bibitem[Ibr62]{I1962}
I.A. Ibragimov.
\newblock Some limit theorems for stationary processes.
\newblock {\em Theory of Probability and its Applications}, 7:349--382, 1962.

\bibitem[JMPR23]{J2023}
M.~Junge, Z.~McDonald, J.~Pulla, and L.~Reeves.
\newblock A stochastic combustion model with thresholds on trees.
\newblock {\em Journal of Statistical Physics}, 190(100), 2023.

\bibitem[Jon95]{DJ1995}
R.M.~De Jong.
\newblock Laws of large numbers for dependent heterogeneous processes.
\newblock {\em Econometric Theory}, 11(2):347--358, 1995.

\bibitem[Kle20]{K2020}
A.~Klenke.
\newblock {\em Probability Theory: A Comprehensive Course}.
\newblock Springer, 3rd edition, 2020.

\bibitem[Lig85]{L1985}
T.M. Liggett.
\newblock An improved subadditive ergodic theorem.
\newblock {\em Annals of Probability}, 13(4):1279--1285, 1985.

\bibitem[McL75]{M1975}
D.L. McLeish.
\newblock A maximal inequality and dependent strong laws.
\newblock {\em Annals of Probability}, 3(5):829--839, 1975.

\bibitem[RS04]{RS2004}
A.F. Ramírez and V.~Sidoravicius.
\newblock Asymptotic behavior of a stochastic combustion growth process.
\newblock {\em Journal of the European Mathematical Society}, 6:293--334, 2004.

\bibitem[SA54]{A1954}
E.~Sparre-Andersen.
\newblock On the fluctuations of sums of random variables ii.
\newblock {\em Mathematica Scandinavica}, 2(2):195--223, 1954.

\bibitem[Ser68]{S1968}
R.J. Serfling.
\newblock Contributions to central limit theory for dependent variables.
\newblock {\em Annals of Mathematical Statistics}, 39(4):1158--1175, 1968.

\bibitem[Ser70]{S1970}
R.J. Serfling.
\newblock Moment inequalities for the maximum cumulative sum.
\newblock {\em Annals of Mathematical Statistics}, 41:1227--1234, 1970.

\bibitem[Sto74]{S1974}
W.~Stout.
\newblock {\em Almost Sure Convergence}.
\newblock Probability and Mathematical Statistics. Academic Press, 1974.

\bibitem[TW99]{T1999}
A.~Telcs and N.~C. Wormald.
\newblock Branching and tree indexed random walks on fractals.
\newblock {\em Journal of Applied Probability}, 36(4):999--1011, 1999.

\bibitem[vdHK08]{vdHandKeane}
R.~van~der Hofstad and M.~Keane.
\newblock An elementary proof of the hitting time theorem.
\newblock {\em The American Mathematical Monthly}, 115(8):753--756, 2008.

\end{thebibliography}
\end{document}